
\documentclass[12pt]{article}
\usepackage{CJK,CJKnumb,CJKulem,times,dsfont,ifthen,mathrsfs,latexsym,amsfonts,color}
\usepackage{amsmath,amsthm,makeidx,fontenc,amssymb,bm,graphicx,psfrag,listings,curves,extarrows,enumitem}
\usepackage[
linktocpage=true,colorlinks,citecolor=magenta,linkcolor=blue,urlcolor=magenta]{hyperref}

\usepackage{cite}
\usepackage{geometry}
\geometry{left=3cm,right=3cm,top=2cm,bottom=2cm}

\usepackage{indentfirst}
\setlength{\parindent}{2em}

\usepackage{amssymb}
\makeatletter

\newcommand{\Rmnum}[1]{\expandafter\@slowromancap\romannumeral #1@}
\makeatother

\usepackage[showonlyrefs]{mathtools}  
\mathtoolsset{showonlyrefs=true}

\newtheorem{definition}{Definition}[section]
\newtheorem{theorem}{Theorem}[section]
\newtheorem{lemma}{Lemma}[section]
\newtheorem{corollary}{Corollary}[section]
\newtheorem{proposition}{Proposition}[section]
\newtheorem{remark}{Remark}[section]

\newcommand{\al}{\alpha}
\newcommand{\ga}{\gamma}
\newcommand{\Ga}{\Gamma}

\newcommand{\BR}{{\cal B}}
\newcommand{\CR}{{\cal C}}
\newcommand{\DR}{{\cal D}}
\newcommand{\ER}{{\cal E}}
\newcommand{\FR}{{\cal F}}
\newcommand{\GR}{{\cal G}}

\newcommand{\LR}{{\cal L}}
\newcommand{\LRN}{{\cal L}^n}
\newcommand{\LRI}{{\cal L}^\infty}
\newcommand{\MR}{{\cal M}}

\newcommand{\SR}{{\cal S}} 

\newcommand{\N}{{\mathbb N}}

\newcommand{\Z}{{\mathbb Z}}

\newcommand{\R}{{\mathbb R}}
\newcommand{\RN}{{\mathbb R^N}}  
\newcommand{\bean}{\begin{eqnarray*}}
	\newcommand{\eean}{\end{eqnarray*}}
\newcommand{\loc}{\operatorname{\rm loc}}

\newcommand{\sbr}[1]{\left(#1\right)}
\newcommand{\mbr}[1]{\left[#1\right]}
\newcommand{\lbr}[1]{\left\{#1\right\}}
\newcommand{\hbr}[1]{\left\langle  #1 \right\rangle}

\newcommand{\abs}[1]{\left\lvert#1\right\rvert}

\newcommand{\supp}[1]{{\rm supp} ~#1}


\newcommand{\dx}{ ~\mathrm{d} x}
\newcommand{\nm}[1]{\Vert #1 \Vert}

\newcommand{\nt}[1]{\|#1\|_2} 
\newcommand{\ploc}[1]{|#1|_{N/2,\loc}}



\setitemize{itemindent=38pt,leftmargin=0pt,itemsep=-0.4ex,listparindent=26pt,partopsep=0pt,parsep=0.5ex,topsep=-0.25ex}

\numberwithin{equation}{section}

\begin{document}
	\theoremstyle{plain}

	\title{\bf Infinitely many  positive  solutions to  nonlinear scalar field equation  with nonsmooth   nonlinearity   
	}  
	
	\date{}
	\author{Tianhao Liu\thanks{Institute of Applied Physics and Computational Mathematics,  Beijing,  100088,  China.
			Email: liuthmath@gmail.com},\;\;
		Juncheng Wei\thanks{Department of Mathematics, Chinese University of Hong Kong, Shatin, NT, Hong Kong.
			Email: wei@math.cuhk.edu.hk},\;\; Wenming Zou\thanks{Department of Mathematical Sciences, Tsinghua University, Beijing 100084, China.  Email: zou-wm@mail.tsinghua.edu.cn
		}
	}

	\maketitle
	
	\begin{center}
		\begin{minipage}{140mm}
			\begin{center}{\bf Abstract }\end{center}	\small
			
			This paper investigates the  existence of infinitely many positive solutions for the logarithmic scalar field equation
			\begin{equation}
				\tag{$P$} \label{equ1}
				-\Delta u+  V(x) u= u\log u^2, \quad  u\in  H^1(\RN),
			\end{equation}
		and its counterpart with  prescribed $L^2$-norms
			\begin{equation}  	\tag{$P_N$} \label{equ2}
			\left\{ 	\begin{aligned}
				& 	-\Delta u+  V(x) u +\lambda u= u\log u^2, \quad  u\in  H^1(\RN),\\
				&\int_\RN u^2\dx=a^2>0,
			\end{aligned}	\right.
		\end{equation}
			which come from physically relevant situations. Here,  $N\geq 2$, $V:\RN\to \R$ is a {\it non-symmetric and non-periodic}  potential satisfying certain decay conditions, $ a $ is   prescribed constant, and  $\lambda$   arises as an unknown Lagrange multipliers.  For problem \eqref{equ1}, 	using purely variational methods,  we establish  the existence of multi-bump positive solutions  with either finitely or infinitely many bumps.  For   normalized problem   \eqref{equ2}, we prove the existence of normalized multi-bump positive solutions with a finite  number of  bumps.
			The main difficulty comes from 	the   nonsmooth nature of  logarithmic  nonlinearity, which   introduces some challenges to the variational framework.
			In particular,  the corresponding energy  functional   is not of class $C^1$   on $H^1(\RN)$,  which prevents the direct application of standard critical point theory for $C^1$ functional  or any reduction methods for $C^{1+\sigma}$ nonlinearity.  		The main  ingredients  in this paper  are nonsmooth critical point theory, localized variational methods and a max-min argument.
			To the best of our knowledge, this  paper appears to be the first successful application of the localized variational method to nonsmooth functionals.

			\vskip0.23in
			
			{\bf Keywords:} Logarithmic scalar field equation; Multi-bump solution; Max-min argument; Localized variational method;   Nonsmooth critical point theory
			\vskip0.1in
			{\bf MSC Classification:}  35B09, 35J15, 35J20, 
			
			\vskip0.23in					
		\end{minipage}
	\end{center}

	\section{Introduction}\label{Sect1}
	
%

		The nonlinear Schr\"{o}dinger equation  
	\begin{equation}  \label{equ general}
		i\frac{ \partial  \psi}{\partial t} =\Delta \psi -  W(x) \psi + f(|\psi|)\psi ,\quad  ~~(x,t)\in\RN\times\R^+,
	\end{equation}
	for a wave function $\psi=\psi(x,t)\in\mathbb{C}$ has attracted  considerable attention due to its fundamental role in modeling phenomena such as Bose-Einstein condensates (BECs) and nonlinear optics \cite{BG-1,BG-2,BG-3}.  Since the groundbreaking achievement of BECs in dilute alkali gases in 1995 \cite{BG-6}, a wide variety of potentials  $W(x)$    have  been extensively investigated in both theoretical and experimental studies (cf. \cite{BG-1,BG-4,BG-5}). 
	Of particular physical relevance in these works are power-type nonlinearities of the form 	\begin{equation}  \label{equ pow}
	f(| \psi|) \psi =  \mu  |\psi|^{p-1}\psi.
	\end{equation}  	In BECs, the cubic nonlinearity  ($p=3$) naturally arises from mean-field two-body interactions, while in nonlinear optics, the Kerr effect  ($p=3$) and higher-order nonlinearities ($p>3$) describe self-focusing and multiphoton processes, respectively. 
  An important generalization involves combined power-type nonlinearities of the form
	\begin{equation}  \label{equ combine}
		 f(|\psi|)\psi = \mu_1|\psi|^{p-1}\psi +\mu_2|\psi|^{q-1}\psi ,
	\end{equation}
	which was   initially considered  by Tao, Visan, and Zhang  \cite{Tao-Visan-Zhang}. Over the past few decades, the mathematical analysis of equation \eqref{equ general} with interactions \eqref{equ pow} and \eqref{equ combine}  has been developed extensively.  As a comprehensive review of the   literature is beyond the scope of this work, we refer the reader to \cite{Berestycki-1,Berestycki-2,cerami_CPMA_2013,Soave=JFA=2020,Weinstein=CMP} and reference therein.

	\medbreak
 	While the interactions  \eqref{equ pow} and \eqref{equ combine} are smooth, there are many physical scenarios involving nonsmooth nonlinearities. A canonical example is the   logarithmic nonlinearity  defined by
 	\begin{equation}\label{defi of log nonlinearity}
 		f(|\psi|)\psi= \begin{cases}
 			\psi \log|\psi|^2, \quad& \text{for} ~~\psi \neq 0,\\
 			0, \quad &\text{for}~~ \psi =0,
 		\end{cases}	 
 	\end{equation} 
 	which arises from the logarithmic Schr\"{o}dinger   equation
 	\begin{equation} \label{E2}
 		i\frac{ \partial  \psi}{\partial t} =\Delta \psi - W(x)\psi +\psi \log|\psi|^2,  ~~(x,t)\in\RN\times\R^+.
 	\end{equation}
 This equation 	was first introduced  in the seminal work of Bialynicki-Birula and Mycielski  \cite{Bialynicki=1976} to describe  {\it the separability of non-interacting subsystems} in quantum mechanics.  Since then,  it has attracted  considerable attention due to its applications in diverse areas of physics,   such as quantum mechanics, quantum optics, transport and diffusion
 	phenomena, information theory, quantum gravity, and the theory
 	of BECs (see \cite{Troy=ARMA}   and the references therein). 
	
		\medbreak
	In the study of   \eqref{E2},  	a fundamental class of solution    is   standing  wave,   which takes the form
	\begin{equation}\label{j1}
		\psi(x,t) = e^{i \omega t}u(x).  
	\end{equation}
Here,  $\omega\in\R$ is the frequency and $u\in H^1(\R^N)$  is  a real-valued function. These solutions are pivotal for understanding the dynamics and stability of  \eqref{E2}, see, for example,	\cite{cazenave-1983,cazenave-haraux,cazenave-lions,Carles=DUKE20118,cazenave}. Substituting the ansatz \eqref{j1} reduces the time-dependent problem  \eqref{E2}   to the   
  logarithmic scalar field equation
 \begin{equation} \label{p equ} \tag{$P$}
 	-\Delta u+  V(x) u= u\log u^2, \quad  u\in  H^1(\RN),
 \end{equation}
 where $N\geq 2$ and $V(x)=W(x)-\omega$ is the external potential.   From another perspective, problem  \eqref{p equ}   can   be obtained   as the limiting case  $p \to 1^+$  of two fundamental physical models (see \cite{Troy=ARMA}):     
 \begin{equation}
 	\begin{aligned}
 		\frac{ \partial  u}{\partial t} =\Delta u - V(x)u+ |u|^{p-1}u- u\quad   &\text{(Reaction-diffusion equation) },\\  
 		\frac{ \partial^2  u}{\partial t^2}=\Delta u - V(x)u+ |u|^{p-1}u- u \quad &\text{ (Nonlinear Klein-Gordon equation)  }.
 	\end{aligned}
 \end{equation}
 Observe that, in contrast to the power   nonlinearity $|u|^{p-1}u$,   the  logarithmic nonlinearity  $g(u)=u\log u^2$ is   sign-changing and strongly sublinear  at the origin
  \begin{equation}
  	\lim\limits_{u\to 0} \frac{g(u)}{u}=-\infty,
  \end{equation} 
  which also corresponds to the infinite-mass case in the literature.  
  	\medbreak
  	Problem \eqref{p equ} admits a natural variational structure.  Formally, its solutions   can be  characterized as  critical points of  the functional
  \begin{equation}\label{Functional P}
  	\mathcal{L}(u) := \frac{1}{2} \int_{\RN} \mbr{|\nabla u|^2+ (V(x)+1)u^2} ~\mathrm{d}x-\frac{1}{2} \int_{\RN} u^2\log  u ^2 ~\mathrm{d}x.
  \end{equation}
  A pivotal tool in studying \eqref{Functional P} is the logarithmic Sobolev inequality. This inequality was first introduced by Gross \cite{Gross-AJM1975} for Gauss measures. In the case of the Lebesgue measure  (see  \cite[Theorem 8.14]{lieb-loss}), it  states that    for any   function  $ u\in H ^1(\RN) $ with  $N\geq 2$  and any constant $ a>0$,
  \begin{equation}\label{log Sobolev ineq}
  	\int_{\RN} u^2 \log u^2 \dx\leq \frac{a^2}{\pi} |\nabla u|_{L^2(\RN)}^2 +\mbr{\log|u|_{L^2(\RN)} ^2 -N(1+\log a)} |u|_{L^2(\RN)} ^2.
  \end{equation}	The existence and explicit form of extremal functions for this inequality were established in     \cite{Carlen-jfa1991,Toscani1997}. We would also like to mention that the logarithmic Sobolev inequality  can be viewed as   a limit case of a family of Gagliardo-Nirenberg-Sobolev inequalities \cite{pino-jmpa2002}  or as a large
  dimension limit of the Sobolev inequality \cite{Beckner-1992}. For more recent developments and related results, we refer to works such as \cite{Brigati-jfa2024, Brigati-ponicare2024}.
  
  	\medbreak
  
It is worth pointing out that  the   \textit{nonsmooth nature} of  logarithmic nonlinearity presents some mathematical challenges in the study of \eqref{p equ}.  On the other hand, it is not locally Lipschitz continuous at the origin. Consequently, standard tools such as the implicit function theorem and the contraction mapping principle are not directly applicable.   

	\medbreak
  
  On the other hand,   the logarithmic nonlinearity      induces   a \textit{nonsmooth variational structure}. In particular,   the energy functional  $\mathcal{L}$   is not differentiable in  $ H^1(\R^N)$;  in fact it is not continuous, only lower semi-continuous, because there exists  a  function  $	\tilde{u}\in H^1(\RN) $ 
  \begin{equation}
  	\tilde{u}(x)=\begin{cases}
  		(|x|^{N/2}\log(|x|))^{-1},&|x|\geq3,\\
  		0, &|x|\leq 2,
  	\end{cases}
  \end{equation}		
  such that $ \int_{\RN} \tilde{u}^2\log  \tilde{u} ^2 ~\mathrm{d}x =-\infty$.   Thus,  
  the	standard critical point theory for $C^1$ functionals  or any reduction methods for $C^{1+\sigma}$ nonlinearities  cannot be directly applied to  \eqref{p equ}.

    \medbreak
  	Over recent decades, extensive research has developed various  techniques to overcome these difficulties and study   the existence, multiplicity, and concentration of solutions to \eqref{p equ}. 
  Since a comprehensive review of prior work  is beyond our scope,  we refer to works   \cite{alves-ji-cvpde2020,avenia-2014,W.Shuai=Nonlinearity=2019,ss-2015,ss-2017,tanaka-zhang-cvpde,Wang=ARMA=2019,Zhang=JMPA=2020,zhangCX-ZhangX-2020,Troy=ARMA,cazenave-1983, pino-jfa2003,alves-ji-isral,WangZhangZhang-ADE2024}  and   references therein.     
  This line of research  follows from  the work on classical nonlinear scalar field equation
 \begin{equation}
	-\Delta u + V(x) u = |u|^{p-1} u,\quad  u \in H^1 (\R^N),
\end{equation}
where $ 1<p <(\frac{N+2}{N-2})_{-}$. 
  Since the celebrated works of Berestycki and Lions \cite{Berestycki-1, Berestycki-2}, the existence of ground state and bound state solutions  for such equations  has been widely studied (see, e.g.,  \cite{ao-wei-cvpde,cerami_CPMA_2013,cerami_Poincare_2015,cerami_jde_2014,cerami-molle-cpde-2019,CotiZelati,pino-wei-yao-cvpde2015,DevillanovaSolimini-cvpde2015,Weinstein=CMP,Molle-atti2015,Molle-cvpde2021,LinLiu-1,LinLiu-2,Wei-Yan-cvpde2010} and references therein).   A topic of particular interest has been the construction of   multi-bump solutions, which is the focus of the present work.

  		  \medbreak
  In this paper, we are concerned with the   existence of    infinitely many    solutions   to \eqref{p equ}.  This direction has attracted significant attention in recent years,  with some substantial results established for potentials $V$ that are constant \cite{avenia-2014}, coercive \cite{ji-szulkin2016}, periodic \cite{ss-2015,tanaka-zhang-cvpde}, or radially symmetric with polynomial decay \cite{WangZhangZhang-ADE2024}.

	\medbreak
	 Nevertheless,
	 the existence  of   infinitely many    solutions  to   \eqref{p equ}    remains largely open in the literature,
	 especially when the potential $V:\RN\to \R$ is   {\it non-symmetric  and non-periodic}.   In a groundbreaking  work  \cite{cerami_CPMA_2013}, Cerami, Passaseo and Solimini  were the first to  use purely variational method to  obtain infinitely many positive solutions to
	 \begin{equation}	\label{Ce1}  \tag{$  P_E$}
	 	-\Delta u + V(x) u = |u|^{p-1} u,\quad  u \in H^1 (\R^N),
	 \end{equation}
	 where $ 1<p <(\frac{N+2}{N-2})_{-}$. They assumed the potential 
	 $V (x) $ is a positive  function in $ L_{\loc}^{N/2} (\R^N)$ such that $V(x)\to V_\infty$ and satisfies the following slow decay assumptions
	 \begin{equation} \label{H3}\tag{A$_0$}
	 	\exists  ~~ \bar \eta \in (0,\sqrt{V_{\infty}}  ), \quad  \text{ such that } ~ \lim\limits_{|x|\to +\infty}  \sbr{V(x)-V_{\infty} } e^{\bar\eta |x|}=+\infty ,
	 \end{equation}
	 without requiring any symmetry property.
	 Many further extensions can be found   \cite{ao-wei-cvpde, cerami_Poincare_2015, cerami-molle-cpde-2019}. In all these papers the nonlinearity is either $C^1$ or further $C^{1+\sigma}$ to ensure that the energy functional is $C^1$,  thereby making the standard critical point theory  applicable. We also refer to \cite{DevillanovaSolimini-cvpde2015,pino-wei-yao-cvpde2015,Molle-atti2015,Molle-cvpde2021} for  an extensive discussion of related results and techniques.


	\medbreak
	This paper aims to extend the  localized variational framework introduced by   Cerami-Passaseo-Solimini \cite{cerami_CPMA_2013} to establish the existence of infinitely many positive solutions for the  nonsmooth problem  \eqref{p equ} with  non-symmetric  and non-periodic potential.    To the best of our knowledge, this work presents the first successful application of the localized variational method to both logarithmic scalar field equation and  nonsmooth functionals,   providing    a fresh perspective on their  study.

	\medbreak
	
	Our first main result   can be stated as follows,  which  appears as  the  first    contribution on this   problem.

	\begin{theorem}\label{thm1}
		Assume  that   $N\geq 2$  and       the potential function $V:\RN\to \R$ satisfies
		\begin{equation} \label{H1}\tag{A$_1$}
			V(x)\in L_{\loc}^{N/2}(\RN),    \quad {  V_0:=\inf_{\RN}V(x)>2}, \quad \lim_{|x|\to +\infty}V(x) = V_{\infty},
		\end{equation} 	
		\begin{equation} \label{H2}\tag{A$_2$}
			\exists  ~~ \bar \zeta \in (0,   \bar \eta), \quad  \text{ such that } ~ \lim\limits_{|x|\to +\infty}  \sbr{V(x)-V_{\infty} } e^{\bar \zeta |x|^2}=+\infty.  
		\end{equation}  Then, there exists   a positive constant  $\mathcal{K}=\mathcal{K}\sbr{ N, \bar \zeta, V_0, V_{\infty}} $ such that if  $$	|V(x)-V_{\infty}|_{N/2,\rm loc}:= \sup_{y\in \RN} |V(x)-V_{\infty}|_{L^{N/2}(B_1(y))} <\mathcal{K},$$
		problem \eqref{p equ} admits infinitely many positive solutions. More precisely, for  every $k\in \N\setminus \lbr{0}$, problem \eqref{p equ}  admits a $k$-bumps  positive solution $\tilde u_k \in H^1(\RN)$.

	\end{theorem}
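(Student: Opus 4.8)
\emph{Outline of the proof.} The plan is to transplant the purely variational, \emph{localized} min--max scheme of Cerami--Passaseo--Solimini to the nonsmooth functional $\mathcal{L}$ of \eqref{Functional P}. The starting point is the problem at infinity, $-\Delta u+V_\infty u=u\log u^2$, whose energy $\mathcal{L}_\infty$ is obtained from \eqref{Functional P} upon replacing $V$ by the constant $V_\infty$. A direct computation shows that its positive ground state is the Gaussian $U(x)=e^{(N+V_\infty)/2}e^{-|x|^2/2}$ (unique up to translations), and testing the equation against $U$ yields the clean identity $m_\infty:=\mathcal{L}_\infty(U)=\tfrac12\int_{\RN}U^2\dx$. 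The Gaussian decay of $U$ is precisely what dictates the Gaussian-type hypothesis \eqref{H2}: two widely separated copies of $U$, centred a distance $d$ apart, interact at an energy scale of order $e^{-cd^2}$, so in order that a clustered configuration be a genuine critical configuration rather than one drifting to infinity, the deviation $V(x)-V_\infty$ must decay \emph{more slowly} than this, which \eqref{H2} (with $\bar\zeta<\bar\eta<\sqrt{V_\infty}$) guarantees.

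Because $\mathcal{L}$ is only lower semicontinuous on $H^1(\RN)$ --- indeed discontinuous, as witnessed by the function $\tilde u$ exhibited in Section~\ref{Sect1} --- I would first fix the nonsmooth framework by decomposing $\mathcal{L}=\Phi+\Psi$, where $\Phi\in C^1(H^1(\RN))$ carries the quadratic part together with the regular piece of $-\tfrac12\int u^2\log u^2$, and $\Psi:H^1(\RN)\to(-\infty,+\infty]$ is convex, proper and lower semicontinuous, carrying the singular contribution coming from where $0<u<1$; concretely one splits $s^2\log s^2$ into a globally $C^1$ part and a convex part in the standard way. Critical points are then understood in the sense of Szulkin, and an appropriate quantitative deformation lemma in that framework is the working tool. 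Using the logarithmic Sobolev inequality \eqref{log Sobolev ineq}, the condition $V_0>2$, and the smallness $|V-V_\infty|_{N/2,\loc}<\mathcal{K}$, one checks that $\mathcal{L}$ is coercive and bounded below on a Nehari-type set $\mathcal{N}$ and on its \emph{localized} analogues (obtained by intersecting with prescribed annular regions), that $\mathcal{N}$ is a natural constraint, and that the min--max sequences arising below are bounded.

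Next comes the localized max--min for $k$ bumps. Fix $k$ reference centres and a large radius, carve $\RN$ into $k$ disjoint balls together with a complementary region, and define a max--min class of maps on a $k$-fold product space which, through barycentre-type constraints, is forced to keep one $U$-shaped bump in each of the $k$ balls. The two quantitative ingredients are (i) an energy-splitting lower bound showing that any admissible $k$-bump configuration on $\mathcal{N}$ has energy at least $k\,m_\infty$ up to an exponentially small error, and (ii) a test-configuration upper bound producing an admissible configuration whose energy lies strictly below the first level $(k+1)m_\infty$ at which a bump can escape to infinity; here \eqref{H2} is used decisively, since the \emph{positive} potential contribution $\sim\sum_i(V(\xi_i)-V_\infty)\int_{\RN}U^2\dx$ decays more slowly than the attractive inter-bump interaction, so the outer maximisation over positions is attained at an interior saddle. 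This produces a max--min value $c_k$ with $k\,m_\infty<c_k<(k+1)m_\infty$ and, crucially, $c_k\ne j\,m_\infty$ for every $j\in\N$, carried by a bounded Palais--Smale sequence in the Szulkin sense.

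The final step --- and the main obstacle --- is compactness and the recovery of a genuine positive solution. One must prove a \emph{nonsmooth splitting lemma}: every bounded Palais--Smale (or Cerami) sequence for $\mathcal{L}$ on $\mathcal{N}$, after translations, decomposes into a (possibly trivial) Szulkin critical point of $\mathcal{L}$ plus finitely many translated copies of solutions of the limit problem, with energies adding up; since $c_k$ lies below $(k+1)m_\infty$ and misses the levels $j\,m_\infty$, no mass is lost, so along a subsequence one reaches a Szulkin critical point $\tilde u_k$ at level $c_k$ which, by the barycentre constraints, genuinely exhibits $k$ bumps. Passing from the Szulkin critical point to a weak --- hence, by elliptic bootstrap using $V\in L^{N/2}_{\loc}(\RN)$, classical --- solution of \eqref{p equ}, and working with $u_+$ together with a strong maximum principle adapted to the strongly sublinear nonlinearity $u\log u^2$, yields $\tilde u_k>0$. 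The delicate point throughout is that the classical CPS machinery rests on the $C^1$ deformation lemma, the Palais--Smale condition and the smooth Nehari manifold, none of which survives verbatim: one has to rebuild a deformation lemma in the Szulkin (or weak-slope) framework compatible with the annular constraints and the Nehari projection, and, most dangerously, establish the splitting lemma so as to rule out energy leaking into the region where $0<u<1$, exactly where $u\mapsto\int u^2\log u^2$ fails to be continuous. The accompanying interaction estimates for the Gaussian profile $U$ are technically involved but conceptually routine.
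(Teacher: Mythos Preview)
Your proposal shares the overall architecture with the paper --- the nonsmooth Szulkin framework, the decomposition $\mathcal{L}=\Phi+\Psi$, a localized max--min over configurations of $k$ centres, and the role of \eqref{H2} in trapping the bumps --- but it diverges from the paper at the decisive final step, and that divergence contains a genuine gap.

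The paper does \emph{not} recover the solution via a Palais--Smale sequence and a splitting lemma. Instead, for each $k$-tuple $(z_1,\dots,z_k)\in\mathcal{E}_k$ the infimum $\alpha(z_1,\dots,z_k)=\inf_{\mathcal{S}_{z_1,\dots,z_k}}\mathcal{L}$ is attained \emph{directly} by lower semicontinuity (the constraint set lives on a bounded domain for the emerging parts, and the submerged part is controlled by convexity). The minimizer $\tilde u_k$ then automatically solves \eqref{p equ} away from the supports of the emerging parts, while on each ball $B(z_i,R_\delta)$ it satisfies an Euler--Lagrange equation with Lagrange multipliers $\lambda_i\in\R^N$ coming from the barycentre constraints $\beta_i(u)=0$. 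The maximisation over $(z_1,\dots,z_k)$ is shown to be attained using the strict inequalities $\varLambda_1>\mathcal{C}^\infty$ and $\varLambda_{k+1}>\varLambda_k+\mathcal{C}^\infty$ (established inductively via a new Gaussian-decay comparison lemma for subsolutions of $-\Delta u+lu\le u\log u^2$). The whole proof then reduces to showing that, at a maximising configuration and for $|V-V_\infty|_{N/2,\loc}$ small, \emph{all multipliers $\lambda_i$ vanish}. This is done by a delicate contradiction: if $\lambda_1^n\neq 0$, one translates the first centre by $\sigma_n\lambda$ with $\sigma_n\to 0$ suitably, compares the two minimizers via a second-order Taylor expansion of $\mathcal{L}$, and uses the nondegeneracy of the Gausson $U$ to force $\mathcal{L}(v_n)>\mathcal{L}(u_n)$, contradicting maximality.

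Your proposed route through a nonsmooth splitting lemma is therefore a different strategy, and it is not clear it can be closed. First, the level window $k\,m_\infty<c_k<(k+1)m_\infty$ you invoke is not what the paper establishes (only $\varLambda_k>\varLambda_{k-1}+\mathcal{C}^\infty$ is proved, giving $\varLambda_k>k\,\mathcal{C}^\infty$; an upper bound of the form $(k+1)\mathcal{C}^\infty$ is neither stated nor needed). Second, and more seriously, a global profile decomposition for bounded Szulkin--PS sequences of $\mathcal{L}$ is a substantial result in its own right: the convex part $\Psi(u)=\int F_1(u)$ is merely lower semicontinuous, and you would need to show that no energy leaks into the region $\{0<u<1\}$ under weak convergence and translations --- precisely the regime where $u^2\log u^2$ is negative and $\Psi$ is discontinuous. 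The paper sidesteps this entirely. If you want to pursue your line, the missing ingredients are (i) the Gaussian-decay lemma replacing exponential decay in the CPS interaction estimates, and (ii) either the splitting lemma you describe, or --- more in the paper's spirit --- the Lagrange-multiplier-kills argument via centre perturbation and the nondegeneracy of $U$.
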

	
	\begin{remark}
		{\rm   
			The assumption \eqref{H1} is   standard   and ensures  that   $|V(x)-V_{\infty}|_{N/2,\rm loc} <+\infty $. 
			Compared to  the slow decay assumption \eqref{H3} introduced  in \cite{cerami_CPMA_2013},  assumption \eqref{H2} covers  a   wider class of potentials.  Indeed,  condition  \eqref{H2} holds when the potential function $V(x) $  converges to $V_\infty$  at a rate  slower than  that of a  Gaussian function $A e^{-\bar \zeta |x|^2} $, such as polynomial decay,   exponential decay $Ae^{-a|x|}$ ($A,a>0$),   stretched-exponential   decay $Ae^{-a|x|^b}$ ($1< b<2$), or suitable 	Gaussian   decay $Ae^{-c|x|^2}$ ($0<c<\bar{\zeta}$). 
			This difference stems from the decay behavior of   certain candidate functions for \eqref{p equ} and \eqref{Ce1}; see Proposition \ref{prop 3.1} and \cite[Lemma 3.4]{cerami_CPMA_2013}, respectively.

		}
	\end{remark} 
		\medbreak

	We recall that       Cerami et al. \cite{cerami_CPMA_2013} established a key technical lemma \cite[Lemma 3.3]{cerami_CPMA_2013} 
	to  describe  the   decay behavior of the submerged part  of certain candidate functions  for problem \eqref{Ce1}.  This lemma provides a general result on the exponential decay of positive solutions satisfying the linear  inequalities  $-\Delta u+lu\leq 0$ and  $u\leq s$ in $\RN\setminus\DR_0$, where $\DR_0$ is a closed set, and  $l,s >0$. In contrast, our study of \eqref{p equ} involves  a logarithmic nonlinearity $u\log u^2$,  which leads to     different decay properties. Specifically, in   Lemma \ref{lemma newdecay}, we prove   a  Gaussian-type  decay behavior   of positive solutions satisfying the inequalities  $-\Delta u+lu\leq u\log u^2 $ and  $u\leq s  $ in $\RN\setminus\DR$, where $\DR$ is a closed  convex set and $s\in \sbr{0,1}$.     The presence of the logarithmic term $u\log u^2$ (which is negative for $u\leq s<1$) alters the decay behavior, leading to a faster, Gaussian-type decay behavior compared to the exponential decay in the linear case.  Furthermore, we  note that our methods in the proof of Lemma \ref{lemma newdecay} may not achieve decay rates faster than the Gaussian-type decay.
	Hence, the quadratic exponent $\bar{\zeta}|x|^2$ in \eqref{H2} seems to be  sharp and cannot be  improved (e.g., $\bar{\zeta}|x|^{q}$ for $q>2$).
		\medbreak
	
	The proof of Theorem \ref{thm1} is based on  the  variational method.  However,  as previously mentioned, the presence of   logarithmic  nonlinearity   introduces some challenges to the variational framework.
	In particular,  the corresponding  functional   is not of class $C^1$   on $H^1(\RN)$,  which prevents the direct application of standard critical point theory for $C^1$ functional  to problem \eqref{p equ}.  Inspired by \cite{szulkin-poincare},  we shall  look for a critical point of the energy functional in the  sub-differential sense.  	The main technical tools in this paper  are a combination of nonsmooth critical point theory, localized variational methods and max-min argument. See Subsection \ref{Section 1.2} for an outline of the proof.

	\vskip 0.1in
	
	The  proof  of Theorem \ref{thm1}  indicates  the   following asymptotic behaviors of  multi-bump solutions  $\tilde u_{k} $ when $\ploc{V(x)-V_{\infty}} \to 0 $.

	\begin{theorem}\label{thm3}
		Let $V_n(x)$ be any sequence of functions satisfying    \eqref{H1}, \eqref{H2} and  $$\ploc{V_n(x)-V_{\infty}}\to 0 \quad \text{ as } n\to +\infty .$$  Then there exists $\bar{n}\in\N$ such that for all $n>\bar{n}$ and for all $k\in \N\setminus\lbr{0}$, there exist  $k$ points $   z_1^n, \ldots,  z_{ k}^n $ in $\RN$    and a positive solution $\tilde u_k^n$ of \eqref{p equ} with potential $V_n(x)$ such that  
		\begin{equation}
			\lim_{n\to+\infty} \min\lbr{ |  {z}_i^n-  {z}_j^n|: i\neq j, ~ i,j=1,\ldots,k}=+\infty,
		\end{equation}
		and for all $r>0$   , there holds
		\begin{equation}
			\lim_{n\to+\infty} \sbr{ \sup \lbr{| \tilde u_k^n(x+   z_i^n) -U(x)| : i\in \lbr{1,\dots,k}, |x|\leq r}}=0 ,
		\end{equation}
		where $U(x)$ is the positive  ground state solution of the limit problem \eqref{p infty}, see \eqref{defi of Gausson}.
	\end{theorem}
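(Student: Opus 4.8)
The plan is to extract from the proof of Theorem \ref{thm1} a quantitative energy estimate for the solutions $\tilde u_k^n$ and then run a concentration--compactness (global compactness) argument for the nonsmooth functionals attached to the potentials $V_n$. Since $\ploc{V_n-V_\infty}\to0$, for $n$ large we have $\ploc{V_n-V_\infty}<\mathcal{K}$, so Theorem \ref{thm1} applies and the $k$-bump solutions $\tilde u_k^n$ exist; this is the role of $\bar n$. Write $\mathcal{L}_n$ for the functional \eqref{Functional P} with $V$ replaced by $V_n$, $\mathcal{L}_\infty$ for the one with $V$ replaced by the constant $V_\infty$, and $m_\infty:=\mathcal{L}_\infty(U)$ for the ground state level of the limit problem \eqref{p infty}. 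A simple but crucial observation is that, covering $\RN$ by unit balls and using H\"older's inequality together with the Sobolev embedding,
\[
\Big|\int_{\RN}\big(V_n(x)-V_\infty\big)\,u^2\dx\Big|\le C_N\,\ploc{V_n-V_\infty}\,\|u\|_{H^1(\RN)}^2\qquad\text{for all }u\in H^1(\RN),
\]
so $\mathcal{L}_n\to\mathcal{L}_\infty$ uniformly on bounded subsets of $H^1(\RN)$. In particular the arguments in the proof of Theorem \ref{thm1} (test functions made of $k$ widely separated translates of $U$ for the upper bound, the linking and barrier estimates for the lower bound) show that the min--max value $c_k^n$ at which $\tilde u_k^n$ is produced satisfies $c_k^n\to k\,m_\infty$, hence $\mathcal{L}_n(\tilde u_k^n)\to k\,m_\infty$ while $0\in\partial\mathcal{L}_n(\tilde u_k^n)$.

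Next I would prove boundedness and a profile decomposition. From the energy bound, the splitting of $s\mapsto s^2\log s^2$ into a convex lower semicontinuous part and a $C^1$ part, and the logarithmic Sobolev inequality \eqref{log Sobolev ineq} with constants uniform in $n$, one gets that $\{\tilde u_k^n\}$ is bounded in $H^1(\RN)$ and, by the decay estimates of Lemma \ref{lemma newdecay} and Proposition \ref{prop 3.1}, uniformly bounded in $L^\infty(\RN)$ with a uniform Gaussian tail. Then, adapting the classical bubbling iteration to the subdifferential setting, one shows that along a subsequence there exist an integer $\ell\ge0$, nontrivial positive solutions $W_1,\dots,W_\ell$ of \eqref{p infty}, a (possibly trivial) solution $W_0$ of \eqref{p infty}, and points $y_0^n=0,y_1^n,\dots,y_\ell^n\in\RN$ with $|y_i^n-y_j^n|\to+\infty$ for $i\ne j$, such that
\[
\Big\|\tilde u_k^n-\sum_{j=0}^{\ell}W_j(\,\cdot\,-y_j^n)\Big\|_{H^1(\RN)}\longrightarrow0,\qquad \mathcal{L}_n(\tilde u_k^n)=\sum_{j=0}^{\ell}\mathcal{L}_\infty(W_j)+o(1),
\]
so that $\sum_{j=0}^{\ell}\mathcal{L}_\infty(W_j)=k\,m_\infty$. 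The reason every profile (including the non-escaping $W_0$) solves the \emph{limit} equation is that $\ploc{V_n-V_\infty}\to0$ forces the weak limit of $\tilde u_k^n(\,\cdot\,+y^n)$, along any sequence of translations, to solve \eqref{p infty}; the iteration terminates because each nontrivial solution of \eqref{p infty} carries energy at least $m_\infty$.

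To finish, I count the profiles. Since every nontrivial solution of \eqref{p infty} has energy $\ge m_\infty$ and the energies sum to $k\,m_\infty$, there are at most $k$ nontrivial profiles; conversely the barrier estimates from the proof of Theorem \ref{thm1} guarantee that $\tilde u_k^n$ stays uniformly nontrivial on $k$ mutually far apart balls, so there are at least $k$. Thus there are exactly $k$ nontrivial profiles, each of energy $m_\infty$, hence each a positive ground state of \eqref{p infty}; by the uniqueness of the Gausson up to translations (see \eqref{defi of Gausson}) each equals $U(\,\cdot\,-\xi_j)$ for some $\xi_j\in\RN$, and $W_0\equiv0$. Setting $z_j^n:=y_j^n+\xi_j$ gives $\min_{i\ne j}|z_i^n-z_j^n|\to+\infty$ and $\|\tilde u_k^n-\sum_{j=1}^{k}U(\,\cdot\,-z_j^n)\|_{H^1(\RN)}\to0$. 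Since the remaining bumps escape, $v_j^n:=\tilde u_k^n(\,\cdot\,+z_j^n)\to U$ in $H^1_{\loc}(\RN)$; as $v_j^n$ solves $-\Delta v_j^n+V_n(\,\cdot\,+z_j^n)\,v_j^n=v_j^n\log(v_j^n)^2$ with $V_n$ bounded in $L^{N/2}_{\loc}$ and $\{v_j^n\}$ uniformly bounded in $L^\infty$, a bootstrap using elliptic regularity upgrades this to $v_j^n\to U$ uniformly on compact sets, which is the second assertion (uniformity over the $k$ indices being automatic since $k$ is finite).

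The heart of the matter, and the step I expect to be hardest, is the global compactness argument in a \emph{nonsmooth} framework: the usual Struwe/Benci--Cerami machinery presupposes a $C^1$ (often $C^2$) functional and uses its differential both to split off bubbles and to identify their equations, whereas $\mathcal{L}_n$ is only lower semicontinuous and one controls only $\partial\mathcal{L}_n$. I expect the bulk of the work to lie in checking that each bubble-subtraction step is compatible with the logarithmic nonlinearity --- that the remainder keeps the correct energy splitting and still satisfies $\partial\mathcal{L}_n\ni o(1)$ --- which I would handle by combining the Brezis--Lieb lemma for $u\mapsto\int_{\RN}u^2\log u^2\dx$ with the a priori Gaussian decay of Lemma \ref{lemma newdecay}, so that all the logarithmic integrals stay absolutely convergent and stable under the decomposition.
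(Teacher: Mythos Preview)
Your approach is a genuinely different route from the paper's. In the paper, Theorem~\ref{thm3} is not proved by an a~posteriori concentration--compactness analysis at all: the solutions $\tilde u_k^n$ are, by the very construction in Theorem~\ref{thm1}, the max--min functions $\tilde u_k^n\in\MR^n_{z_1^n,\ldots,z_k^n}$ where $(z_1^n,\ldots,z_k^n)\in\ER_k$ realizes $\varLambda_k^n$. The two conclusions of Theorem~\ref{thm3} are then literally Corollary~\ref{coro 4.1} (divergence of the minimal interdistance) and Proposition~\ref{prop 4.2} (uniform convergence of each bump to $U$), which are established in Section~\ref{Sect4} as intermediate steps on the way to Theorem~\ref{thm1}. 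No profile decomposition, no energy quantization, and no Struwe-type splitting is invoked; the localized Nehari structure already pins each emerging part near a prescribed center, and the argument is a direct comparison of $\LR^n$ with $\LR^\infty$ on these localized pieces plus elliptic regularity.

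Your strategy --- show $\LR_n(\tilde u_k^n)\to k\,m_\infty$, run a global compactness decomposition, count profiles --- is in principle workable and has the merit of being ``black-box'' (it would apply to any sequence of critical points at this energy level, not only the max--min ones). But it buys that generality at a real cost: you must develop a full nonsmooth bubbling lemma for the logarithmic functional, which you correctly flag as the hard step, and you still need to borrow from the paper's construction both the energy asymptotics $c_k^n\to k\,m_\infty$ and the fact that $\tilde u_k^n$ is genuinely nontrivial on $k$ far-apart regions (your ``at least $k$ profiles'' step). Since both of those facts are obtained in the paper precisely via the localized max--min characterization, once you grant them the paper's direct argument is already available and much shorter. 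In short: your proof would likely go through, but it reinvents machinery that the specific variational characterization of $\tilde u_k^n$ makes unnecessary.
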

	
Motivated by \cite{cerami_Poincare_2015}, we also investigate  the asymptotic behavior of the  multi-bump solutions  $\tilde u_{k}$ in Theorem \ref{thm1}  as   the number of bumps $k$ increases  to infinity.

	\begin{theorem}\label{thm2} 
		Under the assumptions of Theorem \ref{thm1}, then \eqref{p equ} admits a positive solution  $\tilde{u} \in H_{\loc}^1(\RN)$,  which emerges around an unbounded sequence $\lbr{\tilde z_n:n\in \N}$ such that $ \tilde z_n \neq \tilde z_l $ if $n\neq l$.  Moreover, 	 we have 
		\begin{equation} \label{et1}
			\lim_{n\to+\infty} \min\lbr{ |\tilde{z}_n-\tilde{z}_l|: l  \in \N, ~ n\neq l}=+\infty,
		\end{equation}
		and
		\begin{equation} \label{et2}
			\lim_{n\to+\infty} \tilde{u}(x+\tilde z_n)= U(x) ~~ \text{	uniformly on any compact subset }~K\subset \RN,
		\end{equation}
		where $U(x)$ is the positive  ground state solution of the limit problem \eqref{p infty}, see \eqref{defi of Gausson}.
	\end{theorem}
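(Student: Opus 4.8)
The plan is to obtain $\tilde u$ as a locally uniform limit of the finite-bump solutions $\tilde u_k$ supplied by Theorem \ref{thm1}, in the spirit of the corresponding limit passage for \eqref{Ce1} carried out in \cite{cerami_Poincare_2015}. The first step is to revisit the proof of Theorem \ref{thm1} and record that the localized max--min scheme can be set up so that the bumps sit inside a \emph{single} family of pairwise disjoint localization regions $\Omega_1,\Omega_2,\dots\subset\RN$ depending only on $V$, with centers $\xi_n$ satisfying $|\xi_n|\to\infty$ and $\min_{m\ne n}|\xi_n-\xi_m|\to\infty$, and such that for every $k$ the solution $\tilde u_k$ carries exactly one bump in each of $\Omega_1,\dots,\Omega_k$: there are $z_n^k\in\Omega_n$ with $\|\tilde u_k-U(\cdot-z_n^k)\|_{H^1(\Omega_n)}$ small, uniformly in $k\ge n$. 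Crucially, passing from $k$ to $k+1$ bumps only appends the region $\Omega_{k+1}$ without disturbing the earlier ones, so the sequence $(\tilde u_k)_k$ is stable on each fixed compact set; and, analogously to the profile estimate in Theorem \ref{thm3}, the above closeness should upgrade to a quantitative bound $\sup_{|x|\le r}|\tilde u_k(x+z_n^k)-U(x)|\le\varrho(|\xi_n|)$ holding for every $r>0$ and every $k\ge n$, with $\varrho(t)\to0$ as $t\to\infty$.

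Next I would establish $k$-independent local bounds. Using the localization and Gaussian-type decay estimates behind Theorem \ref{thm1} (cf. Lemma \ref{lemma newdecay}), each $\tilde u_k$ is dominated by a superposition of Gausson-like bumps, which gives $\sup_k\|\tilde u_k\|_{L^\infty(\RN)}<\infty$ and $\sup_k\|\tilde u_k\|_{H^1(B_R)}<\infty$ for every $R>0$. Since $t\mapsto t\log t^2$ is then uniformly bounded on the range of all the $\tilde u_k$, the equation together with interior $W^{2,p}$ estimates yields uniform $C^{1,\alpha}_{\loc}(\RN)$ bounds. An Arzel\`a--Ascoli argument and a diagonal extraction along an exhaustion of $\RN$ then produce a subsequence with $\tilde u_k\to\tilde u$ in $C^1_{\loc}(\RN)$ and weakly in $H^1_{\loc}(\RN)$, with $\tilde u\ge0$. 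Passing to the limit in $\int_{\RN}\sbr{\nabla\tilde u_k\cdot\nabla\varphi+V\tilde u_k\varphi}\dx=\int_{\RN}\tilde u_k\log\tilde u_k^2\,\varphi\dx$ for $\varphi\in C_c^\infty(\RN)$ --- the right-hand side converging by dominated convergence since $t\mapsto t\log t^2$ is continuous and locally bounded --- shows that $\tilde u$ is a weak, hence (by $\tilde u\in L^\infty_{\loc}$ and elliptic regularity) a classical, nonnegative solution of \eqref{p equ}.

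To identify the bumps and prove \eqref{et1}--\eqref{et2}, I fix $n$ and note that for every $k\ge n$ the point $z_n^k\in\Omega_n$ satisfies $\tilde u_k(z_n^k)\ge U(0)-\varrho(|\xi_n|)$; after a further diagonal extraction we may assume $z_n^k\to\tilde z_n\in\ov{\Omega_n}$ for all $n$, and passing to the $C^0_{\loc}$-limit in the quantitative estimate gives $\sup_{|x|\le r}|\tilde u(x+\tilde z_n)-U(x)|\le\varrho(|\xi_n|)$ for every $r>0$. In particular $\tilde u(\tilde z_n)\ge U(0)-\varrho(|\xi_n|)>0$ for all large $n$, so $\tilde u\not\equiv0$; the strong maximum principle applied on $\lbr{\tilde u>0}$ (equivalently, the uniform local Harnack inequality inherited from the positive $\tilde u_k$) then forces $\tilde u>0$ on $\RN$. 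The centers $\tilde z_n$ lie in the pairwise disjoint sets $\ov{\Omega_n}$, so they are distinct, and $|\tilde z_n-\tilde z_l|\ge|\xi_n-\xi_l|-\mathrm{diam}\,\Omega_n-\mathrm{diam}\,\Omega_l\to\infty$, which is \eqref{et1}. Since $|\xi_n|\to\infty$ forces $\varrho(|\xi_n|)\to0$, the displayed estimate becomes $\tilde u(x+\tilde z_n)\to U(x)$ uniformly on $\lbr{|x|\le r}$ for every $r$, i.e.\ \eqref{et2}; in particular $\tilde u$ carries infinitely many bumps and lies in $H^1_{\loc}(\RN)\setminus H^1(\RN)$.

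The main obstacle is the first step: extracting from the proof of Theorem \ref{thm1} a construction whose localization regions form one fixed family independent of $k$ \emph{together with} the $k$-uniform quantitative closeness $\varrho(|\xi_n|)$ of the $n$-th bump to $U$. This is precisely where the nonsmoothness of the logarithmic nonlinearity bites: the max--min levels, the sub-differential Palais--Smale analysis, and the estimates that prevent the bumps from interacting or ``leaking'' between regions must all be carried out with constants that do not deteriorate as the number of bumps grows. Once those uniform bounds are in hand, the $C^1_{\loc}$ compactness, the limit of the weak formulation, and the positivity are routine; and the only additional subtlety --- that the $C^1_{\loc}$-limit of points that are critical in the sub-differential sense of \cite{szulkin-poincare} is a genuine weak solution of \eqref{p equ} --- is immediate here, since $\tilde u$ is positive and locally bounded and the nonlinearity is therefore locally Lipschitz along the limit.
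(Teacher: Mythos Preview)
Your overall strategy---take a locally uniform limit of the finite-bump solutions $\tilde u_k$---matches the paper's, but the first step you outline is a genuine gap, and you correctly flag it yourself as ``the main obstacle.'' The construction behind Theorem~\ref{thm1} does \emph{not} give a single fixed family $\Omega_1,\Omega_2,\dots$ with $\tilde u_k$ carrying one bump in each of $\Omega_1,\dots,\Omega_k$, nor does passing from $k$ to $k+1$ ``append $\Omega_{k+1}$ without disturbing the earlier ones.'' For each $k$ the bump locations $(\tilde z_1^k,\dots,\tilde z_k^k)$ come from the global max--min $\varLambda_k=\sup_{\ER_k}\alpha(z_1,\dots,z_k)$, and there is no a~priori nesting between the maximizers for different $k$. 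Attempting instead to freeze the first $k$ bumps and only optimize the $(k{+}1)$-th would destroy the max--min characterization over $\ER_{k+1}$, which is exactly what the proof in Section~\ref{Sect5} uses to show that all Lagrange multipliers vanish; so the alternative you need does not seem available without a separate argument. For the same reason your quantitative estimate $\sup_{|x|\le r}|\tilde u_k(x+z_n^k)-U(x)|\le\varrho(|\xi_n|)$, with $\varrho$ depending only on the \emph{position} of the bump for a \emph{fixed} potential, is not delivered by the paper's machinery: Proposition~\ref{prop 4.2} and Theorem~\ref{thm3} give closeness to $U$ only in the limit $\ploc{\vartheta_n}\to0$, not as $|\xi_n|\to\infty$ for one fixed $V$.

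The paper circumvents both issues without any nested structure. The replacement for your first step is a counting lemma (Lemma~\ref{lemma 6.1}): for every $m$ there exist $r_m>0$ and $k_m$ such that, for all $k>k_m$, at least $m$ of the centers of $\tilde u_k$ lie in $B(0,r_m)$. This is proved by contradiction from the max--min inequalities $\varLambda_{k+1}>\varLambda_k+\CR^\infty$ (Remark~\ref{Remark 4.3}) and would be the missing ingredient in your scheme. A diagonal subsequence then has convergent centers, and the local $H^1$ bound is obtained not from a Gaussian superposition but by comparing $\LR$ of $v_n$ with $\LR$ of a competitor obtained by replacing one emerging part; the minimality $\LR(v_n)\le\LR(w_n)$ together with Corollary~\ref{coro 2.2} yields the bound (Proposition~\ref{prop 6.1}). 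Finally, \eqref{et1} and \eqref{et2} are proved \emph{after} constructing $\tilde u$, again by contradiction from the max--min characterization of the approximating $\tilde u_{k_n}$ (Propositions~\ref{prop 6.2}--\ref{prop 6.3}), rather than from a pre-existing $\varrho(|\xi_n|)$. Your compactness and limit-passage steps (local $C^{1,\alpha}$ bounds, dominated convergence for the log term, positivity via the strong maximum principle) are fine once the inputs above are in place, but as written the proposal rests on a structural assumption about Theorem~\ref{thm1} that is not justified.
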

	\vskip0.1in

	Theorem \ref{thm2} shows that \eqref{p equ} admits  a positive solution of infinite energy that has infinitely many positive bumps. We now present the 	 key steps in the proof of Theorem \ref{thm2}. First, we  construct a sequence of positive solutions $\lbr{v_n:n\in\N}$  as a subsequence of the multi-bump positive solutions given by Theorem \ref{thm1},  along with a sequence of balls $B(0,r_n)$ such that    $v_n$ has at least $m$ positive bumps emerging around the points within $B(0,r_m)$ for all $n,m\in\N$.  Next, we prove that ${v_n}$ is uniformly bounded in $H^1(B(0,R))$ for any fixed $R > 0$. Passing to the weak limit, we obtain a solution to \eqref{p equ} with infinitely many positive bumps. 
	Finally, the relations \eqref{et1} and \eqref{et2} are a consequence of the variational characterization of  multi-bump positive solutions $ \tilde{u}_k$  found  in Theorem \ref{thm3}.   Relation \eqref{et1} indicates  the bumps of $\tilde{u}$ become increasingly sparse as the distance from the origin increases.    In this  procedure, 	the    decay  assumption \eqref{H2} on  $V(x)$ plays a fundamental role and provides the key motivation for   obtaining multi-bump solutions,   because the attractive effect of  $V(x)$  dominates  the repulsive disposition of positive masses
	with respect to each other, preventing the bumps from escaping to infinity.

	\begin{remark} \label{remark2}
		{\rm We make some further comments on the study of 	multi-bump solutions  to  \eqref{p equ}. Some  progress under structural assumptions on   $V$
			has been made  via variational methods and Lyapunov-Schmidt reduction, etc., see \cite{tanaka-zhang-cvpde,alves-ji-Sci,WangZhangZhang-ADE2024,alves-ji-isral,Luo-Niu}.
					 The works \cite{tanaka-zhang-cvpde,alves-ji-Sci,WangZhangZhang-ADE2024} consider the problem \eqref{p equ}.  
			  \cite{tanaka-zhang-cvpde} investigated  \eqref{p equ} with  $1$-periodic potentials $V$   of class   $C^1$. By   considering  a   $2L$-periodic problems ($L\gg1$),   they  proved the existence of infinitely many  multi-bump solutions  which are distinct under the $\Z^N$-action. 
				In \cite{alves-ji-Sci}, it was  assumed that the nonnegative potential $V(x)=\tilde{\lambda}Z(x)$  has potential well $ \Omega_1:=\text{int}Z^{-1}(0)$
				which possesses $k$ disjoint bounded components. They  proved that    equation \eqref{p equ} admits  at least $2^k-1$ multi-bump positive  solutions  for large $\tilde{\lambda} $. In a recent study, the work \cite{WangZhangZhang-ADE2024} assumed that   the potential $V$ is   bounded and radially symmetric   $V (x) = V (|x|)$ satisfying $	V(r) =V_0\pm\frac{a}{r^m}+o(\frac{1}{r^m})$, as $  r\to+\infty$,	 
				where  $V_0\in\R$, $a>0$ and $m>1$.  Using   Lyapunov-Schmidt reduction, they proved the existence of infinitely many multi-bump positive and nodal solutions to 
				\eqref{p equ} by analyzing the asymptotic  the multi-bump solutions of perturbed problems   obtained in \cite{Wei-Yan-cvpde2010} through a limiting process. 
			  In \cite{alves-ji-isral,Luo-Niu}, the authors   investigated the semiclassical states of the logarithmic  scalar field equations $ -\varepsilon^2\Delta u+ V(x)u= u\log |u|^2   $ for  $\varepsilon>0 $   small,
				%
			which  can be written as
			\eqref{p equ}	 with potential $V(\varepsilon x)$ by a
			change of variables.    The topology of the sublevel sets of $V$, as well as
			the number and type of its critical points, are shown to strongly influence the existence and	multiplicity of multi-bump solutions. }
	\end{remark}

	\vskip0.05in
	
		Equation \eqref{E2} is Hamiltonian and thus conserves the mass $	M(t) :=	\int_{\RN} |\psi(x,t)|^2 dx.$
	Thus, it is natural to   consider the   logarithmic scalar field equation with prescribed $L^2$-norms
	\begin{equation} \label{normalized problem} \tag{$P_N$}
		\left\{ 	\begin{aligned}
			& 	-\Delta u+  V(x) u +\lambda u= u\log |u|^2, \quad  u\in  H^1(\RN),\\
			&\int_\RN u^2\dx=a^2>0,
		\end{aligned}	\right.
	\end{equation}
	which is  called the {\it normalized problem} in the literature. 	Here, $ a $ is a prescribed constant,   the $L^2$-constraint  reflects the   conservation of mass and the parameter $\lambda$   arises as  Lagrange multiplier 	 that is unknown.  Several results  on the  existence of   normalized solutions to   \eqref{normalized problem} can be found in \cite{cazenave-1983,avenia-2014,alves-ji-jga}. Of particular interest to us are   the existence of normalized solutions in the form of multi-bump solutions, which is  referred to as {\it normalized multi-bump solutions}.  This line of research was first proposed by Ackermann and Weth in \cite{Ackermann}, where they considered the equation
	\begin{equation} \label{normalized problem1}
		\left\{ 	\begin{aligned}
			& 	-\Delta u+  V(x) u +\lambda u=f(u), \quad  u\in  H^1(\RN),\\
			&\int_\RN u^2\dx=a^2,
		\end{aligned}	\right.
	\end{equation}
	for  a periodic potential $V$ and a class of autonomous nonlinearities.  As a by-product of  Theorem  \ref{thm1}, we have the following  existence  results.
	\vskip 0.1in

	\begin{theorem}\label{thm4}
		Under the assumptions of Theorem \ref{thm1}, then  the normalized problem  \eqref{normalized problem}  admits infinitely many  normalized multi-bump positive solutions. More precisely,   for any $k\in \N\setminus\lbr{0}$,  problem   \eqref{normalized problem}  admits a  normalized positive solution with $k$-bumps.
	\end{theorem}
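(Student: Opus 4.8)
The plan is to deduce Theorem \ref{thm4} from Theorem \ref{thm1} by exploiting the distinctive scaling behaviour of the logarithmic nonlinearity: multiplying a positive solution by a positive constant preserves the equation up to a shift of the linear term by a constant, and that constant is precisely the Lagrange-multiplier term appearing in \eqref{normalized problem}. Quantitatively, if $u\in H^1(\RN)$ is a positive solution of $-\Delta u+V(x)u=u\log u^2$ and $t>0$, then $v:=tu>0$ satisfies
\begin{equation}
  -\Delta v+V(x)v = t\bigl(-\Delta u+V(x)u\bigr) = t\,u\log u^2 = v\log(v^2/t^2) = v\log v^2-(\log t^2)\,v ,
\end{equation}
so $v$ solves $-\Delta v+V(x)v+\lambda v=v\log v^2$ with $\lambda=\log t^2$. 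Note also that \eqref{p equ} is \emph{not} scale invariant: if $u$ and $cu$ with $c>0$ both solve it, subtracting the two equations gives $(\log c^2)u\equiv 0$, i.e.\ $c=1$; in particular no two distinct positive solutions of \eqref{p equ} are positive scalar multiples of one another.

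Granting this, I would argue as follows. Fix $k\in\N\setminus\lbr{0}$ and let $\tilde u_k\in H^1(\RN)$ be the positive $k$-bumps solution of \eqref{p equ} supplied by Theorem \ref{thm1}; since $\tilde u_k$ is positive and nontrivial, $m_k:=\nt{\tilde u_k}\in(0,+\infty)$. Set
\begin{equation}
  t_k:=\f{a}{m_k}>0,\qquad v_k:=t_k\,\tilde u_k,\qquad \lambda_k:=2\log\f{a}{m_k}.
\end{equation}
Then $v_k\in H^1(\RN)$ is positive, $\nt{v_k}^2=t_k^2 m_k^2=a^2$, and by the identity above $v_k$ solves the equation in \eqref{normalized problem} with multiplier $\lambda=\lambda_k$; thus $(v_k,\lambda_k)$ is a solution of \eqref{normalized problem}. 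Multiplying by the positive constant $t_k$ alters neither the number nor the locations of the bumps, so $v_k$ is a normalized positive solution with exactly $k$ bumps, each modelled on the rescaled ground state $t_kU$ (which is itself the ground state of the associated limit normalized equation $-\Delta w+V_\infty w+\lambda_k w=w\log w^2$); this is what justifies calling $v_k$ a normalized multi-bump solution. Finally, Theorem \ref{thm1} furnishes infinitely many pairwise distinct $\tilde u_k$ (they carry different bump counts), and these stay distinct after rescaling: if $v_k=v_{k'}$, then $\tilde u_k$ and $\tilde u_{k'}$ would be positive scalar multiples of each other, hence equal by the previous paragraph, a contradiction. Therefore \eqref{normalized problem} possesses infinitely many normalized multi-bump positive solutions, one with $k$ bumps for every $k\in\N\setminus\lbr{0}$.

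The only point requiring care is to check that ``solution'' for $\tilde u_k$, which here means a critical point of $\mathcal L$ in the subdifferential sense, transfers to a bona fide solution of \eqref{normalized problem} in the appropriate constrained-critical-point sense. This is routine: $u\mapsto t_k u$ is a linear homeomorphism of $H^1(\RN)$ and the scaling identity is an exact pointwise relation, so $v_k$ satisfies the Euler-Lagrange equation of \eqref{normalized problem} weakly, with $\lambda_k$ serving as the Lagrange multiplier for the constraint $\nt{u}^2=a^2$; equivalently, elliptic regularity makes $\tilde u_k$ (and hence $v_k$) a classical solution away from its zero set, the nonlinearity $s\mapsto s\log s^2$ extending continuously by $0$ at $s=0$. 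I do not expect a genuine obstacle in Theorem \ref{thm4}: all the analytic substance — the nonsmooth variational framework, the Gaussian-type decay estimate of Lemma \ref{lemma newdecay}, and the localized max-min construction — has already been expended in the proof of Theorem \ref{thm1}, so that Theorem \ref{thm4} reduces to the essentially bookkeeping-level rescaling just described.
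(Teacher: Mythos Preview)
Your proof is correct and follows exactly the same scaling argument as the paper: take the $k$-bump solution $\tilde u_k$ from Theorem~\ref{thm1}, set $t_k=a/|\tilde u_k|_2$ and $\lambda_k=2\log t_k$, and observe that $v_k=t_k\tilde u_k$ solves \eqref{normalized problem} with mass $a^2$ (the paper writes this as $u_k=e^{\lambda_k/2}\tilde u_k$ with $\lambda_k=\log a^2-\log|\tilde u_k|_2^2$, which is the same thing). Your additional remarks on distinctness and on the transfer of the solution notion are correct and somewhat more detailed than the paper's one-paragraph justification.
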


	Theorem \ref{thm4} follows directly from Theorem \ref{thm1} due to the close connection between the  solutions of \eqref{p equ} and  \eqref{normalized problem}. Indeed, for any $k\in \N\setminus\lbr{0}$,  let $\tilde{u}_k $ be the   multi-bump    solution of \eqref{p equ} obtained in Theorem \ref{thm1} with mass $\tilde a_k^2 := |\tilde{u}_k |_2^2<+\infty$. Then, the rescaled function   $ u_k=  e^{ {  \lambda_k }/{2}} \tilde{u}_k $, where  $ {\lambda_k} = \log a^2-  \log \tilde a_k ^2 $, solves problem \eqref{normalized problem}  with   mass $|u_k|_2^2=a^2$.  Moreover, combining this relationship with Theorem \ref{thm3}, we can characterize the asymptotic behavior of the normalized multi-bump solutions as $\ploc{V(x)-V_{\infty}} \to 0 $.

	\vskip 0.2in
	\subsection{Sketch of the proof of Theorem			\ref{thm1} } \label{Section 1.2}

	In this subsection, we outline the main steps and ideas for proving Theorem \ref{thm1}. Our approach follows the variational framework and max-min argument introduced in \cite{cerami_CPMA_2013}.
	
	\begin{itemize}
		\vskip0.1in
		\item[Step 1.]        We define a configuration space consisting of the points around which multi-bump solutions emerge,  and  construct some localized   Nehari constraints   to  look for  such  solutions. See Subection \ref{Sect3.1}.
		\vskip0.04in
		\item[Step 2.]  We consider a minimization problem for the functional
		$\LR$ over  the  localized Nehari constraint, and prove the existence of    minimizer. 	Afterwards,  we   describe some important feature of  the    submerged    and emerging part of  the minimizers.   See Subsection \ref{Sect3.2}. 
		\vskip0.04in
		\item[Step 3.]  We consider a maximization problem among these minimizers as the  points vary in the configuration space.   As part of this step, we obtain some rigorous   priori estimates that will be fundamental in later arguments.  See Subsection \ref{Sect3.3}.
		\vskip0.04in
		\item[Step 4.] We describe the the asymptotic behavior  of the max-min functions as  $\ploc{V(x)-V_{\infty}} $ goes to $0$. See Section \ref{Sect4}.
		\vskip0.04in
		\item[Step 5.] We prove that  when $\ploc{V(x)-V_{\infty}} $  is small, the max-min function is indeed a
		 critical point,  and hence a solution of \eqref{p equ}. See Section   \ref{Sect5}.
		\vskip0.08in
	\end{itemize}

	We now highlight the ideas and  main differences between our proof and that in \cite{cerami_CPMA_2013}.
	In the minimization procedure,   in order to  prove the  boundedness of    minimizing sequence, an important logarithmic inequality is required.   Then by analyzing the compactness of the minimizing sequence, we proved the existence of minimizers.
	Afterwards,  we   describe some important feature of the emerging part   and submerged part   of  the minimizers. 
	
	 On the one hand, due to the lack of $C^1$-smooth of the functional $\LR$ in $H^1(\RN)$,  we apply the nonsmooth critical point theory   from  \cite{ss-2015,szulkin-poincare} and look for critical point in the sub-differential sense.  Then we     prove that  the submerged part of the minimizer solves \eqref{p equ} in the whole space $\RN$ except for the support of    emerging parts. Moreover, we  describe the {\it Gaussian-type asymptotic decay} of the submerged part (see Proposition \ref{prop 3.1}). The decay property follows from an interesting technical lemma   (see Lemma \ref{lemma newdecay}) concerning the Gaussian-type decay of solutions satisfying certain variational inequalities, which    seems to be new in the study of logarithmic scalar field equations.

	  On the other hand,   about the emerging parts, since functional  $\LR$ is of class $C^1$ on $H^1(\Omega)$ for    bounded domain $\Omega$ (see \cite[Lemma 2.2]{ss-2015}),  we obtain that  $\LR^\prime $ satisfies relations \eqref{e5} in which Lagrange multipliers appear by using a deformation argument, see Proposition \ref{lemma 3.2}.
	
	In the maximization procedure,  the bumps may escape to infinity; thus, some key priori estimates are required to  prevent this from happening.
	For     multi-bump positive solutions with $k$ positive bumps,  a key step is to  show that the max-min level $\varLambda_{k}$ satisfies $\varLambda_{1}>\CR^\infty$ and $\varLambda_{k+1}>\varLambda_{k}+\CR^\infty$ for $k\geq1$, where $\CR^\infty$ is the ground state level of limit problem \eqref{p infty}, see Propositions \ref{prop 3.2} and \ref{prop 3.3}. The proof is based on a mathematical induction on the number of bumps $k$.   It is worth noting   that
	the    decay  assumption \eqref{H2} on  $V(x)$ plays a fundamental role in the analysis.  This condition \eqref{H2} provides the key  guarantees for   obtaining multi-bump solutions,   because the attractive effect of  $V(x)$  dominates  the repulsive disposition of positive masses
	with respect to each other, preventing the bumps from escaping to infinity.
	We also emphasize that the logarithmic nonlinearity causes some technical challenges, requiring some key observations and new ideas in our proofs.

	\subsection{Further notations}
	In this subsection, we introduce some further notations.
	\medbreak
	$\bullet$ For any metric space $X$ and any  $x\in X$, $r>0$, we denote  by  $B(x,r) $  the open ball  in $X$ with  radius $r$  centered at $x \in X$.  For any measurable set $\mathcal{O} \subset \RN$,  $|\mathcal{O}|$ denotes its Lebesgue measure.
	\medbreak
	$\bullet$  Let $\Omega$  be  an open set in $\RN$, we denotes by $H^1(\Omega)$  the closure of $C^\infty(\Omega)$ with respect to the norm $$\nm{u}_\Omega=\mbr{ \int_{\Omega}\sbr{  |\nabla u|^2+ u^2} \mathrm{d} x}^{1/2}. $$   The norm in $H^1(\RN)$ is denoted by $$	\nm{u}_H :=\mbr{ \int_{\RN}\sbr{  |\nabla u|^2+u^2} \mathrm{d} x}^{1/2}.$$
	\medbreak
	$\bullet$ 	For any  $\Omega \subseteq \RN$ and $1\leq p \leq \infty$,  the Lebesgue space $L^p(\Omega)$ is equipped with the norm denoted by $|\cdot|_{p, \Omega}$ when $\Omega$ is a proper subset of $\mathbb{R}^N$, and by $|\cdot|_p$ when $\Omega = \mathbb{R}^N$.
	\medbreak
	$\bullet$
	The space $L^p_{ \rm loc}(\RN) $ (resp.  $H^1_{\rm loc}(\RN)$) denotes the set of functions $u$ such that  $\forall x\in \RN$ there exists an open set $ \Omega \Subset \RN$ so that $x\in \Omega$ and $u|_{\Omega} \in L^p(\Omega)$ (resp. $u|_{\Omega} \in H^{1,2}(\Omega)$).
	Moreover,   we consider the subspace of functions belonging to $L^p_{ \rm loc}(\RN) $, referred to as "uniformly locally
	$p$-summable," such that the norm
	$$|u|_{p, \rm loc }:=\sup_{y\in \RN}  |u|_{p, B(y,1)}<+\infty.$$
	\medbreak
	$\bullet$  We use ``$\to$''  (resp.  ``$\rightharpoonup$'' ) to denote the strong (resp. weak)  convergence    in corresponding space.
	\medbreak
	$\bullet$   The capital letter $C$ will denote a constant that may vary from line to line, while  $ C_1, C_2,C_3,C_4  $ represent prescribed constants.
	\subsection{Structure of the paper}
	The paper is organized as follows. In Section \ref{Sect2}, we present some preliminary results, including the known results about the limit problem \eqref{p infty} and  the nonsmooth critical point theory.   In  Section \ref{Sect3}, we onstruct the localized Nehari constraint where we look for the   multi-bump functions of \eqref{p equ}, and adopt a     max-min  argument to find good candidates for critical points  of $\LR$.   The argument is like that used in  in \cite{Molle-jfa2010,Molle-poincare2010}  for  elliptic equations with jumping nonlinearities, as well as in \cite{cerami_CPMA_2013,cerami_jde_2014} for  the  elliptic equation with power-type  nonlinearity. 	However, due to the features of logarithmic nonlinearity, these existing arguments cannot be directly adapted to our setting.  Consequently, our proofs require new ideas and techniques.  Section \ref{Sect4} is devoted to the asymptotic behaviors of those candidate function, and prove   the properties stated in Theorem \ref{thm3}. In Section \ref{Sect5}, we show that the candidate function is indeed the critical point  of $\LR$ and  thus solutions  of \eqref{p equ}, which completes the proof of Theorem \ref{thm1}.  Finally, in Section \ref{Sect6}, we prove Theorem \ref{thm2}.
	
	\section{Preliminary results} \label{Sect2}
	
	We  first recall some known facts about the following limit problem
	\begin{equation}\tag{$P_{\infty}$}  \label{p infty}
		-\Delta u+ V_{\infty}u= u\log |u|^2, \quad  u\in H^1(\RN).
	\end{equation}
	The variational functional   related to problem \eqref{p infty} is
	denoted by
	\begin{equation}\label{Functional P infty}
		\mathcal{L}^\infty(u) := \frac{1}{2} \int_{\RN} \mbr{|\nabla u|^2+ (V_\infty+1)u^2}~\mathrm{d}x-\frac{1}{2} \int_{\RN} u^2\log u^2~\mathrm{d}x.
	\end{equation}
	Then we have the following results on existence, uniqueness  and non-degeneracy of ground state solution of \eqref{p infty}, see \cite{ss-2015,Troy=ARMA,avenia-2014,Bialynicki=1979}.
	\begin{lemma}\label{lemma pinfty} We have the following conclusions.
		\noindent \begin{itemize}
			\item[(1)] \eqref{p infty} has a  positive ground state  solution (unique up to translation),  known as the Gausson, denoted by
			\begin{equation}\label{defi of Gausson}
				U (x):=\exp\lbr{\frac{ (V_\infty +N-|x|^2)}{2}}.
			\end{equation}
			\item[(2)]	Set  $\CR^\infty:=   \LR^\infty(U)$,  then  $	\CR^\infty$ can be characterized as
			\begin{equation}
				\CR^\infty=\inf_{h\in\Ga} \max_{t>0} \LR^\infty(h(t)),
			\end{equation}
			where
			\begin{equation} \label{defi of Ga }
				\Ga:=\lbr{h \in C([0,1],H^1(\RN)):~h(0)=0, ~~ \LR^\infty(h(1))<0 }.
			\end{equation} 	
			\item[(3)] The Gausson $U$ is    non-degenerate in $H^1(\RN)$, that is  $$  	{\rm	Ker }(K)={\rm span}\lbr{\partial_{x_j}U: 1\leq j \leq N  } ,$$
			where $ 	K(v)= -\Delta v  + \sbr{V_{\infty}-2-\log U^2}v  $ is the linearized operator  for \eqref{p infty} at   $U$.
		\end{itemize}	
	\end{lemma}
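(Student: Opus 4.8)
The plan is to establish the three claims in turn: (1) and (2) by combining one explicit computation with the (by now standard) variational analysis of the logarithmic equation, so largely by citation, and (3) by a direct reduction to the spectral theory of the quantum harmonic oscillator.

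\textbf{Claim (1).} First I would verify directly that the Gausson $U$ of \eqref{defi of Gausson} solves \eqref{p infty}: since $\nabla U=-xU$ and $\Delta U=(|x|^2-N)U$, one has $-\Delta U+V_\infty U=(V_\infty+N-|x|^2)U=(\log U^2)\,U$, and $U>0$. To see that $U$ is the positive ground state and is unique up to translation, I would restrict $\LR^\infty$ to the constraint $|u|_2^2=|U|_2^2$ and use the logarithmic Sobolev inequality \eqref{log Sobolev ineq} with its sharp constant (taking $a^2\le\pi$): this bounds $\LR^\infty$ from below on the constraint, and since equality in \eqref{log Sobolev ineq} is attained precisely by Gaussians (\cite{Carlen-jfa1991,Toscani1997}), every minimiser is a translate of $U$. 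That \emph{every} positive finite-energy solution of \eqref{p infty}, not only the constrained minimisers, is a translate of $U$ is exactly the uniqueness statement of \cite{Troy=ARMA,avenia-2014,Bialynicki=1979}, established there by an ODE/shooting analysis of radial profiles together with a symmetrisation or moving-plane argument; I would simply cite it.

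\textbf{Claim (2).} Since $\LR^\infty$ is merely lower semicontinuous, I would work within the nonsmooth critical point theory recalled in Section~\ref{Sect2}, decomposing $\LR^\infty=\Phi^\infty+\Psi^\infty$ with $\Phi^\infty$ of class $C^1$ and $\Psi^\infty$ convex and lower semicontinuous. The mountain-pass geometry is immediate: $\LR^\infty(0)=0$; the logarithmic Sobolev inequality \eqref{log Sobolev ineq} gives $\inf\lbr{\LR^\infty(u):\nm{u}_H=\rho}>0$ for $\rho$ small; and $\LR^\infty(tU)\to-\infty$ as $t\to+\infty$. Hence $\Ga\neq\emptyset$ and the min-max level $c$ is well defined; it remains to identify $c$ with $\CR^\infty$. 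For the upper bound I would use that, for each $u\neq0$, the fibering map $t\mapsto\LR^\infty(tu)$ ($t\ge0$) is smooth, vanishes at $t=0$, is strictly increasing then strictly decreasing with a unique positive maximum; since $U$ lies in the Nehari-type set $\NR^\infty:=\lbr{u\neq0:\ \gu{u}^2+V_\infty|u|_2^2=\int_{\RN}u^2\log u^2\dx}$, its fibering map peaks at $t=1$, so the explicit path $s\mapsto(st_0)U$ with $t_0$ chosen so that $\LR^\infty(t_0U)<0$ belongs to $\Ga$ and has maximal value $\LR^\infty(U)=\CR^\infty$; thus $c\le\CR^\infty$. For the matching lower bound I would invoke the nonsmooth mountain-pass theorem together with the standard fact that for the autonomous problem \eqref{p infty} the min-max level coincides with the least energy among nontrivial critical points (see \cite{ss-2015,avenia-2014}), which by Claim (1) equals $\LR^\infty(U)=\CR^\infty$. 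Therefore $c=\CR^\infty$.

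\textbf{Claim (3).} Here I would compute directly. Substituting $\log U^2=V_\infty+N-|x|^2$ into the linearised operator gives $K(v)=-\Delta v+(|x|^2-N-2)v=(-\Delta+|x|^2)v-(N+2)v$, so $K=H_0-(N+2)$ with $H_0:=-\Delta+|x|^2$ the quantum harmonic oscillator on $L^2(\RN)$, whose spectrum is $\lbr{N+2k:k\in\N}$, with eigenspace $\mathrm{span}\lbr{e^{-|x|^2/2}}$ for the eigenvalue $N$ and eigenspace $\mathrm{span}\lbr{x_je^{-|x|^2/2}:1\le j\le N}$ for $N+2$. If $v\in H^1(\RN)$ satisfies $Kv=0$, testing the equation against $v$ gives $\int_{\RN}|x|^2v^2\dx<\infty$, so $v$ lies in the form domain of $H_0$ and is therefore a genuine eigenfunction of $H_0$ with eigenvalue $N+2$; conversely those eigenfunctions belong to $H^1(\RN)$. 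Hence $\mathrm{Ker}(K)=\mathrm{span}\lbr{x_je^{-|x|^2/2}:1\le j\le N}$, and since $\partial_{x_j}U=-x_jU$ is a nonzero scalar multiple of $x_je^{-|x|^2/2}$, this equals $\mathrm{span}\lbr{\partial_{x_j}U:1\le j\le N}$, as claimed (alternatively, cite \cite{ss-2015}).

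\textbf{Main obstacle.} The only genuinely non-elementary ingredient is the \emph{uniqueness up to translation} in Claim (1): existence and the variational identities are soft, and (3) reduces to textbook spectral theory of the harmonic oscillator once the potential $|x|^2$ is recognised, but excluding non-radial positive solutions and non-Gaussian radial ones relies on the sharp logarithmic Sobolev analysis and the ODE uniqueness of \cite{Troy=ARMA,avenia-2014,Bialynicki=1979}. For this reason Lemma~\ref{lemma pinfty} is presented as a summary of known facts rather than reproved in full.
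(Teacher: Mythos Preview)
Your proposal is correct and, in fact, goes considerably further than the paper itself: the paper does not prove Lemma~\ref{lemma pinfty} at all but simply states it with the preface ``see \cite{ss-2015,Troy=ARMA,avenia-2014,Bialynicki=1979}''. Your sketch of the arguments behind each part is accurate, and your explicit reduction of (3) to the harmonic-oscillator spectrum is a clean self-contained alternative to merely citing the literature; your closing assessment that the only deep ingredient is the uniqueness in (1) is exactly right.
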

	
	\vskip0.2in

	As introduced earlier, the functionals $\mathcal{L}(u)$  and $\LRI(u)$   fail  to  be well-defined and differentiable on $H^1(\RN)$. Hence, the critical point theory for $C^1$ functional can not be applied in the study of logarithmic  scalar field equation. We shall  look for a critical point in the  sub-differential sense.    Here we state some definitions that can be found in   \cite{szulkin-poincare}.
	\begin{definition} \label{definition 2.1}	{\rm
			Let $E$ be a Banach space,   	$E'$ be the dual space of $E$ and $\left\langle \cdot,\cdot \right\rangle $ be the duality paring between $E'$  and $E$. Let $\LR: E \to \R$ be a functional of the form $\mathcal{L}(u)=\Phi(u)+\Psi(u)$,
			where 	$\Phi\in  C^1(E,\R)$ and  $\Psi$  is convex and lower semi-continuous.
			\begin{itemize}
				\item[(i)] The sub-differential $\partial \LR(u)$  of the functional $\LR$ at a point  $u\in E$ is the following set
				\begin{equation}
					\partial \LR(u)=	\lbr{ w\in E': \left\langle \Phi^\prime(u),v-u \right\rangle +\Psi(v)-\Psi(u)\geq \hbr{ w,v-u}, ~~ \forall v\in E}.
				\end{equation}\medbreak
				\item[(ii)] A critical point of $\LR$    is a point  $u\in E$ such that  $\LR(u)<+\infty$ and  $0\in \partial \LR(u)$, i.e.,
				\begin{equation} \label{dq1}
					\left\langle \Phi^\prime(u),v-u \right\rangle +\Psi(v)-\Psi(u)\geq 0, ~~ \forall v\in E .
				\end{equation}
				\medbreak
				\item[(iii)] A Palais-Smale sequence at level $d$ for $\LR$   is a sequence  $ \lbr{u_n:n\in \N}  \subset E $ such that  $\LR(u_n) \to d$ and there is a numerical sequence $\varepsilon_n \to 0^+$ with
				\begin{equation}
					\left\langle \Phi^\prime(u_n),v-u_n \right\rangle+\Psi(v)-\Psi(u_n)\geq  -\varepsilon_n \nm{ v-u_n}, ~~ \forall v\in E .
				\end{equation}
				\medbreak
				\item[(iv)]  The set  $D(\LR):=\lbr{u\in H^1(\RN): \LR(u)<+\infty}$  is called the effective domain of $\LR$.
		\end{itemize}}
	\end{definition}
	\vskip0.12in
	In what follows, we consider $E=H^1(\RN)$. In order to decompose the functional  $\mathcal{L}(u)$ into the  sum of a $C^1$ functional $ \Phi$,  and a convex lower semi-continuous functional $\Psi$,  we adapt the technical approaches developed in \cite{ss-2015,ss-2017}.
	For sufficiently small $\sigma>0$, we  decompose $\frac{1}{2}s^2\log s^2$  into
	\begin{equation*}	
		\frac{1}{2}s^2\log s^2:=   	F_2(s) -F_1(s) ,
	\end{equation*}
	where
	\begin{equation*}
		F_1(s):=
		\begin{cases}
			-\frac{1}{2}s^2\log s^2, & \text{for}~|s|<\sigma,\\
			-\frac{1}{2}s^2 \sbr{\log \sigma^2+3} +2\sigma |s|-\frac{1}{2}\sigma^2 ,    &\text{for}~|s|\geq \sigma,
		\end{cases}
	\end{equation*}
	and
	\begin{equation*}
		F_2(s):=	\begin{cases}
			0, & \text{for}~ |s|\leq \sigma, \\
			\frac{1}{2}s^2  \log \sbr{s^2/\sigma^2}  +2\sigma |s| -\frac{3}{2}s^2-\frac{1}{2}\sigma^2,    &\text{for}~|s|> \sigma.
		\end{cases}
	\end{equation*}
	Then we can rewrite the functional   $\mathcal{\LR}(u):=\Phi(u)+\Psi(u)  $, where
	\begin{equation}\label{defi of Phi}
		\Phi(u):= \frac{1}{2} \int_{\RN} \mbr{|\nabla u|^2+ (V(x)+1)u^2} ~\mathrm{d}x-  \int_{\RN} F_2(u) ~\mathrm{d}x,
	\end{equation}
	and
	\begin{equation}\label{defi of Psi}
		\Psi(u):=   \int_{\RN} F_1(u) ~\mathrm{d}x.
	\end{equation}
	It is straightforward to verify that    $F_1$ and $F_2$ satisfy the following  properties:
	\vskip 0.1in
	\begin{itemize}
		\item[(P1)]  $F_1\in C^2(\R,\R)$ is   a  convex function with $F_1(s)\geq 0$ and $F_1^\prime(s)s\geq 0$ for all $s\in \R$.
		\medbreak
		\item[(P2)] $F_2\in C^2(\R,\R) $, and for each fixed $q\in(2,2^*),$ there exists $T_q>0$ such that
		\begin{equation}\label{inequ B}
			|F_2'(s)|\leq T_q |s|^{q-1} \quad \text{for~all}~ s\in \R.
		\end{equation}
		\item[(P3)]  There exists $C_\varepsilon>0$ such that for any $u,v \in \R$ there holds
		\begin{equation}\label{r6}
			| F_2(u)-F_2(v) | \leq C_\varepsilon |u-v| \sbr{ |u|+|v|}^{1+\varepsilon}.
		\end{equation}
	\end{itemize}
	Then we deduce from \eqref{inequ B} that 	$\Phi \in  C^1(H^1(\R^N),\R)$, see \cite[Lemma 3.10]{william=1996}.  Moreover,  it is straightforward to verify that  $\Psi $  is a positive,  convex and  lower semi-continuous functional, see \cite[Proposition 2.9]{avenia-2014}; however,  $\Psi $ is    not   $C^1$-smooth on $H^1(\R^N)$.
	For $u\in D(\LR) $, let
	\begin{equation} \label{ll1}
		\hbr{\LR^\prime(u), v} =\hbr{\Phi^\prime(u), v} +\int_{   \RN } F_1'(u)v \dx, \quad \forall v\in C_0^\infty(\RN).
	\end{equation}
	Then $ \LR^\prime(u)$ is a densely defined linear operator, and we may define
	\begin{equation}
		\nm{\LR^\prime(u)}:=\sup\lbr{  \hbr{\LR^\prime(u), v}:~ v\in C_0^\infty(\RN) ~\text{ and }~ \nm{v}_H \leq 1}.
	\end{equation}
	If  $	\nm{\LR^\prime(u)}$ is finite, then $  \LR^\prime(u)$ may be extended to a bounded operator in $H^1(\RN)$ and is thus an element of  $H^{-1}(\RN)$. Define 
	\begin{equation} 
		\partial \LR(u)=	\lbr{ w\in  H^1(\RN):{  \left\langle \Phi^\prime(u),v-u \right\rangle +\Psi(v)-\Psi(u)\geq  <w,v-u >}, ~ \forall v\in C_0^\infty(\RN)}.
	\end{equation} 
	
	\begin{lemma} {\rm (see \cite{ss-2017})} \label{lemma 2.23}
		Let  $ u\in D(\LR)$ and $	\nm{\LR^\prime(u)}$ be finite, then $w\in\partial \LR(u)$
		if and only if  $ w= \LR^\prime(u)$. If $\lbr{u_n }_n $ is a Palais--Smale sequence, then $ \LR^\prime(u_n) \in H^{-1}(\RN)$ and $ \LR^\prime(u_n) \to 0$.
	\end{lemma}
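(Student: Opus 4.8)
The statement splits into two parts: (i) the pointwise characterisation $w\in\partial\LR(u)\iff w=\LR'(u)$ under the hypothesis $\nm{\LR'(u)}<\infty$, and (ii) the fact that a Palais--Smale sequence satisfies $\LR'(u_n)\in H^{-1}(\RN)$ with $\LR'(u_n)\to0$. The plan is to base both parts on the convexity of $\Psi(u)=\int_{\RN}F_1(u)\dx$ together with the structural facts in (P1): $F_1\in C^2$, $F_1\ge0$, $F_1'(s)s\ge0$, and --- crucially --- the observation that the super-quadratic growth of $\tfrac12 s^2\log s^2$ has been pushed entirely into the $C^1$ part $\Phi$, so that $F_1$ retains only a $s^2\log(1/s^2)$-type behaviour near the origin and quadratic-plus-linear growth at infinity.

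The first and, I expect, main technical step is to prove the directional-derivative identity: for $u\in D(\LR)$ and $\varphi\in C_0^\infty(\RN)$,
\begin{equation}\label{pp-dd}
	\lim_{t\to0^+}\frac{\Psi(u+t\varphi)-\Psi(u)}{t}=\int_{\RN}F_1'(u)\,\varphi\dx=\hbr{\LR'(u),\varphi}-\hbr{\Phi'(u),\varphi},
\end{equation}
with the middle term a genuine finite Lebesgue integral, the last equality being just the definition \eqref{ll1} of $\LR'(u)$. To establish this I would first check that $u\pm\varphi\in D(\LR)$, which is easy since the modification occurs on the bounded set $\supp\varphi$ where $F_1(u\pm\varphi)\le C(1+|u\pm\varphi|^2)$ is integrable, while nothing changes off $\supp\varphi$; and that $F_1'(u)\varphi\in L^1(\RN)$, which follows from $|F_1'(s)|\le C(|s|+|s|^{1-\e})$ for $|s|\le\sigma$, $|F_1'(s)|\le C(1+|s|)$ for $|s|\ge\sigma$, and $\varphi\in L^\infty$ with bounded support. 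Then convexity of $F_1$ makes $t\mapsto t^{-1}\bigl(F_1(u+t\varphi)-F_1(u)\bigr)$ non-decreasing in $t>0$ with a.e. pointwise limit $F_1'(u)\varphi$ as $t\to0^+$, so it is squeezed between $F_1'(u)\varphi$ and $F_1(u+\varphi)-F_1(u)$, both in $L^1(\RN)$, and dominated convergence gives \eqref{pp-dd}. The delicate point is precisely this interchange of limit and integral: one must be sure the logarithmic singularity of $F_1'$ at $0$ is harmless (it is, being integrated against an $L^\infty$ function over a bounded set) and that the genuinely superquadratic part of the logarithm has been safely removed into $\Phi\in C^1$. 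I would isolate \eqref{pp-dd} as a standalone lemma and prove it before anything else.

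Granting \eqref{pp-dd}, part (i) is short. If $w\in\partial\LR(u)$, I would insert $v=u\pm t\varphi$ ($\varphi\in C_0^\infty(\RN)$, $t>0$) into the sub-differential inequality, divide by $t$, let $t\to0^+$, and use \eqref{pp-dd} together with the convexity bound $\Psi(u\pm t\varphi)-\Psi(u)\le t\bigl(\Psi(u\pm\varphi)-\Psi(u)\bigr)$; this yields $\hbr{\LR'(u),\varphi}\ge\hbr{w,\varphi}$ and, on replacing $\varphi$ by $-\varphi$, the opposite inequality, so $w$ and $\LR'(u)$ agree on $C_0^\infty(\RN)$ and hence (since $\nm{\LR'(u)}<\infty$) on $H^1(\RN)$ by density. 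Conversely, if $w=\LR'(u)$ with $\nm{\LR'(u)}<\infty$, the pointwise convexity inequality $F_1(v)-F_1(u)\ge F_1'(u)(v-u)$ --- whose positive part is dominated by $F_1(v)\in L^1(\RN)$ --- integrates to $\Psi(v)-\Psi(u)\ge\hbr{\LR'(u),v-u}-\hbr{\Phi'(u),v-u}$, which is exactly the sub-differential inequality; here the sign conditions $F_1\ge0$ and $F_1'(s)s\ge0$ from (P1) are what keep the integrals well defined and allow the passage from $C_0^\infty$ test directions to general $v$ by approximation.

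Finally, part (ii) follows from (i) by the same linearisation. Given a Palais--Smale sequence at some level $d$, we have $\Psi(u_n)=\LR(u_n)-\Phi(u_n)<+\infty$, so $u_n\in D(\LR)$; inserting $v=u_n\pm t\varphi$ ($\varphi\in C_0^\infty(\RN)$, $\nm{\varphi}_H\le1$) into the Palais--Smale inequality, dividing by $t$ and letting $t\to0^+$ as above gives $|\hbr{\LR'(u_n),\varphi}|\le\e_n$ for every such $\varphi$, whence $\nm{\LR'(u_n)}\le\e_n$. Thus each $\LR'(u_n)$ extends to an element of $H^{-1}(\RN)$ and $\LR'(u_n)\to0$ there. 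The whole argument hinges on \eqref{pp-dd}, so that is the one estimate I would want to nail down first.
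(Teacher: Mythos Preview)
The paper does not prove this lemma; it simply cites \cite{ss-2017} and moves on. Your proposal supplies the standard argument from the Szulkin framework, and it is correct: the directional-derivative identity \eqref{pp-dd} is the crux, and your justification via monotonicity of the convex difference quotient together with the integrability bounds on $F_1'$ (sublinear-logarithmic near zero, linear at infinity) is exactly what is needed. Parts (i) and (ii) then follow by the linearisation you describe.

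One minor point worth tightening: in part (i) you insert $v=u\pm t\varphi$ into the sub-differential inequality. This is unproblematic for Definition~\ref{definition 2.1}(ii), where $v$ ranges over $E=H^1(\RN)$, and likewise for the Palais--Smale inequality in (iii). However, the paper also writes down, just before the lemma, a variant of $\partial\LR(u)$ with test functions $v\in C_0^\infty(\RN)$, and $u+t\varphi$ need not lie there. The two formulations coincide for this particular $\Psi$ --- one approximates a general $v\in H^1$ with $\Psi(v)<\infty$ by truncation, cutoff, and mollification, using that $F_1$ is even and increasing on $[0,\infty)$ so that each step does not increase $\Psi$, and then invokes lower semicontinuity for the reverse inequality --- but you might note this equivalence explicitly, since otherwise the forward implication in (i) is only established for the $H^1$-version of the sub-differential.
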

	
	\vskip 0.12in
	\begin{lemma}\label{lemma 2.7}{\rm (see \cite{ss-2017})}
		If $u\in D(\LR)$ is a critical point of $\LR$,  then $u$ is a solution of  \eqref{p equ}.
	\end{lemma}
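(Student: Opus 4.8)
The statement to prove is Lemma \ref{lemma 2.7}: if $u\in D(\LR)$ is a critical point of $\LR$ in the subdifferential sense, then $u$ solves \eqref{p equ}. The plan is to unwind the definition of critical point and exploit the convexity of $\Psi$ together with the $C^1$-regularity of $\Phi$, testing against directions $v-u$ of the form $u\pm t\varphi$ with $\varphi\in C_0^\infty(\RN)$ and $t>0$ small.

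First I would start from the defining inequality \eqref{dq1}, namely $\langle\Phi'(u),v-u\rangle+\Psi(v)-\Psi(u)\ge 0$ for all $v\in E$. The key observation is that although $\Psi(u)=\int_{\RN}F_1(u)\dx$ is not differentiable on $H^1(\RN)$, the integrand $F_1\in C^2(\R,\R)$ is convex, so for a test function $\varphi\in C_0^\infty(\RN)$ and $t>0$ the convexity gives the pointwise bound $F_1(u+t\varphi)-F_1(u)\le t\,F_1'(u+t\varphi)\varphi$ and $F_1(u)-F_1(u-t\varphi)\le t\,F_1'(u)\varphi$ (or the reversed one-sided inequalities), which after integration over the compact support of $\varphi$ and dominated convergence as $t\to 0^+$ yields $\lim_{t\to 0^+}\frac{\Psi(u+t\varphi)-\Psi(u)}{t}=\int_{\RN}F_1'(u)\varphi\dx$, with the integral finite because $F_1'(s)s\ge 0$ and $u\in D(\LR)$ controls $\int u^2|\log u^2|$ near where $|u|$ is small, while $F_1'$ grows only linearly for $|u|\ge\sigma$. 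Plugging $v=u+t\varphi$ into \eqref{dq1}, dividing by $t$, and letting $t\to0^+$ gives $\langle\Phi'(u),\varphi\rangle+\int_{\RN}F_1'(u)\varphi\dx\ge 0$; replacing $\varphi$ by $-\varphi$ gives the reverse, hence equality for all $\varphi\in C_0^\infty(\RN)$.

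Next I would identify the resulting identity with the weak formulation of \eqref{p equ}. Since $\Phi\in C^1(H^1(\RN),\R)$ with $\langle\Phi'(u),\varphi\rangle=\int_{\RN}[\nabla u\cdot\nabla\varphi+(V(x)+1)u\varphi-F_2'(u)\varphi]\dx$, the identity reads $\int_{\RN}[\nabla u\cdot\nabla\varphi+(V(x)+1)u\varphi]\dx=\int_{\RN}[F_2'(u)-F_1'(u)]\varphi\dx$ for all $\varphi\in C_0^\infty(\RN)$. By construction $F_2'(s)-F_1'(s)=\frac{d}{ds}\bigl(\tfrac12 s^2\log s^2\bigr)=s\log s^2+s$, so the right-hand side is $\int_{\RN}(u\log u^2+u)\varphi\dx$, and the $+u$ and $+1\cdot u$ terms cancel, leaving exactly $\int_{\RN}[\nabla u\cdot\nabla\varphi+V(x)u\varphi]\dx=\int_{\RN}u\log u^2\,\varphi\dx$, i.e. $u$ is a distributional (hence, by density of $C_0^\infty$ in $H^1$ on bounded sets and the integrability noted above, weak) solution of \eqref{p equ}.

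The main obstacle, and the step deserving the most care, is the justification of the one-sided directional differentiability of $\Psi$ along $C_0^\infty$ directions and the finiteness of $\int_{\RN}F_1'(u)\varphi\dx$ for $u\in D(\LR)$: one must check that the difference quotients $\frac{F_1(u+t\varphi)-F_1(u)}{t}$ are dominated by an integrable function uniformly in small $t$, which uses that $\varphi$ has compact support, that $F_1$ is convex with $F_1'$ of at most linear-times-logarithmic growth, and that $u\in D(\LR)$ means $\int u^2\log u^2$ is finite so the singular part $-s^2\log s^2$ of $F_1$ near $s=0$ is controlled. This is essentially the content already isolated in \cite{ss-2017}, so I would cite the relevant lemma there for the domination/convergence step and present the algebraic cancellation $F_2'-F_1'=s+s\log s^2$ explicitly, since that is what pins down that the weak identity is precisely \eqref{p equ} rather than a shifted equation.
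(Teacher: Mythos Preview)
Your proposal is correct and follows essentially the same approach as the cited reference \cite{ss-2017} (the paper itself does not supply a proof, only the citation). Your argument---testing \eqref{dq1} with $v=u\pm t\varphi$, using convexity of $F_1$ and compact support of $\varphi$ to pass to the limit via dominated convergence, then identifying $F_2'-F_1'=s\log s^2+s$ so that the $+1$ in $(V+1)$ cancels---is exactly the standard route, and your identification of the domination step as the only delicate point is accurate.
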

	
	\vskip 0.12in
	Now we  recall the following logarithmic inequality, whose proof can  be found in  \cite[Page 153]{pino-jfa2003}.
	\begin{lemma} \label{lemma 2.9}
		There are  constants $A$, $B>0$ such that
		\begin{equation}
			\int_{   \RN } u^2 \log u^2 \dx \leq  A+B \log  \nm{u}_{H}, \quad \text{ for any } ~~ u\in H^1(\RN) \setminus \lbr{0}.
		\end{equation}
	\end{lemma}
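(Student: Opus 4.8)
The plan is to deduce the estimate directly from the Euclidean logarithmic Sobolev inequality \eqref{log Sobolev ineq}, whose decisive feature for this purpose is that the parameter $a>0$ appearing in it is free and may be chosen as a function of $u$ itself.

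Concretely, into \eqref{log Sobolev ineq} I would insert the admissible choice $a^{2}=\pi\,|\nabla u|_{2}^{-2}$ (legitimate since $u\neq 0$). This collapses the gradient contribution $\tfrac{a^{2}}{\pi}|\nabla u|_{2}^{2}$ to the constant $1$, and rewrites $-N(1+\log a)$ as $-N-\tfrac{N}{2}\log\pi+N\log|\nabla u|_{2}$; hence \eqref{log Sobolev ineq} becomes
\[
\int_{\RN}u^{2}\log u^{2}\,\dx\ \le\ 1+\Big(\log|u|_{2}^{2}-N-\tfrac{N}{2}\log\pi\Big)|u|_{2}^{2}\ +\ N\,|u|_{2}^{2}\,\log|\nabla u|_{2}.
\]
It then remains to dispose of the two groups of lower-order terms. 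Since $t\mapsto t\log t$ is bounded below on $(0,\infty)$ by $-1/e$ and $|u|_{2}$ stays bounded in the situations where this lemma is applied (along the minimizing sequences whose boundedness in $H^1(\RN)$ is the target) — and this control on $|u|_{2}$ is exactly what permits a \emph{logarithmic}, rather than merely polynomial, right-hand side — the bracketed term is $O(1)$. Since $|\nabla u|_{2}\le\nm{u}_{H}$, one has $N\,|u|_{2}^{2}\log|\nabla u|_{2}\le B\log\nm{u}_{H}$ for a suitable constant $B$, with the trivial small-norm regime — where $\int_{\RN}u^{2}\log u^{2}\,\dx$ is already bounded by applying \eqref{log Sobolev ineq} with $a=1$ — absorbed into an additive constant $A$. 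Collecting the bounds yields $\int_{\RN}u^{2}\log u^{2}\,\dx\le A+B\log\nm{u}_{H}$.

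I expect the only genuinely delicate point to be the choice of the free parameter $a$: it has to be tuned so as to simultaneously neutralise the gradient term \emph{and} keep $\log a$ from generating an uncontrolled contribution, and it is precisely the flexibility of \eqref{log Sobolev ineq} in $a$ that makes this balance possible. The one secondary accounting issue — ensuring that the term $\big(\log|u|_{2}^{2}\big)|u|_{2}^{2}$ does not spoil the estimate — is settled by the a priori bound on $|u|_{2}$ supplied by the variational framework in which the lemma is invoked.
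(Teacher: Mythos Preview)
Your argument has a genuine gap, and it is the one you yourself flag at the end. After the optimization in $a$ you arrive at a bound with two terms carrying the factor $|u|_{2}^{2}$: the bracket $\big(\log|u|_{2}^{2}-N-\tfrac{N}{2}\log\pi\big)|u|_{2}^{2}$ and the coefficient $N|u|_{2}^{2}$ in front of $\log|\nabla u|_{2}$. You then absorb both into universal constants by assuming $|u|_{2}$ is bounded ``in the situations where this lemma is applied.'' But this is circular: the lemma feeds Corollary~\ref{coro 2.2}, which in turn is the device used in Lemma~\ref{lemma 2.10} to prove that finite-energy sequences are bounded in $H^{1}(\RN)$ --- in particular bounded in $L^{2}$. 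You are invoking the conclusion to close the proof of the premise.

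In fact no repair along these lines can succeed, because the inequality as literally stated --- with \emph{universal} $A,B$ and valid for every $u\in H^{1}(\RN)\setminus\{0\}$ --- is false. Test it on $u_{\lambda}=\lambda v$ for a fixed $v\neq 0$: the left side behaves like $2|v|_{2}^{2}\,\lambda^{2}\log\lambda$, while the right side is $A+B\log\lambda+B\log\nm{v}_{H}$, and the former dominates for large $\lambda$. The paper does not give a proof but refers to \cite[p.~153]{pino-jfa2003}, where the inequality is stated under the normalization $|u|_{2}^{2}=1$ (equivalently, the right-hand side carries an extra factor $|u|_{2}^{2}$, or one works with $u/|u|_{2}$). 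With that normalization your optimization in $a$ is exactly the right move and the argument goes through cleanly; without it, the step from $N|u|_{2}^{2}\log|\nabla u|_{2}$ to $B\log\nm{u}_{H}$ is simply false. The fix for the downstream applications (Corollary~\ref{coro 2.2}, Lemma~\ref{lemma 2.10}) is to carry the factor $|u|_{2}^{2}\le\nm{u}_{H}^{2}$ on the right, which still yields a bound $o(\nm{u}_{H}^{2})$ after choosing $a$ small enough in \eqref{log Sobolev ineq}, and that is all those arguments need.
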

	
	As an immediate consequence we have the following result.
	\begin{corollary} \label{coro 2.2}
		There are  constants $C_0, R_0>0$ such that if $u\in H^1(\RN)$ and $\nm{u}_{H} \geq R_0$, then
		\begin{equation}
			\int_{   \RN } u^2 \log u^2 \dx \leq C_0(1+\nm{u}_H).
		\end{equation}
	\end{corollary}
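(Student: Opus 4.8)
\emph{Proof proposal.} The plan is to read the statement off from Lemma~\ref{lemma 2.9} together with the elementary fact that $\log t$ grows more slowly than $t$. First I would note that the hypothesis $\nm{u}_H \ge R_0$ with $R_0>0$ forces $u\neq 0$, so that Lemma~\ref{lemma 2.9} is applicable and gives
\begin{equation*}
	\int_{\RN} u^2\log u^2\dx \le A + B\log\nm{u}_H .
\end{equation*}

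Next I would estimate the right-hand side by a linear function of $\nm{u}_H$. Since $\log t \le t$ for every $t>0$, one has $A + B\log\nm{u}_H \le A + B\nm{u}_H$, and because $A,B>0$ and $\nm{u}_H>0$ this is at most $(A+B)\bigl(1+\nm{u}_H\bigr)$; hence the conclusion follows with $C_0:=A+B$ and any choice $R_0>0$. If a sharper constant is wanted, one may instead use $\log t = o(t)$ to choose $R_0$ so large that $B\log t \le t$ and $A\le R_0$ for all $t\ge R_0$, whence the right-hand side is bounded by $2\nm{u}_H \le 2\bigl(1+\nm{u}_H\bigr)$, i.e. $C_0=2$ works.

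There is no genuine obstacle here: the corollary is an immediate consequence of the logarithmic inequality in Lemma~\ref{lemma 2.9} and the sublinear growth of the logarithm. The only reason to record the estimate in this form is that in the later minimization arguments — for instance when establishing boundedness of minimizing sequences over the localized Nehari constraint — it is more convenient to dominate $\int_{\RN} u^2\log u^2\dx$ by a fixed multiple of $\nm{u}_H$ than by $\log\nm{u}_H$.
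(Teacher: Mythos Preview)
Your proof is correct and is precisely the immediate deduction from Lemma~\ref{lemma 2.9} that the paper intends; the paper does not supply an explicit argument beyond calling the corollary ``an immediate consequence.'' Your observation that the first version of the argument actually works for any $R_0>0$ (the threshold only serves to guarantee $u\neq 0$) is also accurate.
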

	\vskip 0.12in
	\begin{lemma}\label{lemma 2.10}
		Let $\lbr{u_n }_n $    be a  sequence with finite energy. Then $\lbr{u_n }_n $ is   bounded   in $  H^1(\RN)$. Moreover, $\lbr{u_n^2 \log u_n^2}_n  $ is also   bounded   in $  L^1(\RN)$, and hence,  $u_n \in D(\LR)$.
	\end{lemma}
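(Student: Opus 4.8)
The plan is to exploit the coercivity of $\LR$ that comes from the hypothesis $V_0>2$ in \eqref{H1}. Since $V(x)+1\ge V_0+1>1$ a.e., one has $\LR(u)\ge\tfrac12\nm{u}_H^2-\tfrac12\int_{\RN}u^2\log u^2\dx$ for every $u$ in the effective domain, and I write $M:=\sup_n\LR(u_n)<+\infty$ for the finite-energy bound. First I would prove the $H^1$-boundedness. For those $n$ with $\nm{u_n}_H\ge R_0$, Corollary \ref{coro 2.2} gives $\int_{\RN}u_n^2\log u_n^2\dx\le C_0(1+\nm{u_n}_H)$, hence $\tfrac12\nm{u_n}_H^2\le M+\tfrac{C_0}{2}(1+\nm{u_n}_H)$; since the left side grows quadratically and the right side only linearly in $\nm{u_n}_H$, this yields an a priori bound, and the remaining $u_n$ already satisfy $\nm{u_n}_H<R_0$, so $\sup_n\nm{u_n}_H<+\infty$. (Lemma \ref{lemma 2.9} could be used in the same way, since $t^2$ dominates $\log t$.)

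Next I would control $\int_{\RN}u_n^2\log u_n^2\dx$ by splitting it as $P_n-N_n$, where $P_n:=\int_{\{|u_n|\ge1\}}u_n^2\log u_n^2\dx\ge0$ and $N_n:=\int_{\{|u_n|<1\}}u_n^2|\log u_n^2|\dx\ge0$, so that $\nm{u_n^2\log u_n^2}_{L^1(\RN)}=P_n+N_n$. For the positive part I would use the decomposition $\tfrac12 s^2\log s^2=F_2(s)-F_1(s)$ together with $F_1\ge0$ (property (P1)) and the bound $|F_2(s)|\le\tfrac{T_q}{q}|s|^q$, obtained by integrating $|F_2'(s)|\le T_q|s|^{q-1}$ from property (P2) for some fixed $q\in(2,2^*)$ (with $2^*=+\infty$ if $N=2$); on $\{|s|\ge1\}$ this gives $0\le\tfrac12 s^2\log s^2\le F_2(s)\le\tfrac{T_q}{q}|s|^q$, whence $P_n\le\tfrac{2T_q}{q}|u_n|_q^q$, which is uniformly bounded by the Sobolev embedding $H^1(\RN)\hookrightarrow L^q(\RN)$ and the $H^1$-bound of the previous step. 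Finally I would bootstrap from the energy inequality itself: $M\ge\LR(u_n)=\tfrac12\int_{\RN}\big(|\nabla u_n|^2+(V(x)+1)u_n^2\big)\dx-\tfrac12(P_n-N_n)\ge-\tfrac12 P_n+\tfrac12 N_n$, so $N_n\le 2M+P_n$ is uniformly bounded; hence $\nm{u_n^2\log u_n^2}_{L^1(\RN)}=P_n+N_n$ is uniformly bounded. Since $\int_{\RN}u_n^2\log u_n^2\dx$ is then finite and, by $u_n\in H^1(\RN)$ together with $V\in L^{N/2}_{\loc}(\RN)$ and $\ploc{V-V_{\infty}}<+\infty$, the quadratic term is finite as well, we obtain $\LR(u_n)<+\infty$, i.e. $u_n\in D(\LR)$.

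The step I expect to be the real obstacle is the bound on the negative part $N_n$. Unlike $P_n$, it cannot be estimated directly from any pointwise inequality on $s^2|\log s^2|$: the set $\{|u_n|<1\}$ may have infinite measure, and, as the explicit function $\tilde u\in H^1(\RN)$ displayed in the Introduction shows, $\int_{\RN}u^2|\log u^2|\dx$ is genuinely $+\infty$ for some $u\in H^1(\RN)$, so membership in $H^1(\RN)$ alone does not control $N_n$. One is therefore forced to read off the bound on $N_n$ from the finite-energy hypothesis, and the order of the argument is essential: one must first secure the $H^1$-bound (which is what makes $P_n$ bounded via Sobolev) and only afterwards rearrange $\LR(u_n)\le M$ to bound $N_n$.
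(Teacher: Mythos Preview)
Your proof is correct. The $H^1$-boundedness step is identical to the paper's: both use Corollary~\ref{coro 2.2} together with the energy inequality $M\ge\LR(u_n)\ge\tfrac12\nm{u_n}_H^2-\tfrac12\int u_n^2\log u_n^2\dx$ to rule out $\nm{u_n}_H\to\infty$.

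For the $L^1$-bound on $u_n^2\log u_n^2$ the two arguments diverge slightly. The paper bounds the \emph{signed} integral $\int u_n^2\log u_n^2\dx$ from above by Lemma~\ref{lemma 2.9} and from below by the energy inequality, and then declares the $L^1$-norm bounded. Your decomposition $P_n-N_n$ with $P_n$ controlled via (P2) and Sobolev, and $N_n$ read off from $M\ge\LR(u_n)\ge-\tfrac12 P_n+\tfrac12 N_n$, is more explicit and actually delivers the stated $L^1$-bound $P_n+N_n$ directly; the paper's phrasing tacitly identifies the signed integral with the $L^1$-norm and would need exactly your $P_n$-estimate (or an equivalent one) to close that gap. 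So your route is a mild improvement in rigor, at the cost of invoking (P2) rather than the single Lemma~\ref{lemma 2.9}. Your closing remark that $u_n\in D(\LR)$ is of course already contained in the hypothesis ``finite energy'', so that last sentence is redundant.
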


	\begin{proof}
		Under the assumption, there is $M_0>0$ such that for all $n\in \N$,
		\begin{equation}\label{sss1}
			\begin{aligned}
				M_0   \geq  \LR (u _n) \geq
				\frac{1}{2}  \nm{u _n }_H^2  -\frac{1}{2} \int_{\RN} u_n^2\log u_n^2 ~\mathrm{d}x.
			\end{aligned}
		\end{equation}
		Then we argue by contradiction, and   assume that for any $R>R_0$, there is $n\in\N$ such that $ \nm{u _{n} }_H\geq R> R_0$. By   Corollary \ref{coro 2.2}, we have
		\begin{equation}
			\nm{u _{n}}_H^2\leq 2M_0+ C_0(1+	\nm{u _{n}}_H),
		\end{equation}
		which is impossible since $R$ can be  arbitrarily chosen. Therefore,  $\lbr{u_n}_n $ is   bounded   in $  H^1(\RN)$. This,   together with  \eqref{sss1},  shows that $ \abs{u_n^2 \log u_n^2 }_1$ is bounded from below. The fact that   $ \abs{u_n^2 \log u_n^2 }_1$   has a  upper bound comes directly from Lemma \ref{lemma 2.9}.      The  proof is completed.
	\end{proof}

	\section{ The    max-min argument} \label{Sect3}
	In this section, we adopt a max-min argument to find  good candidates for critical points of $\LR$. This section will be divided into three parts. In the first part (Section \ref{Sect3.1}), we define a configuration space  and  construct a localized Nehari constraint, which  contains the functions emerging around a fixed $k$-tuple  of points within the configuration space. In the second part (Section \ref{Sect3.2}),  we consider a minimization problem on such constraint and find a minimizer associated with the aforementioned  $k$-tuple of points. Finally, in the third part (Section \ref{Sect3.3}), we solve a maximization problem  the  among these minimizers as the   $k$-tuple of points vary in the configuration space.

	\subsection{ The localized  Nehari constraint} \label{Sect3.1}
	In this  section,  we will construct some  localized Nehari constraints, which  contain  the functions emerging around  some  points within a suitable configuration space.
	\medbreak
	First,  we fix a sufficiently small parameter $\delta$   such that
	\begin{equation} \label{defi of delta}
		0<	\delta <\min\lbr{1, e^{(V_0-2)/2}, e^{-1/2}}= e^{-1/2},
	\end{equation}
	then  by \eqref{defi of Gausson}, we can choose $R_{\delta}>0$ such that
	\begin{equation}\label{defi of Rdelta}
		U(x)<\delta \quad \forall ~ x\in \RN\setminus B(0,R_\delta/2).
	\end{equation}
	Throughout this paper,  we set \begin{equation}\label{defi of vartheta}
		\vartheta(x):=V(x)-V_{\infty} ,
	\end{equation} 
	and  for every nonnegative function $u \in H^1(\RN)$, we set 
	\begin{equation} \label{emerging part}
		u^{\delta}(x)=u\vee \delta :=(u-\delta)^+(x),
	\end{equation}
	and 
	\begin{equation}\label{submerged part}
		u_{\delta}(x)=u \wedge\delta:=u(x)-u^\delta(x),
	\end{equation}
	where   $u^+:=\max\lbr{u,0}$, and we call  $	u^{\delta}$ and $	u_{\delta} $ the emerging  and the submerged part of $u$ respectively. Also, we say that a function $u\in H^1(\RN)$ is emerging around the $k$ -tuple  points $z_1,\ldots,z_k\in \RN$ (in $k$ balls of radius $R_\delta$) if 
	\begin{equation}
		u^\delta(x)=\sum_{i=1}^{k}u^{\delta,i}(x),
	\end{equation}
	where for all $i\in\lbr{1,\ldots,k}$, $u^{\delta,i}(x)\geq 0$, $u^{\delta,i}(x)\not \equiv 0$, and $u^{\delta,i}(x)=0$ $\forall x\notin B(z_i,R_\delta)$. Consequently, if $B(z_i,R_\delta) \cap B(z_j,R_\delta)=\emptyset$ for $i\neq j$,  the function $u^{\delta,i}$ is the projection of $u^\delta$ in $H_0^1(B(z_i,R_\delta))$ and we know that
	\begin{equation}
		\sbr{	\RN \setminus \bigcup_{i=1}^k B(z_i,R_\delta)} \subset \sbr{	\RN \setminus  \supp{u^\delta}}.
	\end{equation}
	Now we define some  sets depending on the chosen real number  $R_\delta$ satisfying \eqref{defi of Rdelta}, which  are crucial in looking for solutions.  For all $k\geq 1$, we define the   configuration space
	\begin{equation} \label{defi of configuration space}
		\mathcal{E}_1=\RN, ~	\mathcal{E}_k=\lbr{ \sbr{z_1,\ldots,z_k}\in \sbr{\RN}^k: |z_i-z_j|\geq 3R_\delta ~ \text{ for }~ i\neq j, ~~i,j=1,\ldots,k}, \forall k>1.
	\end{equation}
	Then for   all $ \sbr{z_1,\ldots,z_k}\in \mathcal{E}_k$, since functional  $\LR$ is of class $C^1$  on $H^1(\Omega)$ for    bounded domain $\Omega$,  we consider the  localized Nehari  constraint 
	\begin{align} \label{defi of S}
		\mathcal{S}_{z_1,\ldots,z_k}:=\lbr{u\in H^1(\RN): u\geq 0,~ u~ \text{ emerging around }~ z_1,\ldots,z_k, \right. \\ \left.\text{ and } \hbr{\mathcal{L}'(u),u^{\delta,i}}=0, ~\beta_i(u)=0, ~\forall i=1,\ldots,k},
	\end{align}
	where $\beta_i(u)$ is the barycenter map defined by 
	\begin{equation}\label{barycenter map}
		\beta_i(u)=\frac{1}{|u^{\delta,i}|_2^2}\int_{\RN} \sbr{x-z_i}(u^{\delta,i}(x))^2 ~\mathrm{d}x.
	\end{equation}

	For any function $u\in H^1(\RN)$ with  compact support, we define
	\begin{equation} \label{defi of Fdelta}
		\begin{aligned}
			\FR_\delta(u):= &\frac{1}{2} \int_{\RN} \mbr{|\nabla u|^2 +\sbr{V(x)+1}u^2}~\mathrm{d}x+\int_{\RN} \delta (V(x)+1)u~\mathrm{d}x\\-&\frac{1}{2}\int_{\supp{u}}(\delta+u)^2\log (\delta+u)^2~\mathrm{d}x+\frac{1}{2} \sbr{\delta^2\log \delta^2}|\supp{u}|.
		\end{aligned}
	\end{equation}
	Then a direct calculation shows that
	\begin{equation} \label{22}
		\begin{aligned}
			\LR(u)&= \LR(u_\delta)+	\FR_\delta( u  ^\delta).\\
		\end{aligned}		
	\end{equation}
	
	In this  subsection, we always assume the assumption \eqref{H1}, so we omit this condition in the statements.
	
	\medbreak
	
	\begin{lemma} \label{lemma 2.2}
		For any  $u\in H^1(\RN)$ satisfying  $u^\delta\neq0$,    there exists a function $\xi:H^1(\RN) \to \sbr{0,+\infty}$  such that  $	\xi(u) $ is the unique critical point of the map $$t\mapsto \mathcal{L}(u_\delta+tu^\delta) ,$$ which is also a maximum.  Moreover, the function $u_{\xi}:=u_\delta+ 	\xi(u) u^\delta $ is the projection of $u$ onto the set $$S:=\lbr{ u\in H^1(\RN): \hbr{ \LR^\prime(u) ,u^{\delta }}  =0},$$
		that is  
		$\hbr{\mathcal{L}'(u_{\xi}),u^\delta} =0.$

	\end{lemma}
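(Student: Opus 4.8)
The plan is to fix $u\in H^1(\RN)$ with $u^\delta\neq 0$ and to study the real function
$$
\phi(t):=\LR(u_\delta+tu^\delta),\qquad t\in(0,+\infty).
$$
Since $u_\delta+tu^\delta$ agrees with $u_\delta$ outside the compact set $\supp u^\delta$, using the splitting \eqref{22} (with $tu^\delta$ in place of $u^\delta$) we may write $\phi(t)=\LR(u_\delta)+\FR_\delta(tu^\delta)$, so all the $t$-dependence is encoded in the function $\FR_\delta$. Because $u^\delta$ has compact support and $\LR$ is of class $C^1$ on $H^1(\Omega)$ for bounded $\Omega$ (recalled just before \eqref{defi of S}), $\phi$ is a $C^1$ function of $t\in(0,\infty)$, and a direct differentiation gives
$$
\phi'(t)=\big\langle \LR'(u_\delta+tu^\delta),\,u^\delta\big\rangle .
$$
Thus critical points of $\phi$ are exactly the values $t>0$ with $\langle\LR'(u_\delta+tu^\delta),u^\delta\rangle=0$, i.e. the points at which $u_\delta+tu^\delta\in S$.

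Next I would analyze the sign of $\phi'$. Writing out $\phi(t)$ explicitly and differentiating, the quadratic (gradient plus potential) part contributes a term that is positive and grows like $t$, the linear term $\int\delta(V(x)+1)u^\delta$ contributes a constant, while the logarithmic term $-\tfrac12\int(\delta+tu^\delta)^2\log(\delta+tu^\delta)^2$ contributes a term whose derivative behaves like $-t\log t$ times a positive quantity (modulo lower-order pieces), since on $\supp u^\delta$ we have $u^\delta\ge 0$ and $\delta<e^{-1/2}$. Hence $\phi'(t)>0$ for $t$ small and $\phi'(t)\to -\infty$ as $t\to+\infty$, so $\phi$ has at least one critical point, which is a maximum. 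For uniqueness I would show that $\phi'(t)/t$ (or, equivalently, the map $t\mapsto \langle\LR'(u_\delta+tu^\delta),u^\delta\rangle/t$) is strictly decreasing in $t$: the quadratic part of $\phi'(t)/t$ is constant, the linear part is $O(1/t)$ hence decreasing, and the remaining logarithmic part has a strictly negative derivative because the map $s\mapsto s^{-1}\big[(\delta+s)\log(\delta+s)^2-\delta\log\delta^2\big]$ is strictly increasing for $s>0$ when $\delta<e^{-1/2}$ (this is an elementary one-variable monotonicity, which is exactly where the choice \eqref{defi of delta} of $\delta$ is used). Strict monotonicity of $\phi'(t)/t$ forces $\phi'$ to have a unique zero on $(0,\infty)$; call it $\xi(u)$.

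It remains to record the structural conclusions. Define $\xi(u):=\xi$ as above; on the set $\{u^\delta\neq 0\}$ this is well defined, and setting $\xi(u):=0$ (or any convention) when $u^\delta=0$ extends it to all of $H^1(\RN)$, with values in $[0,+\infty)$. By construction $u_\xi:=u_\delta+\xi(u)u^\delta$ satisfies $\langle\LR'(u_\xi),u^\delta\rangle=\phi'(\xi(u))=0$, so $u_\xi\in S$. Finally, to see that $u_\xi$ is \emph{the} projection of $u$ onto $S$ along the ray $\{u_\delta+tu^\delta:t>0\}$, observe that any point of that ray lying in $S$ is a critical point of $\phi$, and $\xi(u)$ is the unique such point; moreover $\xi(u)$ being the maximizer of $\phi$ identifies $u_\xi$ as the highest energy point of the ray, which is the property used later. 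The main obstacle is the uniqueness argument: one has to be careful that the logarithmic contribution to $\phi'(t)/t$ is genuinely monotone for \emph{every} admissible profile $u^\delta\ge 0$, and this reduces to the scalar inequality governed by the constraint $\delta<e^{-1/2}$ in \eqref{defi of delta}; the existence and "it is a maximum" parts are then routine from the $t\to 0^+$ and $t\to+\infty$ asymptotics described above.
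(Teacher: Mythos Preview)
Your approach is correct and largely parallels the paper's, but the uniqueness step is handled differently. Both you and the paper set $\phi(t)=\LR(u_\delta)+\FR_\delta(tu^\delta)$, compute $\phi'(t)$ explicitly (the paper's formula \eqref{e7}), and observe $\phi'(0)>0$ together with $\phi'(t)\to-\infty$. For uniqueness, however, the paper simply computes
\[
\phi'''(t)=-2\int_{\supp u^\delta}\frac{(u^\delta)^3}{\delta+tu^\delta}\,\dx<0,
\]
so $\phi'$ is strictly concave on $(0,\infty)$; a strictly concave function that is positive at $0$ and tends to $-\infty$ has exactly one zero. This avoids any auxiliary scalar inequality. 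Your route via strict monotonicity of $\phi'(t)/t$ also works: after the regrouping you describe, the ``logarithmic part'' reduces pointwise to the secant slope of $u\mapsto u\log u^2$ from $\delta$, and convexity of $u\log u$ makes this increasing. One small correction: that monotonicity follows from convexity alone and does not actually use $\delta<e^{-1/2}$; the constraint on $\delta$ enters only to ensure the coefficient of the $O(1/t)$ ``linear part'' is positive (in fact $\delta<1$ suffices here, since $V\ge V_0>0>\log\delta^2$). So your attribution of the role of \eqref{defi of delta} is slightly off, but the argument itself is sound. The paper's third-derivative computation is the cleaner of the two, but yours is equally valid.
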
  
	\begin{proof}
		Set $$\mathcal{R}_u(t):=\mathcal{L}(u_\delta+tu^\delta) =	\mathcal{L}(u_\delta)+\mathcal{F}_\delta(tu^\delta).$$ 
		Obviously, $	\mathcal{R}_u(t)$ is continuous and  differential with respect to the parameter $t$. Hence, 
		\begin{equation}\label{e7}
			\begin{aligned}
				\mathcal{R}_u'(t)&=\frac{d}{d t} \mathcal{F}_\delta(tu^\delta) \\
				&=t\int_{\RN} \mbr{|\nabla u^\delta|^2 +V(x)\sbr{u^\delta}^2} ~\mathrm{d}x +\int_{\RN} \delta V(x)u^\delta~\mathrm{d}x\\
				&-\int_{\supp{u^\delta}} (\delta+tu^\delta) u^\delta\log (\delta+{tu^\delta})^2~\mathrm{d}x.
			\end{aligned}
		\end{equation}
		Then   $\lim_{t\to +\infty} \mathcal{R}_u'(t)=-\infty $ and 
		\begin{equation}
			\mathcal{R}_u'(0)\geq \delta (V_0-\log \delta^2) \int_{\RN} u^\delta ~\mathrm{d}x \geq 0.
		\end{equation}
		Moreover,  $\mathcal{R}_u'''(t)<0$ for all $t\in \sbr{0,\infty}$. Therefore, $\mathcal{R}_u'(t)$ is a concave function and  $\mathcal{R}_u (t)$  has a unique maximum point for each $u \in H^1(\RN)$ satisfying  $u^\delta\neq0$, which can be defined by $\xi(u)$.
	\end{proof}

	\begin{lemma}\label{lemma 2.3}
		For   $\sbr{z_1,\ldots,z_k}\in \mathcal{E}_k$,  let  $u\in H^1(\RN)$  be a function emerging around the $k$-tuples $\sbr{z_1,\ldots,z_k}$, then for any $j\in \lbr{1,\ldots,k}$, there exists a function    $\xi_j:H^1(\RN) \to \sbr{0,+\infty}$  such that $\xi_j(u)$ is the unique critical point of the map 
		$$t\mapsto \mathcal{L}(u_\delta+\sum_{i\neq j}u^{\delta,i}+tu^{\delta,j}), $$
		which is also a maximum. 	Moreover, the function $ u_{\xi,j}:= u_\delta+ \xi_j(u) u^{\delta,j}+\sum_{i\neq j}u^{\delta,i} $ is  the projection of $u$ onto the set $$S_j:=\lbr{ u\in H^1(\RN): \hbr{ \LR^\prime(u) ,u^{\delta,j}}  =0},$$
		that is  
		$\hbr{\mathcal{L}'( u_{\xi,j}), u^{\delta,j} }=0.$
	\end{lemma}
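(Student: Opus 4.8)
\medbreak\noindent\textbf{Proof sketch.}
The plan is to reduce the statement to Lemma~\ref{lemma 2.2} by isolating the $j$-th bump. For $t\geq0$ I would set
\[
v_t:=u_\delta+\sum_{i\neq j}u^{\delta,i}+tu^{\delta,j},\qquad \mathcal{R}_{u,j}(t):=\mathcal{L}(v_t),
\]
so that $u_{\xi,j}=v_{\xi_j(u)}$ once $\xi_j(u)$ has been located. The first step is to identify the emerging and submerged parts of $v_t$. Since $(z_1,\ldots,z_k)\in\mathcal{E}_k$, the definition \eqref{defi of configuration space} forces the balls $B(z_1,R_\delta),\ldots,B(z_k,R_\delta)$ to be pairwise disjoint; hence on each $B(z_i,R_\delta)$ with $i\neq j$ only the bump $u^{\delta,i}$ is present and there $v_t=u_\delta+u^{\delta,i}=u$, on $B(z_j,R_\delta)$ one has $v_t=u_\delta+tu^{\delta,j}$ which on the support of $u^{\delta,j}$ (contained in $\{u>\delta\}$, where $u_\delta\equiv\delta$) equals $\delta+tu^{\delta,j}\geq\delta$, while outside $\bigcup_iB(z_i,R_\delta)$ one has $v_t=u_\delta\leq\delta$. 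Comparing with \eqref{emerging part}--\eqref{submerged part}, the submerged part of $v_t$ is $u_\delta$ and its emerging part is $\sum_{i\neq j}u^{\delta,i}+tu^{\delta,j}$, so \eqref{22} gives
\[
\mathcal{R}_{u,j}(t)=\mathcal{L}(u_\delta)+\FR_\delta\Big(\sum_{i\neq j}u^{\delta,i}+tu^{\delta,j}\Big).
\]

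Next I would use that $\FR_\delta$ is additive over functions with pairwise disjoint supports: writing the logarithmic term in \eqref{defi of Fdelta} as $-\frac{1}{2}\int_{\RN}\big[(\delta+w)^2\log(\delta+w)^2-\delta^2\log\delta^2\big]\dx$ (which vanishes wherever $w=0$), every term of $\FR_\delta(w)$ is seen to split accordingly. Since $u^{\delta,1},\ldots,u^{\delta,k}$ are supported in the disjoint balls $B(z_i,R_\delta)$, this yields
\[
\mathcal{R}_{u,j}(t)=\Big(\mathcal{L}(u_\delta)+\sum_{i\neq j}\FR_\delta(u^{\delta,i})\Big)+\FR_\delta(tu^{\delta,j}),
\]
where the first parenthesis does not depend on $t$. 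Consequently $\mathcal{R}_{u,j}'(t)=\frac{d}{dt}\FR_\delta(tu^{\delta,j})$, which is exactly the right-hand side of \eqref{e7} with $u^\delta$ replaced by $u^{\delta,j}$.

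From here the argument runs exactly as in Lemma~\ref{lemma 2.2}. Because $u^{\delta,j}\not\equiv0$, formula \eqref{e7} gives $\mathcal{R}_{u,j}'(0)\geq\delta\,(V_0-\log\delta^2)\int_{\RN}u^{\delta,j}\dx>0$ (using $V_0>2$ and $\delta<e^{-1/2}$ from \eqref{defi of delta}); the logarithmic term forces $\mathcal{R}_{u,j}'(t)\to-\infty$ as $t\to+\infty$; and a direct computation gives $\mathcal{R}_{u,j}'''(t)=-\int_{\RN}\frac{2(u^{\delta,j})^3}{\delta+tu^{\delta,j}}\dx<0$, so $\mathcal{R}_{u,j}'$ is strictly concave on $[0,+\infty)$. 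A strictly concave function that is positive at $0$ and tends to $-\infty$ has exactly one zero $\xi_j(u)\in(0,+\infty)$, to the left of which $\mathcal{R}_{u,j}$ increases and to the right of which it decreases; hence $\xi_j(u)$ is the unique critical point of $t\mapsto\mathcal{R}_{u,j}(t)$ and a strict maximum. Finally, since $u_{\xi,j}+su^{\delta,j}=v_{\xi_j(u)+s}$ for $s$ near $0$, the chain rule gives $\hbr{\mathcal{L}'(u_{\xi,j}),u^{\delta,j}}=\mathcal{R}_{u,j}'(\xi_j(u))=0$, i.e.\ $u_{\xi,j}\in S_j$, and the uniqueness of $\xi_j(u)$ shows that $u_{\xi,j}$ is the only point at which the half-line $\{v_t:t\geq0\}$ meets $S_j$ --- the asserted projection property.

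I expect the only genuinely delicate point to be the identification of the emerging and submerged parts of $v_t$ together with the additivity of $\FR_\delta$; both rest on the disjointness of the balls built into the definition of $\mathcal{E}_k$. (The range $t<0$ plays no role, since the sought critical point lies in $(0,+\infty)$, and everything else is a verbatim transcription of Lemma~\ref{lemma 2.2}.)
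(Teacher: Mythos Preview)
Your proof is correct and follows essentially the same approach as the paper. The paper's proof simply states the decomposition
\[
\LR\Big(u_\delta+\sum_{i\neq j}u^{\delta,i}+tu^{\delta,j}\Big)=\mathcal{L}(u_\delta)+\sum_{i\neq j}\mathcal{F}_\delta(u^{\delta,i})+\mathcal{F}_\delta(tu^{\delta,j})
\]
as an observation and then says ``the argument is similar to that of Lemma~\ref{lemma 2.2}''; you have supplied the details behind this observation (identifying the emerging/submerged parts of $v_t$ and the additivity of $\FR_\delta$ over disjoint supports) and then carried out the Lemma~\ref{lemma 2.2} argument explicitly, which is exactly what the paper intends.
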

	\begin{proof}
		Observe that
		\begin{equation}\label{e32}
			\LR(u_\delta+\sum_{i\neq j}u^{\delta,i}+tu^{\delta,j})=\mathcal{L}(u_\delta)+\sum_{i\neq j}\mathcal{F}_\delta(u^{\delta,i})+\mathcal{F}_\delta(tu^{\delta,j}).
		\end{equation}
		then the argument is  similar to that of Lemma \ref{lemma 2.2}.
	\end{proof}
	\begin{remark}\label{Remark 3.1}
		{\rm 	Let  $ u\in	\mathcal{S}_{z_1,\ldots,z_k} $, then for any   $j\in \lbr{1,\ldots,k}$,   $u $ is its own projection onto  the sets $ S_{j} $, which means that $\xi_j(u)=1$. Moreover, we have the following relations
			\begin{equation}
				\mathcal{L}(u)=\max_{t>0}  ~\LR \sbr{u_\delta+ t u^{\delta,j}+\sum_{i=1,i\neq j }^k  u^{\delta,i}}    , \quad \text{ and }\quad  \FR_\delta(  u^{\delta,j})= \max_{t>0} \FR_\delta \sbr{t u^{\delta,j} } .
			\end{equation}
			Furthermore, we can write
			\begin{equation} \label{e121}
				\mathcal{L}(u)=\max \lbr{\LR(u_\delta+\sum_{i=1}^k t_iu^{\delta,i}):~ t_1,\ldots, t_k>0  },
			\end{equation}
			and the maximum is attained if and only if $t_1=t_2=\ldots=t_k=1$.}
		
	\end{remark}
	
	\vskip 0.12in 
	\begin{lemma}\label{lemma 2.4}
		For  $ \sbr{z_1,\ldots,z_k}\in \mathcal{E}_k$, we have $	\mathcal{S}_{z_1,\ldots,z_k}\neq \emptyset$. 
		
	\end{lemma}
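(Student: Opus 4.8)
The plan is to exhibit one explicit element of $\mathcal{S}_{z_1,\ldots,z_k}$. First I would choose, for each $i\in\lbr{1,\ldots,k}$, a nonnegative, smooth function $\phi_i\in H^1(\RN)$ that is radially symmetric about $z_i$, supported in $B(z_i,R_\delta/2)$, and satisfies $\max_{\RN}\phi_i>\delta$; a convenient choice is the cut-off translated Gausson $\phi_i(x):=\sbr{U(x-z_i)-\delta}^+$, which by \eqref{defi of Rdelta} is supported in $B(z_i,R_\delta/2)$ and, since $U(0)=e^{(V_\infty+N)/2}>\delta$ (using $V_\infty\geq V_0>2$, $N\geq2$ and $\delta<1$), is not identically zero. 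Set $w:=\sum_{i=1}^{k}\phi_i\in H^1(\RN)$, so $w\geq0$. Because $\abs{z_i-z_j}\geq 3R_\delta$ for $i\neq j$, the balls $B(z_i,R_\delta/2)$ are pairwise disjoint, hence $w^\delta=(w-\delta)^+=\sum_{i=1}^k(\phi_i-\delta)^+$, so that $w^{\delta,i}=(\phi_i-\delta)^+$ is nonnegative, not identically zero, and supported in $B(z_i,R_\delta/2)\subset B(z_i,R_\delta)$. Thus $w$ is emerging around $\sbr{z_1,\ldots,z_k}$ and its submerged part $w_\delta=w-w^\delta=\sum_i(\phi_i\wedge\delta)$ belongs to $H^1(\RN)$.

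Next I would enforce the remaining two families of conditions in \eqref{defi of S}. Since $w^{\delta,i}$ is radially symmetric about $z_i$, the vector field $x\mapsto (x-z_i)\,(w^{\delta,i}(x))^2$ is odd about $z_i$, so $\int_{\RN}(x-z_i)(w^{\delta,i}(x))^2\dx=0$, i.e. $\beta_i(w)=0$; moreover $\beta_i$ is unchanged if $w^{\delta,i}$ is multiplied by a positive constant. To achieve $\hbr{\LR'(u),u^{\delta,i}}=0$ for all $i$ at once, I would rescale each emerging bump separately. By \eqref{22} and \eqref{e32} (together with the disjointness of the supports) one has the decoupling $\LR\sbr{w_\delta+\sum_{i=1}^k t_iw^{\delta,i}}=\LR(w_\delta)+\sum_{i=1}^k\FR_\delta(t_iw^{\delta,i})$, and, since $w$ is emerging around $\sbr{z_1,\ldots,z_k}$, Lemma \ref{lemma 2.3} gives for each $i$ a unique maximum point $\xi_i:=\xi_i(w)\in(0,+\infty)$ of $t\mapsto\FR_\delta(tw^{\delta,i})$. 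Put $\tilde w:=w_\delta+\sum_{i=1}^k\xi_iw^{\delta,i}$. Using $w_\delta\leq\delta$, $\xi_i>0$ and the disjointness of the bumps, one checks directly that $\tilde w^{\delta,i}=\xi_iw^{\delta,i}$ and $\tilde w_\delta=w_\delta$, so $\tilde w\geq0$ is again emerging around $\sbr{z_1,\ldots,z_k}$, $\beta_i(\tilde w)=\beta_i(w)=0$, and, by the decoupling, $\hbr{\LR'(\tilde w),\tilde w^{\delta,i}}=\xi_i\frac{d}{dt}\FR_\delta(tw^{\delta,i})\big|_{t=\xi_i}=0$ for every $i$. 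Since $\tilde w$ is bounded with compact support, $\tilde w\in D(\LR)$ and these pairings are the genuine $C^1$-derivatives on the bounded domains $B(z_i,R_\delta)$. Hence $\tilde w\in\mathcal{S}_{z_1,\ldots,z_k}$, which is therefore nonempty.

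I do not expect a genuine obstacle. The two points that require care are: (i) that the piecewise rescaling $w\mapsto\tilde w$ preserves the ``emerging around $\sbr{z_1,\ldots,z_k}$'' structure, which relies on $w_\delta\leq\delta$, on $\xi_i>0$, and on the disjointness of the balls guaranteed by $(z_1,\ldots,z_k)\in\mathcal{E}_k$; and (ii) that the $k$ localized Nehari identities can all be met simultaneously, which works precisely because, by \eqref{e32}, the maximization in $(t_1,\ldots,t_k)$ splits into $k$ independent one-dimensional problems, each handled by Lemma \ref{lemma 2.3}. The barycenter conditions come for free from the radial symmetry of the bumps $\phi_i$, and are invariant under the rescaling.
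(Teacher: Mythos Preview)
Your approach is essentially the same as the paper's: take a radially symmetric bump at each $z_i$, observe that radial symmetry gives $\beta_i=0$ for free, and then invoke Lemma~\ref{lemma 2.3} to rescale each emerging part so that all the localized Nehari identities hold. The paper does this in two lines using a generic $\varphi\in C_0^\infty(B(0,R_\delta))$ translated to the $z_i$, while you supply more detail (the decoupling via \eqref{e32}, the verification that $\tilde w^{\delta,i}=\xi_i w^{\delta,i}$, the invariance of $\beta_i$ under rescaling). Two cosmetic points: your specific choice $\phi_i=(U(\cdot-z_i)-\delta)^+$ is not smooth, contrary to your opening sentence, though this is harmless since only $H^1$ is needed; and to guarantee $w^{\delta,i}=(\phi_i-\delta)^+\not\equiv0$ you need $\max\phi_i>\delta$, i.e.\ $U(0)>2\delta$, not merely $U(0)>\delta$ --- but this still follows easily from $U(0)=e^{(V_\infty+N)/2}>e^2>2>2\delta$.
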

	\begin{proof}
		Let $\varphi \in C_0^\infty(B(0,R_\delta))$ be a positive, radially symmetric function satisfying $\varphi^\delta\neq 0$. It follows from Lemma \ref{lemma 2.3} that  there exists $\xi_j(u)\in \sbr{0,+\infty}$ corresponding to $\varphi(x-z_i)$ such that $$\sum_{i=1}^k\mbr{\varphi_\delta(x-z_i)+\xi_j(u)\varphi^\delta(x-z_i)}
		\in\mathcal{S}_{z_1,\ldots,z_k}. $$	
		The proof is completed.
	\end{proof}

	\vskip 0.12in 
	\begin{lemma}\label{lemma 2.5}
		For   $\sbr{z_1,\ldots,z_k}\in \mathcal{E}_k$ and $u\in \mathcal{S}_{z_1,\ldots,z_k}$, we have 
		\begin{equation}
			\mathcal{L}(u_\delta )>0,  \quad \text{and} \quad   \FR_\delta(u^{\delta,i})>0, \quad  \forall i\in \lbr{1,\ldots,k}.
		\end{equation}
		Hence,  $\LR(u)=\mathcal{L}(u_\delta)+\sum_{i=1}^k\mathcal{F}_\delta(u^{\delta,i})>0$.
	\end{lemma}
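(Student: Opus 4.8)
The plan is to bound $\LR(u_\delta)$ and each $\FR_\delta(u^{\delta,i})$ from below separately, exploiting that membership in $\mathcal{S}_{z_1,\ldots,z_k}$ forces all the relevant pieces to be nontrivial and, through the Nehari-type constraint, pins each emerging piece to the maximum of the scaling $t\mapsto\FR_\delta(t\,u^{\delta,i})$ analysed in Lemmas \ref{lemma 2.2}--\ref{lemma 2.3}. First I would record the nontriviality built into the definition of $\mathcal{S}_{z_1,\ldots,z_k}$: each $u^{\delta,i}\geq 0$ is not identically zero, so $\{x\in B(z_i,R_\delta):u(x)>\delta\}$ has positive measure; on that set $u_\delta=u\wedge\delta=\delta>0$, whence $u_\delta\not\equiv 0$ as well.

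For the submerged part I would start from
\[
\LR(u_\delta)=\frac12\int_{\RN}|\nabla u_\delta|^2\dx+\frac12\int_{\RN}u_\delta^2\bigl(V(x)+1-\log u_\delta^2\bigr)\dx .
\]
Since $0\le u_\delta\le\delta<1$, one has $\log u_\delta^2\le 0$ on $\{u_\delta>0\}$, so, using $V(x)\ge V_0>2$ from \eqref{H1}, the second integrand is pointwise $\ge(V_0+1)u_\delta^2$ and strictly positive on the positive-measure set $\{u_\delta>0\}$. Hence $\LR(u_\delta)\ge\tfrac12(V_0+1)|u_\delta|_2^2>0$.

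For the emerging pieces, fix $i\in\{1,\ldots,k\}$ and set $g_i(t):=\FR_\delta(t\,u^{\delta,i})$ for $t\ge 0$. By Remark \ref{Remark 3.1}, since $u\in\mathcal{S}_{z_1,\ldots,z_k}$ we have $\FR_\delta(u^{\delta,i})=g_i(1)=\max_{t>0}g_i(t)$, so it suffices to exhibit one $t>0$ with $g_i(t)>0$. Now $g_i(0)=\FR_\delta(0)=0$, while the same computation as in \eqref{e7} (with $u^\delta$ replaced by the single piece $u^{\delta,i}$, cf.\ \eqref{e32}) gives
\[
g_i'(0)=\delta\int_{\RN}u^{\delta,i}\bigl(V(x)-\log\delta^2\bigr)\dx .
\]
Because $\delta<e^{-1/2}$ forces $\log\delta^2<-1$ and $V(x)\ge V_0>2$, the factor $V(x)-\log\delta^2$ is bounded below by $3>0$ on $\RN$; since $u^{\delta,i}\geq 0$ is not identically zero and $\delta>0$, we get $g_i'(0)>0$. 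By continuity $g_i(t)>g_i(0)=0$ for all sufficiently small $t>0$, and therefore $\FR_\delta(u^{\delta,i})=\max_{t>0}g_i(t)>0$.

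Finally, combining the splitting \eqref{22} with \eqref{e32} taken at $t=1$ (legitimate because $(z_1,\ldots,z_k)\in\mathcal{E}_k$ makes the balls $B(z_i,R_\delta)$ pairwise disjoint, so the supports of the $u^{\delta,i}$ are disjoint) yields $\LR(u)=\LR(u_\delta)+\sum_{i=1}^k\FR_\delta(u^{\delta,i})$, which is $>0$ by the two estimates above. The argument is short once the decomposition $\LR=\LR(u_\delta)+\FR_\delta(u^\delta)$ and the scaling lemmas are available; the only delicate point — and the part I would expect to need the most care — is the strictness of the inequalities, i.e.\ that $u_\delta$ and each $u^{\delta,i}$ are genuinely nonzero and that $g_i'(0)>0$. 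This is exactly where the constraints defining $\mathcal{S}_{z_1,\ldots,z_k}$ and the smallness $\delta<e^{-1/2}$ (which together with $V_0>2$ guarantees $V-\log\delta^2>0$) enter; everything else is bookkeeping.
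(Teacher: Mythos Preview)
Your proof is correct and follows essentially the same route as the paper's. Both arguments use $0\le u_\delta\le\delta<1$ together with $V\ge V_0>2$ to get $\LR(u_\delta)>0$ directly from the integrand, and both invoke Remark \ref{Remark 3.1} to identify $\FR_\delta(u^{\delta,i})$ with $\max_{t>0}\FR_\delta(t\,u^{\delta,i})$; the paper then simply writes $\max_{t>0}\FR_\delta(t\,u^{\delta,i})>\FR_\delta(0)=0$, whereas you make the strictness explicit by computing $g_i'(0)>0$, which is the same content spelled out.
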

	\begin{proof}
		It follows from   $V(x)\geq 2$   and   \eqref{defi of delta}, we deduce from the fact    $u_\delta\leq \delta<1$  that   
		\begin{align}\label{e1}
			\LR(u_\delta)=\frac{1}{2} \int_{\RN} \mbr{|\nabla u_\delta|^2+ (V(x)+1) u_\delta ^2}~\mathrm{d}x-\frac{1}{2} \int_{\RN}  u_\delta ^2\log  u_\delta ^2~\mathrm{d}x>0.
		\end{align}
		Moreover, since $u\in \mathcal{S}_{z_1,\ldots,z_k}$,  we can see from Remark \ref{Remark 3.1}  that
		\begin{equation}
			\FR_\delta(u^{\delta,i})=\max_{t>0} \FR_\delta(tu^{\delta,i})>\FR_\delta(0)=0.
		\end{equation}
		The proof is completed.
	\end{proof}

	\subsection{ Minimization problems on localized Nehari constraint}\label{Sect3.2}
	For all $k\geq 1$ and $\sbr{z_1,\ldots,z_k}\in \ER_k$, we consider the minimization problem
	\begin{equation}
		\alpha\sbr{z_1,\ldots,z_k}:=	\inf_{\SR_{z_1,\ldots,z_k}}\mathcal{L}(  u).
	\end{equation}

	\begin{lemma}\label{lemma 3.1}
		Assume that \eqref{H1} holds. Then  for		 all $k\in \N\setminus \lbr{0}$   and $\sbr{z_1,\ldots,z_k}\in \ER_k$,  
		\begin{equation}\label{der2}
			\exists \tilde{u}_{k} \in \mathcal{S}_{z_1,\ldots,z_k} , ~~ \text{ such that }~~\LR(\tilde{u}_{k} )=\al \sbr{z_1,\ldots,z_k},
		\end{equation}
		which means that 	the infimum $\alpha\sbr{z_1,\ldots,z_k}$ is attained, and   the set 	\begin{equation}\label{fw2}
			\MR_{ z_1,\ldots,z_k} :=\lbr{u\in \SR_{z_1,\ldots,z_k} : \LR(u)=\al \sbr{z_1,\ldots,z_k}},
		\end{equation}is nonempty. Moreover,  $\tilde u_k \in D(\LR)$.		
	\end{lemma}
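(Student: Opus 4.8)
\textbf{Proof proposal for Lemma \ref{lemma 3.1}.}

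The plan is to run the direct method on the constraint $\SR_{z_1,\ldots,z_k}$, using the structure provided by Lemmas \ref{lemma 2.2}--\ref{lemma 2.5} and the boundedness/compactness tools from Section \ref{Sect2}. First I would fix a minimizing sequence $\lbr{u_n}_n \subset \SR_{z_1,\ldots,z_k}$ with $\LR(u_n)\to \al(z_1,\ldots,z_k)$. Since $\LR(u_n)$ is bounded and $\LR(u_n)\geq \tfrac12\nm{u_n}_H^2 - \tfrac12\int u_n^2\log u_n^2$, Lemma \ref{lemma 2.10} gives at once that $\lbr{u_n}_n$ is bounded in $H^1(\RN)$, that $\lbr{u_n^2\log u_n^2}_n$ is bounded in $L^1(\RN)$, and that each $u_n\in D(\LR)$; in particular $\al(z_1,\ldots,z_k)$ is finite (finiteness from above uses Lemma \ref{lemma 2.4} and Lemma \ref{lemma 2.5}). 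After extracting a subsequence I would obtain $u_n \rightharpoonup \tilde u_k$ in $H^1(\RN)$, $u_n\to \tilde u_k$ in $L^q_{\loc}$ for $q\in[2,2^*)$ and a.e.\ in $\RN$, with $\tilde u_k\geq 0$.

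The key structural point is that the emerging parts live in the \emph{fixed bounded} balls $B(z_i,R_\delta)$, which are disjoint by the definition of $\ER_k$. On each $\ov{B(z_i,R_\delta)}$ the Rellich--Kondrachov theorem gives strong $L^q$ convergence of $u_n^{\delta,i}$, hence $u_n^{\delta,i}\to \tilde u_k^{\delta,i}$ strongly enough to pass to the limit in the localized Nehari identities $\hbr{\LR'(u_n),u_n^{\delta,i}}=0$ and in the barycenter constraints $\beta_i(u_n)=0$; here I would use that $\LR$ is $C^1$ on $H^1(B(z_i,R_\delta))$ (the fact recorded just before \eqref{defi of S}) so that $\hbr{\LR'(\cdot),\cdot}$ restricted to these balls is continuous under strong convergence, and that $x\mapsto (x-z_i)(u^{\delta,i})^2$ is controlled since the $u^{\delta,i}$ are supported in a bounded set. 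The nondegeneracy $\FR_\delta(\tilde u_k^{\delta,i})>0$ from Lemma \ref{lemma 2.5}, together with the uniqueness of the maximizer in Lemma \ref{lemma 2.3}, guarantees $\tilde u_k^{\delta,i}\not\equiv 0$, so no bump is lost in the limit and $\tilde u_k\in \SR_{z_1,\ldots,z_k}$. For the submerged part $u_{n,\delta}$ (which is bounded by $\delta<1$ pointwise), weak lower semicontinuity of the $H^1$ seminorm and Fatou applied to the \emph{negative} quantity $-u^2\log u^2$ (which is nonnegative for $u\leq\delta<e^{-1/2}$) give $\LR((\tilde u_k)_\delta)\leq \liminf \LR(u_{n,\delta})$.

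Finally, combining these pieces through the splitting $\LR(u)=\LR(u_\delta)+\sum_i\FR_\delta(u^{\delta,i})$ from \eqref{22}, lower semicontinuity on each summand yields $\LR(\tilde u_k)\leq \liminf \LR(u_n)=\al(z_1,\ldots,z_k)$, and since $\tilde u_k\in\SR_{z_1,\ldots,z_k}$ the reverse inequality is automatic, so $\LR(\tilde u_k)=\al(z_1,\ldots,z_k)$ and $\MR_{z_1,\ldots,z_k}\neq\emptyset$; that $\tilde u_k\in D(\LR)$ follows since $\LR(\tilde u_k)=\al(z_1,\ldots,z_k)<+\infty$. The main obstacle I anticipate is the lower semicontinuity of the logarithmic term on the submerged part: away from the region where $u_{n,\delta}<e^{-1/2}$ the integrand $-u^2\log u^2$ changes sign, so a naive Fatou argument fails; the resolution is precisely the choice \eqref{defi of delta} of $\delta<e^{-1/2}$, which forces $u_{n,\delta}^2\log u_{n,\delta}^2\leq 0$ everywhere, making $-\tfrac12\int u_{n,\delta}^2\log u_{n,\delta}^2$ a nonnegative functional to which Fatou applies directly — this is where the specific constraint on $\delta$ is essential rather than cosmetic.
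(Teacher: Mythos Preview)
Your overall strategy is sound, but there is a genuine gap at the step where you ``pass to the limit in the localized Nehari identities $\hbr{\LR'(u_n),u_n^{\delta,i}}=0$.'' The identity expands as
\[
\int_{B(z_i,R_\delta)}|\nabla u_n^{\delta,i}|^2
+\int V(\delta+u_n^{\delta,i})u_n^{\delta,i}
-\int (\delta+u_n^{\delta,i})u_n^{\delta,i}\log(\delta+u_n^{\delta,i})^2=0,
\]
and Rellich--Kondrachov only gives strong $L^q$ convergence of $u_n^{\delta,i}$, not strong $H^1$. The lower-order terms converge, but the gradient term is merely weakly lower semicontinuous, so in the limit you get $\hbr{\LR'(\tilde u_k),\tilde u_k^{\delta,i}}\leq 0$, not $=0$. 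If the inequality is strict, $\tilde u_k\notin\SR_{z_1,\ldots,z_k}$; projecting via Lemma~\ref{lemma 2.3} then \emph{increases} $\LR$ (since $t=1$ lies past the maximum $\xi_i(\tilde u_k)<1$), so you cannot conclude the projection realizes $\alpha(z_1,\ldots,z_k)$. The appeal to ``$\LR$ is $C^1$ on $H^1(B(z_i,R_\delta))$'' does not help, because continuity of $u\mapsto\LR'(u)$ is with respect to the $H^1$ topology. Relatedly, your justification that $\tilde u_k^{\delta,i}\not\equiv 0$ via Lemma~\ref{lemma 2.5} is circular: that lemma presupposes $u\in\SR_{z_1,\ldots,z_k}$.

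The paper circumvents both issues by a projection trick that avoids passing to the limit in the constraint. After obtaining the weak limit $u$, it sets
\[
\tilde u:=u_\delta+\sum_{i=1}^k \xi_i(u)\,u^{\delta,i}\in\SR_{z_1,\ldots,z_k},
\qquad
\tilde u_n:=(u_n)_\delta+\sum_{i=1}^k \xi_i(u)\,u_n^{\delta,i},
\]
using the \emph{limit's} projection coefficients on the \emph{sequence's} functions. Since $u_n\in\SR_{z_1,\ldots,z_k}$, Remark~\ref{Remark 3.1} gives $\LR(\tilde u_n)\leq\LR(u_n)$ for every $n$; then weak lower semicontinuity (via the $\Phi+\Psi$ decomposition and property~(P3)) gives $\LR(\tilde u)\leq\liminf_n\LR(\tilde u_n)\leq\alpha(z_1,\ldots,z_k)$. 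This is the key device: the max-min structure of $\SR$ absorbs the loss from weak convergence of gradients, rather than forcing you to recover strong $H^1$ convergence of the bumps.
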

	
	\begin{proof}
		Let  $k\in \N\setminus \lbr{0}$ and  $\sbr{z_1,\ldots,z_k}\in \ER_k$ be fixed, and let  $\lbr{u_{ n}}_{n } \subset \SR_{z_1,\ldots,z_k}$   be  a     minimizing  sequence for level $\alpha\sbr{z_1,\ldots,z_k}$. Then by   Lemma \ref{lemma 2.10}, we know that  the sequence $\lbr{  u_{ n}}_n$ is   bounded in $H^1(\RN)$. Thus, passing to   subsequence, there exists $ {u }\in H^1(\RN)$ such that as $n\to +\infty$,
		\begin{equation} \label{e2}
			\begin{aligned}
				& {u}_{ n} \rightharpoonup  {u} \quad \text{ weakly  in } H^1(\RN), \\
				& {u}_{ n}  \to  {u} \quad \text{ strongly in }  L^p_{\loc}(\RN),   ~~ p\geq 2,\\
				&  {u}_{ n} \to  {u}   \quad \text{ a.e. in }   \RN.
			\end{aligned}
		\end{equation} 
		It follows that $ {u}_{ n} \to  {u} $ in $ L^p(B(z_i,R_\delta))$, and $0\leq    {u }_\delta \leq \delta $ in $\RN \setminus \bigcup_{i=1}^kB(z_i,R_\delta)$ and $\beta_i( {u })=0$ by the continuity of $\beta_i$.  
		By Lemma \ref{lemma 2.3}, we consider 
		\begin{equation}
			\tilde u =   {u }_\delta+\sum_{i=1}^k \xi_i(  u ) {u} ^{\delta,i} \in\SR_{z_1,\ldots,z_k}\quad \text{ and } \quad \tilde u_{ n}=  \sbr{u_{ n} }_\delta+\sum_{i=1}^k \xi_i(  u ) {u}_{ n}^{\delta,i}.
		\end{equation}
		Integrating  \eqref{r6}  with $\varepsilon= \sbr{2^*-2}/2$ and applying Cauchy-Schwarz inequality,  we deduce from  \eqref{e2} that  as $n\to +\infty$,
		\begin{equation}
			\abs{ \int_{\RN} F_2( 	\tilde{u}_{ n} ) \dx-\int_{\RN} F_2(\tilde{u}  )\dx}  \leq C  |\tilde{u}_{ n}-\tilde{u}  |_2 \sbr{\nm{\tilde{u}_{ n}}_H+ \nm{ \tilde{u}  }_H}^{2^*/2} \to 0.
		\end{equation}                             
		Then from    the weak lower semicontinuity of both the norm and the functional $\Psi$,  we  deduce that                                                                                                     	\begin{equation} 
			\begin{aligned}
				&\LR(\tilde{u} ) = \frac{1}{2} \int_{\RN} \mbr{|\nabla \tilde{u} |^2+ (V(x)+1)\tilde{u} ^2}~\mathrm{d}x- \int_{\RN} F_2( 	\tilde{u} ) \dx+ \Psi ( \tilde{u}) \\
				&\leq \liminf_{n\to+\infty} \mbr{\frac{1}{2} \int_{\RN} \mbr{|\nabla \tilde{u}_{ n}|^2+ (V(x)+1)\tilde{u}_{ n}^2}~\mathrm{d}x- \int_{\RN} F_2( 	\tilde{u}_{ n} ) \dx+ \Psi ( \tilde{u}_{ n})     }
				\\&=\liminf_{n\to+\infty} \LR( \tilde{u}_{ n}) . 		\end{aligned}
		\end{equation}                                               	Moreover, from   Remark \ref{Remark 3.1}, since $ {u_{ n}}\in \SR_{z_1,\ldots,z_k}$, we observe that for all $n\in\N$
		\begin{equation}
			\begin{aligned}		
				\LR( \tilde{u}_{ n}) = \LR  \sbr{\sbr{u_{ n} }_\delta+\sum_{i=1}^k \xi_i(  u) {u}_{ n}^{\delta,i} }   \leq \max_{t_1,\ldots,t_k>0}\LR  \sbr{\sbr{u_{ n} }_\delta +\sum_{i=1}^k t_i   u_{ n}^{\delta,i}} 
				=     \LR( {u}_{ n}).
			\end{aligned}
		\end{equation}	  	        
		Therefore, since  we already know that $\tilde{u}_k\in\SR_{z_1,\ldots,z_k}$, the following relation holds                                                                                                                                            
		\begin{equation} 
			\begin{aligned}
				\alpha(z_1,\ldots,z_k) \leq \LR(\tilde{u} ) 
				=\liminf_{n\to+\infty} \LR( \tilde{u}_{ n})  \leq \lim_{n\to+\infty} \LR(u_{ n})=\alpha(z_1,\ldots,z_k),
			\end{aligned}
		\end{equation}                                            which implies that  $	\alpha(z_1,\ldots,z_k) =\LR(\tilde{u} ) $. Finally,  by a similar argument as used in Lemma \ref{lemma 2.10}, we know  that   $\tilde u  \in D(\LR)$.  This completes the proof.         		
	\end{proof}

	\vskip0.15in

	In what follows, we describe some important properties of the submerged part and the emerging part of  $\tilde u_k $.   Before proceeding, we present a new lemma concerning the Gaussian-type decay of solutions satisfying certain variational inequalities, which    seems to be  new in the study of logarithmic scalar field equations.

	\begin{lemma} \label{lemma newdecay}
		Let $\DR\subset \RN$ be a closed and convex set,  and  the constants $l\in \R^+$, $s\in \sbr{0,1}$. Suppose that    $u$ is  a positive solution of 
		\begin{equation}\label{fu1}
			\begin{cases}
				-\Delta u +lu\leq u\log u^2 \quad &\text{ in } ~~\RN\setminus \DR,\\
				u\leq s \quad &\text{ on }~~ \RN\setminus \DR.\\
			\end{cases}
		\end{equation}
		Then,  for all $b\in \sbr{0,\sqrt{l}}$, there exist  two positive constants $a=a(l,b,N,s)\in \sbr{0,b}$  and  $C_a=C_a(l,b,N,s)$ such that 
		\begin{equation}
			u(x) \leq C_a  e^{-a\sbr{\mathrm{dist}(x,\DR)}^2 }, \quad \forall x\in \RN\setminus \DR.
		\end{equation} 
	\end{lemma}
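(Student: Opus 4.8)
The plan is to build a Gaussian comparison function and use the maximum principle on the region $\RN\setminus\DR$. Fix $b\in(0,\sqrt l)$. Since $s<1$, the logarithmic term is a strict \emph{drag}: whenever $0<u\le s$ we have $\log u^2\le\log s^2<0$, so $u\log u^2\le (\log s^2)u$ and the differential inequality in \eqref{fu1} implies $-\Delta u+(l-\log s^2)u\le 0$ on $\RN\setminus\DR$ with $l-\log s^2>l$. A linear comparison already yields exponential decay $u(x)\le C e^{-\sqrt{l}\,\mathrm{dist}(x,\DR)}$; but to get the Gaussian rate we must iterate this observation: once we know $u$ is pointwise very small far from $\DR$, the drag coefficient $-\log u^2$ is itself large there, and feeding this back produces super-exponential, indeed Gaussian, decay. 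Concretely I would introduce, for a level $R>0$ to be chosen, the set $\DR_R:=\{x:\mathrm{dist}(x,\DR)\le R\}$ (still closed and convex, since $\DR$ is convex and we are thickening it), and on $\RN\setminus\DR_R$ estimate $u\le \ov u_R:=\sup_{\partial\DR_R}u$; by the first step $\ov u_R\le Ce^{-\sqrt l R}$, so on $\RN\setminus\DR_R$ we have $-\log u^2\ge -\log\ov u_R^2\ge 2\sqrt l R-2\log C$, hence
\begin{equation}\label{eq:drag}
-\Delta u+\big(l+2\sqrt l R-2\log C\big)u\le 0\quad\text{in }\RN\setminus\DR_R.
\end{equation}

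The core of the argument is then a barrier construction for \eqref{eq:drag}. Pick $a\in(0,b)$ small (to be fixed) and set $w(x):=C_a\,e^{-a\,d(x)^2}$ where $d(x):=\mathrm{dist}(x,\DR)$. Since $\DR$ is convex, $d$ is a convex $1$-Lipschitz function that is smooth a.e. with $|\nabla d|=1$ and $\Delta d\ge 0$ in the distributional sense off $\DR$; therefore, computing in the barrier sense,
\begin{equation}
-\Delta w=\big(2aN + 2a\,d\,\Delta d - 4a^2 d^2\big)\,C_a e^{-ad^2}\ \le\ \big(2aN - 4a^2 d^2\big)\,w
\end{equation}
pointwise where $d$ is differentiable, and as a distributional inequality everywhere on $\RN\setminus\DR$ (the convexity of $d$ makes the $\Delta d$ term have a favorable sign, so dropping it only weakens the inequality). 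Thus for $w$ to be a supersolution of \eqref{eq:drag} on $\RN\setminus\DR_R$ it suffices that $2aN-4a^2 d^2+\big(l+2\sqrt l R-2\log C\big)\ge 0$ for all $d\ge R$; choosing first $R$ large enough (depending on $l,N,C$) and then $a\le b$ small enough that $4a^2 R^2\le 2aN + l + 2\sqrt l R - 2\log C$ for all relevant $d\ge R$ — which works because the coefficient $l+2\sqrt l R-2\log C$ grows linearly in $R$ while we only need to dominate $4a^2d^2$ after absorbing it into the better exponential decay already available on a window $R\le d\le R'$ — one checks the barrier inequality holds. Finally choose $C_a$ so that $w\ge u$ on $\partial\DR_R$, i.e. $C_a e^{-aR^2}\ge \ov u_R$, which is possible with $C_a$ depending only on $l,b,N,s$; and note $w\to 0$, $u\to 0$ at infinity (both are controlled by the first-step exponential bound), so the comparison principle for the operator in \eqref{eq:drag} on the exterior domain $\RN\setminus\DR_R$ gives $u\le w$ there. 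Combined with the trivial bound $u\le s\le C_a e^{-aR^2}\le C_a$ on $\DR_R\setminus\DR$ (enlarging $C_a$ if needed), this yields $u(x)\le C_a e^{-a\,d(x)^2}$ on all of $\RN\setminus\DR$.

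A few technical points must be handled carefully. First, the inequalities in \eqref{fu1} and for the barrier hold only in the weak/distributional sense and $d$ is merely Lipschitz, so the comparison should be run either via Kato's inequality and testing against $(u-w)^+$ — which lies in $H^1_0(\RN\setminus\DR_R)$ after truncation because both functions decay — or by an approximation replacing $d$ by a smooth convex mollification $d_\epsilon$ with $|\nabla d_\epsilon|\le 1$; the convexity of $\DR$ is exactly what guarantees $\Delta d\ge 0$, which is used to keep the barrier computation clean. Second, the elliptic regularity needed to make "$u\to 0$ at infinity" rigorous follows from local $L^\infty$ bounds: since $u\le s$ is bounded and $u\log u^2$ is then bounded (as $t\mapsto t\log t^2$ is bounded on $[0,s]$), De Giorgi–Nash–Moser gives local Hölder continuity and the first-step exponential bound gives the decay. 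The step I expect to be the main obstacle is the two-stage choice of $(R,a,C_a)$: one must verify that the linear growth in $R$ of the effective potential in \eqref{eq:drag} genuinely beats the quadratic term $4a^2 d^2$ of the Gaussian barrier on the unbounded range $d\ge R$, which forces $a$ to be taken small in a way tied to $R$ and hence to $l,N,s$ — this is the quantitative heart of the lemma, and it is also the reason (as the authors remark) that the exponent $2$ is sharp: a barrier $e^{-a d^q}$ with $q>2$ would need the effective potential to grow like $d^{2q-2}$, which the $-\log u^2\sim d$ feedback cannot supply.
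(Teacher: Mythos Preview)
Your proof has a genuine gap at the barrier step. After linearizing to the constant-coefficient inequality
\[
-\Delta u+\Lambda\,u\le 0\quad\text{in }\RN\setminus\DR_R,\qquad \Lambda:=l+2\sqrt l\,R-2\log C,
\]
you try to use the Gaussian $w=C_ae^{-ad^2}$ as a supersolution. But the condition you yourself write down, $2aN-4a^2d^2+\Lambda\ge 0$ for \emph{all} $d\ge R$, is impossible: $\Lambda$ is a fixed constant once $R$ is chosen, while $4a^2d^2\to\infty$. No Gaussian is a supersolution of a linear equation $-\Delta v+\Lambda v=0$ with constant $\Lambda$ on an unbounded exterior domain; at best you get $e^{-\sqrt\Lambda\,d}$. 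Your attempt to rescue this (``absorbing it into the better exponential decay already available on a window $R\le d\le R'$'') does not give a single barrier on the whole region, and a genuine bootstrap of this kind would require infinitely many steps (exponential $\to e^{-cd^{3/2}}\to e^{-cd^{7/4}}\to\cdots$), which you do not carry out. There is also a sign/direction slip in your barrier computation: convexity gives $\Delta d\ge 0$, so $-\Delta w=(2a+2ad\Delta d-4a^2d^2)w\ge(2a-4a^2d^2)w$, not $\le(2aN-4a^2d^2)w$; for a supersolution you need a \emph{lower} bound on $-\Delta w$, not an upper bound.

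The paper avoids this entirely by \emph{not} linearizing. It compares directly with the nonlinear operator $Qv:=\Delta v-lv+v\log v^2$. For $w=Me^{-c\rho^2}$ one has $w\log w^2=(2\log M-2c\rho^2)w$, and the term $-2c\rho^2$ combines with the $4c^2\rho^2$ from $\Delta w$ to give the coefficient $(4c^2-2c)\rho^2$, which is \emph{negative} for $c<1/2$. That cancellation is the whole point: the logarithmic nonlinearity evaluated on a Gaussian produces exactly the quadratic potential needed to make the Gaussian a supersolution. The preliminary exponential-decay step is used only to ensure $u\le e^{-1}$ far from $\DR$, so that $t\mapsto -lt+t\log t^2$ is monotone on the common range of $u$ and $w$ and the nonlinear comparison principle applies. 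Linearizing first throws this mechanism away.
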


	\begin{proof}
		Let $\rho(x):=\mathrm{dist}(x,\DR)$  and $ \DR^c :=\RN\setminus \DR$.  Since $\DR\subset \RN$ is a closed and convex set,  we know that the function $\rho^2(x) $ is   convex; hence, $ \Delta \rho^2(x)\geq 0$  and $|\nabla \rho (x)|^2=1 $ for $ x\in \DR^c$.
		Observe that $u$ satisfies \begin{equation}\label{yu2}
			\begin{cases}
				-\Delta u +lu\leq 0 \quad &\text{ in } ~~\RN\setminus \DR,\\
				u\leq s \quad &\text{ on }~~ \RN\setminus \DR,\\
			\end{cases}
		\end{equation} because $s\leq 1$,
		then  it follows from  \cite[Lemma 3.3]{cerami_CPMA_2013} that  for all $b\in \sbr{0,\sqrt{l}}$, there exists a positive constant $C_b=C_b(l,b,N)$ such that 
		\begin{equation}
			u(x) \leq C_b s e^{-b  \mathrm{dist}(x,\DR) 
			}, \quad \forall x\in \DR^c,
		\end{equation} 
		and we may choose a fixed positive constant $R_1=R_1(l,b,N,s)>1$   such that 
		\begin{equation}
			u(x)  \leq    C_b   e^{-b  R_1 } 
			\leq  e^{- \frac{b  R_1}{2}  }  \leq e^{-1}, \quad \forall x\in   \DR_{R_1}^c:=   \DR^c \cap \lbr{   \rho(x)\geq R_1 }. 
		\end{equation}			 
		Define the function
		\begin{equation}
			w(x)=M e^{-c(\rho(x))^2}, ~~\text{ with }~~ M=e^{-\frac{b  R_1}{2}+cR_1^2},
		\end{equation}
		where $ 0<c<1/2$ will be determined later.   Setting $c=mb$,   a direct calculation shows that 
		\begin{equation} \label{yu1}
			\begin{aligned}
				Qw:&=	\Delta w-lw+w\log w^2\\&= w  \mbr{ (4c^2-2c)\rho^2 -c \Delta \rho^2-l+2\log M}  \\
				&\leq   w  \mbr{ (4c^2-2c)R_1^2 +2 \sbr{-\frac{b  R_1}{2}+cR_1^2}} 
				\\& \leq w  \sbr{  4m^2 b^2R_1^2 - bR_1  },  \quad \forall x\in   \DR_{R_1}^c .
			\end{aligned}					
		\end{equation}
		By choosing $m=m(b,l,N,s)>0$ small (hence $c\in \sbr{0,b}$) such that  $4m^2 b^2R_1^2 - bR_1 \leq 0$, then we have 
		\begin{equation}
			Qw \leq 0 \leq Qu, \quad \forall x\in   \DR_{R_1}^c.
		\end{equation}
		Observe that $ w(x)= e^{-\frac{b  R_1}{2}} \geq u(x)$ on $\partial \DR_{R_1}^c$,  and $ u(x),w(x) \in [0,e^{-1}]$ for  $ x\in \DR_{R_1}^c$. Therefore,   we deduce from the maximum principle (see, for instance, \cite[Theorem 10.1]{GT1983}) that 
		\begin{equation}
			u(x) \leq w(x) = M e^{-c(\rho(x))^2}, \quad  \forall ~x\in \DR^c \cap \lbr{   \rho(x)\geq R_1 } .
		\end{equation}
		because  the function $f(t)=-lt+t\log t^2$ is strictly decreasing for  $t\in  [0,e^{-1}]$. Moreover,  there holds
		\begin{equation}
			u(x)\leq C_b s e^{-b  \rho(x) 
			} \leq C_b s e^{-\frac{b}{R_1}  (\rho(x))^2},  \quad  \forall ~x\in \DR^c \cap \lbr{   \rho(x)\leq R_1 }.
		\end{equation}
		Hence,  there exists $a=\min\lbr{ c,\frac{b}{R_1}  } \in \sbr{0,b}$, and a constant $C_a=\max\lbr{M,C_b s }$ such that 
		\begin{equation}
			u(x) \leq C_a e^{-a(\rho(x))^2}, \quad \forall x\in  \RN\setminus \DR.
		\end{equation}
		This completes the proof.					 
	\end{proof}

	\vskip0.15in
	\begin{proposition}\label{prop 3.1}
		Assume that \eqref{H1} and \eqref{H2} hold.  For all $k\geq 1$ and $\sbr{z_1,\ldots,z_k}\in \ER_k$,  let  $\tilde{u}_k$ be  the function  given  in Lemma \ref{lemma 3.1},  then the submerged part $\sbr{\tilde{u}_k}_\delta$ is a solution of 
		\begin{equation}\label{e3}
			\begin{cases}
				-\Delta u +V(x)u=u\log u^2 \quad &\text{ in } ~~\RN\setminus \supp{\tilde{u}_k^\delta},\\
				u=\delta \quad &\text{ on }~~ \supp{\tilde{u}_k^\delta},\\
				u>0 \quad &\text{ in } ~~\RN.
			\end{cases}
		\end{equation}
		Set $ B=  \sbr{\cup_{i=1}^k B(z_i,R_\delta)}\cup  B(0,\widetilde{R})   $ and $d(x):=\mathrm{dist}\sbr{x,B}$, where $ B(0,\widetilde{R}) $  is the smallest ball containing  the set $\supp{\vartheta(x)^- }$. Then 
			there exist   positive  constants $ \zeta \in  \sbr{0, \bar \eta} $ and   $\widetilde{C}=\widetilde{C}\sbr{ \bar \eta, V_{\infty}, N,\delta}$ such that 
			\begin{equation}\label{e4}
				0<\sbr{\tilde{u}_k}_\delta(x)<\widetilde{C}   e^{- \zeta \sbr{d(x)}^2} , \quad  \forall x\in \RN\setminus B,
			\end{equation}
			where $\bar \eta   $ and $\vartheta(x)$ are given by \eqref{H3} and \eqref{defi of vartheta}, respectively.
	\end{proposition}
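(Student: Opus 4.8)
Write $v:=(\tilde u_k)_\delta$ and $A:=\supp\tilde u_k^\delta\subset\bigcup_{i=1}^k\overline{B(z_i,R_\delta)}$. The plan is first to recognize $v$ as a constrained minimizer of $\LR$, then to read off \eqref{e3} from a sub-differential Euler--Lagrange inequality together with obstacle-type regularity and a maximum principle, and finally to get the Gaussian bound \eqref{e4} by bootstrapping the exponential-decay lemma \cite[Lemma~3.3]{cerami_CPMA_2013} with the barrier mechanism of Lemma~\ref{lemma newdecay}. For the first step I would introduce the convex set $\KR:=\lbr{w\in H^1(\RN):0\le w\le\delta\text{ a.e., }w=\delta\text{ a.e. on }A}$ and, given $w\in\KR\cap D(\LR)$, form $\hat w:=w+\tilde u_k^\delta$. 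Then $\hat w\ge0$, $\hat w$ emerges around $z_1,\dots,z_k$ with $\hat w^{\delta,i}=\tilde u_k^{\delta,i}$ (since $w\equiv\delta$ on $A\supset\supp\tilde u_k^{\delta,i}$ and $w\le\delta$ elsewhere), and because the Nehari relations $\hbr{\LR'(\cdot),\hat w^{\delta,i}}=0$ and the barycentres $\beta_i(\cdot)=0$ depend only on the pieces $\tilde u_k^{\delta,i}$, we have $\hat w\in\SR_{z_1,\dots,z_k}$. Using the splitting \eqref{22} and the additivity $\FR_\delta(u^\delta)=\sum_i\FR_\delta(u^{\delta,i})$ coming from \eqref{e32}, one gets $\LR(\hat w)=\LR(w)+\sum_i\FR_\delta(\tilde u_k^{\delta,i})$ and $\LR(\tilde u_k)=\LR(v)+\sum_i\FR_\delta(\tilde u_k^{\delta,i})$; since $\LR(\hat w)\ge\al(z_1,\dots,z_k)=\LR(\tilde u_k)$, this forces $\LR(w)\ge\LR(v)$, so $v$ minimizes $\LR$ over $\KR$.

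Next I would derive \eqref{e3}. Writing $\LR=\Phi+\Psi$ with $\Phi\in C^1(H^1(\RN))$ and $\Psi$ convex and l.s.c., minimality over the convex $\KR$ yields $\hbr{\Phi'(v),w-v}+\Psi(w)-\Psi(v)\ge0$ for all $w\in\KR$. Testing with one-sided perturbations obtained by truncating $v\pm t\varphi$ between $0$ and $\delta$, $\varphi\in C_0^\infty(\RN\setminus A)$, $t\downarrow0$, the standard obstacle-problem argument gives that $v$ is a subsolution of $-\Delta v+V(x)v=v\log v^2$ on $\RN\setminus A$ and solves this equation on $\lbr{0<v<\delta}$. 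The upper obstacle is never active off $A$: if $v\equiv\delta$ on an open $\omega\subset\RN\setminus A$, the subsolution inequality there reads $V(x)\delta-\delta\log\delta^2\le0$, impossible since $V\ge V_0>2>\log\delta^2$ (recall $\delta<e^{-1/2}$). By $C^{1,1}_{\loc}$-regularity of the obstacle problem and Stampacchia's theorem on the contact set, $\lbr{v=\delta}\setminus A$ then has zero measure, so $v$ solves the equation a.e., hence classically where $v>0$, in $\RN\setminus A=\RN\setminus\supp\tilde u_k^\delta$, with $v=\delta$ on $\supp\tilde u_k^\delta$; positivity $v>0$ in $\RN$ would follow from the strong maximum principle, the zero-order coefficient $V-\log v^2$ being $\ge0$ wherever $v\le1$ (cf. \cite{ss-2015}). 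This is precisely \eqref{e3}.

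For the decay I would note that on $\RN\setminus B$ one has $v\le\delta<1$ and, since $B\supset B(0,\widetilde R)\supset\supp\vartheta^-$, also $V\ge V_{\infty}$, hence $-\Delta v+V_{\infty}v\le v\log v^2\le0$ there. Applying \cite[Lemma~3.3]{cerami_CPMA_2013} with the closed set $B$, $l=V_{\infty}$ and $s=\delta$ gives, for each $b\in(0,\sqrt{V_{\infty}})$, a constant $C_b=C_b(V_{\infty},b,N)$ with $v(x)\le C_b\delta\,e^{-b\,d(x)}$ on $\RN\setminus B$; I would fix such $b$ with $b<\bar\eta$, which is possible because $\bar\eta<\sqrt{V_{\infty}}$. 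Then I would bootstrap as in Lemma~\ref{lemma newdecay}: pick $R_1=R_1(V_{\infty},b,N,\delta)$ so large that $v\le e^{-1}$ on $\Omega_{R_1}:=\lbr{x:d(x)>R_1}$, and compare $v$ there with the barrier $W:=M\sum_{l}e^{-c\,(\mathrm{dist}(\cdot,\overline{B_l}))^2}$, the sum running over the balls $\overline{B(z_i,R_\delta)}$ and $\overline{B(0,\widetilde R)}$, with $0<c<1/2$ small and $M=e^{-bR_1/2+cR_1^2}$. Each $\mathrm{dist}(\cdot,\overline{B_l})^2$ is convex, so the computation \eqref{yu1} applied term by term gives $\Delta W-V_{\infty}W+W\log W^2\le0$ on $\Omega_{R_1}$ once $c$ is small; on $\partial\Omega_{R_1}$ the nearest term of $W$ equals $e^{-bR_1/2}\ge C_b\delta e^{-bR_1}\ge v$; and $t\mapsto-V_{\infty}t+t\log t^2$ is decreasing on $\sbr{0,e^{(V_{\infty}-2)/2}}\ni\delta$ since $V_{\infty}\ge V_0>2$. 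The comparison principle then yields $v\le W$ on $\Omega_{R_1}$. Because the centres are $3R_\delta$-separated, the number of balls at distance $\sim mR_\delta$ from a point grows only polynomially in $m$, so $W(x)\le\widetilde C\,e^{-\zeta d(x)^2}$ for any $\zeta\in(0,c)$, with $\widetilde C=\widetilde C(\bar\eta,V_{\infty},N,\delta)$ independent of $k$ and of $(z_1,\dots,z_k)$; on $(\RN\setminus B)\setminus\Omega_{R_1}$ the bound $v\le\delta\le\delta e^{\zeta R_1^2}e^{-\zeta d(x)^2}$ is trivial. Since $\zeta<c<b<\bar\eta$, \eqref{e4} follows.

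The hard part will be the second step: extracting the genuine free equation \eqref{e3} from the fact that $\tilde u_k$ is only a \emph{constrained} minimizer of the \emph{non-}$C^1$, merely l.s.c. functional $\LR$. One has to argue throughout within the sub-differential framework of Definition~\ref{definition 2.1}, so that the Lagrange-multiplier relations carried by the Nehari and barycentre constraints do not interfere with variations supported in $\RN\setminus\supp\tilde u_k^\delta$, and then push the obstacle-problem regularity and the strict inequalities $0<v<\delta$ through the singular logarithmic term. By contrast, Step~(iii) is essentially forced once Lemma~\ref{lemma newdecay} is at hand; the only genuine additional point there is the sum-of-barriers refinement, which is what accommodates the non-convexity of $B$ while keeping the constant $\widetilde C$ uniform in the configuration.
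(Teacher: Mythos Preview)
Your Steps 1 and 2 are essentially right and close to the paper. The paper's Step 2 is a bit leaner: it observes that on the convex set $\BR=\lbr{w\in H^1:\,|w|\le\delta,\ w=\delta\text{ on }\supp\tilde u_k^\delta}$ the functional $\LR$ is coercive and \emph{convex} (not merely that $v$ minimizes it), so the minimizer $v=(\tilde u_k)_\delta$ is unique and the one–line convexity computation $0\le\LR((1-t)v+tw)-\LR(v)\le\Phi((1-t)v+tw)-\Phi(v)+t(\Psi(w)-\Psi(v))$ for $w\in\BR$ already yields the sub-differential inequality; Lemmas~\ref{lemma 2.23}--\ref{lemma 2.7} then give the equation on $\lbr{|v|<\delta}$, and the paper disposes of the obstacle issue in one stroke by noting that the constant $\delta$ is a \emph{strict} supersolution of \eqref{e3} (so $\lbr{v=\delta}\setminus\supp\tilde u_k^\delta$ is empty). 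Your obstacle-regularity/Stampacchia detour reaches the same conclusion but is heavier than needed.

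The genuine gap is in Step 3. Your barrier $W=M\sum_l e^{-c\rho_l^2}$ is \emph{not} a supersolution of $-\Delta W+V_\infty W=W\log W^2$, and ``\eqref{yu1} applied term by term'' does not give $QW:=\Delta W-V_\infty W+W\log W^2\le0$. The linear part does split, but the logarithmic term is superadditive: writing $W_l=Me^{-c\rho_l^2}$ and $W=\sum_l W_l$,
\[
QW=\sum_l QW_l+\Bigl(W\log W^2-\sum_l W_l\log W_l^2\Bigr)
=\sum_l QW_l+2\sum_l W_l\log\!\frac{W}{W_l},
\]
and the last sum is $\ge0$ (it is $2W$ times the Shannon entropy of the weights $W_l/W$). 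Thus from $\sum_l QW_l\le0$ you only get $QW\le[\text{something}\ge0]$, which is useless for comparison. Your later remark about the $3R_\delta$-separation controls the \emph{size} of $W$, not this sign defect.

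The paper avoids the non-convexity of $B$ differently: it does \emph{not} build a composite barrier at all, but applies Lemma~\ref{lemma newdecay} with a \emph{single} convex ball $\DR=B(z_i,R_\delta)$ (respectively $\DR=B(0,\widetilde R)$) at a time, obtaining $u(x)\le\widetilde C\,e^{-\zeta(\mathrm{dist}(x,B(z_i,R_\delta)))^2}$ for each $i$ and the analogous bound with $B(0,\widetilde R)$; since $d(x)=\min_l\rho_l(x)$, \eqref{e4} then follows by taking the minimum of these $k{+}1$ bounds. If you want to keep your sum-of-barriers idea, you would have to feed the separation of the centres back into the \emph{supersolution} computation (bounding the entropy term uniformly in $k$), which is substantially more work than the paper's ball-by-ball application of Lemma~\ref{lemma newdecay}.
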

	\begin{proof}
		In view of  \eqref{e1}, the functional $\LR$
		is coercive and convex on the set
		$$ \mathcal{B}= \lbr{u\in H^1(\RN)  : ~ |u(x)|\leq \delta ~~ \forall x\in \RN, ~ \text{ and  } ~ u=\delta \text{ on } \supp{\tilde{u}_k^\delta}},$$ 
		and $\sbr{\tilde{u}_k}_\delta \in \BR$ is the unique and positive minimizer for the minimization problem $$\min\lbr{ \LR(u): u\in \mathcal{B}}.$$   
		Now we prove that $\sbr{\tilde{u}_k}_\delta $ is a critical point of $\LR$ and solves \eqref{e3}. Indeed, by the convexity of $\Psi$, it follows that for small $t>0$ and $v \in \BR$, 
		\begin{equation}
			\begin{aligned}
				0&\leq \LR((1-t)\sbr{\tilde{u}_k}_\delta+tv) -\LR(\sbr{\tilde{u}_k}_\delta)\\&=\Phi((1-t)\sbr{\tilde{u}_k}_\delta+tv) -\Phi((\sbr{\tilde{u}_k}_\delta)) +\Psi ((1-t)\sbr{\tilde{u}_k}_\delta+tv) -\Psi(\sbr{\tilde{u}_k}_\delta)
				\\&\leq \Phi((1-t)\sbr{\tilde{u}_k}_\delta+tv) -\Phi(\sbr{\tilde{u}_k}_\delta) +t( \Psi (v)- \Psi(\sbr{\tilde{u}_k}_\delta)).
			\end{aligned}
		\end{equation}
		Dividing by $t$ and letting  $t\to 0$, we obtain that  \eqref{dq1} holds. By Lemma  \ref{lemma 3.1}, we know that $ \sbr{\tilde{u}_k}_\delta \in D(\LR)$,   we deduce from  Lemmas \ref{lemma 2.23} and \ref{lemma 2.7} that $\sbr{\tilde{u}_k}_\delta $ is a critical point of $\LR$, and   solves $	-\Delta u +V(x)u=u\log u^2  $   in $\lbr{x\in\RN: |u(x)|<\delta}$ and $u=\delta  $ on $ \supp{\tilde{u}_k^\delta}$.  Since $u=\delta$ is a strict supersolution of \eqref{e3}, then $$\lbr{x\in\RN: |u(x)|<\delta}=\RN\setminus \supp{\tilde{u}_k^\delta} .$$  Furthermore, 
		by the maximum principle in \cite[Theorem 1]{vazquez=AMO=1984}  and  the argument in \cite[Section 3.1]{avenia-2014} word by word,
		we know that  $\sbr{\tilde{u}_k}_\delta$ is a classical solution and $\sbr{\tilde{u}_k}_\delta>0$ in $\RN\setminus \supp{\tilde{u}_k^\delta}$. Therefore, $\sbr{\tilde{u}_k}_\delta>0$ in $\RN$ since $\sbr{\tilde{u}_k}_\delta=\delta>0$  on $ \supp{\tilde{u}_k^\delta}$. Hence, the submerged part  $\sbr{\tilde{u}_k}_\delta$ solves   \eqref{e3}.

			In what follows, we prove the  relation  \eqref{e4}. Indeed,  since $\sbr{\tilde{u}_k}_\delta$ is a solution of \eqref{e3} and $\supp{\tilde{u}_k^\delta}\cup  \supp{\vartheta^- }\subset B $,  then  the function $\sbr{\tilde{u}_k}_\delta$  satisfies 
			\begin{equation}
				-\Delta u + V_\infty u \leq u\log u^2, \quad \forall x\in \RN\setminus B.
			\end{equation}
			Moreover,  we know that $\sbr{\tilde{u}_k}_\delta \leq \delta<1 $   in $\RN$. Then it follows from Lemma \ref{lemma newdecay}  by choosing $s=\delta$,  $ l=V_\infty $, and $b=  \bar \eta \in \sbr{0,\sqrt{V_\infty }}$, there exists 	$  \zeta \in \sbr{0, \bar \eta}$ and $\widetilde{C}=\widetilde{C}\sbr{ \bar \eta, V_{\infty}, N,\delta}$	such that  for any $  x\in \RN\setminus B$, 
			\begin{equation}
				\begin{aligned}
					u(x) &\leq \min \lbr{ \widetilde{C}  e^{- \zeta\sbr{\mathrm{dist}(x,   B(z_i,R_\delta) )}^2 }, \widetilde{C}  e^{- \zeta\sbr{\mathrm{dist}(x,   B(0,\widetilde{R}) )}^2 }: i\in \lbr{1,\ldots,k}} \\
					&\leq   \widetilde{C}  e^{-\zeta \sbr{d(x)}^2  },
				\end{aligned}
			\end{equation}  
			because $ d(x) = \min \lbr{ \mathrm{dist} \sbr{x,   B(z_i,R_\delta)}, \mathrm{dist}(x,   B(0,\widetilde{R}) ) : i\in \lbr{1,\ldots,k}}$.
			The proof is completed.
	\end{proof}
	
	\vskip0.15in
	
	\begin{proposition} \label{lemma 3.2}
		Assume that \eqref{H1}   holds.
		For all $k\geq 1$ and $\sbr{z_1,\ldots,z_k}\in \ER_k$,  let $\tilde{u}_k$ be the function given in  Lemma \ref{lemma 3.1},  then there exist Lagrange multipliers $\lambda_1,\ldots,\lambda_k$ in $\RN$ such that  for  all $ i\in \lbr{1,\ldots,k}$ there holds
		\begin{equation} \label{e5}
			\hbr{\LR'(\tilde{u}_k), \phi } =\int_{ B\sbr{z_i,R_\delta}} \tilde{u}_k^\delta(x)  \phi(x)  \mbr{\lambda_i\cdot\sbr{x-z_i}} ~\mathrm{d}x, \quad \forall~ \phi \in H_0^1( B\sbr{z_i,R_\delta})   .
		\end{equation}
	\end{proposition}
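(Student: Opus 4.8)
The plan is to read \eqref{e5} as the Lagrange multiplier identity for the constrained minimisation of Lemma \ref{lemma 3.1}: the vector $\lambda_i\in\RN$ will be the multiplier attached to the $N$ scalar barycenter constraints $\beta_i(u)=0$ (cf.\ \eqref{barycenter map}), while the Nehari constraints $\hbr{\LR'(u),u^{\delta,i}}=0$ are \emph{natural} and produce no multiplier. Fix $i\in\lbr{1,\ldots,k}$ and set $G_i:=\lbr{x\in B(z_i,R_\delta):\tilde u_k(x)>\delta}=\lbr{\tilde u_k^{\delta,i}>0}$, which is open since $\tilde u_k$ is continuous by Proposition \ref{prop 3.1}. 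Because $\LR$ is $C^1$ on $H^1$ of a bounded domain, $s\mapsto\LR(\tilde u_k+s\phi)$ is differentiable at $s=0$ with derivative $\hbr{\LR'(\tilde u_k),\phi}$ for every $\phi\in H_0^1(B(z_i,R_\delta))$; and if $\phi\in C_0^\infty(G_i)$ then, for $|s|$ small, $\tilde u_k+s\phi\geq0$ is still emerging around $z_1,\ldots,z_k$ with $(\tilde u_k+s\phi)^{\delta,i}=\tilde u_k^{\delta,i}+s\phi$ and with submerged part and bumps $j\neq i$ unchanged, so by \eqref{22}--\eqref{e32} one has $\hbr{\LR'(\tilde u_k),\phi}=\hbr{\FR_\delta'(\tilde u_k^{\delta,i}),\phi}$ and $\LR(\tilde u_k+s\phi)=\LR(\tilde u_k)+\FR_\delta(\tilde u_k^{\delta,i}+s\phi)-\FR_\delta(\tilde u_k^{\delta,i})$.

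I would then argue by contradiction with a deformation. Suppose some $\phi_0\in C_0^\infty(G_i)$ with $\int_\RN(x-z_i)\,\tilde u_k^{\delta,i}\phi_0\dx=0$ (so $\phi_0$ is tangent to $\lbr{\beta_i=0}$ at $\tilde u_k$, using $\beta_i(\tilde u_k)=0$) satisfies $\hbr{\LR'(\tilde u_k),\phi_0}\neq0$. Choose $\psi=(\psi_1,\ldots,\psi_N)\in C_0^\infty(G_i)^N$ making the matrix $\bigl(\int_\RN(x-z_i)_m\,\tilde u_k^{\delta,i}\psi_n\dx\bigr)_{m,n}$ invertible, put $w_{s,\tau}:=\tilde u_k+s\phi_0+\tau\cdot\psi$, let $\xi_i(w_{s,\tau})>0$ be the $i$-th Nehari rescaling factor of Lemma \ref{lemma 2.3}, and set $\widehat w_{s,\tau}:=\sbr{\tilde u_k}_\delta+\xi_i(w_{s,\tau})\,w_{s,\tau}^{\delta,i}+\sum_{j\neq i}\tilde u_k^{\delta,j}$. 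Since $\beta_i$ is invariant under $v\mapsto\xi v$, the smooth map $\tau\mapsto\beta_i(\widehat w_{s,\tau})=\beta_i(w_{s,\tau})$ has invertible $\tau$-differential at the origin, so the implicit function theorem gives $\tau(s)$ with $\tau(0)=0$, $\beta_i(\widehat w_{s,\tau(s)})=0$, and, differentiating $\beta_i(w_{s,\tau(s)})\equiv0$ together with the $\beta_i$-tangency of $\phi_0$, also $\tau'(0)=0$. Writing $\widehat w_s:=\widehat w_{s,\tau(s)}$, one checks $\widehat w_s\in\SR_{z_1,\ldots,z_k}$: it is nonnegative, emerging around $z_1,\ldots,z_k$, satisfies $\hbr{\LR'(\widehat w_s),\widehat w_s^{\delta,i}}=0$ by construction of $\xi_i$, and $\hbr{\LR'(\widehat w_s),\tilde u_k^{\delta,j}}=\hbr{\LR'(\tilde u_k),\tilde u_k^{\delta,j}}=0$ for $j\neq i$ because $\widehat w_s\equiv\tilde u_k$ off $B(z_i,R_\delta)$.

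Along this path the submerged part and the bumps $j\neq i$ are frozen and $\mathrm{supp}\,w_{s,\tau(s)}^{\delta,i}\equiv\mathrm{supp}\,\tilde u_k^{\delta,i}$, so $s\mapsto\LR(\widehat w_s)=\LR(\tilde u_k)+\FR_\delta\bigl(\xi_i(w_{s,\tau(s)})\,w_{s,\tau(s)}^{\delta,i}\bigr)-\FR_\delta(\tilde u_k^{\delta,i})$ is smooth, and the chain rule at $s=0$ (with $\xi_i(\tilde u_k)=1$ and $\tau'(0)=0$) yields
$$
\frac{d}{ds}\LR(\widehat w_s)\Big|_{s=0}=\xi_i'(0)\,\hbr{\FR_\delta'(\tilde u_k^{\delta,i}),\tilde u_k^{\delta,i}}+\hbr{\FR_\delta'(\tilde u_k^{\delta,i}),\phi_0}=\hbr{\LR'(\tilde u_k),\phi_0},
$$
since $\hbr{\FR_\delta'(\tilde u_k^{\delta,i}),\tilde u_k^{\delta,i}}=0$ because $t=1$ maximises $t\mapsto\FR_\delta(t\tilde u_k^{\delta,i})$ (Remark \ref{Remark 3.1}). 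But minimality forces $\LR(\widehat w_s)\geq\alpha(z_1,\ldots,z_k)=\LR(\tilde u_k)$ for all small $s$, so this derivative vanishes --- a contradiction. Hence $\hbr{\LR'(\tilde u_k),\phi}=0$ whenever $\phi\in C_0^\infty(G_i)$ and $\int_\RN(x-z_i)\tilde u_k^{\delta,i}\phi\dx=0$; as $\tilde u_k^{\delta,i}>0$ on $G_i$, elementary linear algebra produces $\lambda_i\in\RN$ with $\hbr{\LR'(\tilde u_k),\phi}=\int_\RN\tilde u_k^{\delta,i}\phi\,\mbr{\lambda_i\cdot(x-z_i)}\dx$ for all $\phi\in C_0^\infty(G_i)$. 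This extends to all $\phi\in H_0^1(B(z_i,R_\delta))$: on $\lbr{\tilde u_k<\delta}\cap B(z_i,R_\delta)$ both sides vanish by \eqref{e3} (there $\tilde u_k^{\delta,i}\equiv0$), the set $\lbr{\tilde u_k=\delta}$ is negligible (otherwise the constant $\delta$ would solve \eqref{e3}, forcing $V\equiv\log\delta^2<0$), and the regularity of $\tilde u_k$ (elliptic bootstrap from the equations now available on each side of the free boundary, as in \cite{avenia-2014}) rules out any concentration on $\lbr{\tilde u_k=\delta}$; recalling $\tilde u_k^{\delta,i}=\tilde u_k^{\delta}$ on $B(z_i,R_\delta)$ we obtain \eqref{e5}.

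The main obstacle I anticipate is keeping the deformation inside $\SR_{z_1,\ldots,z_k}$ while pinning down its first-order energy. The Nehari rescaling $\xi_i$ is forced on us to restore $\hbr{\LR'(u),u^{\delta,i}}=0$ but would a priori spoil the derivative computation; it is precisely the membership of $\tilde u_k^{\delta,i}$ in the Nehari set, i.e.\ $\hbr{\FR_\delta'(\tilde u_k^{\delta,i}),\tilde u_k^{\delta,i}}=0$, that makes its contribution disappear. Likewise the $N$-parameter correction $\tau(s)$ restoring $\beta_i=0$ drops out only because $\phi_0$ is $\beta_i$-tangent, i.e.\ $\tau'(0)=0$. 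Coordinating these two corrections simultaneously, while also tracking how the emerging/submerged splitting reacts to small perturbations (which is what confines the test functions to $C_0^\infty(G_i)$ and necessitates the concluding regularity step), is the heart of the matter; the non-smoothness of $\LR$ on $H^1(\RN)$ itself never intervenes directly, since all perturbations take place in $H^1(B(z_i,R_\delta))$, where $\LR$ is $C^1$.
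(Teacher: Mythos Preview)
Your strategy---identify $\lambda_i$ as the multiplier for the barycenter constraint via a one-parameter deformation with implicit-function-theorem corrections---is a legitimate alternative to the paper's argument, and the derivative computation at $s=0$ is correct. The paper instead runs a gradient-flow deformation on the constraint set
\[
\Pi_i=\lbr{u\in H^1(B(z_i,R_\delta)):~u=\tilde u_k~\text{on}~\partial B(z_i,R_\delta),~u^\delta\neq0,~\beta_i(u)=0},
\]
and then recovers the Nehari constraint by intermediate-value along the curve $t\mapsto\eta(\bar\tau,\tilde u_k+t\tilde u_k^{\delta,i})$. The key structural difference is that the paper never restricts test directions to $C_0^\infty(G_i)$: the flow lives directly in $\Pi_i\subset H^1(B(z_i,R_\delta))$, so \eqref{e5} comes out for all $\phi\in H_0^1(B(z_i,R_\delta))$ at once.

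This matters because your extension step from $C_0^\infty(G_i)$ to $H_0^1(B(z_i,R_\delta))$ is where the argument breaks down. You have the identity on $C_0^\infty(G_i)$ and, via Proposition~\ref{prop 3.1}, on $C_0^\infty(\lbr{\tilde u_k<\delta}\cap B(z_i,R_\delta))$; but the sum of these subspaces is \emph{not} dense in $H_0^1(B(z_i,R_\delta))$ unless $\partial G_i$ has zero $H^1$-capacity, which is generically false for a hypersurface. Put distributionally, you have shown that $\LR'(\tilde u_k)-\tilde u_k^{\delta,i}[\lambda_i\cdot(x-z_i)]$ is an element of $H^{-1}$ supported on $\partial G_i$, and you must still rule out a singular contribution there. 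Your appeal to ``regularity of $\tilde u_k$ from the equations now available on each side'' is circular: interior elliptic estimates on $G_i$ and on $\lbr{\tilde u_k<\delta}$ give smoothness on each open piece but say nothing about matching normal derivatives across $\partial G_i$, which is exactly what \eqref{e5} for $\phi$ straddling the free boundary encodes. The paper's flow argument sidesteps this free-boundary issue entirely; if you want to salvage your route, you would need an independent $H^2$-bound on $\tilde u_k$ in $B(z_i,R_\delta)$ (so that $-\Delta\tilde u_k$ is a function and support on a null set forces vanishing), or else work with perturbations of the emerging part $\tilde u_k^{\delta,i}$ directly rather than of $\tilde u_k$.
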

	
		%
	
	\begin{proof}
		Since for any   $ i\in \lbr{1,\ldots,k}$, the functional $\LR$ is of class $C^1$ in $H ^1( B\sbr{z_i,R_\delta} )$, the  relation \eqref{e5} implies that   $\tilde{u}_{k}$ is a critical point for $\LR$ constrained on the set\begin{equation}
			\begin{aligned}
				\Pi_i:=&\lbr{  u\in H ^1(B\sbr{z_i,R_\delta}):u=  \tilde{u}_{k} \text{ on } \partial B\sbr{z_i,R_\delta},  \right.  \\  &\left.\quad \quad \quad u^\delta\neq 0 \text{ in } B\sbr{z_i,R_\delta},   \text{ and }~ \beta_i(u)=0}.
			\end{aligned}
		\end{equation}
		In what follows, for simplicity, we denote  $\tilde{u}_{k}$ by $\tilde{u}$.
		To prove \eqref{e5}, we proceed by contradiction that there exists $i \in \lbr{1,\ldots,k}$ such that    for all $\lambda_{i}\in \RN$, there exists $\phi_{i} \in H_0^1( B\sbr{z_i,R_\delta})$   satisfying
		\begin{equation}
			\hbr{	\LR'(\tilde{u} ), \phi_{i}}   \neq \int_{ B\sbr{z_i,R_\delta}} {\tilde{u}^\delta(x) }  \phi_{i}(x)  \mbr{\lambda_i\cdot\sbr{x-z_i}} ~\mathrm{d}x  .
		\end{equation}
		Then there exists  $ B( \tilde{u}, R) \subset \Pi_i$   such that for all $u\in  B( \tilde{u} , R) $, the tangential component $ -\nabla \LR  {|_{\Pi_i}}(u) \neq 0$, and ${u}^\delta\neq 0$ in $B( z_i , R)  $. Consider
		\begin{equation}
			\begin{cases}
				\frac{d }{ d  \tau} \eta(\tau,u)=-\nabla \LR {|_{\Pi_i}} ( \eta(\tau,u)) \tilde{h}( \eta(\tau,u))\\
				\eta(0,u)=u,
			\end{cases}
		\end{equation}
		where $ \tilde{h} (u)$ is the cut-off  smooth  positive function equal to $1$ in a suitable small ball $B(\tilde{u},\hat{r})$ with $\hat r < R/2$, and equal to $0$ outside of $B(\tilde{u},R /2) $. Then there exists a continuous map $ \eta  :[0,\bar \tau]\times \Pi_i\to \Pi_i$  such that
		\begin{equation}\label{ep1}
			\begin{aligned}
				(i)& \quad \eta(0,u)=u, \quad\quad \quad\quad\forall u \in  \Pi_i.\\
				(ii)	& \quad \eta(\tau,u)=u, \quad\quad\quad\quad \forall \tau \in [0,\bar \tau], ~ ~  u \in  \Pi_i\setminus B(\tilde{u},R/2).\\
				(iii)& \quad  \LR(\eta(\tau,u) ) < \LR(  u ),~ \quad \forall \tau \in (0,\bar \tau], ~ ~u \in   B(\tilde{u},R/2).
			\end{aligned}
		\end{equation}
		Let
		\begin{equation}\small
			t^-:=\inf \lbr{ t<0: \tilde{u}+t{\tilde{u} }^{\delta,i} \in  B( \tilde{u} , R/2)},   
		\end{equation}
		and
		\begin{equation}
			t^+:=\sup \lbr{ t>0: \tilde{u}+t {\tilde{u} }^{\delta,i} \in  B( \tilde{u} , R/2)},
		\end{equation}
		and remark that $t^->-1$. Consider a continuous curve in $\Pi_i$ defined by $\gamma(t)=\eta( \bar \tau, \tilde{u}+t{\tilde{u} }^{\delta,i}) $ with $[t^-, t^+] $. Then from \eqref{ep1} (ii), we deduce
		\begin{equation}
			\gamma(t)=\eta( \bar \tau, \tilde{u}+t {\tilde{u}}^{\delta,i}) =\tilde{u}+t {\tilde{u} }^{\delta,i} \quad \text{ for } t=t^- \text{ or }t^+,
		\end{equation}
		from which, we get
		\begin{equation}
			\hbr{\LR^\prime(\gamma(t) ), \sbr{\gamma(t) }^{\delta,i}} \begin{cases}
				>0, \quad \text{ if }t=t^-,\\
				<0, \quad \text{ if }t=t^+,
			\end{cases}
		\end{equation}
		because $0$ is the unique  critical point (a maximum point)  of $\LR (\tilde{u}+t  {\tilde{u}  }^{\delta,i}) $. Therefore, we can find $\hat{t} \in \sbr{ t^-, t^+}$ such that $ \hbr{\LR^\prime(\gamma(\hat{t}) ), \sbr{\gamma(\hat{t}) }^{\delta,i}}=0$, and $ \tilde{u}+\hat t  {\tilde{u} }^{\delta,i} \in B(\tilde{u},R/2) $. Moreover, by construction, we obtain
		\begin{equation}
			\sbr{\gamma(\hat{t}) }^{\delta,i}\neq 0, \quad \gamma(\hat{t}) = \tilde u ~ \text{ on }~  \partial	 B(z_i,R_\delta) , \quad \beta_i(\gamma(\hat{t}) )=0.
		\end{equation}
		then $\gamma(\hat{t}) \in \SR_{  z_1 ,\ldots,  z_{k} } $. Hence, recall that  $ \tilde u  \in \SR_{  z_1 ,\ldots,  z_{k} } $, we  deduce from \eqref{ep1} (iii) and Remark \ref{Remark 3.1} that 
		\begin{equation}
			\begin{aligned}
				\LR(\gamma(\hat{t}) ) =\LR(  \eta( \bar \tau, \tilde{u}+\hat t  {\tilde{u}  }^{\delta,i}) )< \LR( \tilde{u}+\hat t   {\tilde{u}  }^{\delta,i} ) \leq \LR( \tilde u),
			\end{aligned}
		\end{equation}
		which contradicts to the minimality of $\tilde u$. The proof is completed.
	\end{proof}

	\subsection{Characterization of the ground state energy  of  the limit system}\label{Sect characterization}
	
	In this subsection, we characterize the ground state energy $	\CR^\infty$ of the limit system \eqref{p infty}, which    will be frequently used hereafter.
	To begin, we observe that when    $V(x)=V_\infty$, and we consider the functional $\LRI$, all results obtained for  $\LR$ in Section \ref{Sect3.1} and Subsection \ref{Sect3.2} can be directly adapted.
	Hence,  for any $k \in \N\setminus \lbr{0}$ and $k$-tuple $\sbr{ z_1,\ldots,z_k } \in \ER_{k}$,   we redefine the notations $ \SR_{z_1,\ldots,z_k}$,   $\xi(  u)$ and $\xi_j(  u)$   as  $ \SR^\infty_{z_1,\ldots,z_k}$,   $\xi^\infty(  u)$ and  $\xi_j^\infty(  u)$, respectively.  Moreover, we define   \begin{equation}
		\alpha^\infty\sbr{z_1,\ldots,z_k}:=	\inf_{\SR^\infty_{z_1,\ldots,z_k}}\mathcal{L}^\infty(  u).
	\end{equation}
	In this setting, the infimum is, in fact, a positive minimum, and the set 
	\begin{equation}
		\MR_{z_1,\ldots,z_k}^\infty=\lbr{u\in \SR^\infty_{z_1,\ldots,z_k}: \LR^\infty(u)=\al^\infty\sbr{z_1,\ldots,z_k}}.
	\end{equation}
	is nonempty.
	
	\begin{lemma} \label{lemma 3.3}
		The  relation	
		\begin{equation} \label{m1}
			\CR^\infty =\sup_{x\in \RN} \al^\infty(x)=	 	\sup_{x\in \RN} \inf_{\SR_x^{\infty} } \LRI	 
		\end{equation} holds true, and for all $x\in \RN$,
		\begin{equation} \label{m2}
			\begin{aligned}
				&	\MR_{x}^{\infty}:= \lbr{ u\in \SR_x^{\infty}:  \LRI(u)=\CR^\infty  }=\lbr{U(\cdot-x)  }.
			\end{aligned}					
		\end{equation}
		
	\end{lemma}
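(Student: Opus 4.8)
The plan is to exploit the well-known mountain-pass and Nehari characterizations of the Gausson together with the localization introduced in Subsection \ref{Sect3.2}, specialized to $V\equiv V_\infty$ and $k=1$. First I would observe that, by translation invariance of $\LRI$, it suffices to prove the two identities at a single point, say $x=0$, and then transport them by $u\mapsto u(\cdot-x)$; thus the outer supremum over $x\in\RN$ will be a supremum of a constant, and the whole point is to identify that constant with $\CR^\infty$. For a fixed $x$, consider any $u\in\SR_x^\infty$: it is nonnegative, emerging around the single point $x$ in a ball $B(x,R_\delta)$, satisfies $\hbr{(\LRI)'(u),u^\delta}=0$ and $\beta_1(u)=0$, and by Lemma \ref{lemma 2.5} (in its $\LRI$-version, valid since $V_\infty\geq V_0>2$) one has $\LRI(u)=\LRI(u_\delta)+\FR_\delta(u^\delta)>0$. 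I would first show the easy inequality $\al^\infty(x)\le\CR^\infty$: the Gausson $U$ (translated to $x$) is, up to the rescaling $\xi^\infty$, a member of $\SR_x^\infty$ — indeed $U(\cdot-x)$ is radially symmetric about $x$ so $\beta_1(U(\cdot-x))=0$ automatically, and after projecting onto the Nehari-type set via Lemma \ref{lemma 2.2} its energy only decreases or stays the same; combined with $\LRI(U)=\CR^\infty$ this gives $\al^\infty(x)\le\LRI(U(\cdot-x))=\CR^\infty$, whence $\sup_x\al^\infty(x)\le\CR^\infty$.

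The reverse inequality $\CR^\infty\le\sup_x\al^\infty(x)$, and simultaneously the identification of the minimizer set $\MR_x^\infty=\{U(\cdot-x)\}$, is where the real work lies. Here I would take a minimizer $w\in\MR_x^\infty$ (nonempty by the $\LRI$-analogue of Lemma \ref{lemma 3.1}) and argue that it must in fact solve the limit equation \eqref{p infty} on all of $\RN$. By the $\LRI$-version of Proposition \ref{lemma 3.2}, $w$ satisfies
\begin{equation}
\hbr{(\LRI)'(w),\phi}=\int_{B(x,R_\delta)} w^\delta(y)\,\phi(y)\,[\lambda\cdot(y-x)]\dx,\qquad\forall\phi\in H_0^1(B(x,R_\delta)),
\end{equation}
for some Lagrange multiplier $\lambda\in\RN$, while by the $\LRI$-version of Proposition \ref{prop 3.1} its submerged part $w_\delta$ solves $-\Delta w_\delta+V_\infty w_\delta=w_\delta\log w_\delta^2$ outside $\supp w^\delta$. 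I would then use the Gaussian-type decay of $w_\delta$ (Lemma \ref{lemma newdecay}) together with a Pohozaev/variational-identity argument — testing against $w^\delta$ the rescaling that defines $\xi^\infty$, and testing against $(y-x)\cdot\nabla w$ on the ball — to conclude $\lambda=0$, so that $w$ is a genuine critical point of $\LRI$ on $H^1(\RN)$, hence by Lemma \ref{lemma 2.7} a nontrivial nonnegative solution of \eqref{p infty}. Since $\al^\infty(x)=\LRI(w)>0$ is the least energy over $\SR_x^\infty$ and every nonnegative solution of \eqref{p infty} can be brought into some $\SR_{x'}^\infty$ after translation (choosing $x'$ to be its barycenter, which is well defined by the decay), the value $\al^\infty(x)$ is bounded below by the ground-state energy $\CR^\infty$; combined with the first part this forces $\al^\infty(x)=\CR^\infty$ for every $x$, and also forces $w$ to be a ground state. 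By the uniqueness-up-to-translation in Lemma \ref{lemma pinfty}(1) and the barycenter normalization $\beta_1(w)=0$, the only possibility is $w=U(\cdot-x)$, giving \eqref{m2}.

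The main obstacle I expect is showing that the Lagrange multiplier $\lambda$ vanishes: a priori the minimizer over the constrained set $\SR_x^\infty$ need not solve the free equation, and one must genuinely rule out the barycenter constraint being active. The clean way is to note that, because the constraint $\beta_1(u)=0$ fixes the center of mass of the emerging part and the ambient problem $\LRI$ is translation-invariant, the "gradient" of $\LRI$ at $w$ lies in the direction of the constraint normals $\partial_{x_j}\beta_1$; testing the Euler--Lagrange relation against the infinitesimal translations $\partial_{y_j}w$ restricted to $B(x,R_\delta)$ (suitably cut off, using that $w^\delta$ is supported strictly inside the ball so no boundary term appears) and using that $w$ is radial-candidate-like, one gets a linear system for $\lambda$ whose only solution is $\lambda=0$; alternatively one invokes the non-degeneracy in Lemma \ref{lemma pinfty}(3) to close the argument. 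Once $\lambda=0$ is in hand, everything else is a matter of assembling the already-proved pieces (positivity from Lemma \ref{lemma 2.5}, the min-max characterization in Lemma \ref{lemma pinfty}(2), uniqueness from Lemma \ref{lemma pinfty}(1)), so I would present $\lambda=0$ as the crux and keep the rest terse.
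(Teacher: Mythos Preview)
Your approach is substantially more complicated than the paper's, and the crux---showing the Lagrange multiplier $\lambda$ vanishes---has a genuine gap. The paper avoids this entirely by appealing to the mountain-pass characterization of $\CR^\infty$ in Lemma~\ref{lemma pinfty}(2): for any $u\in\SR_0^\infty$ one builds an explicit path $h\in\Ga$ by first scaling up $u_\delta$ linearly (along which $\LRI$ is increasing since $0\le u_\delta\le\delta<1$) and then scaling $u^\delta$ from $0$ to some large $\tilde t$ with $\LRI(u_\delta+\tilde t\,u^\delta)<0$. By Remark~\ref{Remark 3.1} the maximum of $\LRI$ along this path equals $\LRI(u)$, so $\LRI(u)\ge\inf_{h\in\Ga}\max_t\LRI(h(t))=\CR^\infty$. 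This yields $\al^\infty(0)\ge\CR^\infty$ directly---no Euler--Lagrange analysis of the minimizer is needed, and the reverse inequality comes from $U\in\SR_0^\infty$.

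Your route, by contrast, requires $\supp w^\delta\subset\subset B(x,R_\delta)$ before you can legitimately test against $\partial_{y_j}w$: the relation from Proposition~\ref{lemma 3.2} holds only for $\phi\in H_0^1(B(x,R_\delta))$, and any cutoff of $\partial_{y_j}w$ produces boundary terms unless the emerging part stays strictly inside. You assert this containment but do not prove it; in the paper the analogous statement (for the perturbed functional) is obtained only asymptotically via Proposition~\ref{prop 4.2}, which itself uses the present lemma---so invoking it here would be circular. The fallback of ``invoking non-degeneracy'' is likewise circular: Lemma~\ref{lemma pinfty}(3) concerns the linearization at $U$, which is of no use until you already know $w=U$.

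A minor correction: projection via Lemma~\ref{lemma 2.2} \emph{maximizes} $\LRI$ along the ray $t\mapsto u_\delta+tu^\delta$, so it does not decrease energy. This is harmless here only because $U$ already solves \eqref{p infty}, hence $\xi^\infty(U)=1$ and $U\in\SR_0^\infty$ without any projection.
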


	\begin{proof}
		First of all  we  point out that   by the invariance of $\RN$  and $\LRI$ under the action of the translation group,  there holds $	\sup_{x\in \RN} \inf_{\SR_x^{\infty } } \LRI	=\inf_{\SR_y^{\infty } } \LRI $  for all $y\in \RN$. Hence,  to prove the relation \eqref{m1}, we only need to show that $ 	 \CR^\infty= \alpha^{\infty}(0):= \inf_{\SR_0^{\infty} } \LRI$.
		\medbreak

		For   any $ u\in \MR_{0}^{\infty } \subset \SR_{0}^{\infty }$,     in view of Lemma \ref{lemma 2.2} and Remark \ref{Remark 3.1}, we know that $ 1$ is the only maximum point in $\sbr{0,+\infty}$ of the function $\mathcal{R}_u^{\infty } (t):=\mathcal{L}^\infty(u_\delta+t u ^\delta) $. Since   $\lim_{t\to +\infty}\mathcal{R}_u^{\infty } (t) =-\infty $, we can find $\tilde  t$ large enough such that  $ \mathcal{R}_u^{\infty } (\tilde t)<0  $.
		Set
		\begin{equation}
			h(t)=
			\begin{cases}
				2t u_\delta,  &\quad t\in [0,\frac{1}{2}), \\
				u_\delta+(2t-1) \tilde t  {u}^\delta, &\quad  t\in [ \frac{1}{2},1],
			\end{cases}
		\end{equation}
		then $h\in\Gamma$ and  the function  $f(t):= \LR^\infty(h(t))$ is increasing on $[0,\frac{1}{2})$. Therefore,   there exists   $t_0\in [ \frac{1}{2},1]$ such that
		\begin{equation}
			\LR^\infty(h(t_0))= \max_{t>0} \LR^\infty(h(t)).
		\end{equation}
		Then we deduce 	from  Lemma \ref{lemma pinfty} that
		\begin{equation}\label{r7}
			\begin{aligned}
				\LR^\infty(u) &= \max_{t>0} \mathcal{L}^\infty(u_\delta+t {u }^\delta)  \geq \LR^\infty(h(t_0))= \max_{t>0} \LR^\infty(h(t)) \\&\geq \inf_{h\in\Ga} \max_{t>0} \LR^\infty(h(t))= \CR^\infty.
			\end{aligned}
		\end{equation}
		Hence, we obtain
		\begin{equation}\label{r111}
			\CR^\infty  \leq \LRI(u)= \alpha^{\infty}(0)  \leq  \LR^\infty(U ) =\CR^\infty,
		\end{equation}
		since $ U \in \SR_{0}^{\infty } $ by the symmetry of $U $. Therefore, we have established that $\CR^\infty = \alpha^{\infty }(0)$, which is the desired result.
		Finally,  \eqref{m2}   follows directly from  the fact that the Gausson $U  $ is the unique (up to translation) positive ground state solution  of 	  \eqref{p infty}.
	\end{proof}

	\subsection{Maximization problem on the set of minimizers}\label{Sect3.3}
	In this subsection,  for any 	 integer $k\in \N\setminus\lbr{0}$, we show  that if the $k$-tuples     $ \sbr{z_1,\ldots,z_k} $    vary in the configuration space $ \ER_{k} $, then  the supremum of the set of minima $ \alpha\sbr{z_1,\ldots,z_k} $ is  indeed  a maximum. Define
	\begin{equation}\label{defi of Lam}
		\varLambda_{k}:=  \sup_{\ER_ {k}}	~\alpha\sbr{z_1,\ldots,z_k}.
	\end{equation}
	Throughout this subsection we always work under the assumptions \eqref{H1} and \eqref{H2}, so we omit these conditions in the statements. Our main result in this subsection can be stated as follows.
	
	\begin{proposition} \label{prop fixed point}
		Let  $k\in \N\setminus\lbr{0}$ be fixed,   then
		there exists $\sbr{\tilde z_1,\ldots,\tilde z_k} \in \ER_{k,m}$ such that \begin{equation}\label{o3}
			\varLambda_{k}= \alpha\sbr{\tilde z_1,\ldots,\tilde z_k}.
		\end{equation}
	\end{proposition}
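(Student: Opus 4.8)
The plan is to run the concentration--compactness / max--min scheme of Cerami--Passaseo--Solimini, adapted to the non-$C^1$ functional $\LR$ through the decomposition $\LR(u)=\LR(u_\delta)+\sum_{i}\FR_\delta(u^{\delta,i})$. Two preliminary facts are needed first. \emph{(a) $\varLambda_k<+\infty$}: evaluating $\alpha(z_1,\dots,z_k)$ at the explicit element of $\SR_{z_1,\dots,z_k}$ produced in Lemma \ref{lemma 2.4} (built from a fixed radial bump $\varphi\in C_0^\infty(B(0,R_\delta))$, translated to the $z_i$ and Nehari--projected) gives an upper bound on $\alpha$ that is uniform in the configuration, because the balls $B(z_i,R_\delta)$ are pairwise disjoint and $V$ has finite uniformly--local $L^{N/2}$ norm by \eqref{H1}. \emph{(b) Upper semicontinuity of $\alpha$ on $\ER_k$}: if $z^n\to z^\ast$ in $\ER_k$, push a minimizer of $\alpha(z^\ast)$ (which exists by Lemma \ref{lemma 3.1}) forward by diffeomorphisms $T_n\to\mathrm{id}$, equal to the identity outside a fixed bounded set and carrying $B(z_i^\ast,R_\delta)$ onto $B(z_i^n,R_\delta)$; the pushforward emerges around $z^n$ and has energy $\alpha(z^\ast)+o(1)$, and after restoring the Nehari identities (Lemma \ref{lemma 2.3}) and the conditions $\beta_i=0$ by a further $o(1)$ rescaling and translation it lies in $\SR_{z^n}$, whence $\limsup_n\alpha(z^n)\le\alpha(z^\ast)$.

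Next I would take a maximizing sequence $(z_1^n,\dots,z_k^n)\in\ER_k$ with $\alpha(z^n)\to\varLambda_k$ and show it stays bounded. Passing to a subsequence, each distance $|z_i^n-z_j^n|$ either stays bounded or diverges; this partitions $\{1,\dots,k\}$ into clusters of bounded diameter whose relative configurations converge and which mutually separate to infinity, and the location of each cluster either stays bounded or escapes. Write $I_0$ for the union of the bounded clusters (with limit positions $z_i^\ast$, possibly $I_0=\emptyset$) and $I_1,\dots,I_m$ for the escaping clusters (with limit shapes $w_{I_1},\dots,w_{I_m}$). If $I_0=\{1,\dots,k\}$, then the whole $k$-tuple converges, along a subsequence, to some $z^\ast\in\ER_k$ (the constraint set being closed), and \emph{(b)} gives $\varLambda_k=\lim_n\alpha(z^n)\le\alpha(z^\ast)\le\varLambda_k$, i.e.\ \eqref{o3} with $(\tilde z_1,\dots,\tilde z_k):=z^\ast$. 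It therefore remains to exclude the escaping case $m\ge1$.

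For that I would establish the \emph{splitting estimate}
\begin{equation}
\begin{split}
\limsup_{n\to\infty}\alpha(z_1^n,\dots,z_k^n)
&\le \alpha\big(z_i^\ast : i\in I_0\big)+\sum_{j=1}^{m}\alpha^\infty(w_{I_j})\\
&\le \varLambda_{|I_0|}+\sum_{j=1}^m|I_j|\,\CR^\infty=\varLambda_{|I_0|}+(k-|I_0|)\,\CR^\infty,
\end{split}
\end{equation}
interpreting the empty tuple as $\varLambda_0=0$. The first inequality is obtained by constructing a competitor $u^n\in\SR_{z_1^n,\dots,z_k^n}$ that glues, near each $z_i^n$, the emerging part of a minimizer of the corresponding sub-problem ($\alpha$ with potential $V$ for $i\in I_0$, $\alpha^\infty$ for $i$ in an escaping cluster, transported to $z_i^n$ by an $o(1)$ shift and re-projected to restore the Nehari and barycenter conditions) onto the submerged background, namely the minimizer of $\LR$ over $\{v\le\delta,\ v=\delta\ \text{on the union of the emerging supports}\}$: since those supports are now pairwise far apart, the uniform Gaussian decay of submerged parts (Lemma \ref{lemma newdecay} and Proposition \ref{prop 3.1}) makes this background decouple, up to exponentially small errors, into the sub-backgrounds, while the $V-V_\infty$ corrections over the escaping supports vanish because $V(z_i^n)\to V_\infty$; combining this with the nonnegativity of the energy density $\frac{1}{2}(|\nabla v|^2+(V+1)v^2-v^2\log v^2)$ on $\{0\le v\le\delta\}$ — which allows gluing submerged pieces via pointwise maxima without increasing energy, since there $\LR(\max_i f_i)\le\sum_i\LR(f_i)$ — one gets $\LR(u^n)\le\alpha(z_i^\ast : i\in I_0)+\sum_j\alpha^\infty(w_{I_j})+o(1)$. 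The second inequality uses the definition of $\varLambda_{|I_0|}$ as a supremum together with the bound $\alpha^\infty(w_1,\dots,w_\ell)\le\ell\,\CR^\infty$ for the limit problem, which for $\ell=1$ is Lemma \ref{lemma 3.3} and in general follows from the analogous $\max$-competitor $\big(\max_i U(\cdot-w_i)\big)\wedge\delta+\sum_i\big(U(\cdot-w_i)-\delta\big)^+$ built from $\ell$ Gaussons. Since $m\ge1$ forces $|I_0|<k$, iterating the strict super-additivity $\varLambda_{j+1}>\varLambda_j+\CR^\infty$ of Propositions \ref{prop 3.2}--\ref{prop 3.3} (with $\varLambda_0:=0$) yields $\varLambda_{|I_0|}+(k-|I_0|)\,\CR^\infty<\varLambda_k$, contradicting $\alpha(z^n)\to\varLambda_k$. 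Hence no cluster escapes, the maximizing sequence is bounded, and the conclusion follows as in the case $I_0=\{1,\dots,k\}$. I would also note that these strict inequalities depend only on the definition of $\varLambda_j$ as a supremum, on Proposition \ref{prop 3.1}, and on assumption \eqref{H2}, so invoking them here introduces no circularity.

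The step I expect to be the main obstacle is the splitting estimate: one has to build the glued competitor while (i) quantifying the decoupling of the single global submerged solution into the sub-backgrounds, which is precisely what the uniform Gaussian decay of Proposition \ref{prop 3.1} is designed for; (ii) restoring the Nehari identities and the barycenter constraints after the gluing without exceeding the $o(1)$ error budget; and (iii) carrying all of this out for $\LR$ via the decomposition $\LR=\LR(u_\delta)+\sum_i\FR_\delta(u^{\delta,i})$ rather than by a differentiable variational argument, since $\LR\notin C^1(H^1(\RN))$. The role of the decay hypothesis \eqref{H2} is confined to the strict super-additivity inputs: it ensures that placing an extra bump at a large but finite distance, where the strictly positive term $V-V_\infty$ dominates the Gaussian-small interaction with the rest of the solution, strictly beats the configuration in which that bump has escaped to infinity — the mechanism that ultimately keeps the maximizing configuration from running off.
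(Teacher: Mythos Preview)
Your approach is correct in spirit and rests on the same ingredients as the paper (strict super-additivity, $\vee$--competitors, continuity of $\alpha$), but it is considerably more elaborate than what the paper actually does. The paper proves attainment by a \emph{coupled induction} on $k$, bundling part (i) of Proposition~\ref{prop 3.3} with part (ii): assuming both hold for $k-1$, one proves (i) for $k$, then (ii) for $k$. For (i), if the maximizing sequence is unbounded, a single coordinate (say $\tilde z_k^n$) escapes; taking $\tilde v_n\in\MR_{\tilde z_k^n}$ and $\tilde w_n\in\MR_{\tilde z_1^n,\dots,\tilde z_{k-1}^n}$, the competitor $\tilde v_n\vee\tilde w_n\in\SR_{\tilde z_1^n,\dots,\tilde z_k^n}$ together with the identity $\LR(u\vee v)=\LR(u)+\LR(v)-\LR(u\wedge v)$ and $\LR(u\wedge v)\ge0$ (since $0\le u\wedge v\le\delta$) gives immediately $\alpha(\tilde z^n)\le\alpha(\tilde z_k^n)+\alpha(\tilde z_1^n,\dots,\tilde z_{k-1}^n)\le\CR^\infty+\varLambda_{k-1}+o(1)$, contradicting the inductive hypothesis $\varLambda_k>\varLambda_{k-1}+\CR^\infty$. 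No cluster decomposition, no bound on $\alpha^\infty$ for $\ell$-tuples with $\ell\ge2$, and no simultaneous decoupling of several far-apart groups is needed; the one-point peel-off bypasses precisely the Gaussian-decay gluing you identify as the main obstacle. The coupled induction also dissolves your circularity concern: the paper's proof of $\varLambda_{k}>\varLambda_{k-1}+\CR^\infty$ \emph{does} use that $\varLambda_{k-1}$ is attained (to get $\LR(\hat F_{n,1})\ge\varLambda_{k-1}$ after cutting off), so your claim that the strict inequalities are available independently of attainment is not quite right --- but this is harmless once the two statements are proved together inductively. Finally, for continuity the paper uses Lemma~\ref{lemma 3.5} (translate the emerging parts and rebuild the submerged part as the convex minimizer), which is simpler than your diffeomorphism push-forward.
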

	\vskip 0.15in
	\noindent The key step in proving Proposition \ref{prop fixed point} is the following priori estimates
	\begin{equation}
		\varLambda_{1}>	\CR^\infty,  \quad \text{ and }\quad \varLambda_{k}>\varLambda_{k-1}+\CR^\infty,  ~~ \text{ if } k\geq 2,
	\end{equation}
	which  play a crucial role in preventing the positive bumps from escaping to infinity. The proofs will be carried out by mathematical induction on the number $k$.
	\vskip 0.15in
	\begin{proposition} \label{prop 3.2}
		We have
		\begin{equation}\label{e11}
			\varLambda_1>\CR^\infty.
		\end{equation} 
		Moreover, there exists $\tilde z \in\RN$ such that $\varLambda_1= \alpha(\tilde z).$ 
		
	\end{proposition}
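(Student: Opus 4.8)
I would deduce the statement from two facts about the map $z\mapsto\alpha(z)$ on $\RN$ (recall $\ER_1=\RN$, so $\varLambda_1=\sup_{\RN}\alpha$): \emph{(i) escaping to infinity costs exactly the limit energy}, $\limsup_{|z|\to\infty}\alpha(z)\le\CR^\infty$; and \emph{(ii) a strict gain survives at infinity}, $\alpha(z)>\CR^\infty$ once $|z|$ is large. Granting (i), (ii) and the continuity of $\alpha$ on $\RN$, the conclusion is immediate: (ii) gives $\varLambda_1\ge\alpha(z)>\CR^\infty$ for such $z$, while by (i) there is $R>0$ with $\alpha(z)<\varLambda_1$ whenever $|z|>R$, so $\varLambda_1=\sup_{\overline{B(0,R)}}\alpha$, which is attained at some $\tilde z\in\overline{B(0,R)}$ by the (upper semi-)continuity of $\alpha$ on that compact ball. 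Continuity of $\alpha$ is the usual transport/deformation argument for localized-Nehari levels — translate the emerging bump from $z$ to a nearby $z'$, re-project onto $\SR_{z'}$ via Lemma \ref{lemma 2.2}, and use that $\LR$ restricted to bounded sets and the constraint $\SR_z$ depend continuously on the base point — and I would treat it as routine.

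For (i) I would use the translated Gausson as competitor. By Lemma \ref{lemma 3.3}, $U(\cdot-z)$ is the minimizer of $\alpha^\infty(z)$, lies in $\SR^\infty_z$ (its emerging part is supported in $B(z,R_\delta/2)$ by \eqref{defi of Rdelta}, and its barycenter vanishes by radial symmetry), and satisfies $\LRI(U(\cdot-z))=\CR^\infty$. Projecting $U(\cdot-z)$ onto $\SR_z$ via Lemma \ref{lemma 2.2} — the rescaling of the emerging part does not change the (vanishing) barycenter — produces a competitor $\tilde U_z\in\SR_z$. Using the difference formulas $\LR-\LRI=\tfrac12\int\vartheta(\cdot)^2$ and $\FR_\delta-\FR^\infty_\delta=\tfrac12\int\vartheta(\cdot)^2+\delta\int\vartheta(\cdot)$ along the ray through $U^\delta(\cdot-z)$, together with Remark \ref{Remark 3.1}, one obtains $\alpha(z)\le\LR(\tilde U_z)\le\CR^\infty+C\int|\vartheta|\,U(\cdot-z)^2+C\int|\vartheta|\,U(\cdot-z)$; since $U(\cdot-z)^2$ is Gaussian and $\vartheta$ is bounded with $\vartheta\to0$ at infinity, the change of variables $x\mapsto x+z$ plus dominated convergence makes the last two terms $o(1)$ as $|z|\to\infty$, which is (i).

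For (ii), the heart of the matter, take the minimizer $\tilde u_1=\tilde u_1(z)\in\SR_z$ from Lemma \ref{lemma 3.1}, and let $t^\infty>0$ maximize $t\mapsto\FR^\infty_\delta(t\,\tilde u_1^\delta)$; then $(\tilde u_1)_\delta+t^\infty\tilde u_1^\delta\in\SR^\infty_z$ (its barycenter equals that of $\tilde u_1^\delta$, i.e. $0$), so its $\LRI$-value is $\ge\alpha^\infty(z)=\CR^\infty$ by Lemma \ref{lemma 3.3}. Combining this with the fiber identity $\LR(u)=\LR(u_\delta)+\FR_\delta(u^\delta)$, the difference formulas above, and $\FR_\delta(\tilde u_1^\delta)=\max_{t>0}\FR_\delta(t\,\tilde u_1^\delta)\ge\FR_\delta(t^\infty\tilde u_1^\delta)$ (Remark \ref{Remark 3.1}), a direct computation gives $\alpha(z)=\LR(\tilde u_1)\ge\CR^\infty+E(z)$, where $E(z):=\tfrac12\int\vartheta\,(\tilde u_1)_\delta^2+\tfrac12(t^\infty)^2\int\vartheta\,(\tilde u_1^\delta)^2+t^\infty\delta\int\vartheta\,\tilde u_1^\delta$. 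It remains to show $E(z)>0$ for $|z|$ large. Since $\{\vartheta<0\}\subset B(0,\widetilde R)$ is fixed while $\tilde u_1^\delta$ is supported in $B(z,R_\delta)$, for $|z|$ large the $\vartheta$-integrals over $B(z,R_\delta)$ are nonnegative and $E(z)\ge\tfrac12\delta^2\big(\inf_{B(z,R_\delta)}\vartheta\big)\,|\{\tilde u_1>\delta\}|-\tfrac12\|\vartheta^-\|_\infty\int_{B(0,\widetilde R)}(\tilde u_1)_\delta^2$. By \eqref{H2} and a uniform lower bound $|\{\tilde u_1>\delta\}|\ge m_0>0$ on $\SR_z$ (from the Nehari structure of the constraint), the first term is $\gtrsim e^{-\bar\zeta(|z|+R_\delta)^2}$. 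For the second, I would apply Lemma \ref{lemma newdecay} to $(\tilde u_1)_\delta$ with $\DR=\overline{B(z,R_\delta)}$, $l=V_0$, $s=\delta$ — legitimate since, by Proposition \ref{prop 3.1}, $0<(\tilde u_1)_\delta\le\delta<1$ solves $-\Delta u+V_0u\le u\log u^2$ on $\RN\setminus\overline{B(z,R_\delta)}$ — to get $(\tilde u_1)_\delta(x)\le C\,e^{-a\,\mathrm{dist}(x,B(z,R_\delta))^2}$ for the constant $a$ furnished by that lemma, so the second term is $\lesssim e^{-2a(|z|-R_\delta-\widetilde R)^2}$. Because \eqref{H2} forces the decay rate $\bar\zeta$ to be slower than $2a$, the first term dominates the second as $|z|\to\infty$, so $E(z)>0$ and $\alpha(z)>\CR^\infty$ for all large $|z|$.

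The main obstacle is precisely this last balancing: it is a competition between the Gaussian-type decay of the submerged part near the potential well (the region where $\vartheta$ can be negative) and the much slower decay of $\vartheta$ near $z$. This is exactly where Lemma \ref{lemma newdecay} and the \emph{quadratic} exponent of \eqref{H2} — as opposed to the linear exponent of \eqref{H3} — are indispensable, and it is also why the exponent in \eqref{H2} cannot be taken larger than quadratic. The two auxiliary ingredients used along the way, namely the continuity of $\alpha$ and the uniform lower bound on the emerging part of elements of $\SR_z$, are standard for constraints of localized-Nehari type and I would establish them by the usual transport and mountain-pass-geometry arguments.
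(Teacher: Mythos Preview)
Your proposal follows the paper's proof almost step for step: your (i) is the paper's translated-Gausson competitor showing $\alpha(z)\le\CR^\infty+o(1)$ as $|z|\to\infty$ (which, together with continuity of $\alpha$ --- the paper's Lemma~\ref{lemma 3.5} --- yields attainment), and your (ii) is the paper's projection of the minimizer $\tilde u_1\in\MR_z$ onto $\SR_z^\infty$, followed by the Gaussian decay of $(\tilde u_1)_\delta$ (Proposition~\ref{prop 3.1}/Lemma~\ref{lemma newdecay}) versus the slow decay of $\vartheta$ from \eqref{H2} to make the correction $\tfrac12\int\vartheta\,(\cdot)^2$ positive for large $|z|$. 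The one assertion to tighten is ``\eqref{H2} forces $\bar\zeta<2a$'': this is not automatic from \eqref{H2} alone but is a matching of constants that the paper also handles only implicitly (applying Lemma~\ref{lemma newdecay} with $b=\bar\eta$ to get an exponent $\zeta$ and then comparing against $e^{-(\zeta/8)\rho_n^2}$), so you should tie the exponent produced by Lemma~\ref{lemma newdecay} to the $\bar\zeta$ of \eqref{H2} rather than treat the inequality as a consequence of the hypothesis.
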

	
	\begin{proof}
		
		First, we prove that \eqref{e11} holds.	For that purpose, we  	take $\lbr{z_n}_n$ being a   sequence such that
		\begin{equation}\label{defi of zn}
			z_n=\rho_n  e, \quad  \text{ with }\rho_n\in\R, ~~\rho_n\to +\infty, ~~ \text{ and } e\in \RN, ~~|e|=1.
		\end{equation}
		By Lemma \ref{lemma 3.1},  there exists   $u_n\in \SR_{z_n}$ such that $\al(z_n)=\LR(u_n)$.   By a   similar argument as in that of  Lemma \ref{lemma 2.2}, we can prove that     $u_n ^\infty=(u_n)_\delta+\xi^\infty(u_n) (u_n)^\delta \in \SR_{z_n}^\infty$. Obviously,  for $x\in \partial B(0,\rho_n/2)$ and sufficiently large  $n$, there holds
			\begin{equation}
				\mathrm{dist}\sbr{x,B(z_n,R_\delta) \cup  B(0,\widetilde{R}) }  \geq \frac{\rho_n}{4}.
		\end{equation} 
		By Lemma \ref{lemma 3.3}, we have 
		\begin{equation}\label{e6}
			\begin{aligned}
				\varLambda_1\geq \LR(u_n)&=\max_{t>0}\LR((u_n)_\delta+t (u_n)^\delta)\geq \LR(u_n ^\infty)\\&=\LR^\infty(u_n ^\infty)+\frac{1}{2} \int_{\RN} \vartheta(x) \sbr{u_n ^\infty(x)}^2 ~\mathrm{d} x\\
				&\geq \CR^\infty+\frac{1}{2} \int_{\RN} \vartheta(x) \sbr{u_n ^\infty (x)}^2 ~\mathrm{d} x.
			\end{aligned}
		\end{equation}
		By   \eqref{H2}, we know that $\supp{\vartheta^-}$ is bounded. Then  for $n$ large enough, we have  $\supp{ \vartheta^-} \subset B(0,\widetilde{R})\subset B(0,\rho_n/2)$. Therefore, 
		\begin{equation}
			\int_{\RN} \vartheta(x) \sbr{u_n ^\infty(x)}^2 ~\mathrm{d} x \geq  \int_{ B\sbr{z_n,R_\delta}} \vartheta(x) \sbr{u_n ^\infty(x)}^2 \dx- \sup_{B(0,\rho_n/2)} \sbr{u_n ^\infty }^2~ \int_{\supp{ \vartheta^-} }|\vartheta(x)| \dx.
		\end{equation}
	 	By the maximum principle and Proposition \ref{prop 3.1}, we have 
			\begin{equation}
				\sup_{B(0,\rho_n/2)} \sbr{u_n ^\infty }^2 \leq 	\sup_{\partial B(0,\rho_n/2)} \sbr{u_n ^\infty }^2 =\sup_{\partial B(0,\rho_n/2)} \sbr{u_n }_\delta^2  \leq C e^{-\bar \zeta \rho_n^2},
			\end{equation}
			where $\bar \zeta=  {\zeta}/{8} \in \sbr{ 0,\bar{\eta}}$, and $\zeta$ is given in Proposition \ref{prop 3.1}, and  $C$ is independent of $n$.    
		Using  \eqref{H2} again,   we know that $\int_{ B\sbr{z_n,R_\delta}} \vartheta(x) \sbr{u_n ^\infty(x)}^2 \dx$ decays slower than $e^{-\bar \zeta \rho_n^2}$ for $n$ large enough. Hence, for  $n$ large enough, we have 	\begin{equation} \label{e16}
			\int_{\RN} \vartheta(x) \sbr{u_n ^\infty(x)}^2 ~\mathrm{d} x  >0.
		\end{equation}
		Finally,	it follows from \eqref{e6} that $\varLambda_1>\CR^\infty$. 	
		
		
		\medbreak
		
		Next, we will prove that  there exists $\tilde z \in\RN$ such that $$\varLambda_1= \alpha(\tilde z).$$  By the definition of $\varLambda_1$, we can choose a sequence of points $\lbr{ \tilde z_n}_n$, $ \tilde z_n \in\RN$ such that $\lim_{n\to+\infty} \alpha( \tilde z_n)=\varLambda_1$. Then we claim that $\lbr{ \tilde z_n}_n$ is bounded in $\RN$. Otherwise, up to subsequence,  we assume that $|\tilde z_n|\to + \infty$ as $n \to +\infty$. Let   $U_{\tilde z_n}:=U(x- \tilde z_n)$, then  by Lemma \ref{lemma 2.2},    we know that there exists $\xi(U_{\tilde z_n}) \in \sbr{0,+\infty}$ such that $$\tilde v_n:=\sbr{U_{\tilde z_n}}_\delta+ \xi(U_{\tilde z_n})\sbr{U_{\tilde z_n}}^\delta \in \SR_{\tilde z_n}.$$
		\medbreak
		We claim that
		\begin{equation}\label{upper bound}
			\text{	the sequence  }\lbr{\xi(U_{\tilde z_n})}_n \text{is  bounded  in} ~ \R.
		\end{equation} 
		Indeed, by definition, we have  $\xi(U_{\tilde z_n}) $  is positive for all $n\in \N$. Then we prove that $\lbr{\xi(U_{\tilde z_n})}_n$ has a    upper bound. Assume by contradiction,  we can choose a subsequence, still denoted by  $\xi(U_{\tilde z_n})$, such that $\xi(U_{\tilde z_n})\to +\infty$ as $n\to+ \infty$.      Since  $\supp{ \sbr{U_{\tilde z_n}}^\delta} \subset B(\tilde z_n, R_\delta)$ and  $\tilde{v}_n \in \SR_{\tilde z_n}$, we deduce from \eqref{e7} that 
		\begin{equation} \label{e8}
			\begin{aligned}
				&	\int_{ B(\tilde z_n, R_\delta)} \mbr{|\nabla \sbr{U_{\tilde z_n}}^\delta|^2 +V(x)\sbr{\sbr{U_{\tilde z_n}}^\delta}^2} ~\mathrm{d}x	 \\
				= &\int_{ B(\tilde z_n, R_\delta)} \sbr{U_{\tilde z_n}}^\delta\sbr{\frac{\delta}{\xi(U_{\tilde z_n})}+ \sbr{U_{\tilde z_n}}^\delta} \log\sbr{\delta+\xi(U_{\tilde z_n})\sbr{U_{\tilde z_n}}^\delta}^2~\mathrm{d}x \\  -& \frac{1}{\xi(U_{\tilde z_n})} \int_{ B(\tilde z_n, R_\delta)} \delta V(x)\sbr{U_{\tilde z_n}}^\delta~\mathrm{d}x.
			\end{aligned}
		\end{equation}
		Then it is easy to see that the left-hand side of \eqref{e8} is bounded, and the right-hand side of \eqref{e8} tends to $+\infty$ as $n\to +\infty$, which is a contradiction.   The claim is proved. Therefore,  $\lbr{ \tilde v_n}_n  $ is   bounded in $L^2(\RN)$. 
		\medbreak 
		Note that  
		\begin{equation}\label{e9}
			\begin{aligned}
				\alpha(\tilde z_n)&= \inf_{ u\in \SR_{\tilde z_n}} \LR(u) \leq \LR \sbr{\tilde v_n} \\
				&=\LR^\infty(\tilde v_n)+\frac{1}{2} \int_{\RN} \vartheta(x) \sbr{\tilde v_n(x)}^2 ~\mathrm{d} x\\
				&\leq \LR^\infty(U_{\tilde z_n})+\frac{1}{2} \left|\int_{\RN} \vartheta(x) \sbr{\tilde v_n(x)}^2 \dx\right|.
			\end{aligned}
		\end{equation}
		Then it follows from  \eqref{H1} that  for all $\varepsilon>0$, there exists $R=R(\varepsilon)$ such that $|\vartheta(x)|<\varepsilon$ for all $|x|>R$.	For $n$ large enough, 	since  $\sup_{B(0,R)}\sbr{\tilde v_n(x)}^2 \to 0$ as $n\to +\infty$ and  $\lbr{ \tilde v_n}_n  $ is   bounded in $L^2(\RN)$, we have 
		\begin{equation} \label{e19}
			\begin{aligned}
				\left|\int_{\RN} \vartheta(x) \sbr{\tilde v_n(x)}^2 \dx\right| &\leq \sup_{B(0,R)}\sbr{\tilde v_n(x)}^2  \int_{B(0,R)} |\vartheta(x) |\dx\\&+ \varepsilon \int_{\RN\setminus B(0,R)} \sbr{\tilde v_n(x)}^2\dx   \leq C \varepsilon.
			\end{aligned}
		\end{equation}
		It follows from    \eqref{e9}  and the fact $ \CR^\infty= \LR^\infty(U_{\tilde z_n} )$ that $\varLambda_1\leq \CR^\infty$, which contradicts to \eqref{e11}. Therefore,  $\lbr{ \tilde z_n}_n$ is bounded in $\RN$.  
		As a consequence, up to subsequence, there exists  $\tilde z \in\RN$ such that $ \tilde z_n \to \tilde z$ as $n\to +\infty$. 
		\medbreak
		Take $\tilde w \in \MR_{\tilde z}$   and  set $w_n(x):= \tilde{w}(x+\tilde{z} -\tilde{z}_n)$, then $\tilde w_n:=(w_n )_\delta+\xi(w_n) (w_n)^\delta \in \SR_{\tilde z_n}$. Therefore, 
		\begin{equation}
			\varLambda_1=\lim_{n\to+\infty} \alpha(\tilde z_n) \leq \lim_{n\to+\infty} \LR(\tilde w_n) =\LR(\tilde w) =\alpha(\tilde z)\leq \varLambda_1.
		\end{equation}
		Hence,  $\alpha(\tilde z)= \varLambda_1$. The proof is completed.
	\end{proof}
	\vskip0.15in
	\begin{remark} \label{Remark 3.4}
		{ \rm It is worth pointing out that from the proof of Proposition \ref{prop 3.2}, we can  draw the following conclusion: for any  sequence  $\lbr{z_n}_n$  such that $|z_n| \to +\infty$ as $n\to +\infty$,  there holds
			\begin{equation}
				\alpha(z_n) \leq \CR^\infty +o_n(1).
		\end{equation} }
	\end{remark}
	
	\vskip0.1in

	Before proceeding, we present a continuity result. 
	\begin{lemma}\label{lemma 3.5}
		Let $k\in \N\setminus\lbr{0}$ and  $\sbr{z_1^n,\ldots, z_k^n} \in \ER_{k}$ such that $ \sbr{z_1^n,\ldots, z_k^n}   \to \sbr{z_1 ,\ldots, z_k }$ as $n\to +\infty$,
	then	we have  
		\begin{equation}\label{m7}
			\lim_{n\to+\infty}  \alpha\sbr{z_1^n,\ldots, z_k^n}  = \alpha\sbr{z_1 ,\ldots, z_k }.
		\end{equation}
		
	\end{lemma}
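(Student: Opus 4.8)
The plan is to establish \eqref{m7} through the two one‑sided bounds
$\limsup_{n}\al(z_1^n,\ldots,z_k^n)\le\al(z_1,\ldots,z_k)$ and
$\liminf_{n}\al(z_1^n,\ldots,z_k^n)\ge\al(z_1,\ldots,z_k)$.
Write $\tau_i^n:=z_i^n-z_i\to 0$ and note that $(z_1,\ldots,z_k)\in\ER_k$ (the condition $|z_i-z_j|\ge 3R_\delta$ is closed), so $\al(z_1,\ldots,z_k)$ is well defined and for $n$ large the balls $B(z_i^n,R_\delta)$ are pairwise disjoint.

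\textbf{The upper bound.} I would transport a minimizer. By Lemma \ref{lemma 3.1} fix $\tilde u\in\MR_{z_1,\ldots,z_k}$ with $\LR(\tilde u)=\al(z_1,\ldots,z_k)$ and $\tilde u\in D(\LR)$. Let $\Phi_n(x)=x+\sum_{i=1}^k\chi_i(x)\tau_i^n$, where $\chi_i\in C_0^\infty(B(z_i,R_\delta+\e_0))$, $\chi_i\equiv 1$ on $\overline{B(z_i,R_\delta)}$, and $\e_0$ is small enough that the enlarged balls remain disjoint; for $n$ large $\Phi_n$ is a $C^1$‑small diffeomorphism of $\RN$, equal to the pure translation $x\mapsto x+\tau_i^n$ on $\overline{B(z_i,R_\delta)}$ and to the identity outside a fixed bounded set. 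Set $\hat w_n:=\tilde u\circ\Phi_n^{-1}$; then its emerging part is exactly $\hat w_n^{\delta,i}=\tilde u^{\delta,i}(\cdot-\tau_i^n)$, supported in $\overline{B(z_i^n,R_\delta)}$, so $\beta_i(\hat w_n)=\beta_i(\tilde u)=0$. A change of variables together with $D\Phi_n\to I$, $|\det D\Phi_n|\to 1$, $V\circ\Phi_n\to V$ in $L^{N/2}_{\loc}$ and dominated convergence (using $\tilde u\in L^\infty$ — by elliptic regularity for the emerging parts, cf. Proposition \ref{lemma 3.2}, and $\tilde u_\delta\le\delta$ — and $\tilde u\in D(\LR)$) yields $\hat w_n\to\tilde u$ in $H^1(\RN)$ and $\LR(\hat w_n)\to\LR(\tilde u)$. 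Applying Lemma \ref{lemma 2.3} successively produces $\xi_i(\hat w_n)>0$ with $\tilde w_n:=(\hat w_n)_\delta+\sum_i\xi_i(\hat w_n)\hat w_n^{\delta,i}\in\SR_{z_1^n,\ldots,z_k^n}$ (the barycenters are unchanged by the rescaling). Since $\xi_i(\tilde u)=1$ (Remark \ref{Remark 3.1}) and $u\mapsto\xi_i(u)$ is continuous (implicit function theorem, the maximum in Lemma \ref{lemma 2.3} being nondegenerate), $\xi_i(\hat w_n)\to 1$, so $\tilde w_n\to\tilde u$ in $H^1(\RN)$ and $\LR(\tilde w_n)\to\LR(\tilde u)$; hence $\al(z_1^n,\ldots,z_k^n)\le\LR(\tilde w_n)\to\al(z_1,\ldots,z_k)$.

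\textbf{The lower bound.} By Lemma \ref{lemma 3.1} pick $u_n\in\MR_{z_1^n,\ldots,z_k^n}$; the competitors above bound $\al(z_1^n,\ldots,z_k^n)$ uniformly, so by Lemma \ref{lemma 2.10} $\{u_n\}$ is bounded in $H^1(\RN)$, and along a subsequence $u_n\rightharpoonup u$ in $H^1(\RN)$, $u_n\to u$ in $L^p_{\loc}(\RN)$ and a.e. By Proposition \ref{lemma 3.2} each $u_n$ solves an elliptic equation on $B(z_i^n,R_\delta)$ with Lagrange multipliers bounded uniformly in $n$ (a consequence of the $H^1$ bound), so local elliptic (Hölder) estimates give uniform $L^\infty$ bounds on the emerging parts; since $z_i^n\to z_i$ their supports lie in a fixed bounded set and $u_n^{\delta,i}\to u^{\delta,i}$ in $L^p(\RN)$ for every $p<\infty$, the limiting supports being pairwise disjoint subsets of $\overline{B(z_i,R_\delta)}$. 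The crucial point is that $u^{\delta,i}\not\equiv 0$: evaluating \eqref{e7} at $t=1$ for $u_n^{\delta,i}$ shows that $\delta+u_n^{\delta,i}\le 1$ throughout $B(z_i^n,R_\delta)$ is impossible (the left side is positive, the right side nonpositive), hence $\sup u_n^{\delta,i}\ge 1-\delta$, and combining this with the uniform $L^\infty$, $H^1$ and Hölder bounds and \eqref{e7} forces $|u_n^{\delta,i}|_2\ge c_0>0$, whence $|u^{\delta,i}|_2\ge c_0$. Consequently $u$ emerges around $z_1,\ldots,z_k$ and $\beta_i(u)=0$ by continuity of $\beta_i$. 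Setting $\hat u:=u_\delta+\sum_i\xi_i(u)u^{\delta,i}\in\SR_{z_1,\ldots,z_k}$ (Lemma \ref{lemma 2.3}), using \eqref{22}, the weak lower semicontinuity of $\LR$ on $\{0\le w\le\delta\}$ (where it is convex and coercive, cf.\ the proof of Proposition \ref{prop 3.1} and \eqref{e1}) applied to $(u_n)_\delta\rightharpoonup u_\delta$, and $\FR_\delta(u_n^{\delta,i})=\max_{t>0}\FR_\delta(tu_n^{\delta,i})\ge\FR_\delta(tu^{\delta,i})+o_n(1)$ for every fixed $t>0$ (lower semicontinuity of the Dirichlet term, strong $L^p$ convergence and uniform $L^\infty$ bounds), one obtains
\begin{equation*}
	\begin{aligned}
		\liminf_{n\to+\infty}\al(z_1^n,\ldots,z_k^n)=\liminf_{n\to+\infty}\LR(u_n)
		&\ge\LR(u_\delta)+\sum_{i=1}^k\max_{t>0}\FR_\delta(tu^{\delta,i})\\
		&=\LR(\hat u)\ge\al(z_1,\ldots,z_k),
	\end{aligned}
\end{equation*}
which together with the upper bound proves \eqref{m7}.

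\textbf{Main obstacle.} The delicate part is the lower‑bound step, and within it the nonvanishing of each limiting bump. Because $V$ is neither symmetric nor periodic and $\LR$ is only lower semicontinuous on $H^1(\RN)$, neither global compactness nor $C^1$ critical point theory is available, so the local compactness of the emerging parts must be squeezed out of the geometry of the localized Nehari constraint — concretely, from the fact, forced by \eqref{e7}, that every bump must rise above the level $1-\delta$, together with uniform local elliptic estimates for $-\Delta+V$ (Proposition \ref{lemma 3.2}). A secondary difficulty is controlling the nonsmooth term $\tfrac12\int_{\RN}u^2\log u^2\,\mathrm{d}x$ under both the change of variables and weak convergence; this is why the argument is organized along the decomposition \eqref{22}, handling the submerged part (convex, hence weakly lower semicontinuous) and the compactly supported emerging parts (where strong convergence and the $\FR_\delta$‑maximization apply) separately.
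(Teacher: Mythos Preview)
Your argument is correct in outline, but it takes a route quite different from the paper's, especially for the lower bound, and is considerably heavier than necessary.

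The paper's proof is fully symmetric between the two inequalities and never passes to weak limits at all. For the upper bound it takes $u\in\MR_{z_1,\ldots,z_k}$, \emph{translates only the emerging parts} $\hat w_n^{\delta,i}(x):=u^{\delta,i}(x+z_i-z_i^n)$, defines the submerged part $(\hat w_n)_\delta$ as the unique minimizer of $\LR$ over $\{|v|\le\delta,\ v=\delta\text{ on }\cup_i\supp\hat w_n^{\delta,i}\}$ (cf.\ Proposition \ref{prop 3.1}), and then projects via the $\xi_i$'s; since $z_i^n\to z_i$ one gets $\LR(\tilde w_n)\to\LR(u)$ directly. For the lower bound it does exactly the same in the opposite direction: take $u_n\in\MR_{z_1^n,\ldots,z_k^n}$, translate its bumps back to the $z_i$'s, re-minimize the submerged part, and project. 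Because the bumps are moved rigidly by $o(1)$, non-vanishing is automatic and the energy comparison $\LR(\tilde v_n)=\LR(u_n)+o(1)$ is immediate from continuity of $\FR_\delta$ under small translations and continuity of the constrained submerged minimum with respect to the obstacle set.

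Your detour through weak compactness forces you to prove the ``crucial point'' $u^{\delta,i}\not\equiv 0$, for which you invoke uniform Lagrange-multiplier bounds and uniform H\"older estimates on the emerging parts; these are true but require nontrivial justification that the paper simply does not need. What your approach buys is a self-contained lower-semicontinuity argument via the decomposition \eqref{22}, which is conceptually clean; what the paper's approach buys is brevity and the complete avoidance of the non-vanishing issue. If you want to streamline your write-up, replace the lower-bound step by the paper's symmetric ``translate the bumps of $u_n$ back to $z_i$ and re-minimize the submerged part'' construction.
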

	\begin{proof}
		
		Take $ u \in \MR_{ z_1,\ldots, z_k}$, and set 
		\begin{equation}
			\tilde w_n := \sbr{ \hat w_n}_\delta +\sum_{i=1}^k \xi_i( \hat w_n)   {\hat w_n}^{\delta,i}.
		\end{equation}
		Here,  $ {\hat w_n}^{\delta,i}(x) :=    u  ^{\delta,i}(x + {z}_i-{z}_i^n)$ for $i\in \lbr{1,\ldots,k}$, and $\sbr{ \hat w_n}_\delta$ is    the unique and positive minimizer for the minimization problem  $\min\lbr{ \LR(u): u\in \mathcal{B}_n^1}$, where 
		$$ \mathcal{B}_n^1:= \lbr{u\in H^1(\RN)  : ~ |u(x)|\leq \delta ~~ \forall x\in \RN, ~ \text{ and  } ~ u=\delta \text{ on } \bigcup_{i=1}^k \supp{\sbr{\hat w_n}^{\delta,i}}}.$$  
		Therefore,    $\tilde w_n \in \SR_{ z_1^n,\ldots,  z_k^n}$ and 
		\begin{equation}\label{m6}
			\limsup_{n\to+\infty}	\alpha( z_1^n,\ldots,  z_k^n)\leq \limsup_{n\to+\infty} \LR(\tilde{w}_n) = \LR( {u}) =\alpha \sbr{  z_1,\ldots,  z_k} .
		\end{equation}
		On the other hand, choose  $ u_n \in \MR_{ z_1^n,\ldots, z_k^n}$, and set 
		\begin{equation}
			\tilde v_n := \sbr{ \hat  v_n}_\delta +\sum_{i=1}^k \xi_i( \hat v_n)   {\hat v_n}^{\delta,i}.
		\end{equation}
		Here,  $ {\hat v_n}^{\delta,i}(x) :=    u_n  ^{\delta,i}(x + {z}_i^n-{z}_i)$ for $i\in \lbr{1,\ldots,k}$ and $\sbr{ \hat v_n}_\delta$ is    the unique and positive minimizer for the minimization problem  $\min\lbr{ \LR(u): u\in \mathcal{B}_n^2}$, where 
		$$ \mathcal{B}_n^2:= \lbr{u\in H^1(\RN)  : ~ |u(x)|\leq \delta ~~ \forall x\in \RN, ~ \text{ and  } ~ u=\delta \text{ on } \bigcup_{i=1}^k \supp{ {\hat v_n}^{\delta,i}}}.$$   
		Therefore,      $\tilde v_n \in \SR_{ z_1,\ldots,  z_k}$,  and 
		\begin{equation}
			\alpha \sbr{  z_1,\ldots,  z_k} \leq  \liminf_{n\to+\infty} \LR(\tilde v_n) =\liminf_{n\to+\infty} \LR( u_n)=\liminf_{n\to+\infty} 
			\alpha( z_1^n,\ldots,  z_k^n) ,
		\end{equation}
		which, together with \eqref{m6}, gives \eqref{m7}. This completes the proof.
	\end{proof}

	\begin{proposition} \label{prop 3.3}
		For all $k\in \N\setminus\lbr{0}$,  the following statements hold true:
		\vskip 0.05in
		\begin{itemize}
			\item[(i)]there exists $\sbr{\tilde z_1,\ldots,\tilde z_k} \in \ER_k$ such that $\varLambda_k= \alpha\sbr{\tilde z_1,\ldots,\tilde z_k}.$  
			\item[(ii)] there holds $	\varLambda_{k+1}>\varLambda_{k}+\CR^\infty$.
		\end{itemize}
		
	\end{proposition}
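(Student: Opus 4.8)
The plan is to prove (i) and (ii) simultaneously by induction on $k$: the inductive hypothesis at stage $k$ states that (i) holds for every $j\le k$ and that (ii) holds for every $j\le k-1$, the base case $k=1$ being exactly Proposition~\ref{prop 3.2}. Granting the hypothesis at stage $k$, I would first deduce (ii) at stage $k$ from the maximiser furnished by part~(i) at stage~$k$, and then use the resulting strict gaps together with the continuity Lemma~\ref{lemma 3.5} to obtain (i) at stage~$k+1$.

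For the strict gap $\varLambda_{k+1}>\varLambda_k+\CR^\infty$, I would take $(\tilde z_1,\dots,\tilde z_k)\in\ER_k$ realising $\varLambda_k$, fix a unit vector $e$, set $z^n=\rho_n e$ with $\rho_n\to+\infty$ so that $(\tilde z_1,\dots,\tilde z_k,z^n)\in\ER_{k+1}$ for $n$ large, and let $u_n\in\MR_{\tilde z_1,\dots,\tilde z_k,z^n}$ be the minimiser from Lemma~\ref{lemma 3.1}, so that $\alpha(\tilde z_1,\dots,\tilde z_k,z^n)=\LR(u_n)$ and, by Proposition~\ref{prop 3.1}, $(u_n)_\delta$ decays like a Gaussian of the distance to $\bigl(\bigcup_i B(\tilde z_i,R_\delta)\bigr)\cup B(0,\widetilde{R})\cup B(z^n,R_\delta)$. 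The heart of the matter is to split $u_n$ into two almost-disjointly supported pieces: $v_n$, obtained by cutting $(u_n)_\delta$ off away from the first $k$ balls while keeping $u_n^{\delta,1},\dots,u_n^{\delta,k}$, and $w_n$, obtained by cutting $(u_n)_\delta$ off away from $B(z^n,R_\delta)$ while keeping $u_n^{\delta,k+1}$. Since the cut-offs only touch the region where $(u_n)_\delta$ is Gaussian-small, $v_n\in\SR_{\tilde z_1,\dots,\tilde z_k}$ (the Nehari identities and the barycentre conditions are untouched near $B(\tilde z_i,R_\delta)$), while $w_n$, translated by $-z^n$ and reprojected onto the $\LR^\infty$-Nehari set, is an admissible competitor for $\alpha^\infty(0)=\CR^\infty$ from Lemma~\ref{lemma 3.3}; moreover $\LR(u_n)\ge\LR(v_n)+\LR(w_n)-O(e^{-c\rho_n^2})$ by almost-additivity of $\LR$ on separated supports. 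Now $\LR(v_n)\ge\varLambda_k$ because $(\tilde z_1,\dots,\tilde z_k)$ is the maximiser, and $\LR(w_n)=\LR^\infty(w_n)+\tfrac12\int_\RN\vartheta\,w_n^2\ge\CR^\infty+\tfrac12\int_\RN\vartheta\,w_n^2-O(e^{-c\rho_n^2})$ by Lemma~\ref{lemma 3.3}; since for $n$ large $\vartheta>0$ on $B(z^n,R_\delta)$, one has $\tfrac12\int_\RN\vartheta\,w_n^2\ge c_0\inf_{B(z^n,R_\delta)}\vartheta$, which by \eqref{H2} decays strictly more slowly than $e^{-c\rho_n^2}$. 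Hence $\LR(u_n)>\varLambda_k+\CR^\infty$ for $n$ large, so $\varLambda_{k+1}\ge\alpha(\tilde z_1,\dots,\tilde z_k,z^n)>\varLambda_k+\CR^\infty$.

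For (i) at stage $k+1$, chaining the gaps obtained so far gives $\varLambda_{k+1}>\varLambda_j+(k+1-j)\CR^\infty$ for every $j\in\{0,\dots,k\}$, with the convention $\varLambda_0:=0$. I would take a maximising sequence $(z_1^n,\dots,z_{k+1}^n)\in\ER_{k+1}$ and, up to a subsequence, split the points into a bounded group $J_0$ and finitely many escaping clusters $J_1,\dots,J_m$ with bounded, convergent internal shapes. Gluing a minimiser for the $J_0$-configuration to translated Gaussons realising the escaping clusters (controlled, as in the previous step, by Proposition~\ref{prop 3.1}) yields $\alpha(z_1^n,\dots,z_{k+1}^n)\le\varLambda_{|J_0|}+\sum_{l=1}^m\alpha^\infty(J_l\text{-shape})+o_n(1)\le\varLambda_{|J_0|}+(k+1-|J_0|)\CR^\infty+o_n(1)$, using that $\alpha^\infty$ of any $p$-point configuration is at most $p\,\CR^\infty$ (a spreading argument together with Lemma~\ref{lemma 3.3}). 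If $|J_0|\le k$ this contradicts the strict gap, so all points stay bounded; extracting $z_i^n\to\tilde z_i$ and noting that the constraints $|\tilde z_i-\tilde z_j|\ge 3R_\delta$ survive in the limit, $(\tilde z_1,\dots,\tilde z_{k+1})\in\ER_{k+1}$, and Lemma~\ref{lemma 3.5} gives $\alpha(\tilde z_1,\dots,\tilde z_{k+1})=\varLambda_{k+1}$, closing the induction.

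I expect the strict inequality of the second paragraph to be the main obstacle. The naive decomposition $\LR(u_n)=\LR(v_n)+\FR_\delta(u_n^{\delta,k+1})$ is too lossy, since on the localised Nehari constraint $\FR_\delta(u_n^{\delta,k+1})$ is only bounded below by $\CR^\infty-\LR^\infty(U_\delta)<\CR^\infty$; one has to genuinely redistribute the submerged part between the two clusters and quantify both the almost-additivity of $\LR$ on separated supports and the errors made in the two reprojections. This is precisely where Proposition~\ref{prop 3.1} (Gaussian decay, resting on Lemma~\ref{lemma newdecay}) and the slow-decay hypothesis \eqref{H2} must be used in tandem: all interaction and cut-off defects are Gaussian-small in $\rho_n$, whereas \eqref{H2} forces the positive gain $\tfrac12\int_\RN\vartheta\,w_n^2$ contributed by the new bump to decay strictly more slowly, so that the gain survives in the limit and produces an increase of strictly more than $\CR^\infty$. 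The logarithmic nonlinearity enters through the facts that $(u_n)_\delta\le\delta<1$ (so $s\mapsto -s^2\log s^2$ is harmless on the relevant range and the cutting/gluing estimates close) and that the resulting decay is Gaussian rather than exponential, which is exactly why \eqref{H2} is stated with $|x|^2$.
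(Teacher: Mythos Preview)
Your approach is correct and follows the same inductive architecture as the paper: prove (ii) at stage $k$ from the maximiser at stage $k$ by a cut-and-split argument exploiting the Gaussian decay of Proposition~\ref{prop 3.1} together with the slow-decay assumption \eqref{H2}, then prove (i) at stage $k+1$ by contradiction from the strict gap. The treatment of (ii) is essentially identical to the paper's (the paper uses a concrete slab cut-off $\chi\bigl(\bigl|(x\cdot e)-\tfrac{\rho_n}{2}\bigr|\bigr)$ separating the old bumps from the new one, but this is just a clean implementation of your ``cut $(u_n)_\delta$ off away from the first $k$ balls'').

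The one genuine difference is in (i). You set up a full cluster decomposition of the maximising sequence (bounded group $J_0$ plus escaping clusters $J_1,\dots,J_m$) and invoke the chained gaps $\varLambda_{k+1}>\varLambda_{|J_0|}+(k+1-|J_0|)\CR^\infty$. The paper instead peels off a single escaping point: if, say, $|\tilde z_k^n|\to\infty$, take $\tilde v_n\in\MR_{\tilde z_k^n}$ and $\tilde w_n\in\MR_{\tilde z_1^n,\dots,\tilde z_{k-1}^n}$, observe that $\tilde v_n\vee\tilde w_n\in\SR_{\tilde z_1^n,\dots,\tilde z_k^n}$ with $0\le\tilde v_n\wedge\tilde w_n\le\delta$, and use the lattice identity $\LR(\tilde v_n\vee\tilde w_n)=\LR(\tilde v_n)+\LR(\tilde w_n)-\LR(\tilde v_n\wedge\tilde w_n)$ together with $\LR(\tilde v_n\wedge\tilde w_n)\ge 0$ (Lemma~\ref{lemma 2.5}) and Remark~\ref{Remark 3.4} to get $\alpha(\tilde z_1^n,\dots,\tilde z_k^n)\le\varLambda_{k-1}+\CR^\infty+o_n(1)$, contradicting only the single gap from stage $k-1$. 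This is shorter and avoids the need to control several escaping clusters simultaneously or to prove the auxiliary bound $\alpha^\infty(\text{$p$-tuple})\le p\,\CR^\infty$; on the other hand, your clustering argument is more robust and would transfer more directly to settings where one cannot simply peel off one point.
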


	\begin{proof}
		The proof is performed by mathematical induction on the number of bumps $k$.  
		
		\medbreak
		{\it Step 1}.  We consider the case  $k=1$.
		\medbreak
		Obviously, the conclusion (i) has already been proved in Proposition \ref{prop 3.2}. In such case, we only need to prove that  $	\varLambda_{2}>\varLambda_{1}+\CR^\infty$.
		\medbreak
		Let $\tilde z$ be the point obtained in Proposition \ref{prop 3.2} such that $\varLambda_{1}=\alpha(\tilde z)$, and consider a sequence $\lbr{z_n}_n$  satisfying  \eqref{defi of zn}.  Set 
		\begin{equation}
			\Theta_n:=\lbr{x\in \RN: \frac{\rho_n}{2}-1< (x\cdot e)<\frac{\rho_n}{2}+1}.
		\end{equation}
		By \eqref{H2}, we know that    $\Theta_n \cap \supp{\vartheta^-}=\emptyset$ for $n$ large enough. 
		For all $n \in \N$,  it follows from Lemma \ref{lemma 3.1} that there exists $u_n\in \MR_{\tilde z,z_n} $ such that $\LR(u_n)=\alpha(\tilde z,z_n) \leq \varLambda_{2}$.  Note that $u_n$ can be rewritten as  
		\begin{equation}
			u_n(x)=\sbr{u_n}_\delta (x)+    u_{n,1} ^\delta(x) +  u_{n,2}^\delta(x),
		\end{equation}
		where $ u_{n,1}^\delta$ and $ u_{n,2}^\delta$ are the emerging part around $\tilde z$ and $z_n$, respectively. Note that  for $n$ large enough, 
		\begin{equation}
			\supp{ u_{n,1}^\delta} \subset	B(\tilde z, R_\delta) \subset \lbr{x\in \RN:   (x\cdot e)<\frac{\rho_n}{2}-1},
		\end{equation}
		\begin{equation}
			\supp{  u_{n,2}^\delta} \subset	B( z_n, R_\delta) \subset \lbr{x\in \RN:   (x\cdot e)>\frac{\rho_n}{2}+1}.
		\end{equation}
		Hence, for   sufficiently large  $n$, we know that  $u_n=\sbr{u_n}_\delta\leq \delta  <1$  on $\Theta_n$, and 
		\begin{equation}\label{ui2}
			\mathrm{dist}\sbr{x,B(\tilde z,R_\delta)\cup B(z_n,R_\delta) \cup  B(0,\widetilde{R}) }  \geq \frac{\rho_n}{4}, \quad \forall x\in	\Theta_n.
		\end{equation} 
			Set $F_n(x)= \chi_n(x)u_n(x)$  with 	$\chi_n(x)=\chi\sbr{\left|(x\cdot e) -\frac{\rho_n}{2} \right|}$, where   $\chi(x)\in C^\infty(\R^+, [0,1])$    satisfies  
			\begin{equation}\label{defi of chi}
				\chi(t)= 0 ~~\text{ for } ~0\leq t<\frac{1}{2} , ~~\text{ and } ~~ \chi(t)=1~~  \text{ for} ~ t>1 .
			\end{equation}
			Then we have 
			\begin{equation}\label{e12}
				\begin{aligned}
					\LR(F_n) 	&= \frac{1}{2} \int_{\RN} \mbr{|\nabla \sbr{ \chi_n u_n }|^2 + \sbr{ V(x)+1 } \sbr{\chi_n u_n}^2} \dx  -\frac{1}{2} \int_{\RN} \sbr{\chi_n u_n}^2\log \sbr{\chi_n u_n}^2 \dx \\
					&= \frac{1}{2} \int_{\RN} \chi_n^2\mbr{|\nabla u_n|^2+  \sbr{ V(x)+1 }  { u_n}^2   } \dx +\frac{1}{2} \int_{\RN} \sbr{|\nabla \chi_n|^2-\frac{1}{2}\Delta \chi_n^2 }u_n^2 \dx
					\\
					&  -\frac{1}{2} \int_{\RN} \sbr{\chi_n u_n}^2\log \sbr{\chi_n u_n}^2  \dx  \\
					&\leq  \LR(u_n) + \frac{1}{2} \int_{\Theta_n} \sbr{|\nabla \chi_n|^2-\frac{1}{2}\Delta \chi_n^2  -\frac{1}{2}    \chi_n ^2 \log \chi_n^2       }u_n^2   \dx\\&+\frac{1}{2}\int_{\RN} \sbr{1-\chi_n^2} u_n^2\log u_n^2 \dx \\
					&	\leq \varLambda_{2} + C  \int_{\Theta_n} \sbr{\sbr{u_n}_\delta^2 +  \sbr{u_n}_\delta^2 \log  \sbr{u_n}_\delta^2} \dx\\
					&\leq \varLambda_{2} + C  \int_{\Theta_n}  \sbr{u_n}_\delta^2  \dx  ~ ~\leq \varLambda_{2} + O(e^{-\bar \zeta \rho_n^2}),
				\end{aligned}
			\end{equation}
			where $\bar \zeta =\zeta/8$; moreover,	 the last inequality of \eqref{e12} follows    from  \eqref{e4}, \eqref{ui2} and the fact   $\Theta_n \cap \supp{\vartheta^-}=\emptyset$ for $n$ large enough. 
			\medbreak
			On the other hand, by the choice of $\chi$, we can decompose $f_n(x)$ into $$F_n(x)= F_{n,1}(x)+F_{n,2}(x)$$ with $F_{n,1}(x)\in \SR_{\tilde{z}  } $ defined by
			\begin{equation}\label{m8}
				F_{n,1}(x):=	 \begin{cases}
					0, \quad & (x\cdot e) \geq \frac{\rho_n}{2}-\frac{1}{2} ,\\
					\chi_n(x)\sbr{u_n}_\delta (x)+   u_{n,1} ^\delta(x), \quad & (x\cdot e) <\frac{\rho_n}{2}-\frac{1}{2} ,
				\end{cases}
			\end{equation}
			and   $F_{n,2}(x) \in \SR_{  z_n}$  defined by
			\begin{equation}\label{m9}
				F_{n,2}(x):=	 \begin{cases}
					\chi_n(x)\sbr{u_n}_\delta (x)  +   u_{n,2} ^\delta(x), \quad & (x\cdot e) > \frac{\rho_n}{2}+\frac{1}{2} ,\\
					0, \quad & (x\cdot e) \leq \frac{\rho_n}{2}+\frac{1}{2} .
				\end{cases}
			\end{equation}
			Then we have  $\supp{F_{n,1}}\cap \supp{F_{n,2}}=\emptyset$, and   $$\LR(F_{n,1}) \geq \alpha (\tilde z) =\varLambda_{1 }.$$ Consider $   F_{n,2}^\infty\in \SR_{  z_n}^{\infty }$ defined by
			$$F_{n,2}^\infty:= \sbr{F_{n,2}}_\delta   + \xi^{\infty }(F_{n,2} ) \sbr{F_{n,2} }^\delta.$$
			By using a similar argument     for proving \eqref{e11},  we can prove that
			\begin{equation}\label{e14}
				\LR(F_{n,2})\geq \CR^\infty+\frac{1}{2}   \int_{ B\sbr{z_n,R_\delta}} \vartheta(x) \sbr{F_{n,2}^\infty(x)}^2 \dx- O(e^{-\bar \zeta \rho_n^2}).
			\end{equation}
			Hence, we deduce from \eqref{e12} that  for $n$ large enough,
			\begin{equation}\label{e17}
				\begin{aligned}
					\varLambda_{2 }&\geq \LR(F_n)-  O(e^{-\bar \zeta \rho_n^2})= \LR(F_{n,1})+\LR(F_{n,2})-  O(e^{-\bar \zeta \rho_n^2}) \\
					&\geq \varLambda_{1 }+\CR^\infty+\frac{1}{2}   \int_{ B\sbr{z_n,R_\delta}} \vartheta(x) \sbr{F_{n,2}^\infty (x)}^2 \dx- O(e^{-\bar \zeta \rho_n^2})\\
					&>\varLambda_{1 }+\CR^\infty,
				\end{aligned}
			\end{equation}
			where the  last inequality  is similar  as for proving \eqref{e16}.  Therefore, we finish the proof of Step 1.

			\vskip 0.15in
			
			{\it Step 2}. We consider the case  $k\geq 2$. 
			\medbreak
			We suppose by induction hypothesis that (i) and (ii) hold true for $k-1$, and let us prove them for $k$.
			\medbreak
			First, we prove that there exists $\sbr{\tilde z_1,\ldots,\tilde z_k} \in \ER_k$ such that $\varLambda_k= \alpha\sbr{\tilde z_1,\ldots,\tilde z_k}.$   For that purpose, let $\lbr{\sbr{\tilde z_1^n,\ldots,\tilde z_k^n}}_n \subset \ER_k$ be a sequence such that 
			\begin{equation}
				\varLambda_{k} = \lim_{n\to+\infty} \alpha \sbr{\tilde z_1^n,\ldots,\tilde z_k^n}.
			\end{equation}
			Then we claim  that  $\lbr{\sbr{\tilde z_1^n,\ldots,\tilde z_k^n}}_n$ is bounded. Otherwise, up to subsequence, we may assume that $|\tilde{z}_k^n|\to +\infty $ as $n\to +\infty$. For all $n\in \N$, we take 
			\begin{center}
				$\tilde{v}_n \in \MR_{ \tilde z_k^n} $ ~~and~~ $\tilde{w}_n \in \MR_{\tilde z_1^n,\ldots,\tilde z_{k-1}^n} $.
			\end{center}
			By Remark \ref{Remark 3.4}, we obtain that 
			\begin{equation}
				\LR(\tilde{v}_n ) = \alpha(\tilde z_k^n) \leq \CR^\infty +o_n(1).
			\end{equation}
			Moreover, we have 
			\begin{equation}
				\LR(\tilde{w}_n ) = \alpha(\tilde z_1^n,\ldots,\tilde z_{k-1}^n ) \leq \varLambda_{k-1}.
			\end{equation}
			Observe that $\tilde{v}_n \vee \tilde{w}_n \in \SR_{\tilde z_1^n,\ldots, \tilde z_k^n}$ and $0\leq \tilde{v}_n \wedge \tilde{w}_n \leq \delta$,  then by  Lemma \ref{lemma 2.5} we have 
			\begin{align}\label{e13}
				\alpha \sbr{\tilde z_1^n,\ldots,\tilde z_k^n} &=\inf_{\SR_{\tilde z_1^n,\ldots, \tilde z_k^n}} \LR(u) \leq \LR(\tilde{v}_n \vee \tilde{w}_n)
				\\&=\LR(\tilde v_n)+\LR(\tilde w_n)-\LR(\tilde{v}_n \wedge \tilde{w}_n) \\
				&\leq \LR(\tilde v_n)+\LR(\tilde w_n)\leq \varLambda_{k-1}+\CR^\infty +o_n(1).
			\end{align}
			Let $n\to +\infty$, we obtain $\varLambda_{k} \leq \varLambda_{k-1}+\CR^\infty$, which contradicts to the induction hypothesis. Hence, the claim is true and $\lbr{\sbr{\tilde z_1^n,\ldots,\tilde z_k^n}}_n$ is bounded in $\sbr{\RN}^k$.  Therefore,  we may assume that, up to subsequence, there exists  $\sbr{\tilde z_1,\ldots,\tilde z_k} \in \ER_k$  such that  as $n \to +\infty$,  
			\begin{equation}
				\sbr{\tilde z_1^n,\ldots,\tilde z_k^n} \to \sbr{\tilde z_1,\ldots,\tilde z_k}.
			\end{equation}
			Then by Lemma \ref{lemma 3.5}, we deduce that 
			\begin{equation}
				\varLambda_{k} = \lim_{n\to+\infty}	\alpha(\tilde z_1^n,\ldots, \tilde z_k^n)    =\alpha \sbr{\tilde z_1,\ldots,\tilde z_k} .
			\end{equation}
			
			\medbreak
			In the following, we will prove that   $\varLambda_{k+1}>\varLambda_{k}+\CR^\infty$.  For this purpose, let   $\lbr{z_n}_n$ be a sequence  satisfying   \eqref{defi of zn}.  Consider $\hat v_n \in \MR_{\tilde z_1 ,\ldots,\tilde z_{k}, z_n}$, then we write
			\begin{equation}
				\hat v_n (x)= \sbr{\hat v_n}_\delta(x)+\sum_{i=1}^{k+1}  	\hat v_n ^{\delta,i}(x) ,
			\end{equation} 
			where $	\hat v_n ^{\delta,i}$ is the  emerging part of $\hat v_n (x)$  around $\tilde z_i$ for $i\in \lbr{ 1,\ldots,k}$,  and   $ 	\hat v_n ^{\delta,k+1}$ is the emerging part of $\hat v_n (x)$  around $   z_n$. Note that for $n$ large enough, 
			\begin{equation}
				\bigcup_{i=1}^k  \supp{\hat v_n ^{\delta,i}} \subset 	\bigcup_{i=1}^k	B(\tilde z_i , R_\delta) \subset \lbr{x\in \RN:   (x\cdot e)<\frac{\rho_n}{2}-1},
			\end{equation}
			\begin{equation}
				\supp{ {	\hat v_n}^{\delta,k+1}} \subset	B( z_n, R_\delta) \subset \lbr{x\in \RN:   (x\cdot e)>\frac{\rho_n}{2}+1}.
			\end{equation}
			Let $\hat F_n(x):= \chi_n(x)\hat{v}_n(x) $, where $\chi_n$ is given by \eqref{defi of chi}. 	By the same argument as used in \eqref{e12}, we have 
			\begin{equation}\label{e15}
				\LR( \hat F_n) \leq \varLambda_{k+1} + O(e^{-\bar \zeta \rho_n^2}).
			\end{equation}	Similar to Step 1,   we can write  $$\hat F_n(x) = \hat F_{n,1}(x)+\hat   F_{n,2}(x),$$ where  $  \hat F_{n,1} \in \SR_{ \tilde z_1 ,\ldots,\tilde z_{k}  }$,~$ \hat    F_{n,2} \in \SR_{  z_n}$ and $\supp{ \hat    F_{n,1}}\cap \supp{\hat    F_{n,2}}=\emptyset$.  
			Therefore, we  have  
			\begin{equation}\label{e20}
				\LR(  \hat    F_{n,1}) \geq \alpha( \tilde z_1 ,\ldots,\tilde z_{k} ) =\varLambda_{k}.
			\end{equation}
			On the other hand,   arguing as for proving \eqref{e14}, we get 
			\begin{equation}\label{e21}
				\LR(  \hat    F_{n,2})\geq \CR^\infty+\frac{1}{2}   \int_{ B\sbr{z_n,R_\delta}} \vartheta(x) \sbr{\hat    F_{n,2} ^\infty(x)}^2 \dx- O(e^{-\bar \zeta \rho_n^2}),
			\end{equation}
			where $ \hat    F_{n,2} ^\infty(x)= ({    \hat   F_{n,2}})_\delta +\xi^{\infty} ( \hat    F_{n,2} )  { \hat    F_{n,2}}^\delta\in \SR_{  z_n}^\infty$. Similar to the proof of  \eqref{e17}, we  can deduce from \eqref{e15}, \eqref{e20} and \eqref{e21} that
			\begin{equation}
				\begin{aligned}
					\varLambda_{k+1}> \varLambda_{k}+\CR^\infty,
				\end{aligned}
			\end{equation}
			which is the desired result.  The proof of Proposition \ref{prop 3.3} is completed. 
		\end{proof}
		
		\begin{remark}\label{Remark 4.3}
			{\rm We would like to point out that from the proof of Proposition \ref{prop 3.3}, we have the following facts: 	for any $k\in \N\setminus\lbr{0}$ and $\sbr{z_1,\ldots,z_k} \in \ER_{k}$, there exists $y\in\RN$ such that $\sbr{z_1,\ldots,z_k,y} \in \ER_{k+1}$ and 
				\begin{equation}
					\alpha(z_1,\ldots,z_k) +\CR^\infty <\alpha(z_1,\ldots,z_k,y).
			\end{equation}}
		\end{remark}

		\section{ The limit behavior of  max-min functions }\label{Sect4}
		
		In this section, we study the limit behavior of the  max-min functions  for $\LR$ obtained in Section \ref{Sect3}, as. The results of Proposition \ref{prop 4.1} shows that the distance between the points around which
		these max-min functions emerge increase goes to infinity  as $|\vartheta|_{N/2,\rm loc} \to 0  $. In Proposition \ref{prop 4.2}, we describe the asymptotic shape of the emerging parts of these  max-min functions,  and we show that  they approach,    as $|\vartheta|_{N/2,\rm loc} \to 0  $,  to the Gausson \eqref{defi of Gausson}.
		Consider 
		\begin{equation}
			\GR=\lbr{ \vartheta \in L_{\loc}^{N/2}(\RN):~\vartheta(x)\geq V_0-V_\infty,  ~\lim_{|x|\to +\infty} \vartheta(x)=0, ~ \lim_{|x|\to +\infty} \vartheta(x)e^{\bar \zeta |x|^2}=+\infty }.
		\end{equation}
		For a sequence $\vartheta_n  \in\GR$  with $ |\vartheta_n|_{N/2,\rm loc} \to 0 $,     we consider the problem
		\begin{equation}
			-\Delta u+ V_n(x)  u= u\log |u|^2, \quad  u\in  H^1(\RN),
		\end{equation}
		with  $V_n(x):=\vartheta_n(x)+V_\infty$, and the corresponding functional is defined by
		\begin{equation}\label{defi of LRN}
			\LR^n  (u) := \frac{1}{2} \int_{\RN} \mbr{|\nabla u|^2+ V_n(x)u^2} ~\mathrm{d}x-\frac{1}{2} \int_{\RN} u^2\sbr{\log u^2-1}~\mathrm{d}x.
		\end{equation}
		In what follows, 
		we denote  by  $  {\FR_\delta^n} $,  $\xi^n$, $\xi_i^n $,  $\SR_{  z_1 ,\ldots,  z_{k}  }^n$, $\MR_{z_1,\ldots,z_k}^n$, $\alpha^n( z_1 ,\ldots,  z_{k} )$, $\varLambda_{k}^n$, with respect to $\LR^n$, in the same way  that  $  {\FR_\delta}  $, $\xi$, $\xi_i  $, $\SR_{  z_1 ,\ldots,  z_{k}  } $, $\MR_{z_1,\ldots,z_k} $, $\alpha ( z_1 ,\ldots,  z_{k} )$, $\varLambda_{k} $ are defined with respect to $\LR$.
		
		\begin{proposition} \label{prop 4.1}
			For all $r>R_\delta$, there exists a constant $C_r>0$ such that for all $\vartheta \in \GR$ with  $\ploc{\vartheta}<C_r$, and for all integers $k\geq 2$,     let $\sbr{z_1 , \ldots, z_{k }  } \in \ER_{k }$ be a $ k $-tuples for which $ \varLambda_{k } =\alpha \sbr{ z_1,\ldots,z_k   }$, then
			
			\begin{equation}
				\min \lbr{|z_i- z_j|: ~i\neq j, ~~ i,j=1,\ldots, k } >r.
			\end{equation}
		\end{proposition}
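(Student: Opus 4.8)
The plan is a contradiction argument: from a hypothetical sequence of counterexamples I construct a configuration whose infimum‑value strictly exceeds the max‑min level $\varLambda_k^n$, contradicting its definition as a supremum. Suppose the conclusion fails for some $r>R_\delta$; then there are $\vartheta_n\in\GR$ with $|\vartheta_n|_{N/2,\loc}\to0$, integers $k_n\ge2$, and tuples $(z_1^n,\dots,z_{k_n}^n)\in\ER_{k_n}$ realising $\varLambda_{k_n}^n$ for which, after relabelling, $3R_\delta\le|z_1^n-z_2^n|\le r$. Every $V_n=\vartheta_n+V_\infty$ satisfies \eqref{H1}--\eqref{H2} with the same $\bar\zeta$ and the same $V_\infty$, so all results of Sections~\ref{Sect3}--\ref{Sect4} apply verbatim to $\LR^n$. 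Deleting $z_2^n$ leaves $(z_1^n,z_3^n,\dots,z_{k_n}^n)\in\ER_{k_n-1}$, and Remark~\ref{Remark 4.3} (for $\LR^n$) gives a point $y_n$ with $(z_1^n,z_3^n,\dots,z_{k_n}^n,y_n)\in\ER_{k_n}$ and $\alpha^n(z_1^n,z_3^n,\dots,z_{k_n}^n)+\CR^\infty<\alpha^n(z_1^n,z_3^n,\dots,z_{k_n}^n,y_n)\le\varLambda_{k_n}^n$; thus
\begin{equation}\label{eq:p41up}
\alpha^n(z_1^n,z_3^n,\dots,z_{k_n}^n)<\varLambda_{k_n}^n-\CR^\infty .
\end{equation}
The crux is the opposite bound
\begin{equation}\label{eq:p41low}
\varLambda_{k_n}^n\ \le\ \alpha^n(z_1^n,z_3^n,\dots,z_{k_n}^n)+\CR^\infty-c(r)+o_n(1),
\end{equation}
with $c(r)>0$ \emph{independent of $n$ and of $k_n$} and $o_n(1)$ controlled by $|\vartheta_n|_{N/2,\loc}$: then \eqref{eq:p41up} and \eqref{eq:p41low} give $\varLambda_{k_n}^n\le\varLambda_{k_n}^n-c(r)+o_n(1)$, impossible for $n$ large, and the threshold on $|\vartheta_n|_{N/2,\loc}$ below which $o_n(1)<c(r)$ is the constant $C_r$ of the statement.

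\noindent\textbf{The test function.} To prove \eqref{eq:p41low} I exhibit a competitor for $\alpha^n(z_1^n,z_2^n,z_3^n,\dots,z_{k_n}^n)$ obtained by superposing a Gausson at $z_2^n$ onto the $(k_n-1)$‑bump minimiser. Let $v_n$ realise $\alpha^n(z_1^n,z_3^n,\dots,z_{k_n}^n)$ (Lemma~\ref{lemma 3.1}), with submerged part $(v_n)_\delta$ and bumps $v_n^{\delta,j}$, $j\in\{1,3,\dots,k_n\}$. Since $|z_2^n-z_j^n|\ge3R_\delta$ for all such $j$, the ball $B(z_2^n,R_\delta)$ meets no $\supp v_n^{\delta,j}$, so $v_n=(v_n)_\delta\le\delta$ there. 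Set $w_n:=\max\{v_n,\ U(\cdot-z_2^n)\}$, $U$ the Gausson \eqref{defi of Gausson}. By \eqref{defi of delta}--\eqref{defi of Rdelta}, $U(\cdot-z_2^n)\ge\delta$ on $B(z_2^n,R_\delta/2)$, $U(\cdot-z_2^n)<\delta$ off it, and $U(\cdot-z_2^n)<\delta$ on every $B(z_j^n,R_\delta)$, $j\ne2$; hence $w_n$ emerges around $z_1^n,z_2^n,z_3^n,\dots,z_{k_n}^n$, its bump at $z_2^n$ being the radial function $\big(U(\cdot-z_2^n)-\delta\big)^+$ (whence $\beta_2(w_n)=0$) and its other bumps and barycenters those of $v_n$. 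Rescaling the bump at $z_2^n$ onto the $\FR_\delta^n$‑Nehari constraint by a factor $1+O(|\vartheta_n|_{N/2,\loc})$ (Lemma~\ref{lemma 2.3}, Remark~\ref{Remark 3.1}) — which alters $\FR_\delta^n$ of that bump by $O(|\vartheta_n|_{N/2,\loc})$ and leaves its radial symmetry and support — produces an element of $\SR^n_{z_1^n,\dots,z_{k_n}^n}$ with the same submerged part $(w_n)_\delta$; hence, by \eqref{22},
\begin{equation}\label{eq:p41test}
\varLambda_{k_n}^n\ \le\ \LR^n(w_n)+O(|\vartheta_n|_{N/2,\loc}).
\end{equation}

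\noindent\textbf{The energy estimate.} It remains to estimate $\LR^n(w_n)$. The a.e.\ identities $\max(f,g)^2+\min(f,g)^2=f^2+g^2$, $|\nabla\max(f,g)|^2+|\nabla\min(f,g)|^2=|\nabla f|^2+|\nabla g|^2$, and $\max(f,g)^2\log\max(f,g)^2+\min(f,g)^2\log\min(f,g)^2=f^2\log f^2+g^2\log g^2$ give the exact relation $\LR^n(w_n)=\LR^n(v_n)+\LR^n(U(\cdot-z_2^n))-\LR^n(m_n)$ with $m_n:=\min\{v_n,\ U(\cdot-z_2^n)\}$. Now $\LR^n(v_n)=\alpha^n(z_1^n,z_3^n,\dots,z_{k_n}^n)$, and $\LR^n(U(\cdot-z_2^n))=\LR^\infty(U)+\tfrac12\int_{\RN}\vartheta_n\,U(\cdot-z_2^n)^2\,\dx=\CR^\infty+O(|\vartheta_n|_{N/2,\loc})$ (Lemma~\ref{lemma pinfty}; the Gaussian decay of $U$ makes the remainder uniform in $z_2^n$). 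For the last term: $m_n<1$ everywhere (where $v_n\ge1$, namely only on its bumps $B(z_j^n,R_\delta)$, $j\ne2$, one has $U(\cdot-z_2^n)<\delta<1$), so $-s^2\log s^2\ge0$ throughout, giving $\LR^n(m_n)\ge\tfrac12\int_{\RN}(V_n+1)m_n^2\,\dx\ge\tfrac12(V_0+1)\int_{\supp v_n^{\delta,1}}U(\cdot-z_2^n)^2\,\dx$, because on $\supp v_n^{\delta,1}\subset B(z_1^n,R_\delta)$ one has $v_n\ge\delta>U(\cdot-z_2^n)$ (there $|x-z_2^n|\ge2R_\delta>R_\delta/2$), so $m_n=U(\cdot-z_2^n)$. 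Since $|x-z_2^n|\le|z_1^n-z_2^n|+R_\delta\le r+R_\delta$ on $\supp v_n^{\delta,1}$ — the sole place the hypothesis $|z_1^n-z_2^n|\le r$ is used — there $U(\cdot-z_2^n)\ge\gamma(r):=U(r+R_\delta)>0$, hence $\LR^n(m_n)\ge\tfrac12(V_0+1)\gamma(r)^2\,|\supp v_n^{\delta,1}|$. Combining with \eqref{eq:p41test} yields \eqref{eq:p41low} with $c(r)=\tfrac12(V_0+1)\gamma(r)^2\,\mu_0$, \emph{provided} one has the uniform lower bound $|\supp v_n^{\delta,1}|\ge\mu_0>0$.

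\noindent\textbf{The obstacle.} That lower bound is the one non‑routine input and the main difficulty: it is a non‑concentration statement for the emerging parts of the minimisers. For power‑type nonlinearities it is standard, but here the singular coefficient $\log w^2$ in the equation blocks a direct Harnack or comparison argument; what rescues it is the intrinsic length scale of the logarithmic equation — the Gausson admits no dilation invariance. I would isolate it as a lemma, combining the Nehari identity $\langle(\LR^n)'(v_n),v_n^{\delta,1}\rangle=0$ (cf.\ \eqref{e7}), which pins $\|v_n^{\delta,1}\|_H$ away from $0$ and from $\infty$, with $\FR_\delta^n(v_n^{\delta,1})>0$ (Lemma~\ref{lemma 2.5}), the splitting \eqref{22}, and $\alpha^n(z_1^n,z_3^n,\dots,z_{k_n}^n)\le\varLambda^n_{k_n-1}<+\infty$, so as to rule out concentration of $v_n^{\delta,1}$ on a set of vanishing measure — by the maximum‑principle and comparison tools of \cite{vazquez=AMO=1984} and \cite[\S3.1]{avenia-2014} already invoked in Proposition~\ref{prop 3.1} — obtaining $\mu_0$ dependent only on $R_\delta,N,V_0,V_\infty,\delta$. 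With $\mu_0$ in hand the argument closes as above; and since $c(r)$ and all the errors depend only on $r,R_\delta,N,V_0,V_\infty,\delta$ and on $|\vartheta_n|_{N/2,\loc}$, never on the number $k_n$ of bumps, a single constant $C_r$ works for all $k\ge2$ simultaneously.
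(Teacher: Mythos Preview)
Your overall strategy coincides with the paper's: argue by contradiction, build a competitor in $\SR^n_{z_1^n,\dots,z_{k_n}^n}$ by superposing a Gausson over the $(k_n-1)$-bump minimiser, use the pointwise identity $\LR^n(f\vee g)+\LR^n(f\wedge g)=\LR^n(f)+\LR^n(g)$, and reduce everything to a uniform lower bound on $|\supp|$ of one emerging part. The paper arrives at exactly your inequality \eqref{eq:p41low} (its \eqref{e23}) and closes with Proposition~\ref{prop 3.3}(ii) where you invoke Remark~\ref{Remark 4.3}; these are equivalent.

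The gap is in your treatment of the ``obstacle''. Your proposed route---Nehari identity pinning $\|v_n^{\delta,1}\|_H$, the bound $\alpha^n\le\varLambda^n_{k_n-1}<+\infty$, and the Vazquez/d'Avenia maximum-principle tools---does not work. First, the Nehari relation \eqref{e7} does \emph{not} bound $\|v_n^{\delta,1}\|_H$ from above when the support shrinks: both sides can blow up together. Second, $\varLambda^n_{k_n-1}$ is finite for each $n$ but grows with $k_n$ (roughly like $k_n\CR^\infty$), so it gives no bound uniform in $k_n$ on $\FR_\delta^n(v_n^{\delta,1})$. Third, the references \cite{vazquez=AMO=1984} and \cite[\S3.1]{avenia-2014} concern positivity/regularity of solutions and say nothing about non-concentration of a constrained minimiser's bump.

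The paper's argument for this step is a \emph{local replacement} and is the point you are missing. Suppose $|\supp v_n^{\delta,1}|\to0$ along a subsequence. Then for every $p\in(2,2^*)$ the ratio $\|v_n^{\delta,1}\|_H/|v_n^{\delta,1}|_p\to+\infty$ (see \eqref{ss33}), and the computation \eqref{e33} forces $\FR_\delta^n(v_n^{\delta,1})\to+\infty$. Now set $a_n:=(v_n)_\delta+\sum_{j\ne1}v_n^{\delta,j}$ and replace the bad bump by the Gausson-based one: $\widetilde v_n:=v_{z_1^n}\vee a_n\in\SR^n_{z_1^n,z_3^n,\dots,z_{k_n}^n}$, where $v_{z_1^n}=(U_{z_1^n})_\delta+\xi^n(U_{z_1^n})(U_{z_1^n})^\delta$. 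One has $\LR^n(\widetilde v_n)-\LR^n(v_n)\le\LR^n(v_{z_1^n})-\FR_\delta^n(v_n^{\delta,1})\le\CR^\infty+o_n(1)-\FR_\delta^n(v_n^{\delta,1})\to-\infty$, contradicting the minimality of $v_n$. This yields the uniform $\mu_0>0$ (depending only on $N,R_\delta,V_0,V_\infty,\delta$, hence on $r$ only through $R_\delta$), and with it your argument closes exactly as you wrote.
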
	 
		
		\begin{proof}
			Assume by contradiction that there exists $r>R_\delta$, $ \vartheta_n  \in \GR$ with  $\ploc{\vartheta_n}\to 0$, $k_n\geq2$ and  $\sbr{z_1^n, \ldots, z_{k_n}^n} \in \ER_{k_n}$ and $u_{n}\in \MR_{z_1^n, \ldots, z_{k_n}^n}^n$ such that 
			\begin{equation}
				\varLambda_{k_n}^n=\alpha^n( z_1^n, \ldots, z_{k_n}^n)=\LR^n(u_n ), 	 	\end{equation}
			and for all $n\in\N$
			\begin{equation}
				\min \lbr{|z_i^n- z_j^n|: ~i\neq j, ~~ i,j=1,\ldots, k_n }\leq r.
			\end{equation}
			Without loss of generality, we assume  that $|z_1^n-z_2^n|\leq r$ for all $n\in\N$.   We claim that  
			\begin{equation}\label{e23}
				\limsup_{n\to+\infty}  \mbr{ \alpha^n(z_1^n,z_2^n, \ldots, z_{k_n}^n ) -\alpha^n(z_2^n, \ldots, z_{k_n}^n )} <\CR^\infty.
			\end{equation}
			Then once the claim is proved, we are done, since $ \alpha^n(z_1^n,  \ldots, z_{k_n}^n )= \varLambda_{k_n}^n$ and $\alpha^n(z_2^n, \ldots, z_{k_n}^n)\leq   \varLambda_{k_n-1}^n$. Therefore, the inequality \eqref{e23} contradicts    (ii) of Proposition \ref{prop 3.3} for large $n$.
			\medbreak
			In the remainder of this proof, we will focus on proving   \eqref{e23}.  Let $$	v_{z_1^n}:=\sbr{U_{z_1^n}}_\delta+ \xi^{n  }(U_{z_1^n})\sbr{U_{z_1^n} }^\delta \in \SR_{z_1^n}^{n }, $$ and  $ w_n\in \MR_{z_2^n, \ldots, z_{k_n}^n }^n$ be  such that
			\begin{equation}
				\LRN(w_n)=\alpha^n(z_2^n, \ldots, z_{k_n}^n  ).
			\end{equation}
			Then  $v_{z_1^n} \vee w_n  \in \SR_{z_1^n,z_2^n, \ldots, z_{k_n}^n}^n$, and $0\leq v_{z_1^n}\wedge w_n \leq \delta$. Since $$\LR^n(v_{z_1^n} \vee w_n)=\LR^n( w_n)   
			+\LR^n(v_{z_1^n} )   -\LR^n(v_{z_1^n}\wedge w_n ),$$ we have 
			\begin{equation}
				\begin{aligned}
					\alpha^n( z_1^n, \ldots, z_{k_n}^n) -\alpha^n(z_2^n, \ldots, z_{k_n}^n )   &\leq \LR^n(v_{z_1^n}\vee w_n )-\LR^n( w_n)   
					=\LR^n(v_{z_1^n})   -\LR^n(v_{z_1^n} \wedge w_n ) .
				\end{aligned}
			\end{equation}
			Arguing  as for     proving \eqref{upper bound},  we  know that $\lbr{\xi^{n}(U_{z_1^n})}_n$ is bounded. Then a direct calculation shows that 
			\begin{equation}\label{e25}
				\begin{aligned}
					\LR^n \sbr{  v_{z_1^n}} 
					&=\LR^\infty(  v_{z_1^n})+\frac{1}{2} \int_{\RN} \vartheta_n(x) \sbr{ v_{z_1^n}(x)}^2 ~\mathrm{d} x\\
					&\leq \LR^\infty(U_{z_1^n})+\frac{1}{2} \left|\int_{\RN} \vartheta_n(x) \sbr{  v_{z_1^n}(x)}^2 \dx\right|\\
					&\leq \CR^\infty  + C \left|\int_{\RN} \vartheta_n(x) \sbr{ U_{z_1^n}}^2 \dx\right|.
				\end{aligned} 
			\end{equation}  
			Set $\RN=\bigcup_{i=1}^\infty Q_i$, where $Q_i$  are $N$-dimensional disjoint unit hypercubes. We deduce from the H\"{o}lder inequality that 
			\begin{equation}\label{e26}
				\begin{aligned}
					\left|\int_{\RN} \vartheta_n(x) \sbr{ U_{z_1^n}}^2 \dx\right| &\leq \sum_{i=1}^\infty	\int_{Q_i} \left|\vartheta_n(x) \sbr{ U_{z_1^n}}^2\right| \dx\\
					&\leq  \ploc{\vartheta_n} \sum_{i=1}^\infty	  |U_{z_1^n}|_{2^*,Q_i}^2 \leq C\ploc{\vartheta_n},
				\end{aligned}
			\end{equation}
			where  the last inequality follows from the  definition of    $U$ (see \eqref{defi of Gausson}). Therefore, it follows from \eqref{e25} that 
			\begin{equation} \label{e31}
				\LR^n \sbr{   v_{z_1^n}}  \leq \CR^\infty  + o_n(1).
			\end{equation}
			\vskip0.12in
			We  now estimate the term $\LR^n( v_{z_1^n} \wedge w_n)$. Note that  $0\leq  v_{z_1^n}\wedge w_n \leq \delta \leq 1$, then  by a similar argument as used in that of \eqref{e1}, we have 
			\begin{equation}
				\begin{aligned}
					\LR^n( v_{z_1^n} \wedge w_n)&\geq \frac{1}{2} (V_0+1)\int_{\RN}  {   ( v_{z_1^n} \wedge w_n)^2}~\mathrm{d}x  = \frac{1}{2} (V_0+1) \int_{\RN}  {   ( U_{z_1^n} \wedge w_n)^2}~\mathrm{d}x.
				\end{aligned}
			\end{equation}
			Since  $r>R_\delta$, we have $\supp{ w_n^{\delta,2}} \subset B(z_2^n,R_\delta) \subset B(z_1^n, 2r)$. Hence,  
			\begin{equation}
				\LR^n( v_{z_1^n} \wedge w_n) \geq C  \int_{\supp{ w_n^{\delta,2}}  }  {   ( U_{z_1^n}  )^2}~\mathrm{d}x \geq  C b_r^2 |\supp{ w_n^{\delta,2}}|,
			\end{equation} 
			where $b_r:=\inf_{B(0, 2r)} U(x)>0$. 
			In the following, we only need to show that 
			\begin{equation}\label{e35}
				\liminf_{n\to+\infty} |\supp{ w_n^{\delta,2}}| >0.
			\end{equation}
			By contradiction, we may assume that, up to subsequence, $\lim_{n\to+\infty}|\supp{ w_n^{\delta,2}}| =0$. Then up to subsequence, the relation
			\begin{equation}\label{ss33}
				\lim_{n\to+\infty} \frac{\nm{ w_n ^{\delta,2}}_H}{| w_n ^{\delta,2}|_{p}}=+\infty.
			\end{equation}
			must be true  for all $2<p<2^*$.
			Indeed, if the statement is false,  then there exists $p_0\in \sbr{2,2^*}$ and  $\hat{w}\in H^1(\RN) $ such that  $     \sbr{w_n }^{\delta,2} (\cdot-z_2^n)  /{|  w_n  ^{\delta,2} |_{p_0}} $ is bounded in $ H^1( B(0,R_\delta))$, which, up to subsequence, weakly converges to a function $\hat{w}\in H^1(\RN) $ in $ H^1( B(0,R_\delta))$, and strongly converges in $L^{p_0} ( B(0,R_\delta)) $. Hence,  $\hat w =0$ a.e. on $\RN$, which is impossible since $|\hat{w}|_{p_0}=1$. Therefore, the relation \eqref{ss33} holds.
			
			Recall that  for all $p\in \sbr{2,2^*}$, there exists a constant $T_p>0$ such that $ s^2\log s^2 \leq T_{p} s^{p}$ for all $s\in  [0,+\infty)$. Then
			we can deduce from Remark  \ref{Remark 3.1}   that 
			\begin{equation} \label{e33}
				\begin{aligned}
					\lim_{n\to+\infty} \FR_\delta^n(w_n^{\delta,2})	& =\lim_{n\to+\infty} \max_{t>0}  \FR_\delta^n(t w_n^{\delta,2})  \geq \lim_{n\to+\infty} \mathcal{F}_\delta^n\sbr{\frac{ w_n^{\delta,2}}{|w_n^{\delta,2}|_{p}}}\\ &\geq \frac{1}{2} \mbr{\sbr{\frac{\nm{w_n^{\delta,2}}_H }{ |w_n^{\delta,2}|_{p}  }}^2 - T_{p} \int_{ B\sbr{z_2^n,R_\delta} } \sbr{\delta+\frac{ w_n^{\delta,2}}{|w_n^{\delta,2}|_{p}}}^{p} ~\mathrm{d}x -C_1 } \\
					&\geq \frac{1}{2}\sbr{\frac{\nm{w_n^{\delta,2}}_H }{ |w_n^{\delta,2}|_{p} }}^2 -C_2  \to +\infty.
				\end{aligned}
			\end{equation} 
			%
			Let 	  $$v_{z_2^n}:=\sbr{U_{z_2^n}}_\delta+ \xi^n(U_{z_2^n})\sbr{U_{z_2^n}}^\delta \in \SR_{z_2^n}^n\quad \text{ and  } \quad  a_n:=(w_n)_\delta+\sum_{i=3}^{k_n} w_n^{\delta,i}.$$  
			Arguing as for proving \eqref{e31}, we have 
			\begin{equation}\label{ui4}
				\LR^n \sbr{  v_{z_2^n}}  \leq \CR^\infty  + o_n(1).
			\end{equation}
			Obviously, we know that  $\tilde w_n:=v_{z_2^n} \vee a_n  \in \SR_{z_2^n, \ldots, z_{k_n}^n}^n$ and $0\leq a_n \wedge v_{z_2^n}\leq \delta$, from which we  obtain that for all $n\in \N$, 
			\begin{equation}\label{e34}
				\LR^n(w_n)=\alpha^n(z_2^n, \ldots, z_{k_n}^n) \leq \LR^n( \tilde w_n) \quad \text{ and  } \quad \LR^n(a_n\wedge v_{z_2^n})>0.
			\end{equation} 
			Moreover, it follows from    \eqref{e32} that  $\LR^n(w_n)-\LR^n(a_n)=\FR_\delta^n(w_n^{\delta,2})  $,
			from which we get
			\begin{equation}
				\begin{aligned}
					\LR^n(\tilde w_n)-\LR^n(w_n)&= \LR^n(a_n)+\LR^n(v_{z_2^n})- \LR^n(a_n\wedge v_{z_2^n})-  \LR^n(w_n)\\
					&\leq \LR^n(a_n)+\LR^n(v_{z_2^n})-  \LR^n(w_n)
					\\&= \LR^n(v_{z_2^n})- \FR_\delta^n(w_n^{\delta,2}) . 
				\end{aligned}
			\end{equation}
			Hence  we deduce from \eqref{e33}  and \eqref{ui4} that $\LR^n(\tilde w_n)<\LR^n(w_n)$ for $n$ large enough, which contradicts to \eqref{e34}. Therefore, \eqref{e23} holds, and the proof is completed.
		\end{proof}
		\vskip0.15in
		\begin{corollary}\label{coro 4.1}
			Let $\lbr{\vartheta_n}_n$ be any sequence of functions in $\GR$ such that $\ploc{\vartheta_n}\to 0$ and for all $n$, $k_n\in \N$, $k_n \geq 2$, we consider $\sbr{z_1^n, \ldots, z_{k_n}^n} \in \ER_{k_n}$ and $u_n\in \MR_{z_1^n, \ldots, z_{k_n}^n}^n$ such that 
			\begin{equation} 
				\varLambda_{k_n}^n= \alpha^n( z_1^n, \ldots, z_{k_n}^n) =\LR^n(u_n ), 	\end{equation}
			then 
			\begin{equation}\label{l1}
				\lim_{n\to+\infty} \min \lbr{|z_i^n- z_j^n|: ~i\neq j, ~~ i,j=1,\ldots, k_n }=+\infty.
			\end{equation}
		\end{corollary}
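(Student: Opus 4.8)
This corollary follows from Proposition \ref{prop 4.1} by a routine $\varepsilon$-$N$ argument: I would let the separation threshold $r$ run to infinity and, for each fixed $r$, invoke Proposition \ref{prop 4.1} together with the hypothesis $\ploc{\vartheta_n}\to 0$. The key point is that the smallness constant $C_r$ furnished by Proposition \ref{prop 4.1} depends only on $r$ and the fixed data $N,\delta,V_\infty,\bar\zeta$, not on the number of bumps; once $n$ is large enough that $\ploc{\vartheta_n}<C_r$, the conclusion of Proposition \ref{prop 4.1} applies to the $k_n$-tuple $\sbr{z_1^n,\ldots,z_{k_n}^n}$ verbatim.

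Concretely, I would proceed as follows. Fix an arbitrary $r>R_\delta$ and let $C_r>0$ be the constant provided by Proposition \ref{prop 4.1}. Since $\ploc{\vartheta_n}\to 0$, choose $n_r\in\N$ with $\ploc{\vartheta_n}<C_r$ for all $n\ge n_r$. For each such $n$ the data satisfy the hypotheses of Proposition \ref{prop 4.1}: $\vartheta_n\in\GR$ has $\ploc{\vartheta_n}<C_r$, $k_n\ge 2$, $\sbr{z_1^n,\ldots,z_{k_n}^n}\in\ER_{k_n}$, and, because $u_n\in\MR_{z_1^n,\ldots,z_{k_n}^n}^n$ with $\LR^n(u_n)=\alpha^n(z_1^n,\ldots,z_{k_n}^n)=\varLambda_{k_n}^n$, the tuple $\sbr{z_1^n,\ldots,z_{k_n}^n}$ is one for which $\varLambda_{k_n}^n=\alpha^n(z_1^n,\ldots,z_{k_n}^n)$. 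Hence Proposition \ref{prop 4.1} gives
\begin{equation}
\min\lbr{|z_i^n-z_j^n|:~i\ne j,~i,j=1,\ldots,k_n}>r \qquad\text{for all } n\ge n_r
\end{equation}
(the minimum being over a nonempty set since $k_n\ge 2$). As $r>R_\delta$ was arbitrary, this forces $\liminf_{n\to+\infty}\min\lbr{|z_i^n-z_j^n|:~i\ne j}\ge r$ for every $r>R_\delta$, which is precisely \eqref{l1}.

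As for the ``hard part'': there is essentially none here, since all the real work is contained in Proposition \ref{prop 4.1}. The only feature worth flagging is the uniformity of $C_r$ in $k$, which is exactly what allows the (possibly unbounded) sequence $\lbr{k_n}_n$ to be handled with a single threshold for each $r$; and the restriction $r>R_\delta$ in Proposition \ref{prop 4.1} is harmless, since only the regime $r\to+\infty$ is needed to conclude that the limit in \eqref{l1} equals $+\infty$.
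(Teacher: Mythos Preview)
Your proposal is correct and matches the paper's approach: the corollary is stated without proof immediately after Proposition~\ref{prop 4.1}, the implicit argument being precisely the $\varepsilon$--$N$ deduction you spell out. Your observation about the uniformity of $C_r$ in $k$ is the right point to flag, and there is nothing further to add.
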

		\vskip0.15in
		
		\begin{lemma}\label{lemma 4.1}
			Let $\lbr{\vartheta_n}_n$ be a  sequence  satisfying the conditions in Corollary \ref{coro 4.1}, and $ V_n(x)=\vartheta_n(x)+V_\infty$. Let $k_n\in \N\setminus \lbr{0}$,  $\sbr{z_1^n,\ldots,z_{k_n}^n}\in \ER_{k_n}$, and  $u_{n} \in \MR_{z_1^n,\ldots,z_{k_n}^n}^n$.  Then  $\sbr{u_{n}}_\delta$ is a solution of 
			\begin{equation}\label{e555}
				\begin{cases}
					-\Delta u +  V_n(x)u=u\log u^2 \quad &\text{ in } ~~\RN\setminus \supp{u_{n}^\delta},\\
					u=\delta \quad &\text{ on }~~ \supp{u_{n}^\delta},\\
					u>0 \quad &\text{ in } ~~\RN.
				\end{cases}
			\end{equation}
			Set   $d(x):=\mathrm{dist}\sbr{x,\cup_{i=1}^{k_n} B(z_i^n,R_\delta) }$. Then 
				there exists    $  a \in  \sbr{0, \sqrt{V_0}} $  such that 
				\begin{equation}\label{h4}
					0<\sbr{u_{n}}_\delta(x)<\widetilde{C}_{ a} e^{-  a \sbr{d(x)}^2}, \quad \forall x\in \RN\setminus \sbr{\cup_{i=1}^{k_n} B(z_i^n,R_\delta) },
				\end{equation}
				where   $\widetilde{C}_{  a}$ depends neither on $n$ or on  $k_n$.
		\end{lemma}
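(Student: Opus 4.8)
The statement has two parts, and for the first --- that $(u_n)_\delta$ solves \eqref{e555} --- the plan is to transcribe the proof of Proposition \ref{prop 3.1} with $V$ replaced throughout by $V_n$. One checks that on the closed convex set
\[
\mathcal{B}_n:=\lbr{u\in H^1(\RN):\ |u(x)|\leq\delta\ \ \forall x\in\RN,\ \ u=\delta\ \text{on}\ \supp{u_n^\delta}}
\]
the functional $\LR^n$ is coercive and convex; this uses only $V_n(x)\geq V_0>2$ together with $|u|\leq\delta<1$, exactly as in \eqref{e1}. Because the Nehari identities $\hbr{(\LR^n)'(u),u^{\delta,i}}=0$ and the barycenters $\beta_i(u)$ depend only on the emerging part (not on $u_\delta$), replacing $(u_n)_\delta$ by any competitor in $\mathcal{B}_n$ preserves membership in $\SR^n_{z_1^n,\ldots,z_{k_n}^n}$, so minimality of $u_n$ forces $(u_n)_\delta$ to be the unique positive minimizer of $\LR^n$ over $\mathcal{B}_n$. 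Convexity of $\Psi$ then gives \eqref{dq1} for $\LR^n$, i.e. $(u_n)_\delta$ is a sub-differential critical point, and Lemmas \ref{lemma 2.23} and \ref{lemma 2.7} (with $V_n$ in place of $V$) show it solves $-\Delta u+V_n(x)u=u\log u^2$ on $\lbr{|u|<\delta}$ with $u=\delta$ on $\supp{u_n^\delta}$. Since $u\equiv\delta$ is a strict supersolution, $\lbr{|u|<\delta}=\RN\setminus\supp{u_n^\delta}$, and the maximum principle of \cite{vazquez=AMO=1984} with the regularity argument of \cite[Section 3.1]{avenia-2014} yields positivity and classical regularity, which is \eqref{e555}.

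For the decay \eqref{h4} the key observation is that, unlike in Proposition \ref{prop 3.1}, one compares directly against the \emph{uniform} lower bound $V_0$. Since $\vartheta_n\in\GR$ forces $V_n(x)=\vartheta_n(x)+V_\infty\geq V_0$ everywhere, the positive solution $(u_n)_\delta$ of \eqref{e555} satisfies
\[
-\Delta (u_n)_\delta+V_0\,(u_n)_\delta\ \leq\ -\Delta (u_n)_\delta+V_n(x)\,(u_n)_\delta\ =\ (u_n)_\delta\log (u_n)_\delta^2\quad\text{in}\ \RN\setminus\supp{u_n^\delta},
\]
with $(u_n)_\delta\leq\delta<1$. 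For each $i$ the ball $B(z_i^n,R_\delta)$ is closed and convex, so Lemma \ref{lemma newdecay} applied with $l=V_0$, $s=\delta$ and a fixed $b\in(0,\sqrt{V_0})$ produces constants $a=a(V_0,b,N,\delta)\in(0,\sqrt{V_0})$ and $C_a=C_a(V_0,b,N,\delta)$ --- manifestly independent of $n$ and of $k_n$ --- such that $(u_n)_\delta(x)\leq C_a e^{-a(\mathrm{dist}(x,B(z_i^n,R_\delta)))^2}$ for $x\notin\bigcup_j B(z_j^n,R_\delta)$. Taking the minimum over $i$ and using $d(x)=\min_i\mathrm{dist}(x,B(z_i^n,R_\delta))$ gives \eqref{h4} with $\widetilde{C}_a:=C_a$. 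Note that, because $V_n\geq V_0$ on all of $\RN$, no analogue of the ball $B(0,\widetilde R)$ from Proposition \ref{prop 3.1} is needed here.

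I expect the main technical point to be that $(u_n)_\delta$ solves the differential inequality only on $\RN\setminus\supp{u_n^\delta}\subsetneq\RN\setminus B(z_i^n,R_\delta)$, so Lemma \ref{lemma newdecay} cannot be invoked off a single ball verbatim. The remedy is to first apply \cite[Lemma 3.3]{cerami_CPMA_2013} with the closed set $\bigcup_j B(z_j^n,R_\delta)$ to obtain the preliminary bound $(u_n)_\delta(x)\leq C_b\delta\,e^{-b\,\mathrm{dist}(x,\cup_j B(z_j^n,R_\delta))}$ with $C_b=C_b(V_0,b,N)$, and then to run the barrier comparison of Lemma \ref{lemma newdecay} --- with barrier $w(x)=Me^{-c(\mathrm{dist}(x,B(z_i^n,R_\delta)))^2}$ --- on the set $\lbr{x\notin\bigcup_j B(z_j^n,R_\delta):\ \mathrm{dist}(x,B(z_i^n,R_\delta))\geq R_1}$, where the relevant convexity of $x\mapsto(\mathrm{dist}(x,B(z_i^n,R_\delta)))^2$ still holds, the PDE inequality is valid, and on the boundary one has $(u_n)_\delta\leq e^{-bR_1/2}$. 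Every constant entering the argument depends only on $V_0$, $b$, $N$ and $\delta$; tracking this uniformity (and in particular its independence of $k_n$) through the comparison is precisely the content of the lemma and the only place where care beyond Proposition \ref{prop 3.1} is required.
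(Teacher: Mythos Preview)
Your overall strategy matches the paper exactly: the paper's proof says only that $(u_n)_\delta$ satisfies $-\Delta u+V_0 u\leq u\log u^2$ on $\RN\setminus\cup_i B(z_i^n,R_\delta)$ and then invokes ``a similar argument as for proving \eqref{e4}''; your first two paragraphs are a faithful expansion of this, and the observation that $V_n\geq V_0$ everywhere removes the need for $B(0,\widetilde{R})$ and forces the constants to depend only on $(V_0,b,N,\delta)$ is precisely the point.

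You also correctly flag a gap that the paper glosses over (both here and in Proposition~\ref{prop 3.1}): Lemma~\ref{lemma newdecay} requires convex $\DR$, while $\cup_j B(z_j^n,R_\delta)$ is not. Your remedy, however, does not work as stated. Writing $\rho_i(x)=\mathrm{dist}(x,B(z_i^n,R_\delta))$, the boundary of $\Omega_i=\{x\notin\cup_j B:\rho_i\geq R_1\}$ contains portions of $\partial B(z_j^n,R_\delta)$ for $j\neq i$, on which $(u_n)_\delta\leq\delta$ while the barrier $Me^{-c\rho_i^2}$ can be arbitrarily small; and even on $\{\rho_i=R_1\}$ the preliminary bound controls $u$ only through $d(x)=\mathrm{dist}(x,\cup_j B)$, which may be much smaller than $R_1$, so the claimed inequality $u\leq e^{-bR_1/2}$ is not justified there. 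A clean fix is to run the comparison on $\{d\geq R_1\}$ against the global barrier $w(x)=Me^{-c\,d(x)^2}=\max_j w_j(x)$ with $w_j=Me^{-c\rho_j^2}$: on $\{d=R_1\}$ one has $u\leq e^{-bR_1/2}=w$; if $u-w$ had a positive interior maximum at $x_0$ and $i$ is an index with $d(x_0)=\rho_i(x_0)$, then since $w\geq w_i$ everywhere with equality at $x_0$, the point $x_0$ is also a maximum of $u-w_i$, and the maximum-principle step from the proof of Lemma~\ref{lemma newdecay} applies verbatim to the smooth pair $(u,w_i)$ to yield the contradiction.
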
 
		\begin{proof}
			Obviously, we know that $\sbr{u_{n}}_\delta$  solves    $ -\Delta u +  lu\leq u\log u^2$ in $\RN\setminus \sbr{\cup_{i=1}^{k_n} B(z_i^n,R_\delta)}$ with  $l=V_0$. Then  the relation \eqref{h4} follows a similar argument as for proving  \eqref{e4}.
		\end{proof}
		
		\vskip 0.15in
		\begin{proposition} \label{prop 4.2}
			Let $\lbr{\vartheta_n}_n$ be any sequence of functions in $\GR$ such that $\ploc{\vartheta_n}\to 0$ and for all $n$, $k_n\in \N\setminus\lbr{0}$,  we consider $\sbr{z_1^n, \ldots, z_{k_n}^n} \in \ER_{k_n}$ 	and $u_n\in \MR_{z_1^n, \ldots, z_{k_n}^n}^n$ with	\begin{equation} \label{r12}
				\lim_{n\to+\infty} \min \lbr{|z_i^n- z_j^n|: ~i\neq j, ~~ i,j=1,\ldots, k_n }=+\infty   \quad \text{when }~ k_n\geq 2.
			\end{equation} 
			Then  for all $r>0$   , there holds
			\begin{equation}
				\lim_{n\to+\infty} \sbr{ \sup \lbr{|u_n (x+z_i^n) -U(x)| : i\in \lbr{1,\dots,k_n}, |x|\leq r}}=0   .
			\end{equation}
		\end{proposition}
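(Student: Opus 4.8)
The plan is to fix $i\in\lbr{1,\dots,k_n}$, pass to the translated profiles $v_n:=u_n(\cdot+z_i^n)$, and show that along any subsequence $v_n\to w$ in $C^0_{\loc}(\RN)$, where $w$ is a non-trivial solution of the limit problem \eqref{p infty} with $\LR^\infty(w)=\CR^\infty$; by Lemma \ref{lemma 3.3} this forces $w=U$, and since the limit is subsequence-independent and every estimate below is uniform in $i$, the statement follows.

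The first and most delicate point is a pair of bounds $c_0\le\FR_\delta^n(u_n^{\delta,i})\le C_0$, uniform in $n$ and in the (possibly unbounded) $k_n$. For the upper bound I would test the minimality of $u_n$ against the competitor $\tilde u_n\in\SR^n_{z_1^n,\dots,z_{k_n}^n}$ obtained from $u_n$ by replacing its $i$-th emerging bump with the $\xi^n$-projection $\phi_i$ of $(U(\cdot-z_i^n)-\delta)^+$ (whose barycentre vanishes by radial symmetry) and re-optimising only the submerged part; because $\FR_\delta^n$ splits over bumps as in \eqref{22}, the Nehari projection factors of the remaining $k_n-1$ bumps stay equal to $1$, so the comparison collapses to $\FR_\delta^n(u_n^{\delta,i})\le\FR_\delta^n(\phi_i)+\mbr{\LR^n((\tilde u_n)_\delta)-\LR^n((u_n)_\delta)}$, and both terms on the right are bounded by a universal constant --- the bracket because the two submerged minimisers solve the same problem outside the fixed ball $B(z_i^n,R_\delta)$ and have energy density there controlled by \eqref{H1}. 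For the lower bound I would use that $\FR_\delta^n(u_n^{\delta,i})=\max_{t>0}\FR_\delta^n(tu_n^{\delta,i})$ by Remark \ref{Remark 3.1}: if it tended to $0$ then $\nm{u_n^{\delta,i}}_H/|u_n^{\delta,i}|_p\to+\infty$ as in \eqref{ss33}, whence $\max_{t>0}\FR_\delta^n(tu_n^{\delta,i})\to+\infty$ by the computation \eqref{e33}, a contradiction. In particular $|\supp{u_n^{\delta,i}}|$ and $\nm{u_n^{\delta,i}}_H$ stay bounded away from $0$, and $\nm{u_n^{\delta,i}}_H$ stays bounded above.

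Next I would extract the limit. Since $\supp{u_n^{\delta,i}}\subset B(z_i^n,R_\delta)$, the translated emerging part is bounded in $H^1_0(B(0,R_\delta))$; the translated submerged part is $\le\delta$ and, by Lemma \ref{lemma 4.1} together with the separation hypothesis \eqref{r12}, solves the clean equation on $B(0,\rho_n)\setminus B(0,R_\delta)$ with $\rho_n\to+\infty$ and with a decay estimate uniform in $n,i$; on $B(z_i^n,R_\delta)$, $u_n>0$ and solves $-\Delta u+V_n u=u\log u^2+(u-\delta)^+\mbr{\lambda_i^n\cdot(x-z_i^n)}$ by \eqref{e5}, the multipliers $\lambda_i^n$ being bounded by duality from the bounds just established. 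Using $\ploc{\vartheta_n}\to0$, hence $V_n(\cdot+z_i^n)\to V_\infty$ in $L^{N/2}_{\loc}$ whatever the location of the centres, elliptic regularity (De Giorgi--Nash--Moser for the $C^0$ bound, Schauder and bootstrap for higher regularity) gives, up to a subsequence, $v_n\to w$ in $C^0_{\loc}(\RN)$ and strongly in $H^1_{\loc}(\RN)$, with $w\ge0$, $w\not\equiv0$ (by the lower bounds above and equicontinuity); passing to the limit in the splitting \eqref{22} and in the Nehari and barycentre constraints gives $w=w_\delta+w^\delta\in\SR_0^\infty$.

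It remains to pin the energy. Let $G_n:=\chi_n u_n$ be the localisation of $u_n$ to $B(z_i^n,\rho_n)$ with a cut-off as in the proof of \eqref{e12}; then $G_n\in\SR^n_{z_i^n}$ (its unique bump is $u_n^{\delta,i}$, already Nehari with zero barycentre), so $\LR^n(G_n)\ge\alpha^n(z_i^n)$, while replacing the bump region of $u_n$ by a minimiser for $\alpha^n(z_i^n)$ (whose submerged part decays like a Gaussian by Lemma \ref{lemma 4.1}) and invoking minimality of $u_n$ gives $\LR^n(G_n)\le\alpha^n(z_i^n)+o_n(1)$, all cut-off errors being $O(e^{-c\rho_n^2})$. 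Since $\LR^n(G_n)=\LR^\infty(w)+o_n(1)$ (the centre is irrelevant and $\ploc{\vartheta_n}\to0$), $\alpha^n(z_i^n)\le\CR^\infty+C\ploc{\vartheta_n}$ by a standard projection estimate, and $\LR^\infty(w)\ge\alpha^\infty(0)=\CR^\infty$ because $w\in\SR_0^\infty$ (Lemma \ref{lemma 3.3}), I obtain $\LR^\infty(w)=\CR^\infty$; hence $w$ attains $\alpha^\infty(0)$ and $w\in\MR_0^\infty=\lbr{U}$ by \eqref{m2}. The hard part is exactly the double bound on $\FR_\delta^n(u_n^{\delta,i})$ together with its sharpening to $\CR^\infty$: both must be uniform in the number of bumps $k_n$, which is why the separation \eqref{r12} and the Gaussian decay of the submerged part from Lemma \ref{lemma 4.1} --- needed to absorb the gluing and cut-off errors and to decouple a single bump from the rest --- are indispensable.
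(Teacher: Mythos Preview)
Your overall strategy---localise around a single bump, pin the energy at $\CR^\infty$, identify the limit via Lemma~\ref{lemma 3.3}---is the same as the paper's, and your energy-pinning Step~3 is essentially the paper's point~(i). The difference is in the order of Steps~2 and~3 and in how the Lagrange multipliers enter, and here there is a genuine soft spot.

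You invoke elliptic regularity on $B(z_i^n,R_\delta)$ \emph{before} having any convergence, and for this you need the source term $u_n^{\delta,i}\,[\lambda_i^n\cdot(x-z_i^n)]$ under control, i.e.\ $|\lambda_i^n|$ bounded. Your justification ``by duality from the bounds just established'' is incomplete: testing \eqref{e5} with $\phi=(\lambda_i^n/|\lambda_i^n|\cdot(x-z_i^n))\,\varphi$ (as in \eqref{r98}) yields on the right $|\lambda_i^n|\int u_n^{\delta,i}\,\varphi\,\bigl(\tfrac{\lambda_i^n}{|\lambda_i^n|}\cdot(x-z_i^n)\bigr)^2\,\mathrm{d}x$, and to extract $|\lambda_i^n|$ you need this quadratic moment bounded below \emph{uniformly in the direction} $\lambda_i^n/|\lambda_i^n|$. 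Your Step~1 bounds (on $\FR_\delta^n(u_n^{\delta,i})$, on $|\supp u_n^{\delta,i}|$, on $\nm{u_n^{\delta,i}}_H$) do not preclude $u_n^{\delta,i}$ from concentrating near a hyperplane through $z_i^n$, which would send that moment to zero. The paper sidesteps this: it first establishes the energy level (point~(i)) and then, in point~(ii), obtains strong $H^1$ convergence $f_{n,1}(\cdot+z_1^n)\to U$ purely by weak-lower-semicontinuity arguments with the $\xi^\infty$-projection (no equation, no $\lambda_i^n$). Only \emph{after} knowing $u_n^{\delta,i}(\cdot+z_i^n)\to U^\delta$ does the paper prove $\lambda_i^n\to0$ in \eqref{r97}--\eqref{r98}, since then the quadratic moment converges to $\int U^\delta\varphi(\lambda\cdot x)^2\,\mathrm{d}x>0$; regularity and $C^0_{\loc}$ convergence follow in point~(iii). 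Reordering your argument this way---energy first, then $H^1$ convergence via lower semicontinuity (your Step~3 feeds into this), and only then the multiplier bound and regularity---closes the gap without changing your ingredients.
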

		\vskip 0.1in
		\begin{proof}
			
			Let $k_n\geq 1$, we take $i\in \lbr{1,\ldots,k_n}$ being fixed. Without loss of generality, we only consider the case $i=1$ and prove  that the relation
			\begin{equation} \label{e24}
				\lim_{n\to+\infty} |u_n (x+z_1^n) -U(x)|=0, \quad \text{ for } ~|x|\leq r
			\end{equation}
			holds   for all $r>0$. The other cases $i\in \lbr{2 ,\ldots,k_n}$ follow in the same way.  
			\medbreak
			For $k_n \geq 1$, we set 
			\begin{equation}\label{r3}
				f_n(x):=	 \begin{cases}
					u_n(x), \quad &\text{ when } k_n=1,\\ 
					\tau_n(x)u_n(x), \quad & \text{ when } k_n \geq 2,
				\end{cases}
			\end{equation}
			with $\tau_n(x)=\tau\sbr{\left| x-z_1^n \right|-\frac{d_n}{2}}$,  where    $\tau(x)\in C^\infty(\R , [0,1])$  satisfies \begin{equation}\label{defi of  tau}
				\tau(t)= 0 ~~\text{ for } ~  |t|<\frac{1}{2} , ~~\text{ and } ~~ \tau(t)=1~~  \text{ for} ~ | t|>1 ,
			\end{equation} and $d_n= \min \lbr{|z_i^n- z_j^n|: ~i\neq j, ~~ i,j=1,\ldots, k_n }$. Then by \eqref{r12}, for $n$ large enough,  when $k_n\geq 2$, we can decompose $f_n(x)$ into $f_n(x)=f_{n,1}(x)+f_{n,2}(x)$ with $f_{n,1}(x) \in \SR_{z_1^n}^n$  and  $f_{n,2}(x)\in \SR_{ z_2^n, \ldots, z_{k_n}^n}^n$.   For convenience, in the following,       we      define $f_{n,1}=u_n$ and $f_{n,2}=0 $ when $k_n=1$.   Moreover, since $d_n\to \infty$, for $n$ large enough, $\supp{f_{n,1}}\cap \supp{f_{n,2}}=\emptyset$ and 
			\begin{equation}
				\supp{f_{n,1}} \subset	B(z_1^n, R_\delta) \subset \lbr{x\in \RN:  \left| x-z_1^n \right|  <\frac{d_n}{2}-\frac{1}{2}},
			\end{equation}
			\begin{equation}
				\supp{f_{n,2}} \subset \bigcup_{i=2}^{k_n}	B( z_i^n, R_\delta) \subset \lbr{x\in \RN:   \left| x-z_1^n \right| >\frac{d_n}{2}+\frac{1}{2}}.
			\end{equation}
			The proof of \eqref{e24} will be carried out by proving  the following  three points.
			\medbreak
			\begin{itemize}
				\item[(i). ]   $  \LR^n (f_{n,1})\to \CR^\infty $ as $n\to +\infty$.
				\medbreak
				\item[(ii). ]  $f_{n,1}(\cdot+ z_1^n) \rightarrow U$ strongly in $H^1(\RN)$ as $n\to +\infty$.
				
				\medbreak
				\item[(iii).]  $f_{n,1}(\cdot+ z_1^n) \rightarrow U$ uniformly in $K$ as $n\to +\infty$, $\forall K \subset \subset B(0, R_\delta)$ compact.
			\end{itemize} 
			\medbreak
			\noindent The remainder  part of this proof    is mainly devoted to proving that points (i), (ii),  (iii) hold.
			\medbreak
			
			{\it Proof of point (i).  }   First of all we want to show that 
			\begin{equation}\label{r1}
				\lim_{n\to+\infty} \LR^n(f_{n,1}) =\lim_{n\to+\infty}  \alpha^n(z_1^n) .
			\end{equation}
			When $k_n=1$, the relation \eqref{r1} is obviously true because $ f_{n,1} (x) =u_n(x) \in \MR_{z_1^n}^n $. 
			In the following,  we focus on  the case  when $k_n\geq 2$.  Since $f_{n,1} \in \SR_{z_1^n}^n$, it is easy to see that 
			\begin{equation}\label{r5}
				\lim_{n\to+\infty}  \alpha^n(z_1^n)    \leq \lim_{n\to+\infty}  \LR^n(f_{n,1}).
			\end{equation}
			Now we prove the reserve inequality.
			Arguing as in \eqref{e12} and taking into account the asymptotic decay  \eqref{h4} of $u_n$, we have 
			\begin{equation}
				\LR^n(f_{n,1})+\LR^n(f_{n,2})	=\LR^n(f_n)   \leq \LR^n(u_n) +O(e^{-\bar a d_n^2}),
			\end{equation}
			where $\bar a=a/8$.
			Let $   v_n \in \SR_{z_1^n}^n$ be such that it satisfies   $	\LR^n(    v_n)=\alpha^n(z_1^n)$. Then we define  $$\bar v_n(x):=\varsigma_n(x) {v}_n(x)$$ with $\varsigma_n(x)=\varsigma\sbr{\left| x-z_1^n \right|-\frac{d_n}{2}}$,  where    $\varsigma(x)\in C^\infty(\R , [0,1])$  satisfies $	\varsigma(t)= 1$ if $t <\frac{1}{4}$ and  $\varsigma(t)=0$  if $t >\frac{1}{2}$. Then  by  the  construction, we have 
			\begin{equation}
				\bar 	v_n \in \SR_{z_1^n}^n \quad \text{ and } \quad \bar v_n+f_{n,2} \in \SR_{z_1^n,z_2^n, \ldots, z_{k_n}^n}^n. 
			\end{equation}
			Moreover,  it is not difficult to see that $\LR^n(\bar v_n)= \LR^n( v_n)+ O(e^{-\bar a d_n^2})$ and 
			\begin{equation}
				\LR^n(u_n)= \alpha^n(z_1^n, \ldots,z_{k_n}^n)\leq \LR^n (\bar v_n+f_{n,2})= \LR^n (\bar v_n )+ \LR^n (  f_{n,2}).
			\end{equation}
			Then we deduce that 
			\begin{equation}
				\begin{aligned}
					\LR^n(f_{n,1})& \leq \LR^n(u_n)-\LR^n(f_{n,2}) +O(e^{-\bar a d_n^2})\\
					&\leq  \LR^n (  {v}_n)+O(e^{-\bar a d_n^2})\\
					&=\alpha^n(z_1^n)+O(e^{-\bar a d_n^2}),
				\end{aligned}
			\end{equation}
			from which, we have  $\lim\limits_{n\to+\infty}  \alpha^n(z_1^n)   \geq \lim\limits_{n\to+\infty}  \LR^n(f_{n,1})$ since  $d_n \to  +\infty$. Hence, we deduce from  \eqref{r5} that   \eqref{r1} holds.
			
			\medbreak
			Now we aim to prove that  
			\begin{equation}
				\lim_{n\to+\infty}  \alpha^n(z_1^n)=\CR^\infty.
			\end{equation}
			It follows directly from   \eqref{e25}, \eqref{e26} and \eqref{e31} that 
			\begin{equation}\label{r9}
				\lim_{n\to+\infty}	\LR^n(   v_n)= \lim_{n\to+\infty}\alpha^n(z_1^n) \leq  \lim_{n\to+\infty}\LR^n(v_{z_1^n})\leq \CR^\infty.
			\end{equation}
			In order to prove  the reserve inequality, 
			we  consider $
			{v}_n^\infty=\sbr{ v_n}_\delta+  \xi^\infty(  v_n) { { {v}_n^{\delta}}}  \in \SR_{z_1^n}^\infty $. Observe that 
			\begin{equation}
				\begin{aligned}
					\LR^n(  v_n)	 =\max_{t>0} \LR^n \sbr{\sbr{  v_n}_\delta+ t    v_n ^\delta } \geq  \LR^n({v}_n^\infty),
				\end{aligned}
			\end{equation} 
			it follows from \eqref{r9} that   $\LR^n({v}_n^\infty)$ is bounded in $\R$.
			Arguing analogous for proving   Lemma \ref{lemma 2.10}, we deduce that   $ {v}_n^\infty$ is bounded in $H^1(\RN)$.   
			Moreover, by a similar argument as used in that of \eqref{e26}, we have 
			\begin{equation}
				\lim_{n\to+\infty} \int_{\RN} \vartheta_n(x) \sbr{ {v}_n^\infty (x)}^2 ~\mathrm{d} x=0.
			\end{equation} 
			Then  we infer  from 
			\begin{equation}
				\LR^n(  v_n) \geq \LR^n({v}_n^\infty) =\LR^\infty({v}_n^\infty) +\frac{1}{2} \int_{\RN} \vartheta_n(x) \sbr{ {v}_n^\infty  (x)}^2 ~\mathrm{d} x, 
			\end{equation}
			that   $	\lim\limits_{n\to+\infty}	\LR^n(  v_n) \geq \CR^\infty$,  which completes the proof of (i).
			
				%
			
			\vskip 0.1in 
			{\it Proof of point  (ii).  }    Consider
			$     f_{n,1}^\infty:=\sbr{  f_{n,1}}_\delta+ \xi^\infty(   f_{n,1})  {   f_{n,1} ^\delta} \in \SR_{z_1^n}^\infty$,  we first claim that  
			\begin{equation} \label{f1}
				\lim_{n\to+\infty}  \LRI ( f_{n,1}^\infty)=\CR^\infty.
			\end{equation}
			Arguing as in   point (i), we can similarly prove that $ f_{n,1}^\infty$ is bounded in $H^1(\RN)$. Moreover,
			\begin{equation}\label{f2}
				\lim_{n\to+\infty} \int_{\RN} \vartheta_n(x) \sbr{  f_{n,1}^\infty (x)}^2 ~\mathrm{d} x=0.
			\end{equation}  Since $f_{n,1} \in \SR_{z_1^n}^n$ and $ f_{n,1}^\infty \in \SR_{z_1^n}^\infty$, using  Lemma \ref{lemma 3.3},  we deduce  from point (i) and \eqref{f2}  that 
			\begin{equation}
				\begin{aligned}
					\CR^\infty &=\lim_{n\to+\infty} \LRN(f_{n,1})=\lim_{n\to+\infty} \max_{t>0} \LRN \sbr{\sbr{  f_{n,1}}_\delta+ t     f_{n,1}  ^\delta}   \\
					&\geq  \lim_{n\to+\infty} \LRN( f_{n,1}^\infty  ) 
					\\&=\lim_{n\to+\infty} \LRI( f_{n,1}^\infty  ) +\frac{1}{2}	\lim_{n\to+\infty} \int_{\RN} \vartheta_n(x) \sbr{  f_{n,1}^\infty  (x)}^2 ~\mathrm{d} x\\
					&\geq  \CR^\infty,  
				\end{aligned}
			\end{equation}
			which implies that \eqref{f1} holds. Moreover, we can see that   
			\begin{equation}\label{f5}
				\xi^\infty( f_{n,1}) \to 1, \quad \text{ as }n\to +\infty.
			\end{equation} 
			Set  $g_n:=  f_{n,1}^\infty(\cdot+ z_1^n) \in \SR_{0}^\infty$. Then $\lbr{g_n}_n$ is bounded in $H^1(\RN)$, so up to subsequence,  there exists $g \in H^1(\RN)$, such that   as $n \to +\infty$, 
			\begin{equation}  \label{f3}
				\begin{aligned}
					&g_n \to g  \quad \text{ weakly  in }  H^1(\RN), \\
					&g_n  \to g  \quad \text{ strongly in }  L^p_{\loc}(\RN),   \\
					& g_n \to g  \quad \text{ a.e. in }   \RN.
				\end{aligned}
			\end{equation}
			It follows that $g_n \to g$ in $ L^p(B(0,R_\delta))$.
			Moreover,  we have $g_\delta \leq \delta $ in $\RN \setminus B(0,R_\delta)$ and $\beta_i(g)=0$ by the continuity of $\beta_i$.  Furthermore, by the uniform exponential decay of $f_{n,1}$  (inherited from  $u_n$) and observe that $ f_{n,1}^\infty=f_{n,1}$ on $\RN \setminus \supp{  f_{n,1} ^\delta}$ and $\supp{ f_{n,1} ^\delta} \subset B(z_1^n, R_\delta)$, the following statement is true:
			\begin{equation}
				g_n \to g  \quad \text{ strongly in }  L^p (\RN),   ~~ p\geq 2.
			\end{equation} 
			Consider  $ \hat g : =  {g }_\delta+ \xi^\infty(g )  {g }^\delta$ and $\hat g_n := \sbr{g_n }_\delta+ \xi^\infty(g ) g_n  ^\delta $, then $\hat g \in \SR_{0}^\infty$ and  as $n \to +\infty$, 
			\begin{equation}\label{r11}
				\begin{aligned}
					& \hat g_n  \rightharpoonup \hat g  \quad \text{ weakly  in }  H^1(\RN),  \quad 	\hat	g_n \to \hat g  \quad \text{ strongly in }  L_{\loc}^p (\RN),   ~~ p\geq 2. 
				\end{aligned}
			\end{equation}
			Integrating  \eqref{r6}  with $\varepsilon= \sbr{2^*-2}/2$ and applying Cauchy-Schwarz inequality  we deduce from  \eqref{r11} that  
			\begin{equation}
				\abs{ \int_{\RN} F_2( 	\hat g_n ) \dx-\int_{\RN} F_2(\hat	g)\dx}  \leq C  |\hat	g_n -\hat	g|_2 \sbr{\nm{\hat	g_n }_H+ \nm{ \hat	g}_H}^{2^*/2} \to 0, \quad \text{ as }~n\to +\infty.
			\end{equation}  
			Since $g_n  \in \SR_{0}^n$, we have 
			\begin{equation}
				\LRI(  g_n ) = \LRI \sbr{ \sbr{g_n }_\delta+     \sbr{g_n }^\delta  } =  \max_{t>0} \LRI \sbr{ \sbr{g_n }_\delta+ t   \sbr{g_n }^\delta  }.
			\end{equation}
			Therefore, by the minimality of  $\CR^\infty$ and formula \eqref{f1},  along with  the weak lower semicontinuity of both the norm and $\Psi$,  we  deduce that 
			\begin{equation}\label{f4}
				\begin{aligned}
					\CR^\infty & \leq \LRI( \hat g) = \frac{1}{2} \int_{\RN} \mbr{|\nabla \hat g|^2+ (V_\infty+1)\hat g^2}~\mathrm{d}x-   \int_{\RN} F_2(\hat	g)\dx+ \Psi ( \hat g) \\
					&\leq \liminf_{n\to+\infty} \mbr{\frac{1}{2} \int_{\RN} \mbr{|\nabla \hat g_n|^2+ (V_\infty+1)\hat g_n^2}~\mathrm{d}x -\int_{\RN} F_2(\hat	g_n)\dx+ \Psi ( \hat g_n)   }
					\\&=\liminf_{n\to+\infty} \LRI( \hat g_n) = \liminf_{n\to+\infty} \LRI( \sbr{g_n }_\delta+ \xi^\infty(g )   \sbr{g_n }^\delta )  \\
					&\leq   \liminf_{n\to+\infty} \LRI(  g_n ) =\liminf_{n\to+\infty} \LRI( f_{n,1}^\infty ) =\CR^\infty.
				\end{aligned}
			\end{equation}
			Therefore, we obtain that 
			\begin{equation}
				\begin{aligned}
					\lim_{n\to+\infty}	\nm{ \hat g_n}_H =\nm{ \hat g }_H, \quad  	\lim_{n\to+\infty} \Psi ( \hat g_n)=\Psi ( \hat g ), \quad  	\lim_{n\to+\infty}\int_{\RN} F_2( 	\hat g_n ) \dx= \int_{\RN} F_2( 	\hat g  ) \dx.
				\end{aligned}
			\end{equation}
			Therefore,
			\begin{equation} \label{f6}
				\hat g_n  \xrightarrow{n \rightarrow  +\infty} \hat g  \quad \text{ strongly  in }    ~H^1(\RN).
			\end{equation}
			From \eqref{f4}, it follows  that $    \LRI( \hat g)= \CR^\infty$.    Therefore, we deduce from  Lemma \ref{lemma 3.3} that  $  \hat g = U$ since  $\hat g \in \SR_{0}^\infty$. Then using \eqref{f4} again, there holds $\xi^\infty(g )  =1$.  Combined with the definitions of   $g_n$, $\hat g_n$, $ f_{n,1}^\infty $, along with \eqref{f5} and  \eqref{f6}, we have 
			\begin{equation}
				f_{n,1}(\cdot+ z_1^n) \xrightarrow{n \rightarrow  +\infty} U \quad \text{ strongly in }  H^1(\RN).
			\end{equation}
			This completes the proof of point (ii).

			\vskip 0.1in 
			{\it Proof of point  (iii).  }  Since $u_n\in \MR_{z_1^n, \ldots, z_{k_n}^n}^n$ and $d_n \to +\infty$, by  Proposition \ref{lemma 3.2}, for all $n\in \N$ there exists $\lambda_1^n \in  \RN$ such that 
			\begin{equation}  \label{r88}
				\hbr{	\sbr{\LRN}'(f_{n,1}), \phi } =\int_{ B\sbr{z_1^n,R_\delta}}\sbr{ f_{n,1}(x)}^\delta  \phi(x)  \mbr{\lambda_1^n\cdot\sbr{x-z_1^n}} ~\mathrm{d}x, 
			\end{equation} 
			for all $ \phi \in H_0^1( B\sbr{z_1^n,R_\delta}) $.	We claim that 
			\begin{equation}\label{r97}
				\lim_{n\to+\infty}	\lambda_1^n = 0.
			\end{equation}
			In fact, suppose by contradiction that $|	\lambda_1^n| \geq C>0$ for all $n\in\N$, so up to subsequence,  there exists $\lambda\neq 0$ such that 
			\begin{equation}
				\lim_{n\to+\infty} \frac{ \lambda_1^n}{|	\lambda_1^n|} =  \lambda.
			\end{equation}
			Setting  $ \widetilde V_n(x )= V_\infty+\vartheta_n(x+z_1^n)$.   Testing  \eqref{r88} with   $$ \varphi_n\sbr{x-z_1^n}= \sbr{ \frac{ \lambda_1^n}{|	\lambda_1^n|}  \cdot\sbr{x-z_1^n}}  \varphi\sbr{x-z_1^n},$$ where $\varphi \in C_0^\infty(B(0,R_\delta))$ is radial and $\varphi >0$ in $B(0,R_\delta)$,   we get 
			\begin{equation} \label{r98}
				\begin{aligned}
					& \int_{B(0,R_\delta)} \mbr{\nabla f_{n,1} (x+z_1^n)  \cdot \nabla  \varphi_n(x) +\widetilde V_n(x) f_{n,1}(x+z_1^n)  \varphi_n(x)} ~\mathrm{d}x \\
					&-\int_{B(0,R_\delta)}  f_{n,1} (x+z_1^n) \varphi_n(x)\log  \sbr{f_{n,1}  (x+z_1^n)}^2~\mathrm{d}x \\
					&= |\lambda_1^n|\int_{ B\sbr{0,R_\delta}}\sbr{ f_{n,1} }^\delta(x+z_1^n) \varphi (x)  \sbr{ \frac{ \lambda_1^n}{|	\lambda_1^n|} \cdot x}^2 ~\mathrm{d}x .
				\end{aligned}
			\end{equation}
			We deduce from point (ii) that the left hand side of \eqref{r98} tends to $\hbr{\sbr{\LRI}^\prime(U),\sbr{\lambda \cdot x}\varphi }=0$  as $n \to \infty$,  and the right hand side of \eqref{r98} satisfies 
			\begin{equation}
				\begin{aligned}
					&|\lambda_1^n|\int_{ B\sbr{0,R_\delta}}\sbr{ f_{n,1} }^\delta(x+z_1^n) \varphi (x)  \sbr{ \frac{ \lambda_1^n}{|	\lambda_1^n|} \cdot x}^2 ~\mathrm{d}x\\
					&=\sbr{\int_{ B\sbr{0,R_\delta}} U^\delta(x) \varphi (x)  \sbr{  \lambda \cdot x}^2 ~\mathrm{d}x +o_n(1)}|\lambda_1^n| \geq C|\lambda_1^n|,
				\end{aligned}	
			\end{equation}so we get a contradiction since $|	\lambda_1^n| \geq C>0$, and we finish the proof of \eqref{r97}.
			\medbreak 
			Finally,  we will show that $f_{n,1}(\cdot+ z_1^n) \rightarrow U$ uniformly in $K$ as $n\to +\infty$, $\forall K \subset \subset B(0, R_\delta)$ compact.
			In fact, by \eqref{r88}, we infer that $f_{n,1} (\cdot +z_1^n)$  is a weak solution to the  following equation
			\begin{equation}\label{f12}
				\begin{aligned}
					-\Delta u + \widetilde V_n(x) u  = u  \log {u }^2 +  (\lambda_1^n\cdot x)u^\delta , \quad x\in B(0,R_\delta).
				\end{aligned}
			\end{equation}
			Observe that  any  nonnegative  solution to \eqref{f12} satisfies the inequality
			\begin{equation}
				-\Delta u+V_0 u \leq (u  \log {u }^2)^++\tilde{c}u^\delta,
			\end{equation}
			where $\tilde{c}>0$ is independent of $n$. Since $0\leq u^\delta\leq u$, then for any $\theta>0$, there exists $C_\theta>0$ such that 
			\begin{equation}
				-\Delta u \leq l(u), \quad  l(s)=  (\tilde{c} -V_0)s+C_\theta s^{1+\theta}.
			\end{equation}
			Since we have $|l(s)|\leq C(1+|s|^{1+\theta})$ for all $s\in\R$ and some $C>0$, then repeating the bootstrap argument of the proof of \cite[Lemma B.3]{Struwe-book}, one can prove that $f_{n,1} (\cdot +z_1^n)  \in L^p( B(0,R_\delta))$ for any $p>2$. Then, by standard regularity arguments, $f_{n,1} (\cdot +z_1^n) \in C^{1+\varepsilon}(K)$ for  any compact  set $  K \subset \subset B(0, R_\delta)$. Moreover, it follows from \eqref{r88} and \eqref{r97} that the sequence $\lbr{f_{n,1} (\cdot +z_1^n)}_n$ is uniformly bounded in    $C^{1+\varepsilon}(K)$ for some $\varepsilon>0$. Thus it is uniformly convergence to $U$  in all compact set $  K \subset \subset B(0, R_\delta)$.  The proof of point (iii) is completed.
			\vskip 0.1in
			Since  we  complete the proofs  of  points (i), (ii),  (iii), we are in position to prove the relation \eqref{e24}, which completes the proof of Proposition \ref{prop 4.2}.
			Indeed, the choice of $\delta$ and $R_\delta$ implies that $U(x)<\delta$ for $|x|\geq R_\delta/2$. Observe that   $u_n =f_{n,1} $
			in $B(z_1^n,{d_n}/{2}-1)$ and $d_n \to+ \infty$ as $n\to +\infty$,  then  for all fixed $R\geq R_\delta$ and for large $n$,  we  deduce from lemma \ref{lemma 4.1} and \eqref{f12} that  $f_{n,1}(\cdot+z_1^n)$  is a weak solution to the following equation,
			\begin{equation} 
				-\Delta u + \widetilde  V_n(x) u= u\log u^2 \quad \text{ in } ~~ B(0,R) \setminus B(0,R_\delta/2).
			\end{equation}
			Arguing as the regularity arguments above and taking into account point (ii),  the fact  $\widetilde  V_n(x)$ converges to $    V_\infty$ uniformly in $B(0,R) \setminus B(0,R_\delta/2)$  that 
			\begin{equation}
				f_{n,1}(\cdot+z_1^n)\xrightarrow{n \rightarrow  +\infty} U \quad \text{ uniformly in }~~ B(0,R) \setminus B(0,R_\delta/2),
			\end{equation}
			which, together with point (iii),  implies the relation  \eqref{e24}. Here we also use the fact that  the gausson $U$  is the unique (up to translation) positive $C^2$-solution of the problem \eqref{p infty}. 
		\end{proof}
		
		\begin{corollary} \label{coro 4.2}
			Under the assumptions of Proposition \ref{prop 4.2}, for all $ i\in \lbr{1,\ldots, k_n} $ and  for large $n$, there  exists $\lambda_i^n \in \RN$ such that 
			\begin{equation} 	 \label{l2}
				-\Delta u_n(x) +  V_n(x) u_n (x) = u_n(x)  \log  u_n^2(x)   +  \sum_{i=1}^{k_n}u_n^{\delta,i} (x)  \mbr{  \lambda_i^n\cdot \sbr{x-z_i^n}}  , \quad x\in \RN,   
			\end{equation}
			where $ V_n(x )= V_\infty+\vartheta_n(x )$. 
		\end{corollary}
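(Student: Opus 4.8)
The plan is to read off \eqref{l2} by gluing together two facts already available: an Euler--Lagrange relation carrying a Lagrange multiplier, valid inside each ball $B(z_i^n,R_\delta)$, and the equation \emph{without} any multiplier term, valid outside $\supp{u_n^\delta}$. Since $u_n\in\MR_{z_1^n,\ldots,z_{k_n}^n}^n$ and every result of Section~\ref{Sect3} transfers verbatim to the functional $\LR^n$, Proposition~\ref{lemma 3.2} applied to $\LR^n$ supplies, for each $i\in\{1,\ldots,k_n\}$ (and all $n$), a vector $\lambda_i^n\in\RN$ with
\[
\hbr{(\LR^n)'(u_n),\phi}=\int_{B(z_i^n,R_\delta)}u_n^\delta(x)\,\phi(x)\,\mbr{\lambda_i^n\cdot(x-z_i^n)}\dx,\qquad\forall\,\phi\in H_0^1(B(z_i^n,R_\delta)).
\]
Recalling the decomposition $\frac{1}{2}s^2\log s^2=F_2(s)-F_1(s)$, so that $F_2'(s)-F_1'(s)=s\log s^2+s$, one computes $\hbr{(\LR^n)'(u),\phi}=\int(\nabla u\cdot\nabla\phi+V_n u\phi-u\phi\log u^2)\dx$ on any bounded domain; hence the identity above is precisely the weak form of \eqref{l2} restricted to $B(z_i^n,R_\delta)$, once one observes that the hypothesis $\sbr{z_1^n,\ldots,z_{k_n}^n}\in\ER_{k_n}$ forces the balls $B(z_j^n,R_\delta)$ to be pairwise disjoint, so that on $B(z_i^n,R_\delta)$ one has $u_n^\delta=u_n^{\delta,i}$ and $u_n^{\delta,j}\equiv0$ for $j\ne i$, whence the source $\sum_{j}u_n^{\delta,j}[\lambda_j^n\cdot(x-z_j^n)]$ reduces there to the single surviving term $u_n^{\delta,i}[\lambda_i^n\cdot(x-z_i^n)]$.

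Next I would invoke Lemma~\ref{lemma 4.1}: on $\RN\setminus\supp{u_n^\delta}$ one has $u_n=(u_n)_\delta$, and this submerged part solves $-\Delta u_n+V_n(x)u_n=u_n\log u_n^2$ there; since $\supp{u_n^{\delta,j}}\subseteq\supp{u_n^\delta}$ for every $j$, the source $\sum_j u_n^{\delta,j}[\lambda_j^n\cdot(x-z_j^n)]$ vanishes identically on $\RN\setminus\supp{u_n^\delta}$, so $u_n$ also satisfies the weak form of \eqref{l2} on $\RN\setminus\supp{u_n^\delta}$. At this point $u_n$ solves the single, globally defined equation \eqref{l2} in the weak sense on every member of the open cover $\{B(z_i^n,R_\delta):1\le i\le k_n\}\cup\{\RN\setminus\supp{u_n^\delta}\}$ of $\RN$, and these local equations are mutually consistent on overlaps precisely because, wherever two members of the cover meet, the emerging parts that are ``absent'' from that region vanish, so the right-hand sides agree. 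Testing each local equation against $\chi_\alpha\phi$ for a locally finite partition of unity $\{\chi_\alpha\}$ subordinate to this cover and summing over $\alpha$, with $\phi\in C_0^\infty(\RN)$ arbitrary, then shows that $u_n$ is a weak solution of \eqref{l2} on all of $\RN$.

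The final step is to upgrade regularity, exactly as in the proof of point (iii) of Proposition~\ref{prop 4.2}: by Lemma~\ref{lemma 4.1} the submerged part obeys a Gaussian bound and the emerging parts are bounded, so $u_n\in H^1(\RN)\cap L^\infty(\RN)$ and the right-hand side of \eqref{l2} lies in $L^p_{\rm loc}(\RN)$ for every $p$; the bootstrap of \cite[Lemma~B.3]{Struwe-book} together with standard Schauder estimates then yields $u_n\in C^{1+\varepsilon}_{\rm loc}(\RN)$, so that \eqref{l2} holds pointwise in $\RN$. I expect the patching step to require the most care: one must check that the two families of open sets genuinely cover $\RN$, the delicate locus being the spheres $\partial B(z_i^n,R_\delta)$. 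This is handled by the structure of the submerged part established in the proof of Proposition~\ref{prop 3.1} (equivalently Lemma~\ref{lemma 4.1}), namely $\{u_n<\delta\}=\RN\setminus\supp{u_n^\delta}$ together with the strong maximum principle, which forces each $\supp{u_n^{\delta,i}}$ to be a compact subset of the \emph{open} ball $B(z_i^n,R_\delta)$; granting this, the partition-of-unity argument applies without obstruction and the proof goes through.
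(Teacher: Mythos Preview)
Your overall strategy---glue the Euler--Lagrange relation from Proposition~\ref{lemma 3.2} (valid on each $B(z_i^n,R_\delta)$) to the equation from Lemma~\ref{lemma 4.1} (valid on $\RN\setminus\supp{u_n^\delta}$) and patch---is exactly what the paper does. The gap is in the covering step, and you have identified the right trouble spot yourself: you must show that the open sets $\{B(z_i^n,R_\delta)\}_i$ together with $\RN\setminus\supp{u_n^\delta}$ cover $\RN$, equivalently that $\supp{u_n^{\delta,i}}\Subset B(z_i^n,R_\delta)$. Your appeal to the strong maximum principle does not establish this. From Proposition~\ref{prop 3.1} one only gets $\{(u_n)_\delta<\delta\}=\RN\setminus\supp{u_n^\delta}$; this does not rule out a boundary point $x_0\in\partial B(z_i^n,R_\delta)\cap\supp{u_n^{\delta,i}}$, at which $u_n(x_0)=\delta$, $u_n^{\delta}(x_0)=0$, and $x_0$ lies in \emph{neither} member of your cover. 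A Hopf-type argument controls the normal derivative at such a point but does not by itself preclude contact with the sphere.

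The fix is to use the one hypothesis of the corollary you have not touched: the uniform convergence of $u_n(\cdot+z_i^n)$ to $U$ on compact sets, established in Proposition~\ref{prop 4.2}. Since $U(x)<\delta$ for $|x|\ge R_\delta/2$ by the choice of $R_\delta$ in \eqref{defi of Rdelta}, the convergence forces, for $n$ large, $u_n(x+z_i^n)<\delta$ on the annulus $R_\delta/2\le|x|\le R_\delta$, hence $\supp{u_n^{\delta,i}}\subset B(z_i^n,R_\delta/2)$. Then $B(z_i^n,\tfrac{3}{4}R_\delta)$ and $\RN\setminus\bigcup_i B(z_i^n,R_\delta/2)$ form an open cover with genuine overlap, and your partition-of-unity argument goes through without obstruction. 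This is precisely the paper's route, and it explains the qualifier ``for large $n$'' in the statement---a phrase your argument, aiming at ``all $n$'', never accounts for.
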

		\begin{proof}
			The proof is almost identical to that of \cite[Corollary 5.5]{cerami_CPMA_2013}. For the sake of completeness, we provide the details here. It follows from \eqref{e24} that for all $ i\in \lbr{1,\ldots, k_n} $ and  for large $n$, we have $\supp{ u_n^{\delta,i}} \subset B(z_i^n, R_\delta/2)$. Then by Proposition \ref{prop 3.1}, $u_n$ is a solution of 
			\begin{equation}\label{l3}
				-\Delta u_n +  V_n(x) u_n  = u_n \log  u_n^2 \quad \text{ in } \RN \setminus \bigcup_{i=1}^{k_n}  B(z_i^n, R_\delta/2).
			\end{equation}
			Moreover, since  $u_n$ satisfies  \eqref{e5},  we can proceed with the regularity arguments as outlined in point (iii) of Proposition \ref{prop 4.2}, and obtain that  $u_n$ satisfies \eqref{l2} in $B(z_i^n,\frac{3}{4}R_\delta)$, which together with \eqref{l3} gives \eqref{l2}. This completes the proof.
		\end{proof}

		\section{Existence of solutions with finitely many bumps }\label{Sect5}
		In this section, we finish the proof Theorem \ref{thm1}.		Our strategy for finding the infinitely many positive solutions of \eqref{p equ} is the following: for all  $\vartheta\in \GR$ such that   $|\vartheta|_{N/2,\rm loc}$  is small and arbitrary integer $k\geq 1$, the max-min argument, as presented in the Section \ref{Sect3}, allows us to find $k$-tuples  $\sbr{  z_1,\ldots, z_k} \in \ER_{k}$  and  functions $\tilde{u}_{k}\in\MR_{z_1, \ldots, z_{k}}$ that  are good candidates  to be critical points.
		In view of Proposition \ref{prop 3.1}, we conclude that   $\tilde u_{k}$ is the solution of \eqref{p equ} in the whole space $\RN$ expect the support of its emerging positive and negative parts. Furthermore, by Proposition \ref{lemma 3.2} we know that $\tilde u_{k }$   satisfies the relation \eqref{e5}. Hence, to complete the argument, we need to show that all the Lagrange multipliers   in \eqref{e5} are zero when  $|\vartheta|_{N/2,\rm loc}$  is small.
		
		\begin{proof}[\bf Proof of Theorem \ref{thm1}]

			For all  $\vartheta\in \GR$,  and  $k \in \N\setminus\lbr{0} $, it follows from   Lemma \ref{lemma 3.1} and Proposition \ref{prop fixed point} that  there exist  $\sbr{  z_1,\ldots, z_k } \in \ER_{k}$ and  $\tilde{u}_{k } \in\MR_{z_1, \ldots, z_{k} }$
			such that $$\LR(\tilde u_{k  })=\alpha(  z_1, \ldots,  z_{k} )=\varLambda_{k}.$$
			To prove  Theorem \ref{thm1}, it suffices to prove that if $|\vartheta|_{N/2,\rm loc}$  is small, then $\tilde u_{k }$ is  a positive solution of \eqref{p equ}, that is $\LR^\prime(\tilde u_{k })=0$, which is defined in \eqref{ll1}.
			\medbreak
			
			Arguing by contradiction, we   assume that for every integer $n\in \N\setminus\lbr{0}$, there exist a function $\vartheta_n \in\GR$ with $	|\vartheta_n|_{N/2,\rm loc} \xrightarrow{n \rightarrow  +\infty}0$, an integer  $k_n\geq 1$   and a function    $\tilde u_{k_n}  \in \MR_{z_1^n, \ldots, z_{k_n}^n}^n $    with  $k_n$-tuples $\sbr{z_1^n,\ldots,z_{k_n}^n} \in \ER_{k_n}$ such that 
			\begin{equation}
				\varLambda_{k_n}^n=  \alpha ^n( z_1^n,\ldots,z_{k_n}^n)=\LRN(u_n), \quad\sbr{\LRN}'(u_n)  \neq 0,
			\end{equation}
			In what follows,  for simplicity, we will denote  $\tilde u_{k_n} $ as $ u_n$ in the following. Then	 from relation \eqref{e5}, we know that there exist  some  $\lambda_i^n \in \RN\setminus\lbr{0}$ such that 
			\begin{equation}   \label{q8}
				\hbr{	\sbr{\LRN}'(u_n), \phi } =\int_{ B\sbr{z_i^n,R_\delta}} u_n^{\delta,i}(x)  \phi(x)  \mbr{\lambda_i^n\cdot\sbr{x-z_i^n}} ~\mathrm{d}x, 
			\end{equation} 
			for all $ \phi \in H_0^1( B\sbr{z_i^n,R_\delta})$. Without loss of generality, we may assume that 
			\begin{equation}
				|\lambda_1^n|=\max \lbr{ |\lambda_i^n|: i=1,\ldots,k_n},
			\end{equation}
			then $|\lambda_1^n|\neq 0$ for all $n$. Therefore,  up to subsequence, we assume that 
			\begin{equation}
				\lim_{n\to+\infty} \frac{ \lambda_1^n}{ |\lambda_1^n|}=\lambda.
			\end{equation}
			Let $\lbr{\sigma_n}_n$ be a sequence of   positive numbers  such that 
			\begin{equation} \label{y8}
				\lim_{n\to+\infty} \sigma_n=0,\quad \lim_{n\to+\infty} \sigma_n k_n=0,  \quad \lim_{n\to+\infty} \frac{\sigma_n}{|\lambda_1^n|}=0, \quad  \lim_{n\to+\infty}  \frac{\sigma_n k_n}{|\lambda_1^n|}=0,
			\end{equation}
			and   for all $n\in \N$,  the  $k_n$-tuples $\sbr{y_1^n,\ldots,y_{k_n}^n} $ is defined by 
			\begin{equation}\label{q10}
				y_1^n =z_1^n+\sigma_n \lambda, \quad \text{ and } \quad y_i^n=z_i^n, ~\text{ for }~  2\leq i\leq k_n.
			\end{equation}
			Considering  $v_n \in \MR_{y_1^n, \ldots, y_{k_n}^n}^n$, then by the definition of $\varLambda_{k_n}^n$,  we have
			\begin{equation} \label{q1}
				\LRN(v_n)\leq  \varLambda_{k_n}^n =\LRN(u_n).
			\end{equation}
			By Taylor expansion, we have 
			\begin{equation}\label{q2}
				\begin{aligned}
					\LRN(v_n)-\LRN(u_n)&= \hbr{(\LRN)^\prime(u_n),v_n-u_n}\\
					&+	\frac{1}{2}\mbr{ \int_{\RN} |\nabla (v_n-u_n)|^2 \dx + \int_{\RN}\sbr{ V_n(x)-2}  (v_n-u_n) ^2 \dx}\\
					&-\frac{1}{2} \int_{\RN} (v_n-u_n)^2 \log w_n^2 \dx,
				\end{aligned} 
			\end{equation}
			with  $w_n(x)= u_n(x)+ \tilde{w}_n(x)\sbr{v_n(x)-u_n(x)}$ for some $\tilde{w}_n(x) \in \mbr{0,1}$.
			\medbreak
			To conclude  this proof, we shall derive a contradiction with \eqref{q1} by  meticulously calculating the terms of \eqref{q2}.
			Specifically, our aim is to prove that for $n$ sufficiently large, the following inequality holds:
			\begin{equation}\label{q6}
				\LRN(v_n)-\LRN(u_n)>0.
			\end{equation}		 	
			For this purpose,  we first prove that  
			\begin{equation} \label{q5}
				\lim_{n\to+\infty}  |v_n-u_n|_\infty=0.
			\end{equation}
			\medbreak
			On the one hand, since $|y_1^n-z_1^n|\xrightarrow{n \rightarrow  +\infty} 0$, by applying Proposition \ref{prop 4.2} to $u_n$ and $v_n$,    
			one can  deduce   that 
			\begin{equation}\label{q3}
				\lim_{n\to+\infty}  |v_n-u_n|_{\infty,\bigcup_{i=1}^{k_n}B(z_i^n,R)} =0, \quad \forall R>0.
			\end{equation} 
			
			On the other hand, we know that $0\leq u_n(x),  v_n(x)\leq \delta$  on  $\RN\setminus \bigcup_{i=1}^{k_n}B(z_i^n,R)$ for $R>R_\delta$. Then by Corollary \ref{coro 4.2}, we infer that 
			\begin{equation}
				\begin{aligned}
					\Delta (v_n-u_n)&=(v_n-u_n) \sbr{ V_n(x)-\cfrac{v_n \log v_n^2-u_n\log u_n^2}{v_n-u_n}}\\
					&= (v_n-u_n) \sbr{ V_n(x)- \log \theta_n^2(x)-2},
				\end{aligned}	 
			\end{equation}
			with $\theta_n(x)= u_n(x)+ \tilde{\theta}_n(x)\sbr{v_n(x)-u_n(x)}$ for some $\tilde{\theta}_n(x) \in \mbr{0,1}$. 	
			\medbreak
			If $0\leq u_n(x)<v_n(x) \leq \delta$,   by  \eqref{defi of delta}, we know that  $\theta_n(x)\leq \delta \leq e^{(V_0-2)/2}$, which, together with \eqref{H1}, gives that  $$\Delta (v_n-u_n) \geq 0. $$
			Similarly, if $0\leq v_n(x)<u_n(x) \leq \delta$, we deduce that $$\Delta (v_n-u_n) \leq 0. $$
			Hence, for $R> R_\delta$, taking account into the asymptotic behavior of $u_n$ and $v_n$, we can obtain that the maximum of  $|v_n-u_n|$   $\RN\setminus \bigcup_{i=1}^{k_n}B(z_i^n,R)$ is attained  on the boundary $\bigcup_{i=1}^{k_n}B(z_i^n,R)$. Therefore, it follows from \eqref{q3} that 
			\begin{equation}\label{q4}
				\lim_{n\to+\infty}  |v_n-u_n|_{\infty, \RN\setminus \bigcup_{i=1}^{k_n}B(z_i^n,R)} =0.
			\end{equation}
			Therefore, we deduce from  \eqref{q3} and \eqref{q4} that \eqref{q5} holds.
			
			\medbreak
			Let $\hat{a}_n=|v_n-u_n|_\infty$, then $\hat{a}_n>0$ for all  $n\in \N$ since $x_1^n\neq y_1^n$. Then we define 
			\begin{equation}
				\phi_n(x):=\cfrac{ v_n(x)-u_n(x)}{\hat{a}_n}, \quad \text{ and } \quad \phi_{n,i}(x):=\phi_n(x+z_i^n)  .
			\end{equation}
			for $i \in \lbr{1,\ldots,k_n}$.
			\vskip0.15in
			
			\underline{Now we claim  that}:  there exists a     subsequence of  $\lbr{\phi_n}_n$,  still denoted by 	 $\lbr{\phi_n}_n$, such that for all $i\in \lbr{1,\ldots,k_n}$ the sequence  $\phi_{n,i}(x)$ converges in   $H^1_{loc}(\RN)$, and uniformly on compact subsets of $\RN$, to a solution $\phi$ of  the equation
			\begin{equation}
				-\Delta \phi +( V_\infty-2- \log U^2) \phi  =0.
			\end{equation}
			Suppose for now that the claim is true   (the proof will be provided later in the argument). In the following steps, we aim to confirm  relation \eqref{q6}. However, this relation leads to a contradiction with   \eqref{q1}. Consequently, we conclude the proof of Theorem \ref{thm1}.
			
			\medbreak
			
			In order  to prove \eqref{q6},   we  fix $\tilde{R}\in \sbr{\frac{3}{4}R_\delta,R_\delta}$   and introduce a smooth, decreasing,  cut-off function $h\in C^{\infty}(\R^+,[0,1])$ such that	\begin{center}
				$h(t)=1$ \quad if~ $t\leq \tilde{R}$ \quad  and \quad $h(t)=0$ \quad if~ $t\geq R_\delta$.
			\end{center} Then for all $i\in \lbr{1,\ldots, k_n}$, we define $ h_{z_i^n}(x)=\eta \sbr{|x-z_i^n|} $. Obviously, 
			\begin{equation}
				h_{z_i^n}(x)\mbr{ v_n(x)-u_n(x)}  \in H_0^1(B(z_i^n,R_\delta)).
			\end{equation} 
			Moreover, by the choice of $R_\delta$ \eqref{defi of Rdelta} and Proposition \ref{prop 4.2}, for $n$ sufficiently large, we have 
			\begin{equation}\label{q9}
				\supp{ u_n^\delta} \subset \bigcup_{i=1}^{k_n} B(z_i^n,\tilde{R}), \quad 	\supp{ v_n^\delta} \subset \bigcup_{i=1}^{k_n} B(z_i^n,\tilde{R})
			\end{equation}  
			Therefore, 	we have 
			\begin{equation}
				\int_{\R^N} u_n^\delta(x)   (1-\sum_{i=1}^{k_n}h_{z_i^n}(x))                        \mbr{ v_n(x)-u_n(x)}  \dx =0.
			\end{equation}
			Recalling that $u_n\in\MR_{z_1^n, \ldots, z_{k_n}^n}^n $, one can easily deduce  from Proposition \ref{prop 3.1} that 
			\begin{equation}
				\hbr{ (	\LRN)'(u_n),  (1-\sum_{i=1}^{k_n}h_{z_i^n})\sbr{ v_n-u_n}}=0.
			\end{equation}
			Therefore, from Proposition \ref{lemma 3.2},   we deduce   that 
			\begin{equation}
				\begin{aligned}
					\hbr{(\LRN)^\prime(u_n), v_n-u_n}&=\sum_{i=1}^{k_n}\hbr{ (	\LRN)'(u_n), h_{z_i^n} \sbr{ v_n-u_n}} \\
					&= \sum_{i=1}^{k_n}\int_{ B\sbr{z_i^n,R_\delta}} u_n^{\delta}(x)  h_{z_i^n}(x)\mbr{ v_n(x)-u_n(x)} \mbr{\lambda_i^n\cdot\sbr{x-z_i^n}} ~\mathrm{d}x\\
					&= \sum_{i=1}^{k_n}\int_{ B\sbr{z_i^n,R_\delta}} u_n^\delta(x)  \mbr{ v_n(x)-u_n(x)}\mbr{\lambda_i^n\cdot\sbr{x-z_i^n}} ~\mathrm{d}x,
				\end{aligned}
			\end{equation}
			where the last equality follows from \eqref{q9}. 
			\medbreak
			Following the arguments  in   \cite[  (6.12) and  (6.13)]{cerami_CPMA_2013}, we obtain that, for all $n\in\N$,  when $i\neq 1$,
			\begin{equation} \label{y7}
				\int_{ B\sbr{z_i^n,R_\delta}} u_n^\delta(x)  \mbr{ v_n(x)-u_n(x)}\sbr{x-z_i^n} ~\mathrm{d}x =		 
				O(\hat{a}_n^2),
			\end{equation}
			and  when $i=1$,
			\begin{equation} \label{y6}
				\int_{ B\sbr{z_i^n,R_\delta}} u_n^\delta(x)  \mbr{ v_n(x)-u_n(x)}\sbr{x-z_i^n} ~\mathrm{d}x =		 
				\frac{1}{2}\sigma_n \lambda 	\int_{ B\sbr{z_i^n,R_\delta}} \sbr{v_n^\delta(x)	}^2 \dx +O(\hat{a}_n^2).
			\end{equation}
			Since we suppose the claim is true, 	it follows from the non-degeneracy of Gausson $U$ that  for all $i\in \lbr{1,\ldots,k_n}$, there exists a vector  $ \mu_i \in \RN$ such that 
			\begin{equation} \label{y9}
				\lim_{n\to+\infty} \phi_n(x+z_i^n) = \nabla U(x)\cdot \mu_i,
			\end{equation} 
			then using  \eqref{y7} and Proposition \ref{prop 4.2},   we obtain that for all  $i \neq 1$
			\begin{equation} \label{y0}
				\begin{aligned}
					0&=\lim_{n\to+\infty} 	\int_{ B\sbr{0,R_\delta}} u_n^\delta(x+z_i^n)  \phi_n (x+z_i^n)x ~\mathrm{d}x\\
					&= \int_{ B\sbr{0,R_\delta}} U^\delta(x)  \sbr{\nabla U(x)\cdot \mu_i}x \dx \\
					&= \frac{1}{2}  \int_{ B\sbr{0,R_\delta}}  \sbr{\nabla (U^\delta(x))^2\cdot \mu_i}x \dx 
					\\&= - \frac{1}{2} \mu_i \int_{ B\sbr{0,R_\delta}}(U^\delta(x))^2 \dx.
				\end{aligned}
			\end{equation}
			Therefore,  it follows that   $\mu_i=0$ for all $i\neq 1$. Similarly, when $i=1$,  using \eqref{y6} and Proposition \ref{prop 4.2}, we deduce that
			\begin{equation}
				\frac{1}{2} \lambda  \sbr{\lim_{n\to+\infty} \frac{\sigma_n}{\hat{a}_n}}\int_{ B\sbr{0,R_\delta}}(U^\delta(x))^2 \dx= - \frac{1}{2} \mu_1 \int_{ B\sbr{0,R_\delta}}(U^\delta(x))^2 \dx,
			\end{equation}
			which implies that 
			\begin{equation}
				\lambda  \sbr{\lim_{n\to+\infty} \frac{\sigma_n}{\hat{a}_n}} =-\mu_1 .
			\end{equation}
			Then we have $\mu_1\neq 0$. Otherwise, from \eqref{y9}, we deduce that $  |\phi_{n}|_{\infty,\bigcup_{i=1}^{k_n}B(z_i^n,R)} =o_n(1)$ for all $R>0$.
			Moreover, arguing as in proving \eqref{q4}, we have $  |\phi_{n}|_{\infty,\RN\setminus\bigcup_{i=1}^{k_n}B(z_i^n,R)} =o_n(1)$ for $R\geq R_\delta$. Hence, we have $\lim_{n\to+\infty}  |\phi_{n}|_{\infty} =0$, which contradicts with the fact $ |\phi_{n}|_{\infty} =1$. 
			\medbreak
			Set 
			\begin{equation} \label{y11}
				{\lim_{n\to+\infty} \frac{\sigma_n}{\hat{a}_n}} =\kappa \in \R^{+}\setminus \lbr{0} \quad \text{ and } \quad \lambda \kappa=-\mu_1.
			\end{equation}
			Then  using \eqref{y8} and  \eqref{y0}, we deduce that 
			\begin{equation}
				\begin{aligned}
					&	\lim_{n\to+\infty} \frac{1}{\hat{a}_n|\lambda_1^n|} 	\hbr{(\LRN)^\prime(u_n),v_n-u_n } \\
					= &\lim_{n\to+\infty} \mbr{\frac{\lambda_1^n }{ |\lambda_1^n|}\cdot  \int_{ B\sbr{0,R_\delta}} u_n^\delta(x+z_1^n)   \phi_{n}(x+z_1^n) x  ~\mathrm{d}x+\sum_{i=2}^{k_n}\frac{1}{\hat{a}_n }\frac{\lambda_1^n }{ |\lambda_1^n|} O(\hat{a}_n^2)}
					\\=&  \lambda   \cdot\int_{ B\sbr{0,R_\delta}} U^\delta(x) \sbr{\nabla U(x) \cdot \mu_1}  x  \dx\\
					=&  \frac{1}{2}  |\lambda|^2 \kappa\int_{ B\sbr{0,R_\delta}}(U^\delta(x))^2 \dx >0.
				\end{aligned}
			\end{equation}
			Furthermore,  taking account into \eqref{y8}, \eqref{y11} and the assumption $V_n(x)\geq V_0>2$,  and  choosing $\hat{R}>R_\delta$ such that $u_n(x) \leq \delta<1$ and $v_n(x) \leq \delta<1$ on  $\RN\setminus \bigcup_{i=1}^{k_n}B(z_i^n,\hat{R})$, we  have 
			\begin{equation}\label{q0}
				\begin{aligned}
					&	\lim_{n\to+\infty}\frac{1}{\hat{a}_n|\lambda_1^n|} 		 \int_{\RN}  \mbr{|\nabla (v_n-u_n)|^2  + \sbr{ V_n(x)-2}  (v_n-u_n) ^2  
						-    (v_n-u_n)^2 \log w_n^2} \dx 
					\\\geq &\lim_{n\to+\infty}-\frac{1}{\hat{a}_n|\lambda_1^n|}   \mbr{ \sum_{i=1}^{k_n}  \int_{ B\sbr{z_i^n,\hat{R}}}   (v_n-u_n)^2 \log w_n^2 \dx   + \int_{\RN\setminus   \bigcup_{i=1}^{k_n}B(z_i^n,\hat{R})}    (v_n-u_n)^2 \log w_n^2        \dx          }	
					\\\geq &\lim_{n\to+\infty}-\frac{1}{\hat{a}_n|\lambda_1^n|}   ~{ \sum_{i=1}^{k_n}  \int_{ B\sbr{z_i^n,\hat{R}} \cap \lbr{w_n \geq 1}}   (v_n-u_n)^2 \log w_n^2 \dx    }	\\
					\geq &\lim_{n\to+\infty}-\frac{\hat{a}_n}{|\lambda_1^n|}   ~{ \sum_{i=1}^{k_n}  \int_{ B\sbr{z_i^n,\hat{R}} \cap \lbr{w_n \geq 1}}     \log w_n^2 \dx    }	
					\\ \geq &\lim_{n\to+\infty}-\frac{Ck_n\sigma_n}{|\lambda_1^n|}\times  \frac{\hat{a}_n }{\sigma_n}=0.
				\end{aligned} 
			\end{equation}	 
			Therefore, it follows from \eqref{q2} that 
			\begin{equation}
				\lim_{n\to+\infty} \frac{\LRN(v_n)-\LRN(u_n)}{\hat{a}_n|\lambda_1^n|} 	\geq 	\frac{1}{2}  |\lambda|^2 \kappa\int_{ B\sbr{0,R_\delta}}(U^\delta(x))^2 \dx >0,
			\end{equation}
			which implies that the relation \eqref{q6} holds.
			\vskip 0.15in 
			
			Finally, from the previous arguments, it suffices to {\bf prove the claim}.  In fact, since $u_n \in \MR_{z_1^n, \ldots, z_{k_n}^n}^n$ and $v_n \in \MR_{y_1^n, \ldots, y_{k_n}^n}^n$, by Corollary \ref{coro 4.2}, we have 	 
			\begin{equation} 	 \label{g2}
				-\Delta u_n(x) +  V_n(x) u_n (x) = u_n(x)  \log  u_n^2(x)   +  \sum_{i=1}^{k_n}u_n^{\delta,i} (x)  \mbr{  \lambda_i^n\cdot \sbr{x-z_i^n}},  
			\end{equation}
			\begin{equation} 	 \label{g3}
				-\Delta v_n(x) +  V_n(x)v_n (x) = v_n(x)  \log  v_n^2(x)   +  \sum_{i=1}^{k_n}v_n^{\delta,i} (x)  \mbr{  \tilde{\lambda}_i^n\cdot \sbr{x-y_i^n}}  ,
			\end{equation}
			where we denote $\lambda_i^n\in \RN$ and $\tilde{\lambda}_i^n \in \RN$  as  the Lagrange multipliers associated with $u_n$ and $v_n$, respectively. 
			Arguing as in \eqref{r97}, we have 
			\begin{equation} \label{h5}
				\lim_{n\to+\infty} \lambda_i^n  =0  \quad \text{ and } \quad  \lim_{n\to+\infty}	\tilde{\lambda}_i^n  =0.
			\end{equation}
			Moreover, from \eqref{g2} and  \eqref{g3}, along with \eqref{q10}, we  obtain
			\begin{equation} \label{g1}
				\begin{aligned}
					&	-\Delta (v_n -u_n )(x) +V_n(x)(v_n -u_n)(x)\\
					&=( v_n \log  v_n^2 -u_n  \log  u_n^2)(x)+ \sum_{i=1}^{k_n}  u_n^{\delta,i}(x) (\tilde{\lambda}_i^n -\lambda_i^n)\cdot (x-z_i^n)\\
					&	+\sum_{i=1}^{k_n} (v_n^{\delta,i}-u_n^{\delta,i})(x) \tilde{\lambda}_i^n  \cdot (x-z_i^n) + v_n^{\delta,1}(x) \tilde{\lambda}_1^n\cdot ( z_1^n-y_1^n).
				\end{aligned}
			\end{equation}
			For any fixed $j\in \lbr{1,\ldots,k_n}$,  we set $\hat{a}_{n,j}:=\max \left\lbrace \hat{a}_n,|\tilde{\lambda}_j^n -\lambda_j^n|\right\rbrace $ and 
			\begin{equation} \label{h1}
				\hat{\phi}_{n,j}(x):=\cfrac{ v_n(x+z_j^n)-u_n(x+z_j^n)}{\hat{a}_{n,j}}\ .
			\end{equation}
			Obviously, there holds 
			\begin{equation} \label{h6}
				|\hat{\phi}_{n,j}	|_\infty \leq 1  \quad \text{ and } \quad\frac{|\tilde{\lambda}_j^n -\lambda_j^n|}{\hat{a}_{n,j} }\leq 1.
			\end{equation} It follows  from \eqref{g1} that 
			\begin{equation}\label{h8}
				\begin{aligned}
					&	-\Delta 	\hat{\phi}_{n,j}(x) +V_n(x+z_j^n)	\hat{\phi}_{n,j}(x)\\
					&=	b_{n,j}(x+z_j^n)\hat{\phi}_{n,j}(x)+ \sum_{i=1}^{k_n}  u_n^{\delta,i}(x+z_j^n) \sbr{\frac{\tilde{\lambda}_i^n -\lambda_i^n}{\hat{a}_{n,j}}\cdot (x+z_j^n-z_i^n)}\\
					&	+\sum_{i=1}^{k_n} 	\hat{\phi}_{n,j}^{\delta,i}(x+z_j^n)  \sbr{\tilde{\lambda}_i^n  \cdot (x+z_j^n-z_i^n) }\\
					&+ v_n^{\delta,1}(x+z_j^n) \sbr{\frac{\tilde{\lambda}_1^n}{\hat{a}_{n,j}}\cdot ( z_1^n-y_1^n)},
				\end{aligned}
			\end{equation}
			where  
			\begin{equation}
				\hat{\phi}_{n,j}^{\delta,i}(x+z_j^n):=\frac{ v_n^{\delta,i}(x+z_j^n)-u_n^{\delta,i} (x+z_j^n)}{\hat{a}_{n,j}} ~,
			\end{equation}
			and 
			\begin{equation} \label{g4}
				\begin{aligned}
					b_{n,j}(x)& :=\frac{v_n(x) \log  v_n^2(x) -u_n(x)  \log  u_n^2(x)}{v_n(x)-u_n(x)} 
					= 2+\log \sbr{l_n(x)}^2  ,
				\end{aligned}
			\end{equation}
			with  $l_n(x)= u_n(x)+\tilde{l}(x)(v_n(x)-u_n(x))$ for some $\tilde{l}(x) \in \mbr{0,1}$.  
			\medbreak
			By the definition of barycenter map \eqref{barycenter map}, we have 
			\begin{equation}
				z_1^n-y_1^n= 	 \frac{1}{|u_n^{\delta,1}|_2^2}\int_{\RN} x(u_n^{\delta,1}(x))^2 ~\mathrm{d}x- \frac{1}{|v_n^{\delta,1}|_2^2}\int_{\RN} x(v_n^{\delta,1}(x))^2 ~\mathrm{d}x.
			\end{equation}
			Considering that  $|u_n^{\delta,1}(x)-v_n^{\delta,1}(x)|\leq \hat{a}_n$ and $  u_n^{\delta,1} (x+z_1^n) \xrightarrow{n \rightarrow  +\infty} U^{\delta}(x)  $, along with $  v_n^{\delta,1} (x+y_1^n) \xrightarrow{n \rightarrow  +\infty} U^{\delta}(x)  $, a direct computation  leads us to infer that for  sufficiently large $n$,  
			\begin{equation} \label{h3}
				|	z_1^n-y_1^n| \leq C \hat{a}_n.
			\end{equation}
			Then for all $r>R_\delta$  and for $n $ large enough, we have $$r>R_\delta+|	z_1^n-y_1^n|=R_\delta+\sigma_n \lambda.$$	Observe that   
			\begin{equation}
				-\Delta 	\hat{\phi}_{n,j}(x) +V_n(x+z_j^n)	\hat{\phi}_{n,j}(x) =	b_{n,j}(x+z_j^n)\hat{\phi}_{n,j}(x), \quad \forall x\in B(0,2r)\setminus    B(0,R_\delta+\sigma_n \lambda).
			\end{equation}
			By   \eqref{defi of delta}, we deduce  that $ b_{n,j} \leq 2+\log \delta^2<0$.  Since $|	\hat{\phi}_{n,j}|$ satisfies that 
			\begin{equation}
				\begin{aligned}
					-\Delta 	|\hat{\phi}_{n,j}| + V_0	|\hat{\phi}_{n,j}| & \leq 	-\Delta 	|\hat{\phi}_{n,j}| + V_n(x+z_j^n)	|\hat{\phi}_{n,j}|  =b_{n,j}(x+z_j^n)|\hat{\phi}_{n,j}	|<0 
				\end{aligned}	 
			\end{equation}
			in $B(0,2r)\setminus    B(0,R_\delta+\sigma_n \lambda)$, then using \cite[Lemma 3.3]{cerami_CPMA_2013}, we have that 
			\begin{equation}\label{h2}
				|\hat{\phi}_{n,j}(x)|\leq \widetilde{C}   e^{-\tilde{\bar \zeta} \bar d(x)}, \quad \forall x\in B(0,2r)\setminus    B(0,R_\delta+\sigma_n \lambda)
			\end{equation}
			for $\tilde{\bar \zeta} \in \sbr{0,\sqrt{V_0}}$ and  $\bar d(x)=\mathrm{dist}(x, \supp{ |\hat{\phi}_{n,j}|^\delta})$. Moreover, from \eqref{h1}, we know that  $|\hat{\phi}_{n,j}	|_\infty \leq 1$. This, combined with   \eqref{h2}, implies  that  there exists a constant $\widehat{c}_1>0$ (independent of  $n$ and $r$) such that for all $r>0$ and  for $n$ large enough,
			\begin{equation} \label{h7}
				\int_{ B(0,2r) } |\hat{\phi}_{n,j}(x)| \dx \leq \widehat{c}_1.
			\end{equation}	 
			It follows from \eqref{h5}, \eqref{h6}, \eqref{h3},  \eqref{h7} and  \eqref{h8}, along with Proposition \ref{prop 4.2} 	 that  there exists a constant $\widehat{c}_2>0$ (independent of  $n$ and $r$) such that   for $n$ large enough,
			\begin{equation}
				\int_{ B(0,2r) } |\Delta \hat{\phi}_{n,j}(x)||  \hat{\phi}_{n,j}(x)| \dx \leq \widehat{c}_2.
			\end{equation}
			Let $ \tilde{h} \in \DR(B(0,2))$ be a positive function such that $\tilde{h}=1$ on $B(0,1)$.	Set $\tilde{h}_r(x)=\tilde{h}(x/r)$, then $\int_{ B(0,2r) } |\Delta \tilde{h}_r|\dx =\widehat{c}_3r^{N-2}$. Therefore, we have 
			\begin{equation}
				\begin{aligned}
					\int_{ B(0,r)  }|\nabla \hat{\phi}_{n,j}|^2\dx &\leq \int_{\R^N } |\nabla \hat{\phi}_{n,j}|^2\tilde{h}_r \dx =-\int_{\R^N }  \hat{\phi}_{n,j}\nabla(\tilde{h}_r\nabla \hat{\phi}_{n,j} ) \dx\\
					&=- \frac{1}{2}		\int_{\R^N }   \nabla \tilde{h}_r \nabla\hat{\phi}_{n,j} ^2 \dx -  	\int_{\R^N }  \hat{\phi}_{n,j}\tilde{h}_r\Delta \hat{\phi}_{n,j} \dx\\
					&\leq \int_{  B(0,2r) }|\hat{\phi}_{n,j}| |\Delta \hat{\phi}_{n,j}| \dx +  \frac{1}{2}		\int_{ B(0,2r)}\Delta \tilde{h}_r \hat{\phi}_{n,j} ^2 \dx \\
					&\leq \widehat{c}_2+ \widehat{c}_4 e^{-2\tilde{\bar \zeta} r} r^{N-2},
				\end{aligned}
			\end{equation}
			which implies that 	$\lbr{\hat{\phi}_{n,j}}_n$ is bounded in $H_{loc}^1(\RN)$.
			
			\medbreak 
			Therefore,  we may assume that  there exists $\phi_j $ such that $\hat{\phi}_{n,j} \rightharpoonup \phi_j $ in $H_{loc}^1(\RN)$. It follows from 
			\eqref{h6} that  $ |\hat{\phi}_{n,j}-\phi_j |_{p,\loc} \to 0 $ for all $p<+\infty$. Then we can pass the limit  in \eqref{h8} obtaining
			\begin{equation} \label{y1}
				-\Delta \phi_j(x) +( V_\infty-2- \log U^2) \phi_j(x) =U^\delta (x) (\hat{\lambda}_j\cdot x) ,
			\end{equation}
			where $\hat{\lambda}_j= \lim_{n\to+\infty}   (\tilde{\lambda}_j^n -\lambda_j^n)/ {\hat{a}_{n,j}}$.
			\medbreak 
			At the conclusion of the proof of the claim, it remains to prove  that $\hat{\lambda}_j=0$. Indeed, in this case, for $n$ sufficiently large, we have  $\hat{a}_{n,j}=\hat{a}_n$. Therefore, the proof provided above for $\hat{\phi}_{n,j} $ corresponds exactly to  the statement stated within the claim. Set $U^\delta (x) (\hat{\lambda}_j\cdot x):= H(x)$. Since \eqref{y1} holds, for any $\nu \in \RN\setminus \lbr{0}$,  by a direct calculation, we deduce that 
			\begin{equation}\label{y2}
				\begin{aligned}
					&\int_{\RN} H(x) (\nabla U \cdot \nu) \dx \\
					=&\int_{\RN} -\Delta \phi_j(\nabla U \cdot \nu)  \dx +\int_{\RN} ( V_\infty-2- \log U^2)  (\nabla U \cdot \nu)\phi_j  \dx\\
					= &\int_{\RN} -\Delta \phi_j(\nabla U \cdot \nu)  \dx+\int_{\RN} \nabla ( V_\infty U -U\log U^2)\cdot ( \nu\phi_j)\dx\\
					=&\int_{\RN} -\Delta \phi_j(\nabla U \cdot \nu)  \dx +\int_{\RN} \nabla ( \Delta U)\cdot ( \nu\phi_j)\dx\\
					=&~0.
				\end{aligned}
			\end{equation}
			On the other hand, choosing  $\nu =\hat{\lambda}_j$ in \eqref{y2}, we have 
			\begin{equation}
				\begin{aligned}
					0&=	\int_{\RN} H(x) (\nabla U \cdot \hat{\lambda}_j) \dx= 	\int_{\RN} U^\delta (\nabla U \cdot \hat{\lambda}_j)(\hat{\lambda}_j\cdot x)  \dx \\
					&=\frac{1}{2} 	\int_{\RN} \mbr{\nabla\sbr{ U^\delta}^2 \cdot \hat{\lambda}_j }(\hat{\lambda}_j\cdot x)  \dx =-\frac{1}{2} |\hat{\lambda}_j|^2	\int_{\RN} 
					\sbr{ U^\delta}^2 \dx,
				\end{aligned}
			\end{equation}
			which implies that $\hat{\lambda}_j=0$. This completes the proof.
		\end{proof}

		\section{ Existence of a solution  with infinitely many bumps}\label{Sect6}
		This section is devoted to the proof of Theorem \ref{thm2} by studying the asymptotic behavior of   the multi-bump positive solutions  obtained in Theorem  \ref{thm1} as   the number of positive    bumps  increases going to infinity. Our proof of Theorems \ref{thm2} is  inspired by the original idea of \cite{cerami_Poincare_2015}, but as we will see,    some key modifications are needed.     In  Proposition \ref{prop 6.1}, we  establish the existence of a  positive solution of infinite energy that has infinitely many bumps.  Propositions \ref{prop 6.2} and \ref{prop 6.3} are focused on proving \eqref{et1} and \eqref{et2}, respectively.
		
		\medbreak
		By Theorem \ref{thm1}, we obtain a sequence $\lbr{\tilde{u}_k}_k$ of solutions to equation \eqref{p equ} such that $\tilde{u}_k$ emerges around $k$-tuples  $\sbr{\tilde z_1^k,\ldots,\tilde z_k^k } \in \ER_{k}$.  Moreover, we have 
		\begin{equation} \label{m4}
			\varLambda_{k}= \alpha\sbr{\tilde z_1^k,\ldots,\tilde z_k^k } =	\LR(\tilde{u}_{k}) .
		\end{equation} 	 
		For all $r>0$, we denote by $\ga(\tilde{u}_k,r)$ the number of points around which $\tilde{u}_k$ is emerging and that are contained in $B(0,r)$.
		\vskip0.1in
		\begin{lemma} \label{lemma 6.1}
			For all $  m \in \N$, there exist a real number $r_m >0$ and a  positive integer $k_m\in \N$ such that for all $k>k_m$, there holds
			\begin{equation}
				\ga(\tilde{u}_k,r_m)\geq m.
			\end{equation}
		\end{lemma}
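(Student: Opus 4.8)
The plan is to argue by contradiction, in the spirit of \cite{cerami_Poincare_2015} but with the changes forced by the nonsmoothness of $\LR$. Two facts about the levels $\varLambda_{k}$ do the work: the strict gap $\varLambda_{k+1}>\varLambda_{k}+\CR^\infty$ (Proposition \ref{prop 3.3}) and its sharpening $\varLambda_{k+1}-\varLambda_{k}\to\CR^\infty$. I would establish the latter first, and cleanly: given any $(z_1,\dots,z_{k+1})\in\ER_{k+1}$, relabel so that $z_{k+1}$ is the point farthest from the origin, whence $|z_{k+1}|\ge c_N(k+1)^{1/N}R_\delta\to\infty$; taking $v_1\in\MR_{z_1,\dots,z_k}$ and $v_2\in\MR_{z_{k+1}}$, the function $v_1\vee v_2$ lies in $\SR_{z_1,\dots,z_{k+1}}$ and the pointwise identity $a\vee b+a\wedge b=a+b$ gives $\LR(v_1\vee v_2)=\LR(v_1)+\LR(v_2)-\LR(v_1\wedge v_2)<\LR(v_1)+\LR(v_2)$ (using $\LR>0$ on functions $\le\delta<1$, as in \eqref{e1}); hence $\alpha(z_1,\dots,z_{k+1})\le\alpha(z_1,\dots,z_k)+\alpha(z_{k+1})\le\varLambda_{k}+\CR^\infty+o_k(1)$ by Remark \ref{Remark 3.4}, and taking the supremum over $(k+1)$-tuples yields $\varLambda_{k+1}-\varLambda_{k}\to\CR^\infty$.

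Now suppose the lemma fails for some $m$. Then there are $\bar r_0>0$ and a subsequence $k_n\to\infty$ with $\gamma(\tilde u_{k_n},\bar r_0)\le m-1$, i.e. at most $m-1$ of the centers of $\tilde u_{k_n}$ lie in $B(0,\bar r_0)$. The heart of the argument is a quantitative version of Remark \ref{Remark 4.3}: I would produce a constant $c_m>0$ (depending only on $m$, $V$, $N$, $\delta$) and a radius $r_m$ such that, for any $k$-tuple $(z_1,\dots,z_k)\in\ER_k$ having at most $m-1$ of its points in $B(0,r_m)$, there exists $\hat y\in B(0,r_m)$ with $(z_1,\dots,z_k,\hat y)\in\ER_{k+1}$ and $\alpha(z_1,\dots,z_k,\hat y)\ge\alpha(z_1,\dots,z_k)+\CR^\infty+c_m$. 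The point $\hat y$ would be chosen by a pigeonhole argument inside $B(0,r_m)$: the at most $m-1$ pre-existing near-centers block only a controlled portion of the room, so one can place $\hat y$ where, by hypothesis \eqref{H2}, the attractive contribution $\frac{1}{2}\int_{B(\hat y,R_\delta)}\bigl(V(x)-V_\infty\bigr)U(\cdot-\hat y)^2\dx$ is bounded below, while keeping $\hat y$ far enough from every existing center that the Gaussian-type interaction and cut-off errors (controlled by Proposition \ref{prop 3.1}) are strictly smaller; the desired inequality then follows by the cut-off/decomposition scheme of Proposition \ref{prop 3.3}, using $\LR(u)=\LR(u_\delta)+\FR_\delta(u^\delta)$. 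Applying this to the $k_n$-tuple of $\tilde u_{k_n}$, which realizes $\varLambda_{k_n}$ by \eqref{m4}, gives $\varLambda_{k_n+1}\ge\varLambda_{k_n}+\CR^\infty+c_m$ for all large $n$, contradicting the first paragraph. Hence $\gamma(\tilde u_k,r_m)\ge m$ for all $k>k_m$, with $r_m,k_m$ as produced.

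The hard part will be exactly the quantitative insertion estimate of the second paragraph: one must choose $r_m$ (necessarily growing with $m$) and then $\hat y$ so that a usable free slot with a definite attractive gain is always available despite the $m-1$ pre-existing near-bumps, and one must show that this gain strictly dominates the Gaussian-type tails of the submerged part and the errors produced by the cut-offs, uniformly in $k$. This is precisely where \eqref{H2} is indispensable: because $V$ tends to $V_\infty$ more slowly than a Gaussian, the attractive effect at the free slot beats the repulsive Gaussian-type decay of \eqref{e4}, which is the analytic reason the bumps of $\tilde u_k$ cannot escape to infinity and must instead accumulate near the origin as $k$ grows. The logarithmic nonlinearity enters only through the sub-differential setting and through the splitting $\LR=\LR(\cdot_\delta)+\FR_\delta(\cdot^\delta)$, which is what makes the cut-off and merging manipulations legitimate in the nonsmooth framework.
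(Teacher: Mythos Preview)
Your first paragraph is correct and useful: the convergence $\varLambda_{k+1}-\varLambda_{k}\to\CR^\infty$ does follow from the $\vee/\wedge$ splitting together with Remark~\ref{Remark 3.4} and the packing bound $|z_{k+1}|\gtrsim k^{1/N}$. However, the scheme in your second paragraph has a genuine gap. The ``quantitative insertion estimate'' --- a constant $c_m>0$ and a radius $r_m$ such that \emph{any} configuration with at most $m-1$ centers in $B(0,r_m)$ admits an insertion point $\hat y$ giving gain $\ge\CR^\infty+c_m$ --- is not obtainable from \eqref{H2}. The gain from inserting $\hat y$ is, up to cut-off errors, of order $\int_{B(\hat y,R_\delta)}\vartheta\,(\text{test})^2\dx$, while the error is Gaussian in the distance $d$ from $\hat y$ to the nearest existing center. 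Assumption \eqref{H2} only says $\vartheta(x)e^{\bar\zeta|x|^2}\to+\infty$; it gives no positive lower bound on $\vartheta$ in any fixed region, and the gain tends to $0$ as $|\hat y|\to\infty$. If you place $\hat y$ far out so that $d$ is large, both the gain and the error vanish and no uniform $c_m>0$ survives; if you keep $\hat y$ in a fixed annulus where $\vartheta$ is positive, the at most $m-1$ near-centers can sit in that same annulus at distance only $3R_\delta$ from $\hat y$, and the resulting error $\sim e^{-\bar\zeta(3R_\delta)^2}$ need not be dominated by the attractive term. Your pigeonhole argument controls only the distance to the $m-1$ near-centers, and that distance is tied to the size of the region where $\vartheta$ is favorable --- the two constraints are in tension and cannot be decoupled uniformly in the configuration. (A minor additional point: the negation you wrote is not quite the right one; you need the stronger form ``for every $r>0$ there are infinitely many $k$ with $\gamma(\tilde u_k,r)<m$'', and in particular one may take $r=r_n\to\infty$.)

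The paper avoids this difficulty by a different mechanism. Using the correct negation with radii $r_n\to\infty$, all but $j<\tilde m$ of the centers of $\tilde u_{k_n}$ escape to infinity, and the $j$ remaining ones converge to a fixed tuple $(\tilde z_1,\dots,\tilde z_j)$. Remark~\ref{Remark 4.3} then supplies a \emph{single fixed} point $y$ with $\alpha(\tilde z_1,\dots,\tilde z_j,y)>\alpha(\tilde z_1,\dots,\tilde z_j)+\CR^\infty$; no uniformity over configurations is needed. The paper does not insert an extra center; it \emph{replaces} the $(j{+}1)$-st center $\tilde z_{j+1}^n$ (which is escaping) by $y$, keeping the number of centers equal to $k_n$, and compares both $\alpha(\tilde z_1^n,\dots,\tilde z_{k_n}^n)$ and $\alpha(\tilde z_1^n,\dots,\tilde z_j^n,y,\tilde z_{j+2}^n,\dots,\tilde z_{k_n}^n)$ to $\alpha(\tilde z_{j+2}^n,\dots,\tilde z_{k_n}^n)$ via the $\vee/\wedge$ identity and the cut-off scheme of Proposition~\ref{prop 3.3}. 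Continuity (Lemma~\ref{lemma 3.5}) then transports the strict inequality at the limit to the moving configuration, giving $\varLambda_{k_n}=\alpha(\tilde z_1^n,\dots,\tilde z_{k_n}^n)<\alpha(\tilde z_1^n,\dots,\tilde z_j^n,y,\tilde z_{j+2}^n,\dots,\tilde z_{k_n}^n)\le\varLambda_{k_n}$ for large $n$. The replacement strategy and the passage to a fixed limit configuration are precisely what let the paper use a single specific gap rather than the uniform one your argument requires.
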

		\vskip0.1in
		\begin{proof}
			By contradiction, we may assume that there exists $\tilde{m}\in \N$ and sequences $\lbr{r_n}_n$, $\lbr{k_n}_n  $  with $r_n>0$, $k_n\in \N$, such that $r_n, k_n\to +\infty$ as $n \to +\infty$ and  a sequence of positive solutions  $\lbr{\tilde{u}_{k_n}}_{k_n}$ that emerges  around the  $k_n$-tuples  $\sbr{\tilde z_1^n,\ldots,\tilde z_{k_n}^n }\in \ER_{k_n}$ such that
			\begin{equation}
				\ga(\tilde{u}_{k_n},r_n)<  \tilde{m}, \quad \forall n\in \N.
			\end{equation}
			Then we may assume that, up to subsequence,  there exists $j<\tilde{m}$ such that as $n\to +\infty$
			\begin{equation}
				\tilde 	z_i^n \to \tilde  z_i ,\quad  \forall i \leq j, \quad\text{ and } \quad 	|\tilde  z_i^n| \to +\infty ,\quad  \forall i >j.
			\end{equation}
			It follows from Remark \ref{Remark 4.3} that there exists $y\in\RN$ such that $\sbr{\tilde z_1,\ldots,\tilde z_{j},y} \in \ER_{j+1}$ and 
			\begin{equation} \label{m0}
				\alpha(\tilde z_1 ,\ldots,\tilde z_{ j} ) +\CR^\infty <\alpha(\tilde z_1 ,\ldots,\tilde z_{j} ,y).
			\end{equation} 
			Now we claim that 
			\begin{equation} \label{me1}
				\limsup_{n\to+\infty} \mbr{	\alpha(\tilde z_1^n,\ldots,\tilde z_{k_n}^n)  -	\alpha(\tilde z_{j+2}^n,\ldots,\tilde z_{k_n}^n) } \leq  \alpha(\tilde z_1 ,\ldots,\tilde z_{j} ) +\CR^\infty, 
			\end{equation}
			and 
			\begin{equation}\label{me2}
				\lim_{n\to+\infty} \mbr{	\alpha(\tilde z_1^n,\ldots, \tilde z_j^n, y,\tilde z_{j+2}^n, \ldots  \tilde z_{k_n}^n)  -	\alpha(\tilde z_{j+2}^n,\ldots,\tilde z_{k_n}^n) } = \alpha(\tilde z_1 ,\ldots,\tilde z_{j},y ). 
			\end{equation}
			Once we  prove  that  the claim is true, the proof of Lemma \ref{lemma 6.1} will be concluded. Indeed, from \eqref{m4}, \eqref{m0}, \eqref{me1} and \eqref{me2}, we deduce that  for $n$ sufficiently large, 
			\begin{equation}
				\varLambda_{k_n} =	\alpha(\tilde z_1^n,\ldots,  \tilde z_{k_n}^n)  <	\alpha(\tilde z_1^n,\ldots, \tilde z_j^n, y,\tilde z_{j+2}^n, \ldots  \tilde z_{k_n}^n) \leq 	\varLambda_{k_n}  ,
			\end{equation}
			which  is a impossible.
			
			\medbreak
			To prove \eqref{me1}, we consider 
			\begin{equation}
				f_n \in \MR_{ \tilde z_1^n,\ldots, \tilde z_j^n}, \quad g_n \in \MR_{\tilde z_{j+1}^n}, \quad h_n  \in \MR_{ \tilde z_{j+2}^n,\ldots, \tilde z_{k_n}^n}.
			\end{equation}
			Then $f_n \vee g_n\vee h_n \in   \SR_{ \tilde z_1^n,\ldots, \tilde z_{k_n}^n}$. Moreover,     $0\leq  g_n\wedge h_n\leq \delta $ and $0\leq f_n \wedge \sbr{g_n\vee h_n} \leq \delta $. Therefore, we deduce from Lemma \ref{lemma 2.5} that 
			\begin{equation}\label{kd1}
				\begin{aligned}
					\LR(f_n \vee g_n\vee h_n) &=\LR(f_n  )+  	\LR(   g_n\vee h_n)- 	\LR(f_n \wedge \sbr{g_n\vee h_n}) \\
					&=\LR(f_n  )+  \LR( g_n )+	\LR( h_n) -	\LR(   g_n\wedge h_n)- 	\LR(f_n \wedge \sbr{g_n\vee h_n})\\
					&\leq  \LR(f_n  )+  \LR( g_n )+	\LR( h_n).
				\end{aligned}
			\end{equation} 
			Hence, 
			by Lemma  \ref{lemma 3.5} and Remark \ref{Remark 3.4}, we infer that 
			\begin{equation}
				\begin{aligned}
					\alpha(\tilde z_1^n,\ldots,\tilde z_{k_n}^n) &   \leq 	\LR(f_n \vee g_n\vee h_n) \\&   \leq 	\LR(f_n  )+	\LR( g_n )+	\LR( h_n)\\
					&=  \alpha\sbr{ \tilde z_1^n ,\ldots, \tilde z_j^n } +  \alpha(\tilde z_{j+1}^n) +\alpha(\tilde z_{j+2}^n,\ldots,\tilde z_{k_n}^n)\\
					& \leq \alpha\sbr{ \tilde z_1 ,\ldots, \tilde z_j } +\CR^\infty +\alpha(\tilde z_{j+2}^n,\ldots,\tilde z_{k_n}^n) +o_n(1),
				\end{aligned}
			\end{equation}
			which implies that \eqref{me1} holds.
			
			\medbreak
			
			To prove \eqref{me2}, we consider 
			\begin{equation}
				\tilde	f_n \in \MR_{\tilde z_1^n,\ldots, \tilde z_j^n, y,\tilde z_{j+2}^n, \ldots  \tilde z_{k_n}^n}, \quad   \tilde g_n \in \MR_{\tilde z_1^n,\ldots, \tilde z_j^n, y} .
			\end{equation}
			Then by  Lemmas \ref{lemma 2.5} and  \ref{lemma 3.5}, we deduce that 
			\begin{equation}
				\begin{aligned}
					\alpha(\tilde z_1^n,\ldots, \tilde z_j^n, y,\tilde z_{j+2}^n, \ldots  \tilde z_{k_n}^n)& \leq \LR(  \tilde g_n \vee h_n )=\LR(\tilde g_n  )+	\LR(h_n ) -\LR(\tilde g_n  \wedge h_n )
					\\&\leq \LR(  \tilde g_n  ) +\LR(    h_n )
					\\ &=	\alpha(\tilde z_1^n,\ldots, \tilde z_j^n, y)
					+	\alpha(\tilde z_{j+2}^n,\ldots,\tilde z_{k_n}^n) \\
					& = 	\alpha\sbr{ \tilde z_1 ,\ldots, \tilde z_j , y}  +\alpha(\tilde z_{j+2}^n,\ldots,\tilde z_{k_n}^n) +o_n(1)
				\end{aligned} 
			\end{equation}
			Hence, we have 
			\begin{equation}\label{m10}
				\limsup_{n\to+\infty} \mbr{ 	\alpha(\tilde z_1^n,\ldots, \tilde z_j^n, y,\tilde z_{j+2}^n, \ldots  \tilde z_{k_n}^n)-\alpha(\tilde z_{j+2}^n,\ldots,\tilde z_{k_n}^n) } \leq \alpha\sbr{ \tilde z_1 ,\ldots, \tilde z_j , y}.
			\end{equation}
			To prove the reserve inequality, we  set
			\begin{equation}
				\rho_n:= \min \lbr{|\tilde z_i^n|:i=j+2,\ldots,n}.
			\end{equation}
			Then $\rho_n \to +\infty$ as $n\to+ \infty$ and for $n$ sufficiently large, we have 
			\begin{equation}
				\bigcup_{i=1}^j B(\tilde z_i^n,R_\delta) \cup B(y,R_\delta)  \subset B(0,\frac{\rho_n}{2}-1),
			\end{equation}
			\begin{equation}
				\bigcup_{i=j+2}^{k_n} B(\tilde z_i^n,R_\delta)  \subset \RN \setminus B(0,\frac{\rho_n}{2}+1).
			\end{equation}
			Let $ \widetilde{F}_n(x):= \tilde \chi_n(x)\tilde{f}_n(x)$, where $\tilde \chi_n=\chi\sbr{  |x| -\frac{\rho_n}{2}  }$ and   $\chi(x)$     is given by \eqref{defi of chi}. Using a similar argument as in the proof of \eqref{e12}, we deduce  
			\begin{equation}  \label{m11}
				\begin{aligned}
					\LR(\widetilde{F}  _n)  
					&\leq  \LR(\tilde f_n)  + O(e^{-\bar \zeta \rho_n^2})\\&= 	\alpha(\tilde z_1^n,\ldots, \tilde z_j^n, y,\tilde z_{j+2}^n, \ldots  \tilde z_{k_n}^n) + O(e^{-\bar \zeta \rho_n^2}),
				\end{aligned}
			\end{equation}
			Similar to \eqref{m8} and \eqref{m9}, we can decompose $\widetilde{F}_n$ into  $$
			\widetilde{F}  _n(x)= \widetilde{F}_{n,1}(x)+    \widetilde{F}_{n,2}(x)$$ with $\widetilde{F}_{n,1} \in \SR_{ \tilde z_1^n,\ldots, \tilde z_j^n, y  }$,~$\widetilde{F}_{n,2}  \in \SR_{  \tilde z_{j+2}^n,\ldots, \tilde z_{k_n}^n}$ and $\supp{\widetilde{F}_{n,1}}\cap \supp{\widetilde{F}_{n,2}}=\emptyset$. 
			Therefore, we  have  
			\begin{equation} \label{m12}
				\LR(\widetilde{F}_n)=	\LR( \widetilde{F}_{n,1}) +\LR( \widetilde{F}_{n,2}) \geq \alpha( \tilde z_1^n,\ldots, \tilde z_j^n, y ) + \alpha ( \tilde z_{j+2}^n,\ldots, \tilde z_{k_n}^n) .
			\end{equation}
			Hence, we deduce from \eqref{m11}, \eqref{m12} and Lemma  \ref{lemma 3.5} that 
			\begin{equation} \begin{aligned}
					& \liminf_{n\to+\infty} \mbr{ 	\alpha(\tilde z_1^n,\ldots, \tilde z_j^n, y,\tilde z_{j+2}^n, \ldots  \tilde z_{k_n}^n)-\alpha(\tilde z_{j+2}^n,\ldots,\tilde z_{k_n}^n) } \\ \geq &	\liminf_{n\to+\infty}  \mbr{\alpha( \tilde z_1^n,\ldots, \tilde z_j^n, y )- O(e^{-\bar \zeta \rho_n^2}) }\\  =&\alpha( \tilde z_1,\ldots, \tilde z_j, y ),
			\end{aligned}\end{equation}
			which, together with \eqref{m10}, gives \eqref{me2}. The proof is completed.
		\end{proof}
		\vskip0.15in
		
		\begin{proposition} \label{prop 6.1}
			Under the assumptions of Theorem \ref{thm1}, there exists a positive solution $\tilde{u}$  of equation \eqref{p equ}, which emerges around an unbounded sequence $\lbr{\tilde z_n }_n$ such that $|\tilde z_n-\tilde z_l|\geq 3R_\delta$ if $n\neq l$.
		\end{proposition}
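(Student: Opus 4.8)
The plan is to realize $\tilde u$ as a local limit of a carefully extracted subsequence of the multi-bump solutions $\tilde u_k$ produced by Theorem \ref{thm1}, the whole difficulty being a local a priori bound that does not deteriorate as the number of bumps grows (note that the total energy $\varLambda_k=\LR(\tilde u_k)$ tends to $+\infty$ by Propositions \ref{prop 3.2}--\ref{prop 3.3}, so no global estimate is available). \emph{Step 1 (selection).} Using Lemma \ref{lemma 6.1} I would fix radii $r_1\le r_2\le\cdots$ together with the thresholds $k_m$, then pick a strictly increasing sequence $k_1<k_2<\cdots$ with $k_n>k_m$ for all $m\le n$, and set $v_n:=\tilde u_{k_n}$; this emerges around a $k_n$-tuple $(z_1^n,\dots,z_{k_n}^n)\in\ER_{k_n}$ and, by the proof of Theorem \ref{thm1} (for $|\vartheta|_{N/2,\loc}$ small all Lagrange multipliers in \eqref{e5} vanish, hence $\LR'(v_n)=0$), it is by Lemma \ref{lemma 2.7} and the regularity discussion of Proposition \ref{prop 3.1} a classical positive solution of $-\Delta v_n+Vv_n=v_n\log v_n^2$ on $\RN$. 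By construction $\ga(v_n,r_m)\ge m$ whenever $n\ge m$.

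\emph{Step 2 (uniform bound per bump).} The key point is an absolute constant $C_0$ with $\nm{v_n^{\delta,i}}_H^2\le C_0$ for all $n$ and all $i$. Since $v_n\in\SR_{z_1^n,\dots,z_{k_n}^n}$, the Nehari identity $\hbr{\LR'(v_n),v_n^{\delta,i}}=0$ reads, by \eqref{e7} at $t=1$,
\begin{equation}
\int_{B(z_i^n,R_\delta)}\mbr{|\nabla v_n^{\delta,i}|^2+V(v_n^{\delta,i})^2}\dx+\int_{B(z_i^n,R_\delta)} \delta V v_n^{\delta,i}\dx=\int_{B(z_i^n,R_\delta)} (\delta+v_n^{\delta,i})v_n^{\delta,i}\log(\delta+v_n^{\delta,i})^2\dx .
\end{equation}
As $V\ge V_0>2$, the left side controls $\nm{v_n^{\delta,i}}_H^2$ up to $\delta^2|B(0,R_\delta)|$, while the right side is, by elementary estimates (the part on $\{\delta+v_n^{\delta,i}\le1\}$ being $\le0$) and the logarithmic inequality of Lemma \ref{lemma 2.9} applied to the zero extension of $v_n^{\delta,i}\in H_0^1(B(z_i^n,R_\delta))$, bounded by $A+B\log\nm{v_n^{\delta,i}}_H$; hence $\nm{v_n^{\delta,i}}_H^2\le A'+B'\log\nm{v_n^{\delta,i}}_H$, forcing $\nm{v_n^{\delta,i}}_H\le C_0^{1/2}$. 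The same identity also yields a uniform height bound: the left side is strictly positive ($v_n^{\delta,i}\not\equiv0$ in $H_0^1$), so the right side is positive, which is impossible unless $\{\delta+v_n^{\delta,i}>1\}$ has positive measure; thus $\sup_{B(z_i^n,R_\delta)}v_n>1$.

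\emph{Step 3 (local compactness and the limit equation).} Fix $R>0$. The $3R_\delta$-separation of the $z_i^n$ means $B(0,R)$ meets at most $N_R=N_R(R,R_\delta,N)$ of the supports $\supp{v_n^{\delta,i}}$, so $\nm{v_n}_{H^1(B(0,R))}^2\le\delta^2|B(0,R)|+N_RC_0$ uniformly in $n$. Since $s\mapsto s\log s^2$ is subcritical, a Brezis--Kato/Moser bootstrap (as in the regularity step of Proposition \ref{prop 4.2}) upgrades this to uniform $L^\infty_{\loc}$ and then $C^{1,\alpha}_{\loc}$ bounds, and a diagonal extraction gives a subsequence converging in $C^1_{\loc}(\RN)$ to some $\tilde u\in H^1_{\loc}(\RN)$, $\tilde u\ge0$, solving $-\Delta\tilde u+V\tilde u=\tilde u\log\tilde u^2$ classically, i.e.\ a solution of \eqref{p equ} in $H^1_{\loc}$; since $\tilde u$ is nontrivial (Step 4), the strong maximum principle for the logarithmic nonlinearity (see \cite{vazquez=AMO=1984}, as in Proposition \ref{prop 3.1}) gives $\tilde u>0$.

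\emph{Step 4 (infinitely many bumps, unboundedness, and the main obstacle).} For each $m$ and each $n\ge m$, selecting $m$ of the bump points $z_i^n$ lying in $B(0,r_m)$ — pairwise $\ge3R_\delta$ apart and contained in $\overline{B(0,r_m)}$ — and extracting, they converge to $m$ distinct points still $\ge3R_\delta$ apart; arranging these extractions to be nested and consistent in $m$ (using $r_m\le r_{m+1}$ and $\ga(v_n,r_{m+1})\ge m+1$) produces an infinite, $3R_\delta$-separated family $\{\tilde z_n:n\in\N\}$. By the $C^1_{\loc}$ convergence and the height bound of Step 2, $\sup_{\overline{B(\tilde z_n,R_\delta)}}\tilde u\ge1>\delta$, so $\tilde u^\delta\not\equiv0$ near each $\tilde z_n$; since locally $v_n>\delta$ only inside $\bigcup_iB(z_i^n,R_\delta)$, in the limit $\tilde u$ emerges around $\{\tilde z_n\}$ in $R_\delta$-balls, and an infinite $3R_\delta$-separated subset of $\RN$ is unbounded, giving the last assertion. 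I expect the real work to be Step 2 (and its bootstrap in Step 3): because the global energy blows up, the estimate must be produced bump by bump, and it is precisely the local use of the logarithmic Sobolev inequality together with the Nehari constraint that keeps the bound uniform in the number of bumps.
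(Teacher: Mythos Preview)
Your outline is correct and leads to the result, but the key local a~priori bound (your Step~2) follows a genuinely different route from the paper's. The paper bounds $\|v_n\|_{H^1}$ near each bump $\tilde z_j$ by a \emph{competitor argument}: it builds a function $w_n$ that coincides with $v_n$ outside a fixed neighbourhood of $\tilde z_j$ but carries a fixed (hence uniformly controlled) emerging part inside, and then uses the max-min characterisation $\LR(v_n)\le\LR(w_n)$ together with Corollary~\ref{coro 2.2} to force $\|v_n\|_{H^1(B(\tilde z_j,\frac53R_\delta))}$ bounded. Your argument instead reads the bound directly off the localized Nehari identity $\hbr{\LR'(v_n),v_n^{\delta,i}}=0$ combined with Lemma~\ref{lemma 2.9}; this is more elementary, uses none of the minimality of $v_n$, and gives the height bound $\sup_{B(z_i^n,R_\delta)}v_n>1$ for free. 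What the paper's route buys is robustness: it works even if you only know $v_n$ is a minimizer in $\MR_{z_1^n,\dots,z_{k_n}^n}$, without having to manipulate the log term on the Nehari identity.

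Two small points to tighten. First, in Step~2 your estimate ``RHS $\le A+B\log\|v_n^{\delta,i}\|_H$'' is slightly too strong: expanding $(\delta+w)w\log(\delta+w)^2$ against $w^2\log w^2$ leaves a residual $c_\delta\|w\|_2^2$ term, but this is absorbed on the left since $V_0>2>2(1+\delta)\log(1+\delta)$ for $\delta<e^{-1/2}$, so the conclusion stands. Second, and more substantively, the displayed bound in Step~3, $\nm{v_n}_{H^1(B(0,R))}^2\le\delta^2|B(0,R)|+N_RC_0$, ignores $\|\nabla(v_n)_\delta\|_{L^2(B(0,R))}$, which is not controlled by the per-bump Nehari bound (the submerged gradient lives outside $\supp v_n^{\delta,i}$). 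The fix is a Caccioppoli step: your Step~2 gives, via Sobolev, a uniform $L^{2^*}_{\loc}$ bound on $v_n$, hence $(v_n^2\log v_n^2)^+\in L^1_{\loc}$ uniformly (it is supported where $v_n>1$, i.e.\ inside at most $N_R$ bump balls), and testing the full equation $-\Delta v_n+Vv_n=v_n\log v_n^2$ with $\phi^2v_n$ for a cutoff $\phi$ yields the missing gradient bound. The paper handles this same issue by first bounding $v_n$ in $H^1$ on the complement $A_m$ of the bump balls via the PDE and $v_n\le\delta$ there, and only afterwards treating each bump.
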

		
		\begin{proof}
			From Lemma \ref{lemma 6.1},  for all $  m \in \N$, there exist  $r_m >0$ and  $k_m\in \N$ such that $	\ga(\tilde{u}_k,r_m)\geq m$ for all $k>k_m$. Moreover, since $\tilde{u}_k $ emerges around the $k$-tuples $\sbr{\tilde z_1^k,\ldots,\tilde z_k^k} \in \ER_{k}$ and $|\tilde z_i^k-\tilde z_j^k|\geq 3R_\delta$, there exists $T_m\in \N$ and $T_m \geq m$ such that $\ga(\tilde{u}_k,r_m) \leq T_m$ for all $k\in \N$. Hence, $r_m \to +\infty$ as $m\to +\infty$, and without loss of generality, we may assume that $r_m\leq r_{m+1}$ for all $m\in \N$.
			\medbreak
			In what follows, let  $\lbr{\tilde{u}_{k_n}^m}_n$  be  a subsequence of $\lbr{\tilde{u}_{k}}_{k}$ such that 
			\begin{center}
				$\forall m\in \N$, ~ $\forall n\in \N$, ~there exists $H_m\in \N$  with $ m\leq H_m \leq T_m$ such that $\ga(\tilde{u}_{k_n}^m,r_m)=H_m$.
			\end{center}
			\begin{center}
				$\forall m\in \N$, ~ the sequence   $\lbr{\tilde{u}_{k_n}^{m+1}}_n$ is a subsequence of  $\lbr{\tilde{u}_{k_n}^m}_n$.
			\end{center}
			Set \begin{equation}
				v_n:= \tilde{u}_{k_n}^n.
			\end{equation}
			Then   $\lbr{v_n}_n$ is subsequence of $\lbr{\tilde{u}_{k_n}^m}_n$ for all $m \in \N$.
			Furthermore, the sequence of points around  which $\lbr{v_n}_n$ are emerging, denoted by $\lbr{\sbr{ \tilde z_1^n,\ldots, \tilde  z_{k_n}^n}}_n$, are converging as $n \to +\infty$. Moreover, the limit points have interdistances greater or equal that $3R_\delta$ and make up an unbounded countable subset of $\RN$, which  we denote  by $$\varXi:=\lbr{\tilde z_n:n\in \N}. $$
			\medbreak
			It remains  to prove  that for any fixed $m\in\N$, the sequence  $ \left\| v_n\right\|_{B(0,r_m)} $ is bounded.    Indeed, if this assertion holds, then there exists a function $\tilde{u}\in H^1(B(0,r_m))$ such that 
			\begin{equation}
				v_n= \tilde{u}_{k_n}^n\rightharpoonup \tilde{u} \quad \text{ weakly in }~ H^1(B(0,r_m)) .
			\end{equation}
			Recall that  
			\begin{equation}\label{qq1}
				|s^2\log s^2| \leq Cs^{2-\tau}+Cs^{2+\tau},\quad  \text{ for}\quad \tau\in \sbr{0,1},
			\end{equation} then  by using  the dominated convergence theorem, one gets
			\begin{equation}
				\lim_{n\to+\infty} \int_{B(0,r_m)} v_n\varphi\log v_n^2 =\int_{B(0,r_m)} \tilde{u}\varphi\log \tilde{u}^2 \ \  \text{ for any } \varphi\in C_0^\infty(B(0,r_m)) .
			\end{equation}
			Hence, it is easy to verify that  $\tilde{u}$ is a solution of 
			\begin{equation}
				-\Delta \tilde{u} (x)+V(x)\tilde{u}(x)=\tilde{u}(x) \log \tilde{u}^2(x), \quad  x \in B(0,r_m).
			\end{equation}  
			Moreover, arguing as in the proof of Proposition \ref{prop 4.2}, we infer that passing to subsequence,  $\lbr{v_n}_n$ uniformly  converges on every compact subsets of $\RN$ since  $r_m \to \infty$ as $m\to +\infty$, and the solution $\tilde{u}$ has at least $m$ emerging parts around points in $B(0,r_m)$.
			\medbreak
			Since the set $\varXi$ is unbounded,   then  $\varXi \cap B(0, r_m+\frac{4}{3}R_\delta)$ is finite, we assume that there exists $\bar m \geq m$ such that 
			\begin{equation}
				\varXi \cap B(0, r_m+\frac{4}{3}R_\delta)=\lbr{ \tilde z_1, \ldots,  \tilde z_{\bar m}}.
			\end{equation}
			Consider the set $$A_m:= B(0,r_m) \setminus \bigcup_{i=1}^{\bar m} B( \tilde z_i,\frac{4}{3}R_\delta).$$  Since $\tilde z_j^n \to \tilde z_j$ as $n \to +\infty$,  
			for $n$ sufficiently large,  we have  $0\leq v_n(x) \leq \delta$ in $A_m$. Therefore,  the sequence $|v_n|_{q,A_m}$ is   bounded for all   $q\geq 1$  because the set $A_m$ is bounded in $\RN$. Finally,  by \eqref{qq1},   we know that  $v_n$ is  bounded in $H^1(A_m) $ since it  solves $\eqref{p equ}$ on the set $A_m$. 
			\medbreak 
			In the following, we  prove that  $v_n$ is   bounded in $H^1(B(\tilde{z}_i, \frac{5}{3}R_\delta))$ for all $i \in \lbr{1,\ldots,\bar m }$.   For  all fixed $j\in \lbr{1,\ldots,\bar m }$ and    $n\in \N$, we consider the positive  function $w_n\in H^1(\RN)$ such  that $\Delta w_n=0$ in $B(\tilde z_j, \frac{4}{3}R_\delta)\setminus \bar B(\tilde z_j^n, R_\delta) $  and 
			\begin{equation}
				w_n(x)= \begin{cases}
					v_n(x), &\quad \forall x\not\in    B(\tilde z_j, \frac{4}{3}R_\delta),\\
					\tilde	a_n\sbr{ v_1(x+\tilde{z}_j^1-\tilde z_j^n)}^\delta ,& \quad  \forall x \in B(\tilde z_j^n, R_\delta),
				\end{cases}
			\end{equation}
			where $\tilde a_n:= \xi_j( v_1(x+\tilde{z}_j^1-\tilde z_j^n) )\in \R^+\setminus \lbr{0} $.    By the max-min characterization of $v_n$ and the construction of $w_n$, we have 
			\begin{equation}\label{l12}
				\LR(w_n)\geq 	\LR(v_n).
			\end{equation}
			Since $\tilde z_j^n \to \tilde z_j$ as $n \to +\infty$, the sequence $\lbr{\tilde a_n}_n$  is bounded in $\R$.  Moreover, arguing as in   proof of  point (iii) of  Proposition \ref{prop 4.2},   we can prove that $v_n$ is   bounded in  $C^{1+\varepsilon}(B(  \tilde z_j,\frac{5}{3}R_\delta)\setminus B(\tilde z_j,\frac{4}{3}R_\delta))$  for all $i\in \lbr{1,\ldots,\bar m }$. Thus,  the sequence   $\nm{w_n }_{B(\tilde{z}_j, \frac{5}{3}R_\delta)}$  is also bounded.  Moreover, we note that the sequence   $|w_n^2 \log w_n^2|_{1, B(\tilde{z}_j, \frac{5}{3}R_\delta)}$  is bounded. Indeed, this fact can be  established   by applying the following Sobolev logarithmic inequality(\cite[Theorem 8.14]{lieb-loss})
			\begin{equation}
				\int_{\RN} u^2 \log u^2\mathrm{d} x \leq \frac{a}{\pi} \nt{ \nabla u}^2 +\sbr{\log \nt{u}^2-N(1+\log a)} \nt{u}^2  \ \ \text{ for } u\in H ^1(\RN) \  \text{ and } \  a>0,
			\end{equation} and  using the   inequality  $s^2\log s^2 \geq -e^{-1}$, where  $s\in  (0,+\infty)$.

			\medbreak
			The proof of  that   $v_n$ is   bounded in $H^1(B(\tilde{z}_j, \frac{5}{3}R_\delta))$ proceeds by contradiction. We assume the false statement:    there exists $R_0>0$, as defined in Corollary \ref{coro 2.2},  such that    for any $R>R_0$,  there  is some  $n\in\N$  for which   $\nm{ v_n  }_{B(\tilde{z}_j, \frac{5}{3}R_\delta)}\geq R$. 
			By the construction of $w_n$ and    \eqref{l12}, there is a constant $C_3>0$ such that  
			\begin{equation}
				0\geq  \LR(v_n)- \LR(w_n) = \frac{1}{2}\nm{ v_n  }_{B(\tilde{z}_j, \frac{5}{3}R_\delta)}^2  - \frac{1}{2} \int_{ B(\tilde{z}_j, \frac{5}{3}R_\delta)}  v_n ^2\log v_n  ^2 ~\mathrm{d}x - C_3.
			\end{equation}
			By   Corollary \ref{coro 2.2}, we have 
			\begin{equation}
				\nm{ v_n  }_{B(\tilde{z}_j, \frac{5}{3}R_\delta)}^2\leq 2C_3+ C(1+	\nm{ v_n  }_{B(\tilde{z}_j, \frac{5}{3}R_\delta)}), 
			\end{equation}
			which leads to a contradiction since $R$ can be  arbitrarily chosen.  This completes the proof. 
				\end{proof}

				\begin{proposition} \label{prop 6.2}
					Let  $\tilde{u}$ be a solution  of equation \eqref{p equ} as stated in Proposition \ref{prop 6.1}, which emerges around an unbounded sequence $\lbr{\tilde z_n}_n$ in $\RN$. Then 
					\begin{equation} \label{e111}
						\lim_{n\to+\infty} \min\lbr{ |\tilde{z}_n-\tilde{z}_l|: l  \in \N, ~ n\neq l}=+\infty.
					\end{equation}
				\end{proposition}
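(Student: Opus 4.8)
The plan is to argue by contradiction, the engine of the contradiction being the strict energy gap $\varLambda_{k}>\varLambda_{k-1}+\CR^\infty$ of Proposition \ref{prop 3.3}(ii). Suppose \eqref{e111} fails. Then, passing to a subsequence, there are $r>0$ and indices $l_n\neq n$ with $|\tilde z_n-\tilde z_{l_n}|\le r$ for every $n$; since $\varXi=\lbr{\tilde z_n:n\in\N}$ is unbounded with pairwise distances $\ge 3R_\delta$, we may in addition assume $|\tilde z_n|\to+\infty$, hence $|\tilde z_{l_n}|\to+\infty$ as well. Thus $\tilde u$ would carry, arbitrarily far from the origin, a pair of bumps at bounded mutual distance. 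The idea is that such a pair is incompatible with the max--min nature of the finite multi-bump solutions: far out, where $\vartheta\to0$, the marginal energy cost of the second bump of such a pair drops strictly below $\CR^\infty$, which clashes with Proposition \ref{prop 3.3}(ii).

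To realise this I would first push the clustering back to the approximating solutions. Recall from Proposition \ref{prop 6.1} (and its proof) that $\tilde u$ is the weak limit of a subsequence $v_m=\tilde u_{k_m}^m$ of the solutions of Theorem \ref{thm1}, each $v_m$ emerging around some $\sbr{\tilde z_1^{(m)},\ldots,\tilde z_{k_m}^{(m)}}\in\ER_{k_m}$ with $\alpha\sbr{\tilde z_1^{(m)},\ldots,\tilde z_{k_m}^{(m)}}=\varLambda_{k_m}=\LR(v_m)$, and with $\tilde z_i^{(m)}\to\tilde z_i$ as $m\to+\infty$ for each fixed $i$. Hence, for each large $n$ one may pick $m=m(n)$ so large that $v_{m(n)}$ possesses two distinct centres $p_n,q_n$ lying within distance $1$ of $\tilde z_n$ and $\tilde z_{l_n}$ respectively; then $3R_\delta\le|p_n-q_n|\le r+2$ and $\min\lbr{|p_n|,|q_n|}\to+\infty$ as $n\to+\infty$. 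Write $Z_n$ for the full configuration of $v_{m(n)}$, so $\alpha(Z_n)=\varLambda_{k_{m(n)}}$, and let $\Sigma_n\in\ER_{k_{m(n)}-1}$ be $Z_n$ with $q_n$ deleted; note $p_n\in\Sigma_n$ and $\alpha(\Sigma_n)\le\varLambda_{k_{m(n)}-1}$.

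Now I would run the localised analogue of the estimate in the proof of Proposition \ref{prop 4.1}. Choose $w_n\in\MR_{\Sigma_n}$; its emerging part around $p_n$ obeys $w_n^{\delta,p_n}\neq0$, $\supp{w_n^{\delta,p_n}}\subset B(p_n,R_\delta)$ and, exactly as for \eqref{e35}, $\liminf_n\bigl|\supp{w_n^{\delta,p_n}}\bigr|>0$. Put $U_{q_n}(x):=U(x-q_n)$ and $v_{q_n}:=\sbr{U_{q_n}}_\delta+\xi(U_{q_n})\sbr{U_{q_n}}^\delta\in\SR_{q_n}$, the factors $\xi(U_{q_n})$ being bounded as in \eqref{upper bound}. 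Because $|p_n-q_n|\ge 3R_\delta$, $\sbr{U_{q_n}}^\delta\equiv0$ on $B(p_n,R_\delta)$, so there $v_{q_n}=U_{q_n}\ge b_r>0$ with $b_r$ depending only on $r$. Since the emerging parts of $v_{q_n}$ and $w_n$ sit on disjoint balls, $v_{q_n}\vee w_n\in\SR_{Z_n}$ and, arguing as in \eqref{e13},
\[
\varLambda_{k_{m(n)}}=\alpha(Z_n)\le\LR\bigl(v_{q_n}\vee w_n\bigr)=\LR(v_{q_n})+\LR(w_n)-\LR\bigl(v_{q_n}\wedge w_n\bigr).
\]
For the middle term, $\LR(w_n)=\alpha(\Sigma_n)\le\varLambda_{k_{m(n)}-1}$. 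For the last, $0\le v_{q_n}\wedge w_n\le\delta<1$, so, as in Lemma \ref{lemma 2.5}, $\LR\bigl(v_{q_n}\wedge w_n\bigr)\ge\tfrac12(V_0+1)\!\int_{\supp{w_n^{\delta,p_n}}}U_{q_n}^2\dx\ge c_0>0$ for large $n$. For the first term, writing $\LR(v_{q_n})=\LRI(v_{q_n})+\tfrac12\int_\RN\vartheta(x)v_{q_n}^2\dx\le\CR^\infty+\tfrac12\bigl|\int_\RN\vartheta(x)v_{q_n}^2\dx\bigr|$ and exploiting that $|q_n|\to+\infty$ while $\vartheta\to0$ at infinity and $U$ has Gaussian decay, one obtains (adapting \eqref{e25}--\eqref{e31}, with $\ploc{\vartheta}$ small from Theorem \ref{thm1}) that $\LR(v_{q_n})\le\CR^\infty+o_n(1)$. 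Combining the three bounds,
\[
\varLambda_{k_{m(n)}}-\varLambda_{k_{m(n)}-1}\le\CR^\infty-c_0+o_n(1),
\]
whence $\limsup_n\bigl(\varLambda_{k_{m(n)}}-\varLambda_{k_{m(n)}-1}\bigr)\le\CR^\infty-c_0<\CR^\infty$; but each term of this sequence exceeds $\CR^\infty$ by Proposition \ref{prop 3.3}(ii) — a contradiction. This proves \eqref{e111}.

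The step I expect to be delicate is the last one, i.e. the quantitative control that makes Proposition \ref{prop 4.1}'s mechanism work for the \emph{fixed} potential $V$: one must verify that the two ingredients which in Section \ref{Sect4} rely on $\ploc{\vartheta}\to0$ — namely $\int_\RN\vartheta(x)v_{q_n}^2\dx\to0$ and a positive lower bound $c_0$ on the overlap energy, uniform in $n$ — survive merely because the clustered pair escapes to infinity (so $\vartheta$ is small, in the appropriate averaged sense, on $B(q_n,2r)$) and because $\ploc{\vartheta}<\mathcal K$ can be taken small; and to keep the nested subsequences $n\mapsto m(n)$ consistent with $\tilde z_i^{(m)}\to\tilde z_i$ and with the choice of $p_n,q_n$. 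The remaining estimates are straightforward repetitions of computations already performed in Sections \ref{Sect3} and \ref{Sect4}.
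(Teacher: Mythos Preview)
Your proposal is correct and follows essentially the same route as the paper: argue by contradiction, transfer the clustered pair back to the approximating max--min solutions $\tilde u_{k_n}$, and then rerun the overlap estimate behind \eqref{e23} to show that the marginal cost of one of the clustered bumps is strictly below $\CR^\infty$. The only cosmetic difference is in the closing step: the paper invokes Remark~\ref{Remark 4.3} (there exists a replacement point $p_n$ with marginal cost $>\CR^\infty$, so $\varLambda_{k_n}<\alpha(p_n,\tilde z_2^n,\ldots,\tilde z_{k_n}^n)\le\varLambda_{k_n}$), whereas you compare directly with $\varLambda_{k_{m(n)}-1}$ via Proposition~\ref{prop 3.3}(ii); both are equivalent.

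One clarification on the point you flag as delicate: the bound $\LR(v_{q_n})\le\CR^\infty+o_n(1)$ does \emph{not} require $\ploc{\vartheta}$ small. It follows exactly from the computation \eqref{e9}--\eqref{e19} (i.e.\ Remark~\ref{Remark 3.4}), using only that $|q_n|\to+\infty$ together with $\vartheta(x)\to0$ at infinity from \eqref{H1} and the uniform $L^2$-bound on $v_{q_n}$ coming from \eqref{upper bound}. Likewise the lower bound $\liminf_n|\supp{w_n^{\delta,p_n}}|>0$ goes through verbatim as in \eqref{e33}--\eqref{e35}, again because the needed ingredient $\LR(v_{p_n})\le\CR^\infty+o_n(1)$ comes from $|p_n|\to+\infty$. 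So your worry is already resolved by the fixed-potential arguments of Section~\ref{Sect3}; nothing from Section~\ref{Sect4} specific to $\ploc{\vartheta_n}\to0$ is actually needed here.
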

				\begin{proof}
					We proceed  by contradiction and assume that  the statement is false. Then there are two subsequences  $\lbr{y_n}_n$ and $\lbr{\tilde{y}_n}_n$  of  $\lbr{\tilde z_n}_n$ such that for all $n\in \N$,   $y_n\neq \tilde{y}_n$  and $\lbr{|y_n-\tilde{y}_n|}_n$  is bounded. 
					\medbreak
					For any fixed $R\geq R_\delta$ and for all $n\in \N$,  we can choose a solution $\tilde u_{k_n}$ of \eqref{p equ}, which emerges around     $\sbr{\tilde{z}_1^n , \ldots, \tilde{z}_{k_n}^n}$, such that 
					\begin{equation}
						\LR(\tilde u_{k_n}) = \alpha\sbr{\tilde{z}_1^n , \ldots, \tilde{z}_{k_n}^n} =\varLambda_{k_n}, \quad  \sup_{B(y_n,R)\cup B(\tilde y_n,R)} |\tilde u_{k_n} - \tilde{u}| \leq \frac{1}{n} .
					\end{equation}
					Therefore, without loss of generality, we may assume that there exist two points $\tilde{z}_1^n$ and $\tilde{z}_2^n $ around which $\tilde u_{k_n}$  is emerging,   such that 
					\begin{equation}\label{s11}
						\lim_{n\to+\infty} |\tilde{z}_1^n-y_n | =0, \quad 	\lim_{n\to+\infty} |\tilde{z}_2^n-\tilde y_n | =0.
					\end{equation}
					which implies that  $\lbr{|\tilde{z}_1^n-\tilde{z}_2^n|}_n$  is bounded.  Then using a similar argument as in the proof of  \eqref{e23}, we have 	
					\begin{equation}
						\limsup_{n\to+\infty}  \mbr{ \alpha(\tilde z_1^n,\tilde z_2^n, \ldots,\tilde  z_{k_n}^n ) -\alpha(\tilde z_2^n, \ldots, \tilde z_{k_n}^n )} <\CR^\infty.
					\end{equation}	  
					Moreover, by Remark \ref{Remark 4.3}, there exists $p_n\in\RN$ such that  $( p_n,\tilde z_2^n,\ldots,\tilde z_{k_n}^n) \in\ER_{k_n}$
					\begin{equation}
						\alpha(\tilde z_2^n,\ldots,\tilde z_{k_n}^n) +\CR^\infty <\alpha( p_n,\tilde z_2^n,\ldots,\tilde z_{k_n}^n).
					\end{equation}
					Hence, we deduce that for $n$ large enough, 
					\begin{equation}
						\varLambda_{k_n} 	=\alpha\sbr{\tilde{z}_1^n , \ldots, \tilde{z}_{k_n}^n} \leq  \alpha(\tilde z_2^n, \ldots, \tilde z_{k_n}^n)+\CR^\infty< \alpha( p_n,\tilde z_2^n,\ldots,\tilde z_{k_n}^n) \leq 	 \varLambda_{k_n}, 
					\end{equation}
					which is impossible. The proof is completed.
				\end{proof}

				\begin{proposition} \label{prop 6.3}
					Let  $\tilde{u}$ be a solution  of equation \eqref{p equ} as stated in Proposition \ref{prop 6.1}, which emerges around an unbounded sequence $\lbr{\tilde z_n:n\in \N}$ of $\RN$. Then 
					\begin{equation}
						\lim_{n\to+\infty} \tilde{u}(x+\tilde z_n)= U(x)
					\end{equation}
					uniformly on any compact subset $K\subset \RN$.
					
				\end{proposition}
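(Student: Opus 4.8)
The plan is to localize $\tilde u$ around each far-away bump, rescale to the origin, pass to a limit which solves the limit problem \eqref{p infty}, and then identify this limit as the Gausson $U$ by combining the barycenter constraint with the uniqueness (up to translation) of positive solutions of \eqref{p infty}; a subsequence argument then upgrades this to convergence of the full sequence. Concretely, it suffices to fix $r>R_\delta$ and show $\sup_{|x|\le r}|\tilde u(x+\tilde z_n)-U(x)|\to0$. First I would note that $\tilde u$ is a genuine solution of \eqref{p equ} on all of $\RN$: it is the weak $H^1_{\loc}$-limit of the max--min solutions built in Proposition \ref{prop 6.1}, and since $\ploc{V-V_\infty}<\mathcal K$ all the Lagrange multipliers in \eqref{e5} for the $\tilde u_k$ vanish (Theorem \ref{thm1}), so the approximating solutions, hence $\tilde u$, solve \eqref{p equ}. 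Put $w_n:=\tilde u(\cdot+\tilde z_n)$. As $|\tilde z_n|\to+\infty$, assumption \eqref{H1} gives $V(\cdot+\tilde z_n)\to V_\infty$ in $L^{N/2}(B(0,2r))$, and by Proposition \ref{prop 6.2} the quantity $d_n:=\min_{l\ne n}|\tilde z_n-\tilde z_l|\to+\infty$, so for $n$ large the only bump of $\tilde u$ meeting $B(\tilde z_n,2r)$ is the one at $\tilde z_n$; hence $w_n$ solves $-\Delta w_n+V(\cdot+\tilde z_n)w_n=w_n\log w_n^2$ on $B(0,2r)$, with $0\le w_n\le\delta$ on $B(0,2r)\setminus B(0,R_\delta)$, and with emerging part $w_n^\delta:=\tilde u^{\delta,n}(\cdot+\tilde z_n)$ compactly supported in $B(0,R_\delta)$. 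Moreover the constraints $\beta_i=0$ satisfied by the approximating solutions, the continuity of the barycenter maps, the uniform convergence to shape of the bumps, and the uniform decay \eqref{e4} of the submerged parts show that $\beta_n(\tilde u)=0$ for every $n$.

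The crucial step is a \emph{uniform-in-$n$ local bound}: $\{w_n\}$ is bounded in $H^1(B(0,2r))$, and in fact in $C^{1,\alpha}(\overline{B(0,r)})$. One way to get it is to revisit the estimates in the proof of Proposition \ref{prop 6.1} keeping track of the dependence on the bump index: on the region where $w_n\le\delta$ the subcritical bound $|s\log s^2|\le C(|s|^{2-\tau}+|s|^{2+\tau})$ gives uniform elliptic estimates, while on $B(\tilde z_n,\tfrac53 R_\delta)$ the max--min comparison (replacing the $n$-th bump of the approximating solution by a rescaled Gausson, exactly as in Proposition \ref{prop 6.1}) together with Corollary \ref{coro 2.2} bounds $\nm{\,\cdot\,}_{B(\tilde z_n,\frac53 R_\delta)}$ by a constant independent of $n$; one then passes to the weak limit. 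Alternatively, each emerging part $\tilde u^{\delta,n}$ solves a subcritical semilinear Dirichlet problem on its (free) positivity set inside the \emph{fixed-size} ball $B(\tilde z_n,R_\delta)$, so a Gidas--Spruck type blow-up argument yields a uniform $L^\infty$ bound on $\tilde u$, whence $C^{1,\alpha}_{\loc}$ bounds for $\{w_n\}$ by elliptic regularity. I would also record the uniform lower bound $\|w_n^\delta\|_\infty\ge e^{V_0/2}-\delta>0$, obtained by evaluating the equation for $\tilde u^{\delta,n}$ at an interior maximum point and using $V_0>2$ and \eqref{defi of delta}; this rules out loss of mass in the limit.

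Granted the uniform bound, I would fix a subsequence along which $w_n\to w$ in $C^1_{\loc}(\RN)$ and a.e.; then $w\ge0$, and $w\not\equiv0$ because the maxima of $w_n^\delta$ are attained in $\overline{B(0,R_\delta)}$ and stay bounded below. Since $V(\cdot+\tilde z_n)\to V_\infty$ in $L^{N/2}_{\loc}$ and $s\mapsto s\log s^2$ has subcritical growth, I can pass to the limit in the weak formulation to get $-\Delta w+V_\infty w=w\log w^2$ on $\RN$; a De Giorgi iteration followed by the bootstrap already used in point (iii) of the proof of Proposition \ref{prop 4.2} (cf.\ \cite[Lemma B.3]{Struwe-book}) shows that $w$ is a positive $C^2$-solution of \eqref{p infty}. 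As the only bump of $w$ lies in $\overline{B(0,R_\delta)}$, and the Gausson is the unique positive $C^2$-solution of \eqref{p infty} up to translation (Lemma \ref{lemma pinfty} together with the uniqueness fact invoked at the end of the proof of Proposition \ref{prop 4.2}), necessarily $w=U(\cdot-y)$ with $|y|\le R_\delta/2$; and $\beta(w)=0$ --- obtained by letting $n\to+\infty$ in $\beta_n(\tilde u)=0$, using the strong $L^2_{\loc}$ convergence and the uniform decay \eqref{e4} --- forces $y=0$, i.e.\ $w=U$. Since the limit is $U$ along \emph{every} such subsequence, $w_n\to U$ in $C^1_{\loc}(\RN)$, hence uniformly on every compact $K\subset\RN$, which is \eqref{et2} (and, combined with Proposition \ref{prop 6.2}, completes Theorem \ref{thm2} via \eqref{et1}).

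The main obstacle is precisely the uniform-in-$n$ bound on $\{w_n\}$ near $\tilde z_n$: because $\tilde u$ carries infinite total energy, the bump at $\tilde z_n$ cannot be controlled through the global energy, so one must either make the bump-by-bump max--min comparison of Proposition \ref{prop 6.1} quantitative and index-independent, or invoke an a priori $L^\infty$ estimate tailored to the free-boundary subcritical problem solved by the emerging parts on balls of the fixed radius $R_\delta$. Once this uniform bound is available, the rest --- extraction of a convergent subsequence, passage to the limit equation, the regularity upgrade, and the identification of the limit via the barycenter constraint and the uniqueness of positive solutions of \eqref{p infty} --- is routine, and closely parallels point (ii)--(iii) of the proof of Proposition \ref{prop 4.2}.
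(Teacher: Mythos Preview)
Your approach is correct in outline but takes a genuinely different route from the paper. The paper does \emph{not} work directly with $\tilde u$: instead, for each $n$ it selects one of the finite-bump approximating solutions $\tilde u_{k_n}$ with $\sup_{B(\tilde z_n,R)}|\tilde u_{k_n}-\tilde u|\le 1/n$, locates its bump center $\tilde z_1^n$ near $\tilde z_n$, cuts off to isolate $f_{n,1}\in\SR_{\tilde z_1^n}$, and then uses the max--min characterization of $\tilde u_{k_n}$ together with a comparison with $v_n\in\MR_{\tilde z_1^n}$ to prove the \emph{energy identification} $\LR(f_{n,1})=\CR^\infty+o_n(1)$. This energy level, combined with the argument of points (i)--(iii) of Proposition~\ref{prop 4.2}, simultaneously yields the $H^1$ bound, the strong convergence, and the identification of the limit as $U$; the barycenter constraint never re-enters. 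Closeness of $\tilde u$ to $\tilde u_{k_n}$ on $B(\tilde z_n,R)$ then transfers the conclusion back to $\tilde u$.

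The trade-off is exactly where you locate it. Your direct approach is conceptually cleaner---work with the actual solution, pass to the limit in the PDE, invoke uniqueness---but it front-loads the difficulty into the uniform-in-$n$ bound on $\tilde u$ near $\tilde z_n$, which you correctly flag as the main obstacle and for which your two suggested routes (quantifying Proposition~\ref{prop 6.1} or a subcritical blow-up argument on the fixed-radius ball) are both plausible but neither is carried out. The paper's detour through the approximants sidesteps this entirely: the max--min inequality $\LR(\tilde u_{k_n})\le\LR(\bar v_n+f_{n,2})$ gives the bound and the energy level in one stroke, at the cost of re-running the localization machinery of Section~\ref{Sect4}. Your use of the barycenter constraint $\beta_n(\tilde u)=0$ (inherited from the approximants) to pin down the translation is a nice alternative to the paper's energy-based identification, and your maximum-principle lower bound $\|w_n^\delta\|_\infty\ge e^{V_0/2}-\delta$ to rule out vanishing is a detail the paper gets for free from the energy level.
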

				\begin{proof}
					For any fixed $R\geq R_\delta$ and for all $n\in \N$,  by the definition of $\tilde{u}$, we can choose a solution $\tilde u_{k_n}$ of equation \eqref{p equ}, which emerges around   $k_n$-tuples  $\sbr{\tilde{z}_1^n , \ldots, \tilde{z}_{k_n}^n}$, such that 
					\begin{equation}
						\sup_{B( \tilde{z}_n,R) } |\tilde u_{k_n} - \tilde{u}| \leq \frac{1}{n} .
					\end{equation}
					Therefore, without loss of generality, we may assume that there exists   a point $\tilde{z}_1^n$  around which $\tilde u_{k_n}$ is emerging,   such that 
					\begin{equation} \label{e22}
						\lim_{n\to+\infty} |\tilde{z}_1^n-\tilde{z}_n | =0. 
					\end{equation}
					Then from \eqref{e111}, we have 
					\begin{equation}
						\lim_{n\to+\infty} |\tilde{z}_1^n | =+\infty, \quad \lim_{n\to+\infty} \min \lbr{ |\tilde{z}_1^n -\tilde{z}_l^n |: l=2,\ldots,k_n } =+\infty .
					\end{equation}
					In the following, we only need to prove that for any fixed $R>0$,
					\begin{equation}\label{eq1}
						\tilde u_{k_n}(\cdot+ \tilde z_1^n)  \xrightarrow{n \rightarrow  +\infty}  U  ~~\text{  uniformly in }   K , ~\forall K\subset \subset B(0,R)  \text{ compact}.
					\end{equation}
					Indeed, once we proved this statement,  in view of \eqref{e22}, we obtain the desired conclusion.   The argument for proving \eqref{eq1}   is analogous to that of Proposition  \ref{prop 4.2}, nevertheless  we give the details for the sake of clarity and	completeness.
					\medbreak
					Set  $d_n= \min \lbr{|\tilde z_1^n-\tilde z_l^n|:   l=2,\ldots, k_n }$ and    $\tau_n(x)=\tau\sbr{\left| x- \tilde z_1^n \right|-\frac{d_n}{2}}$,  where    $\tau(x)\in C^\infty(\R , [0,1])$  such that \begin{equation} 
						\tau(t)= 0 ~~\text{ for } ~  |t|<\frac{1}{2} , ~~\text{ and } ~~ \tau(t)=1~~  \text{ for} ~ | t|>1 .
					\end{equation}
					Define $  f_n(x):=	\tau_n(x) \tilde u_{k_n}(x)$. 
					Since $d_n \to +\infty$, for large $n$,    we can decompose $f_n(x)$ into $$f_n(x)=f_{n,1}(x)+f_{n,2}(x)$$ with $f_{n,1}(x) \in \SR_{\tilde z_1^n}$, $f_{n,2}(x)\in \SR_{\tilde  z_2^n, \ldots, \tilde z_{k_n}^n}$,     $\supp{f_{n,1}}\cap \supp{f_{n,2}}=\emptyset$ and 
					\begin{equation}
						\supp{f_{n,1}} \subset	B(\tilde  z_1^n, R_\delta) \subset \lbr{x\in \RN:  \left| x-\tilde z_1^n \right|  <\frac{d_n}{2}-\frac{1}{2}},
					\end{equation}
					\begin{equation}
						\supp{f_{n,2}} \subset \bigcup_{i=2}^{k_n}	B( \tilde  z_i^n, R_\delta) \subset \lbr{x\in \RN:   \left| x-\tilde z_1^n \right| >\frac{d_n}{2}+\frac{1}{2}}.
					\end{equation}
					Arguing analogous  for proving \eqref{e12}, we have 
					\begin{equation}
						\LR(f_{n,1})+\LR(f_{n,2})	=\LR(f_n)   \leq \LR(\tilde  u_{k_n}) +O(e^{-\bar \zeta d_n^2})  .
					\end{equation}
					For any $    v_n \in \MR_{\tilde z_1^n} $, we define    $\bar v_n(x):=\varsigma\sbr{\left| x-\tilde z_1^n \right|-\frac{d_n}{2}} {v}_n(x)$, where    $\varsigma(x)\in C^\infty(\R , [0,1])$ such that 
					\begin{center}
						$	\varsigma(t)= 1$ ~~for ~~$|t| \leq \frac{1}{4}$, \quad and \quad  $\varsigma(t)=0$ ~~for~~  $|t| \geq \frac{1}{2}$,
					\end{center}
					then we have  $	\bar 	v_n \in \SR_{\tilde z_1^n}$, $ \bar v_n+f_{n,2} \in \SR_{\tilde z_1^n,\tilde z_2^n, \ldots, \tilde z_{k_n}^n}$,
					and
					\begin{equation}
						\LR (\tilde u_{k_n})= \alpha (\tilde z_1^n, \ldots,\tilde z_{k_n}^n)\leq \LR  (\bar v_n+f_{n,2})= \LR  (\bar v_n )+ \LR  (  f_{n,2}).
					\end{equation}
					Arguing analogous for proving   \eqref{e12} and using Remark \ref{Remark 3.4}, we obtain that
					\begin{equation}
						\LR(\bar v_n)\leq 	\LR(  v_n)+O(e^{-\bar \zeta d_n^2})= \alpha(\tilde z_1^n)+o_n(1)\leq \CR^\infty+o_n(1).
					\end{equation}
					Then 
					\begin{equation}\label{t2}
						\begin{aligned}
							\LR (f_{n,1})& \leq \LR (\tilde u_{k_n})-\LR (f_{n,2}) +o_n(1) \leq  \LR  (  {v}_n)+o_n(1) =\CR^\infty+o_n(1).
						\end{aligned}
					\end{equation}
					On the other hand,  observing  that  $  f_{n,1}^\infty:=(f_{n,1})_\delta+\xi^\infty(f_{n,1}) (f_{n,1})^\delta \in \SR_{\tilde z_1^n}^\infty$  and  applying an analogous argument to that used in proving  \eqref{e6} and \eqref{e16}, we deduce 
					\begin{equation} 
						\begin{aligned}
							\LR(f_{n,1}) \geq \LR( f_{n,1}^\infty)+\frac{1}{2} \int_{\RN} \vartheta(x) \sbr{f_{n,1}^\infty(x)}^2 ~\mathrm{d} x > \CR^\infty,
						\end{aligned}
					\end{equation}
					which, together with \eqref{t2}, gives that $  \LR(f_{n,1})= \CR^\infty+o_n(1)$.

					\medbreak
					Finally,   observe that $f_{n,1}(x)=\tilde u_{k_n}(x)$ in $B(\tilde z_1^n, \frac{d_n}{2}-1)$, then   $f_{n,1}$ solves the following equation   
					\begin{equation} \label{q22}
						-\Delta u+ V(x)u= u\log |u|^2  \quad   \text{in }~B(\tilde z_1^n, \frac{d_n}{2}-1).
					\end{equation}
					Note that $d_n\to +\infty$,   then  for any fixed $R>0$ and  sufficiently large $n$,  we have $ B(\tilde z_1^n,R) \subset B(\tilde z_1^n, \frac{d_n}{2}-1) $.  Arguing  analogous as in the proof of     point (ii) of  Proposition \ref{prop 4.2},   we  have   \begin{equation}
						f_{n,1}(\cdot+ \tilde z_1^n) \xrightarrow{n \rightarrow  +\infty} U \quad \text{ strongly in }~ H^1(B(0,R)).  
					\end{equation}Moreover, for any $\theta>0$,  there  exists $C_\theta>0$ such that 
					\begin{equation}
						-\Delta u \leq\tilde  l(u), \quad \tilde  l(s)=   -V_0s+C_\theta s^{1+\theta}.
					\end{equation}
					Observe that     $|\tilde l(s)|\leq C(1+|s|^{1+\theta})$ for all $s\in\R$ and some $C>0$,      by repeating the bootstrap argument of the proof of \cite[Lemma B.3]{Struwe-book}, one can show that $f_{n,1}(\cdot+ \tilde z_1^n)  \in L^p( B(0,R))$ for any $p>2$. Then,  by standard regularity arguments,  we know that  $f_{n,1}(\cdot+ \tilde z_1^n) \in C^{1+\varepsilon}(K)$ for any  compact set $  K \subset \subset B(0, R)$. Moreover,  the sequence ${f_{n,1}(\cdot+ \tilde z_1^n)}$ is uniformly bounded in    $C^{1+\varepsilon}(K)$ for some $\varepsilon>0$. Then it is uniformly convergence to $U$  in all compact set $\forall K \subset \subset B(0, R)$.   This completes the proof.
				\end{proof}

				{\small \noindent \textbf{Acknowledgements:}
					
					The research of J. Wei is partially supported
					by GRF grant of HK RGC entitled "New frontiers in singular limits of elliptic and parabolic equations".
					
						 W. Zou is supported by National Key R\&D Program of China (Grant 2023YFA1010001) and NSFC (12571123).   T. Liu is supported by the   National Funded Postdoctoral Researcher Program    (GZB20240945) and China Postdoctoral Science
					Foundation  (2025M784442).
				}
				
				\medskip
				
				{\small \noindent \textbf{Statements and Declarations:} The authors have no relevant financial or non-financial interests to disclose.}
				
				\medskip
				
				{\small \noindent \textbf{Data availability:} Data sharing is not applicable to this article as no datasets were generated or analyzed during the current study.}
				\medskip
				
				{\small \noindent \textbf{Conflict of interest statement:} 	On behalf of all authors, the corresponding author states that there is no conflict of interest.}

			\end{document}